\newcommand\tenq[2][1]{%
\def\useanchorwidth{T}%
\ifnum#1>1%
\stackunder[0pt]{\tenq[\numexpr#1-1\relax]{#2}}{\!\scriptscriptstyle\thicksim}%
\else%
\stackunder[1pt]{#2}{\!\scriptstyle\thicksim}%
\fi%
}
\DeclareRobustCommand\widecheck[1]{{\mathpalette\@widecheck{#1}}}
\def\@widecheck#1#2{%
    \setbox\z@\hbox{\m@th$#1#2$}%
    \setbox\tw@\hbox{\m@th$#1%
       \widehat{%
          \vrule\@width\z@\@height\ht\z@
          \vrule\@height\z@\@width\wd\z@}$}%
    \dp\tw@-\ht\z@
    \@tempdima\ht\z@ \advance\@tempdima2\ht\tw@ \divide\@tempdima\thr@@
    \setbox\tw@\hbox{%
       \raise\@tempdima\hbox{\scalebox{1}[-1]{\lower\@tempdima\box
\tw@}}}%
    {\ooalign{\box\tw@ \cr \box\z@}}}
\def\given{\,|\,}
\def\Biggiven{\,\Big{|}\,}
\def\tr{\mathop{\text{tr}}\kern.2ex}
\def\tZ{{\tilde Z}}
\def\tX{{\tilde X}}
\def\P{{\mathrm P}}
\def\E{{\mathrm E}}
\def\d{{\mathrm d}}
\def\cI{{\mathcal I}}
\newcommand{\zahl}[1]{\llbracket #1\rrbracket}
\newcommand\yestag{\addtocounter{equation}{1}\tag{\theequation}}
\newcolumntype{L}[1]{>{\raggedright\let\newline\\\arraybackslash\hspace{0pt}}m{#1}}
\newcolumntype{C}[1]{>{  \centering\let\newline\\\arraybackslash\hspace{0pt}}m{#1}}
\newcolumntype{R}[1]{>{ \raggedleft\let\newline\\\arraybackslash\hspace{0pt}}m{#1}}
\newcolumntype{d}[1]{D{.}{.}{#1}}
\newcolumntype{H}{>{\setbox0=\hbox\bgroup}c<{\egroup}@{}}
\newcolumntype{Z}{>{\setbox0=\hbox\bgroup}c<{\egroup}@{\hspace*{-\tabcolsep}}}
\newcolumntype{b}{X}
\newcolumntype{s}{>{\hsize=.5\hsize}X}
\numberwithin{equation}{section}
\newtheorem{theorem}{Theorem}[section]
\newtheorem{lemma}{Lemma}[section]
\newtheorem{proposition}{Proposition}[section]
\newtheorem{assumption}{Assumption}[section]
\newtheorem{corollary}{Corollary}[section]
\providecommand{\customgenericname}{}
\newcommand{\newcustomtheorem}[2]{%
  \newenvironment{#1}[1]
  {%
   \renewcommand\customgenericname{#2}%
   \renewcommand\theinnercustomgeneric{##1}%
   \innercustomgeneric
  }
  {\endinnercustomgeneric}
}
\theoremstyle{definition}
\newtheorem{remark}{Remark}[section]
\newcommand{\mylabel}[2]{#2\def\@currentlabel{#2}\label{#1}}
\begin{document}

\setlength{\abovedisplayskip}{5pt}
\setlength{\belowdisplayskip}{5pt}
\setlength{\abovedisplayshortskip}{5pt}
\setlength{\belowdisplayshortskip}{5pt}
\hypersetup{colorlinks,breaklinks,urlcolor=blue,linkcolor=blue}

\title{\LARGE On regression-adjusted imputation estimators of the average treatment effect}

\author{Zhexiao Lin\thanks{Department of Statistics, University of California, Berkeley, CA 94720, USA; e-mail: {\tt zhexiaolin@berkeley.edu}}~~~and~
Fang Han\thanks{Department of Statistics, University of Washington, Seattle, WA 98195, USA; e-mail: {\tt fanghan@uw.edu}}
}

\date{}

\maketitle

\vspace{-1em}

\begin{abstract}
Imputing missing potential outcomes using an estimated regression function is a natural idea for estimating causal effects. In the literature, estimators that combine imputation and regression adjustments are believed to be comparable to augmented inverse probability weighting. Accordingly, people for a long time conjectured that such estimators, while avoiding directly constructing the weights, are also doubly robust \citep{imbens2004nonparametric,stuart2010matching}. Generalizing an earlier result of the authors \citep{lin2021estimation}, this paper formalizes this conjecture, showing that a large class of regression-adjusted imputation methods are indeed doubly robust for estimating the average treatment effect. In addition, they are provably semiparametrically efficient as long as both the density and regression models are correctly specified. Notable examples of imputation methods covered by our theory include kernel matching, (weighted) nearest neighbor matching, local linear matching, and (honest) random forests.
\end{abstract}

{\bf Keywords}: double robustness, kernel matching, nearest neighbor matching, random forests, double machine learning.

\section{Introduction}

The problem of estimating the average effect of a binary treatment on a scalar outcome under unconfoundedness and overlap conditions has had a long and rich history \citep{rosenbaum1983central,imbens2015causal}. While nowadays a large literature focuses on propensity score-based methods, alternatives that are based on regression \citep{heckman1997matching,heckman1998matching,heckman1998characterizing,hahn1998role,athey2016recursive,wager2018estimation} and matching \citep{rubin1973matching,abadie2006large,abadie2011bias} still receive persistent attention. 

Regression and matching methods 
relate causal inference to the imputation methods prevalent in the statistical missing value literature \citep{rubin2004multiple,tsiatis2006semiparametric,little2019statistical}. Indeed, as Guido Imbens and others (cf. \citet[Section IIIB]{imbens2004nonparametric} and \citet[Page 241]{abadie2006large}) have pointed out, both the regression and matching methods are intrinsically imputing the missing potential outcomes using, e.g., kernel matching, local linear matching, random forests, or the nearest neighbor matching. Accordingly, to be aligned with the missing value terminology, we call both of them the {\it imputation methods}.

Employing imputation methods alone can be either inefficient or lacking precision. This was discussions by \cite{robins1995semiparametric} in the missing value, \citet[Section IIID]{imbens2004nonparametric}  and \cite{abadie2006large} in the causal inference, and \cite{cassel1976some} and \cite{sarndal2003model} in the survey literature. It stimulates a surge in combining imputation methods with different types of adjustments --- including the celebrated augmented inverse probability weighted (AIPW) estimators \citep{robins1994estimation,scharfstein1999adjusting} as well as its much more recent cousin, the double machine learning estimators \citep{chernozhukov2018double}--- partly in order to encourage more efficient and robust estimators. 

This paper is interested in exploring the {\it double robustness} \citep{robins1994estimation,robins1997toward,scharfstein1999adjusting,bang2005doubly,kang2007demystifying} and {\it semiparametric efficiency} properties of the imputation methods when combined with {\it regression adjustments} for {\it correcting the bias}. While being proposed and studied in prominent works \citep{rubin1973use,abadie2011bias}, unlike its counterpart that integrates imputation with weighting --- e.g., propensity score \citep{robins1994estimation,hirano2003efficient} or covariate balancing \citep{chan2016globally,ben2021balancing} --- theoretical results on regression-adjusted imputation methods are extremely scarce. This may be partly explained by the fact that they are fully outcome model driven, and hence it was unclear which part is playing the role of propensity score weighting. 

More specifically, in the literature, people have been long time conjecturing that combining imputation with regression adjustments (for the purpose of bias correction) would yield doubly robust estimators. This was made explicit in, e.g., \citet[Section IIID]{imbens2004nonparametric} that ``the benefit associated with combining methods is made explicit in the notion developed by Robins and Ritov (1997) of double robustness'' as well as \citet[Section 5]{stuart2010matching} that  ``[matching and regression] have been shown to work best in combination... [t]his is similar to the idea of double robustness''. However, a mathematical formulation of double robustness for regression-adjusted imputation methods is still absent in the literature.

In addition to double robustness, statistical efficiency is vital for justifying any developed estimator. In a landmark paper, \cite{heckman1998matching} underpinned theoretical studies of (bias-uncorrected) imputation methods and showed that 
imputation based on covariate kernel matching yields a semiparametrically efficient estimator. Nevertheless,  \cite{heckman1998matching}'s result only focuses on estimating the average treatment effect on the treated (ATT). Later, \cite{abadie2006large,abadie2011bias} studied the limit theorems of NN matching for estimating both the ATT and the average treatment effect (ATE). However, the conveyed message therein is mixed, suggesting that NN matching-based imputation --- no matter bias correction is made or not --- is not semiparametrically efficient in estimating either the ATT or ATE. Except for the aforementioned two special cases, efficiency theory on (regression-adjusted) imputation methods is still largely lacking.

This paper aims to offer a general theory towards demystifying the efficiency and robustness properties of regression-adjusted imputation methods. For imputing the missing potential outcomes, we are concerned with a class of nonparametric regression methods called {\it linear smoothers} \citep{buja1989linear,fan2018local,wasserman2006all}, which include all the aforementioned examples (kernel matching, local linear matching, nearest neighbor matching, and random forests). Building on an earlier result of the authors that focuses on the nearest neighbor matching \citep{lin2021estimation}, the new theory shows:
\begin{itemize}
\item[(P1)] a linear smoother can implicitly give rise to a density ratio estimator;
\item[(P2)] imputation methods with regression adjustments in the form of \cite{rubin1973use} and \cite{abadie2011bias} constitute AIPW estimators;
\item[(P3)] these imputation methods are consistent as long as either the density model or the outcome model is correctly specified, and thus {\it doubly robust};
\item[(P4)] they further constitute asymptotically normal estimators of the ATE with the asymptotic variance attaining the semiparametric efficiency lower bound \citep{hahn1998role} if both the density and outcome models are correctly specified, and are thus {\it semiparametrically efficient};
\item[(P5)] the double machine learning \citep{chernozhukov2018double} versions of regression-adjusted imputations --- those that estimate the imputation function and the corrected bias via sample splitting and cross fitting --- can attain the properties in (P3) and (P4) while weakening some conditions.
\end{itemize}
Our results thus provide necessary theoretical support for using regression-adjusted imputation methods and establish them as useful alternatives to the weighting-based ones.

Notably speaking, the results of this paper are built on an earlier work of the authors \citep{lin2021estimation}, who established the double robustness and semiparametrical efficiency theory for \cite{abadie2011bias}'s NN matching-based ATE estimator by allowing the number of matches to diverge with the sample size. Their Lemma 5.1 reveals that \cite{abadie2011bias}'s bias-corrected NN matching estimator can be formulated as an AIPW one, which stimulates us to explore more cases. This leads to the general theory established in Section \ref{sec:general} and the study of more imputation methods elaborated on in Sections \ref{sec:example} and \ref{sec:RF}. Due to the richness of newly obtained results, we feel compelled to disseminate them to peers by writing a second manuscript.

\vspace{0.2cm}

\noindent {\bf Paper organization.} Section \ref{sec:prelim} introduces necessary notation, the preliminary setup, and those regression-adjusted imputation ATE estimators that will be analyzed in subsequent sections. Section \ref{sec:general} lays out our general theory, with examples provided in Sections \ref{sec:example} and \ref{sec:RF}. Specifically, Section \ref{sec:example} concerns imputation using kernel matching, weighted NN, and local linear matching while Section \ref{sec:RF} is focused on imputing the missing potential outcomes using random forests.

\section{Preliminary}\label{sec:prelim}

In the following, for any integers $n,d\ge 1$, we write $\zahl{n}:= \{1,2,\ldots,n\}$, and $\bR^d$ to represent the $d$-dimensional real space. A set consisting of distinct elements $x_1,\dots,x_n$ is written as either $\{x_1,\dots,x_n\}$ or $\{x_i\}_{i=1}^{n}$, and the corresponding sequence is denoted by $[x_1,\dots,x_n]$ or $[x_i]_{i=1}^{n}$. 


Consider $n$ observations, categorized to two groups, the treated and control, separately with $D_1,\ldots,D_n$ indexing the treatment statuses. More specifically, for each unit $i \in \zahl{n}$, we observe $D_i=1$ if in the treated group and $D_i=0$ if in the control group. Let $n_0:=\sum_{i=1}^n (1-D_i)$ and $n_1:=\sum_{i=1}^n D_i$ be the numbers of control and treated units, respectively. Adopting the Neyman-Rubin potential outcome framework \citep{neyman1923applications,rubin1974estimating}, the unit $i$ has two potential outcomes, $Y_i(1)$ and $Y_i(0)$, but we observe only one of them:
\[
    Y_i = \begin{cases}
        Y_i(0), & \mbox{ if } D_i=0,\\
        Y_i(1), & \mbox{ if } D_i=1.
    \end{cases}
\]
Let $X_i$ represent the pretreatment covariates of the $i$-th unit. 

The data we observe are $[(X_i,D_i,Y_i)]_{i=1}^n$, which are assumed to be independently drawn from the triple $(X,D,Y)$, where $D\in\{0,1\}$ is a binary variable, $X \in \bR^d$, and $Y \in \bR$. Our goal of interest is to estimate the following population ATE, 
\begin{align*}
    \tau := \E\Big[Y_i(1)-Y_i(0)\Big],
\end{align*}
based on $[(X_i, D_i, Y_i)]_{i=1}^n$.

As stated in the introduction section, this paper is interested in studying the imputation-based ATE estimators. To this end, we consider imputing the missing potential outcomes by regressing the data points in the opposite group against it:
\begin{align*}
    \hat{Y}_i^{\rm imp}(0) := \begin{cases}
        Y_i, & \mbox{ if } D_i=0,\\
        \displaystyle\sum_{j:D_j=0} w_{i\leftarrow j} Y_j, & \mbox{ if } D_i=1,     
    \end{cases}  
\end{align*}
and
\begin{align*}
    \hat{Y}_i^{\rm imp}(1) := \begin{cases}
   \displaystyle   \sum_{j:D_j=1} w_{i\leftarrow j} Y_j, & \mbox{ if } D_i=0,\\   
      Y_i, & \mbox{ if } D_i=1.
    \end{cases}    
\end{align*}
Here the $[w_{i\leftarrow j}]_{i,j}$ constitutes the {\it smoothing matrix}, where each entry $w_{i\leftarrow j}$ --- called the {smoothing parameter} --- is learnt from the covariates $X_i$ and those $X_j$'s in the opposite group, i.e., those with $D_j=1-D_i$. 
Nonparametric regressors taking the above form are called the {\it linear smoothers} \citep{buja1989linear}. Note that all imputation methods considered in Sections \ref{sec:example} and \ref{sec:RF}, including the kernel regression and local linear regression estimators \citep{heckman1997matching,heckman1998characterizing,heckman1998matching}, the (weighted) NN regression \citep{abadie2006large,abadie2011bias,lin2021estimation}, and the (honest) random forests \citep{athey2016recursive,wager2018estimation,athey2019estimating}, admit such a form. 

Unfortunately, imputing the missing potential outcomes alone is often not sufficient for attaining efficiency or even merely root-$n$ consistency. To remedy it, we are interested in correcting the bias via regression adjustments as proposed in \cite{rubin1973use} and \cite{abadie2011bias}. In detail, let's write 
\[
\hat{\mu}_0(x)~~~{\rm and}~~~\hat{\mu}_1(x) 
\]
to represent the mappings from $\bR^d$ to $\bR$ that estimate the conditional means of the outcomes
\[
\mu_0(x) := \E [Y \given X=x,D=0]~~ {\rm and}~~ \mu_1(x) := \E [Y \given X=x,D=1], 
\]
respectively. Of note, in the literature, $\hat{\mu}_0(x)$ and $\hat{\mu}_1(x)$ may differ from the regression imputation methods used in calculating $\hat{Y}_i^{\rm imp}(0)$'s and $\hat{Y}_i^{\rm imp}(1)$'s. For example, \cite{abadie2011bias} used NN regression to impute the missing potential outcomes, but series regressions to correct the bias.

We are then ready to define the regression-adjusted imputed values as
\begin{align*}
    \hat{Y}_i(0) := \begin{cases}
        Y_i, & \mbox{ if } D_i=0,\\
        \displaystyle \sum_{j:D_j=0} w_{i\leftarrow j} (Y_j + \hat{\mu}_0(X_i) - \hat{\mu}_0(X_j)), & \mbox{ if } D_i=1,     
    \end{cases}  
\end{align*}
and
\begin{align*}
    \hat{Y}_i(1) := \begin{cases}
    \displaystyle  \sum_{j:D_j=1} w_{i\leftarrow j} (Y_j + \hat{\mu}_1(X_i) - \hat{\mu}_1(X_j)), & \mbox{ if } D_i=0,\\   
      Y_i, & \mbox{ if } D_i=1.
    \end{cases}    
\end{align*}
The according regression-adjusted imputation-based ATE estimator is 
\begin{align*}
   \hat\tau_w  := \frac{1}{n} \sum_{i=1}^n \Big[\hat{Y}_i(1) -\hat{Y}_i(0)\Big].
\end{align*}

The estimator $\hat\tau_w$ has the appealing property of being fully outcome model driven, i.e.,  both the imputation and the bias correction steps are regression-based. It is conceptually easy to parse. The first goal of this paper is to show that $\hat\tau_w$, while avoiding directly modeling the propensity score, can be formulated as an AIPW one, and the regression imputation is intrinsically estimating the propensity score. The second goal of this paper is to establish a general theory, formulating conditions under which $\hat\tau_w$ is doubly robust and semiparametrically efficient.  Examples covered by our general theory shall occupy the rest two sections of this paper.

\section{The general theory}\label{sec:general}

This section lays out the general theory on the regression-adjusted imputation estimator $\hat\tau_w$. Recall the conditional mean estimators $\hat\mu_0$ and $\hat\mu_1$ introduced in the last section. Let the residuals from fitting the outcome models be
\[
\hat{R}_i := Y_i - \hat{\mu}_{D_i}(X_i), ~~i\in\zahl{n},
\]
and the estimator based on the outcome models be
\[
\hat{\tau}^{\rm reg}:= n^{-1} \sum_{i=1}^n \Big[\hat{\mu}_1(X_i) - \hat{\mu}_0(X_i)\Big].
\]

\subsection{A key lemma}

Results in Section \ref{sec:general} are all built on the following key lemma, which gives an AIPW formulation of the ATE estimator $\hat\tau_w$.

\begin{lemma}\label{lemma:mbc}  The regression-adjusted imputation estimator $\hat\tau_w$ can be rewritten as
\begin{align}\label{eq:mbc}
    \hat\tau_w  = \hat{\tau}^{\rm reg} + \frac{1}{n} \sum_{i=1,D_i=1}^n \Big(1 + \sum_{j:D_j=1-D_i} w_{j\leftarrow i}\Big) \hat{R}_i- \frac{1}{n} \sum_{i=1,D_i=0}^n \Big(1 + \sum_{j:D_j=1-D_i} w_{j\leftarrow i}\Big) \hat{R}_i  \notag\\
    + \frac{1}{n} \sum_{i=1}^n (2D_i-1) \Big(1 - \sum_{j:D_j=1-D_i} w_{i\leftarrow j} \Big) \hat{\mu}_{1-D_i}(X_i).
\end{align}
\end{lemma}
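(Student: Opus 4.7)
The plan is to establish the identity by direct algebraic manipulation: I would start from the definition of $\hat\tau_w$, substitute the decomposition $Y_j = \hat\mu_{D_j}(X_j) + \hat{R}_j$ into the imputation formulas, regroup into four categories of terms ($\hat\mu_1$ terms, $\hat\mu_0$ terms, own-residual terms, imputed-residual terms), and then perform an index swap in the double sums to produce the $w_{j\leftarrow i}$ weights appearing in the target expression.

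Concretely, I would first write $n\hat\tau_w = \sum_{i:D_i=1}[Y_i - \hat{Y}_i(0)] + \sum_{i:D_i=0}[\hat{Y}_i(1)- Y_i]$ and observe that for $i$ with $D_i=1$ the bias-corrected imputation simplifies to
\begin{align*}
\hat{Y}_i(0) = \Big(\sum_{j:D_j=0} w_{i\leftarrow j}\Big) \hat\mu_0(X_i) + \sum_{j:D_j=0} w_{i\leftarrow j}\hat{R}_j,
\end{align*}
because the $\hat\mu_0(X_j)$ inside the bias correction cancels with the $\hat\mu_0(X_j)$ part of $Y_j$; the analogous identity holds for $\hat{Y}_i(1)$ on the control side. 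Substituting $Y_i = \hat\mu_{D_i}(X_i) + \hat{R}_i$ on the treated side and $Y_i = \hat\mu_0(X_i) + \hat{R}_i$ on the control side, each summand expands into $\hat\mu_{D_i}$-, $\hat\mu_{1-D_i}$-, $\hat{R}_i$-, and $w_{i\leftarrow j}\hat{R}_j$-contributions that I would collect separately.

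Next, I would handle the residual terms: the direct contributions yield $\sum_{i:D_i=1}\hat{R}_i - \sum_{i:D_i=0}\hat{R}_i$, and the double sums $\sum_{i:D_i=1}\sum_{j:D_j=0} w_{i\leftarrow j}\hat{R}_j$ and $\sum_{i:D_i=0}\sum_{j:D_j=1} w_{i\leftarrow j}\hat{R}_j$ get reindexed by swapping $i\leftrightarrow j$, converting them into sums over $i$ in a fixed treatment group with inner sums $\sum_{j:D_j=1-D_i} w_{j\leftarrow i}$. Combining then gives exactly the two residual terms in \eqref{eq:mbc}. For the $\hat\mu$ contributions, I would add and subtract $\hat\mu_0(X_i)$ for each treated $i$ and $\hat\mu_1(X_i)$ for each control $i$ to manufacture the full sum $\sum_{i=1}^n[\hat\mu_1(X_i)-\hat\mu_0(X_i)] = n\hat\tau^{\rm reg}$; the remainders neatly assemble into $\sum_{i:D_i=1}(1-\sum_{j:D_j=0}w_{i\leftarrow j})\hat\mu_0(X_i) - \sum_{i:D_i=0}(1-\sum_{j:D_j=1}w_{i\leftarrow j})\hat\mu_1(X_i)$, which is precisely the last display of \eqref{eq:mbc} written with the factor $2D_i-1$.

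This is a purely algebraic identity with no probabilistic content, so there is no real obstacle beyond careful bookkeeping of indices. The one subtlety worth isolating is the index-swap step, since the smoothing weights $w_{i\leftarrow j}$ are in general asymmetric, and one must clearly distinguish ``$i$ imputing from $j$'' (weight $w_{i\leftarrow j}$) from ``$j$ imputing from $i$'' (weight $w_{j\leftarrow i}$); this asymmetry is exactly what produces the $w_{j\leftarrow i}$ factors attached to $\hat{R}_i$ in \eqref{eq:mbc}, and is also what will later enable the AIPW-style interpretation where $\sum_j w_{j\leftarrow i}$ plays the role of an implicit density-ratio (or inverse-probability) weight.
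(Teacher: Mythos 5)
Your proposal is correct and follows essentially the same route as the paper's own proof: expand the definition, use the cancellation $Y_j + \hat{\mu}_{1-D_i}(X_i) - \hat{\mu}_{1-D_i}(X_j) = \hat{R}_j + \hat{\mu}_{1-D_i}(X_i)$, swap indices in the double residual sums to turn $w_{i\leftarrow j}$ into $w_{j\leftarrow i}$, and regroup the $\hat\mu$ terms into $\hat\tau^{\rm reg}$ plus the unnormalized-weight bias term. Your emphasis on the asymmetry of the smoothing weights as the source of the $w_{j\leftarrow i}$ factors is exactly the right point to isolate.
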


The sum of the first three terms in \eqref{eq:mbc} has the same form as an AIPW estimator that was studied in \cite{scharfstein1999adjusting} and \cite{bang2005doubly}, among many others.  The last is an additional bias term that was induced by those unnormalized $w_{i\leftarrow j}$'s such that
\[
\sum_{j:D_j=1-D_i} w_{i\leftarrow j} \ne 1.
\]
Accordingly, Equation \eqref{eq:mbc} favors a normalized smoothing matrix such that $\sum_{j:D_j=1-D_i} w_{i\leftarrow j}$ adds up to 1. This is an observation interestingly related to the classic arguments in nonparametric regressions; cf. \citet[Section 3.4]{fan2018local} and \citet[Remark 5.23]{wasserman2006all}. 

Note that the relation between regression-adjusted imputation and AIPW estimators was for the first time disclosed in \cite{lin2021estimation}, stated as Lemma 5.1 therein and with a focus on NN regression-based imputation. Lemma \ref{lemma:mbc}, on the other hand, delivers the general form that applies to an arbitrary linear smoother.

\subsection{Double robustness}

For presenting the general theory, let us first introduce some additional notation. In the sequel, for any two real sequences $\{a_n\}$ and $\{b_n\}$, we write $a_n = O(b_n)$ if $\lvert a_n \rvert / \lvert b_n \rvert $ is bounded and $a_n = o(b_n)$ if $\lvert a_n \rvert / \lvert b_n \rvert  \to 0$. We use $\stackrel{\sf d}{\longrightarrow}$ and $\stackrel{\sf p}{\longrightarrow}$ to denote convergence in distribution and in probability, respectively. For any sequence of random variables $[X_n]$, write $X_n = o_\P(1)$ if $X_n \stackrel{\sf p}{\longrightarrow} 0$ and $X_n = O_\P(1)$ if $X_n$ is bounded in probability. For any vector $x$, we use $\lVert x \rVert$ to denote its Euclidean norm. For any $0<p\le \infty$ and function $f$, let $\lVert f(Z) \rVert_p$, or simply $\lVert f \rVert_p$ if no confusion is possible, to represent $(\int \lvert f(\omega) \rvert^p \d \P_Z(\omega))^{1/p}$, where $\P_Z$ represents the law of a certain random variable $Z$. 

In the following, let $U_\omega := Y(\omega) - \mu_{\omega}(X)$ for $\omega \in \{0,1\}$ be the residuals of $Y(0)$ and $Y(1)$ projected on $X$ and let $\cS$ be the support of $X$. 
The first set of assumptions concerns the data generating distribution.

\begin{assumption}  \phantomsection \label{asp:dr} 
    \begin{enumerate}[itemsep=-.5ex,label=(\roman*)]
      \item\label{asp:dr-1} For almost all $x \in \cS$, $D$ is independent of $(Y(0),Y(1))$ conditional on $X=x$, and there exists some constant $\eta > 0$ such that $\eta < \P(D=1 \given X=x) < 1-\eta$.
      \item $[(X_i,D_i,Y_i)]_{i=1}^n$ are independent and identically distributed (i.i.d.) following the joint distribution of $(X,D,Y)$.
      \item $\E [U^2_\omega \given X=x] $ is uniformly bounded for almost all $x \in \cS$ and $\omega \in \{0,1\}$.
      \item $\E [\mu^2_\omega(X)]$ is bounded for $\omega \in \{0,1\}$.
    \end{enumerate}
\end{assumption}

Assumption~\ref{asp:dr}\ref{asp:dr-1} is the unconfoundedness and overlap assumptions commonly assumed in the literature. In particular, $e(x):=\P(D=1\given X=x)$ is the propensity score \citep{rosenbaum1983central}. The rest conditions in Assumption \ref{asp:dr} constitute standard i.i.d. assumptions and the moment assumptions on the residuals.

The next set of assumptions concerns the smoothing matrix used in the imputation step.

\begin{assumption} \phantomsection \label{asp:weight} 
    \begin{enumerate}[itemsep=-.5ex,label=(\roman*)]
        \item\label{asp:weight-1} Let $\pi: \zahl{n} \to \zahl{n}$ be any permutation. For samples $[(X_i,D_i,Y_i)]_{i=1}^n$ given, let $[w_{i\leftarrow j}]_{D_i+D_j=1}$ be the weights constructed by $[(X_i,D_i,Y_i)]_{i=1}^n$, and $[w^\pi_{i\leftarrow j}]_{D_i+D_j=1}$ be the weights constructed by $[(X_{\pi(i)},D_{\pi(i)},Y_{\pi(i)})]_{i=1}^n$. Then for any $i,j \in \zahl{n}$ such that $D_i+D_j=1$ and any permutation $\pi$, we have $w_{i\leftarrow j} = w^\pi_{\pi(i)\leftarrow \pi(j)}$.
        \item \label{asp:weight-2} The weights satisfy
        \begin{align*}
            \lim_{n \to \infty }\E \Big[ 
                \sum_{j:D_j=1-D_1} w_{1\leftarrow j} - 1
            \Big]^2 = 0.
        \end{align*}
    \end{enumerate}
\end{assumption}

Assumption~\ref{asp:weight} is to our knowledge new and is added for aiding the general theory to be presented later. There Assumption~\ref{asp:weight}\ref{asp:weight-1} ensures that the regression smoothing matrix is invariant to the feeding order of sample points, and Assumption~\ref{asp:weight}\ref{asp:weight-2} ensures that the bias term in Lemma~\ref{lemma:mbc} is asymptotically ignorable, which will be automatically satisfied if the smoother preserves the constant curve \citep[Remark 5.23]{wasserman2006all}. 

The next set of assumptions quantifies estimation accuracy of the ``density models''.

\begin{assumption} \phantomsection \label{asp:dr1}
    \begin{enumerate}[itemsep=-.5ex,label=(\roman*)]
        \item\label{asp:dr1,o} For $\omega \in \{0,1\}$, there exists a deterministic function $\bar{\mu}_\omega(\cdot):\bR^d \to \bR$  such that $\E [\bar{\mu}^2_\omega(X)]$ is bounded and the estimator $\hat{\mu}_\omega(x)$ satisfies
        \[
            \lVert \hat{\mu}_\omega - \bar{\mu}_\omega \rVert_\infty = o_\P(1).
        \]
        \item\label{asp:dr1,w} The weights satisfy
        \begin{align*}
            \lim_{n \to \infty }\E \Big[ 
                \sum_{j:D_j=1-D_1} w_{j\leftarrow 1} - \Big(D_1 \frac{1-e(X_1)}{e(X_1)} + (1-D_1) \frac{e(X_1)}{1-e(X_1)} \Big)
            \Big]^2 = 0.
        \end{align*}
    \end{enumerate}
  \end{assumption}
  
Assumption \ref{asp:dr1} allows for outcome model misspecification. Here Assumption \ref{asp:dr1}\ref{asp:dr1,o} is a regression misspecification assumption that is Assumption 5.3 in \cite{lin2021estimation}. Assumption \ref{asp:dr1}\ref{asp:dr1,w} is the key assumption that relates regression imputation/linear smoothers to the estimation of density ratios, in the form of $(1-e(x))/e(x)$ and its inverse; in Sections \ref{sec:example} and \ref{sec:RF} we will verify its validity for a variety of regression imputation methods.
  
 In parallel to Assumption \ref{asp:dr1}, the following conditions quantify estimation accuracy of the ``outcome models''. 
 
 \begin{assumption} \phantomsection \label{asp:dr2}
    \begin{enumerate}[itemsep=-.5ex,label=(\roman*)]
        \item\label{asp:dr2,o} For $\omega \in \{0,1\}$, the estimator $\hat{\mu}_\omega(x)$ satisfies
        \[
            \lVert \hat{\mu}_\omega - \mu_\omega \rVert_\infty = o_\P(1).
        \]
        \item\label{asp:dr2,w} The weights $[w_{1\leftarrow j}]_{D_j=1-D_1}$ are constructed by $[(X_i,D_i)]_{i=1}^n$ only without using the outcome information $[Y_i]_{i=1}^n$. 
        \item\label{asp:dr2,w2}  The weights satisfy
        \begin{align*}
            \E \Big[ \Big\lvert \sum_{j:D_j=1-D_1} w_{j\leftarrow 1} \Big\rvert \Big] = O(1).
        \end{align*}
    \end{enumerate}
\end{assumption}

Assumption \ref{asp:dr2} allows for density model misspecification. Here Assumption \ref{asp:dr2}\ref{asp:dr2,o} is Assumption 5.4 in \cite{lin2021estimation}; \cite{chen2015optimal} and \cite{chen2018optimal} verified such conditions for various nonparametric regressors. Assumption~\ref{asp:dr2}\ref{asp:dr2,w} ensures that the responses are not used in the construction of weights, and is satisfied by all examples to be introduced in Sections \ref{sec:example} and \ref{sec:RF}. This assumption is also related to the sample splitting procedures used in the context of double machine learning \citep{chernozhukov2018double} and honest random forests \citep{wager2018estimation}, shown to help avoid overfitting. Given Assumptions \ref{asp:dr} and \ref{asp:weight}, Assumption~\ref{asp:dr2}\ref{asp:dr2,w2} holds automatically as long as all the weights $w_{i\leftarrow j}$'s are nonnegative, or when Assumption~\ref{asp:dr1}\ref{asp:dr1,w} holds.
We would also like to highlight that Assumption~\ref{asp:dr2}\ref{asp:dr2,w2} is only needed for proving double robustness properties.

With the above assumptions, we are now ready to formalize the double robustness property of the regression-adjusted imputation estimator $\hat\tau_w$.

\begin{theorem}[Double robustness of $\hat\tau_w$]  \label{thm:dr} 
Suppose Assumptions \ref{asp:dr} and \ref{asp:weight} hold, and either Assumption~\ref{asp:dr1} or Assumption~\ref{asp:dr2} is true. We then have
\begin{align*}
\hat\tau_w  - \tau \stackrel{\sf p}{\longrightarrow} 0.
\end{align*}
\end{theorem}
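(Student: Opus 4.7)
The plan is to apply Lemma~\ref{lemma:mbc} to express
\[
\hat\tau_w = \hat\tau^{\rm reg} + T_1 - T_0 + T_B,
\]
where $T_\omega$ collects the weighted residual terms for group $\omega$ and
$T_B := n^{-1}\sum_i(2D_i-1)(1-\sum_{j:D_j=1-D_i}w_{i\leftarrow j})\hat\mu_{1-D_i}(X_i)$
is the bias arising from unnormalized weights. First I would show $T_B=o_\P(1)$: by the permutation invariance in Assumption~\ref{asp:weight}\ref{asp:weight-1},
\[
\E|T_B|\le\E\bigl|\bigl(1-{\textstyle\sum_{j:D_j=1-D_1}}w_{1\leftarrow j}\bigr)\hat\mu_{1-D_1}(X_1)\bigr|,
\]
and Cauchy--Schwarz combined with Assumption~\ref{asp:weight}\ref{asp:weight-2} and the $L^2$-boundedness of $\hat\mu_\omega(X)$---deduced on the high-probability event that $\|\hat\mu_\omega-\bar\mu_\omega\|_\infty\le 1$ (under Assumption~\ref{asp:dr1}) or $\|\hat\mu_\omega-\mu_\omega\|_\infty\le 1$ (under Assumption~\ref{asp:dr2}) together with Assumption~\ref{asp:dr}(iv)---closes this step.

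For Case~1 (Assumption~\ref{asp:dr1}; density model correct), write $c_i := 1+\sum_{j:D_j=0}w_{j\leftarrow i}$ and $c_i^\star := 1/e(X_i)$ for units with $D_i=1$. Assumption~\ref{asp:dr1}\ref{asp:dr1,w} together with permutation invariance gives $\E[(c_i-c_i^\star)^2]\to 0$ for every~$i$. Splitting
\[
T_1 = n^{-1}\sum_{i:D_i=1}c_i^\star\hat R_i + n^{-1}\sum_{i:D_i=1}(c_i-c_i^\star)\hat R_i,
\]
Cauchy--Schwarz controls the second sum by $\sqrt{\E[(c_1-c_1^\star)^2]}\sqrt{\E[\hat R_1^2]}\to 0$, while the first is an oracle AIPW expression: overlap and $\|\hat\mu_1-\bar\mu_1\|_\infty=o_\P(1)$ allow substituting $\bar\mu_1$ for $\hat\mu_1$, and the LLN limit equals $\E[(D/e(X))(Y-\bar\mu_1(X))] = \E[Y(1)-\bar\mu_1(X)]$ by unconfoundedness and iterated expectation. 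Symmetrically $T_0\stackrel{\sf p}{\longrightarrow}\E[Y(0)-\bar\mu_0(X)]$ and $\hat\tau^{\rm reg}\stackrel{\sf p}{\longrightarrow}\E[\bar\mu_1(X)-\bar\mu_0(X)]$; adding these cancels the $\bar\mu_\omega$'s and leaves~$\tau$.

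For Case~2 (Assumption~\ref{asp:dr2}; outcome model correct), $\hat\tau^{\rm reg}\stackrel{\sf p}{\longrightarrow}\tau$ follows from uniform consistency of $\hat\mu_\omega$, the LLN, and unconfoundedness. It remains to show $T_1=o_\P(1)$ (the proof for $T_0$ being symmetric). Decompose $\hat R_i = (Y_i-\mu_{D_i}(X_i))+(\mu_{D_i}(X_i)-\hat\mu_{D_i}(X_i))$: the second contribution is bounded by $\|\hat\mu_1-\mu_1\|_\infty\cdot n^{-1}\sum_{i:D_i=1}|c_i|$, which is $o_\P(1)$ via Assumption~\ref{asp:dr2}\ref{asp:dr2,w2}, exchangeability, and Markov. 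For the mean-zero part $n^{-1}\sum_{i:D_i=1}c_i(Y_i-\mu_1(X_i))$, Assumption~\ref{asp:dr2}\ref{asp:dr2,w} makes $c_i$ measurable with respect to $(X,D)$, so unconfoundedness gives $\E[Y_i-\mu_1(X_i)\mid X,D,D_i=1]=0$ with conditional independence across~$i$.

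The main obstacle is this last mean-zero sum: a direct conditional-variance calculation demands $n^{-1}\E[c_1^2]\to 0$, while Assumption~\ref{asp:dr2}\ref{asp:dr2,w2} supplies only an $L^1$ bound. I would handle it by truncation: set $c_i^M := c_i\,\mathbbm{1}\{|c_i|\le M\}$, so that for fixed $M$ the bounded piece $n^{-1}\sum_{i:D_i=1}c_i^M(Y_i-\mu_1(X_i))$ is $O_\P(M/\sqrt n)$ via the conditional variance bound and Assumption~\ref{asp:dr}(iii), while the tail $n^{-1}\sum_{i:D_i=1}(c_i-c_i^M)(Y_i-\mu_1(X_i))$ has $L^1$-norm at most a constant multiple of $\E[|c_1|\mathbbm{1}\{|c_1|>M\}]$ (by exchangeability, Fubini, and the bounded conditional moment of the residual), which tends to $0$ as $M\to\infty$ by integrability of $|c_1|$. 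Sending $n\to\infty$ before $M\to\infty$ yields $T_1=o_\P(1)$ and completes both cases.
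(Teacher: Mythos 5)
Your proposal follows the same route as the paper's proof: start from Lemma~\ref{lemma:mbc}, kill the unnormalized-weight bias term via Assumption~\ref{asp:weight}\ref{asp:weight-2} and Cauchy--Schwarz after splitting off the deterministic limit $\bar\mu_\omega$ (resp.\ $\mu_\omega$), and then in Case~1 replace $1+\sum_j w_{j\leftarrow i}$ by $1/e(X_i)$ using Assumption~\ref{asp:dr1}\ref{asp:dr1,w} plus Cauchy--Schwarz so that the remainder is an oracle Horvitz--Thompson/AIPW expression amenable to the law of large numbers, while in Case~2 reduce everything to the regression estimator plus a conditionally centered residual sum. All of that matches the paper step for step (the paper's Part~I handles your two pieces of $T_1$ as its second and third displayed terms, and its fifth and sixth terms are your oracle AIPW limit).

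The one place you diverge is the conditionally mean-zero sum $n^{-1}\sum_{i:D_i=1}c_i(Y_i-\mu_1(X_i))$ in Case~2, and your truncation argument has a gap there. The tail bound $\E[|c_1|\mathbbm{1}\{|c_1|>M\}]\to 0$ as $M\to\infty$ is claimed ``by integrability of $|c_1|$,'' but $c_1=1+\sum_{j:D_j=1-D_1}w_{j\leftarrow 1}$ is a triangular array indexed by $n$, and your scheme of sending $n\to\infty$ before $M\to\infty$ requires $\lim_{M\to\infty}\limsup_{n\to\infty}\E\bigl[|c_1^{(n)}|\mathbbm{1}\{|c_1^{(n)}|>M\}\bigr]=0$, i.e.\ (asymptotic) uniform integrability. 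Assumption~\ref{asp:dr2}\ref{asp:dr2,w2} supplies only $L^1$-boundedness, which does not imply this (take $c_1^{(n)}=n$ with probability $1/n$ and $0$ otherwise). So the tail does not close under the stated hypotheses. To your credit, you have isolated exactly the delicate point: the paper itself disposes of this term with the same two observations you make (conditional mean zero given $(\mathbf X,\mathbf D)$ and an $O(1)$ bound on the $L^1$ norm) followed by a one-line appeal to ``the weak law of large numbers,'' without the second-moment or uniform-integrability control that a rigorous degenerate-convergence argument would need; in the concrete examples the weights are nonnegative with $L^2$-controlled column sums, which restores the missing uniform integrability, but neither your truncation as written nor the paper's one-liner derives the conclusion from Assumption~\ref{asp:dr2}\ref{asp:dr2,w2} alone.
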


Theorem \ref{thm:dr} unveils an interesting phenomenon that, although regression-adjusted imputation methods are {\it fully outcome model driven}, they are doubly robust and an intrinsic statistic coming from imputation captures the role of the propensity score; cf. Assumption \ref{asp:dr1}\ref{asp:dr1,w}. To the authors' knowledge, both the missing value and causal inference literature is largely silent about this phenomena. The most related result to Theorem \ref{thm:dr} resides in simple parametric models.

In detail, the fact that ordinary least square (OLS) is intrinsically a weighted estimator is very well known; cf. \cite{angrist2009mostly} and \cite{imbens2015matching}. In two very interesting papers, \citet[Section 3]{robins2007comment} and \cite{kline2011oaxaca} showed that OLS is also able to offer double robustness guarantee for estimating either a population mean with incomplete data or the ATT. This was developed more sophistically in a recent work of \cite{chattopadhyay2021implied} and other interesting research along this line includes \cite{guo2021generalized} and \cite{cohen2020no}. In the high level, they all bear a similar flavor to Theorem \ref{thm:dr} that a regression/imputation approach, without designing a set of weights (propensity score-based or not) on purpose, automatically satisfies the double robustness property. The difference with ours, on the other hand, is self-explanatory.


\subsection{Semiparametric efficiency}

This section establishes the semiparametric efficiency theory of $\hat\tau_w$. To this end, it appears that we have to put more assumptions on the moments of $U_\omega$, the regression adjustments $\hat\mu_w(\cdot)$, and the smoothing parameters $w_{i\leftarrow j}$'s.

\begin{assumption}  \phantomsection \label{asp:se1} 
\begin{enumerate}[itemsep=-.5ex,label=(\roman*)]
    \item $\E [U^2_\omega \given X=x]$ is uniformly bounded away from zero for almost all $x \in \cS$ and $\omega \in \{0,1\}$.
    \item There exists some constant $\kappa>0$ such that $\E [\lvert U_\omega \rvert ^{2+\kappa} \given X=x]$ is uniformly bounded for almost all $x \in \cS$ and $\omega \in \{0,1\}$.
\end{enumerate}
\end{assumption}

\begin{assumption}  \phantomsection \label{asp:se2} 
There exists a positive integer $k$ such that
\begin{enumerate}[itemsep=-.5ex,label=(\roman*)]
    \item\label{asp:se2,o1} $\max_{t \in \Lambda_{k}} \lVert \partial^t \mu_{\omega} \rVert_\infty$ is bounded, where for any positive integer $k$, $\Lambda_k$ is the set of all $d$-dimensional vectors of nonnegative integers $t=(t_1,\ldots,t_d)$ such that $\sum_{i=1}^d t_i = k$;
    \item\label{asp:se2,o2} For $\omega \in \{0,1\}$, the estimator $\hat{\mu}_\omega(x)$ satisfies
    \[
    \max_{t \in \Lambda_{k}} \lVert \partial^t \hat{\mu}_{\omega} \rVert_\infty = O_\P(1)~~~{\rm and}~~~ 
    \max_{t \in \Lambda_\ell} \lVert \partial^t \hat{\mu}_{\omega} - \partial^t \mu_{\omega} \rVert_\infty = O_\P(n^{-\gamma_\ell}) ~~\mbox{\rm for all}~~ \ell \in \zahl{k-1},
    \]
    with some constants $\gamma_\ell$'s for $\ell=1,2,\ldots,k-1$;
    \item\label{asp:se2,d} The discrepancy satisfies
    \begin{align*}
        & \E \Big[ \sum_{j:D_j=1-D_1} \lvert w_{1\leftarrow j} \rvert \cdot \lVert X_j - X_1 \rVert^k \Big] = o(n^{-1/2}),\\
        & \E \Big[ \sum_{j:D_j=1-D_1} \lvert w_{1\leftarrow j} \rvert \cdot \lVert X_j - X_1 \rVert^\ell \Big] = o(n^{-1/2+ \gamma_\ell} )  ~\mbox{\rm for all}~ \ell \in \zahl{k-1};
    \end{align*}
    \item\label{asp:se2,w1} The weights satisfy
    \begin{align*}
        \E \Big[ 
            \sum_{j:D_j=1-D_1} w_{1\leftarrow j} - 1
        \Big]^2 = o(n^{-1}).
    \end{align*}
\end{enumerate}
\end{assumption}

Assumptions~\ref{asp:se1} and \ref{asp:se2}\ref{asp:se2,o1}-\ref{asp:se2,o2} are Assumptions 5.6 and 5.7 in \cite{lin2021estimation}; check \cite{abadie2011bias} and \cite{chen2018optimal} for results on verifying these requirements. Assumption~\ref{asp:se2}\ref{asp:se2,d} assumes that the linear smoother used in imputing the missing values is a local method, i.e., it will put larger values on the closer ones and smaller values on the farther ones. Lastly, Assumption \ref{asp:se2}\ref{asp:se2,w1}, as a counterpart of Assumption \ref{asp:weight}\ref{asp:weight-2}, requires the bias term in \eqref{eq:mbc} to be root-$n$ ignorable.

We then introduce the semiparametric efficiency lower bound for estimating the ATE  \citep{hahn1998role},
\begin{align}\label{eq:hahn}
  \sigma^2:= \E \Big[\mu_1(X) - \mu_0(X) + \frac{D(Y-\mu_1(X))}{e(X)} - \frac{(1-D)(Y-\mu_0(X))}{1-e(X)} - \tau \Big]^2.
\end{align}
The following theorem then shows that the asymptotic variance of $\hat\tau_w$ can attain $\sigma^2$.

\begin{theorem}[Semiparametric efficiency of $\hat\tau_w$]\label{thm:mbc}
Suppose Assumptions~\ref{asp:dr}-\ref{asp:se2} hold. We then have
    \begin{align*}
        \sqrt{n} (\hat\tau_w  - \tau) \stackrel{\sf d}{\longrightarrow} N(0,\sigma^2).
    \end{align*}
    In addition, the variance estimator 
  \begin{align*}
  \hat{\sigma}^2:= \frac{1}{n} \sum_{i=1}^n \Big[\hat{\mu}_1(X_i) - \hat{\mu}_0(X_i) + (2D_i-1)\Big(1 + \sum_{j:D_j=1-D_i} w_{j\leftarrow i}\Big) \hat{R}_i - \hat\tau_w \Big]^2
\end{align*}
is a consistent estimator of  $\sigma^2$ in \eqref{eq:hahn}. 
\end{theorem}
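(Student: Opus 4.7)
The plan is to use Lemma~\ref{lemma:mbc} to cast $\hat\tau_w$ as an AIPW-type statistic and then establish the asymptotic linearization
\begin{align*}
\sqrt{n}(\hat\tau_w - \tau) = \frac{1}{\sqrt{n}}\sum_{i=1}^n \psi_i + o_\P(1),
\end{align*}
where $\psi_i := \mu_1(X_i) - \mu_0(X_i) + D_i U_{1,i}/e(X_i) - (1-D_i) U_{0,i}/(1-e(X_i)) - \tau$ is the Hahn efficient influence function. Since $\{\psi_i\}$ are i.i.d.\ mean-zero with variance $\sigma^2$ given in \eqref{eq:hahn} and $(2+\kappa)$-finite moments under Assumptions~\ref{asp:dr}\ref{asp:dr-1} and \ref{asp:se1}, the distributional limit $N(0,\sigma^2)$ follows from Lyapunov's CLT.

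Starting from Lemma~\ref{lemma:mbc}, I would first peel off the trailing bias term
\begin{align*}
C_n := \frac{1}{n}\sum_{i=1}^n (2D_i-1)\Big(1 - \sum_{j:D_j=1-D_i} w_{i\leftarrow j}\Big) \hat\mu_{1-D_i}(X_i),
\end{align*}
since Cauchy--Schwarz combined with the $o(n^{-1})$ rate in Assumption~\ref{asp:se2}\ref{asp:se2,w1} and the $L^2$-boundedness of $\hat\mu_\omega$ coming from Assumption~\ref{asp:se2}\ref{asp:se2,o1}-\ref{asp:se2,o2} yields $C_n = o_\P(n^{-1/2})$ directly. What remains is $\hat\tau^{\rm reg}$ plus the two weighted-residual sums from \eqref{eq:mbc}.

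The main step is to replace the implicit density-ratio estimator and the estimated regression function by their population targets. Decomposing $\hat R_i = U_{D_i,i} - (\hat\mu_{D_i}(X_i) - \mu_{D_i}(X_i))$ and rearranging against $\hat\tau^{\rm reg}$, the remainder separates into (a) the oracle AIPW sum $n^{-1}\sum_i \psi_i$, (b) a weight-discrepancy cross term of the form $n^{-1}\sum_i \bigl(\text{weight factor}_i - e(X_i)^{-1}\text{ or }(1-e(X_i))^{-1}\bigr) U_{D_i,i}$, and (c) an imputation-bias double sum of the form $n^{-1}\sum_i \sum_j w_{i\leftarrow j}\bigl(\hat\mu_{1-D_i}(X_i) - \hat\mu_{1-D_i}(X_j)\bigr)$. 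Part (b) is handled by Assumption~\ref{asp:dr1}\ref{asp:dr1,w} for $L^2$-convergence of the weight sums to the density ratios, together with Assumption~\ref{asp:dr2}\ref{asp:dr2,w} so that the cross-term variance factorizes into $o(1)\cdot O(n^{-1}) = o(n^{-1})$. Part (c) is the delicate bias cancellation: a $k$-th order Taylor expansion of $\hat\mu_{1-D_i}$ around $X_i$, legitimate by the derivative bounds in Assumption~\ref{asp:se2}\ref{asp:se2,o1}-\ref{asp:se2,o2}, reduces its control to
\begin{align*}
\sum_{\ell=1}^{k-1} \max_{t\in\Lambda_\ell}\lVert\partial^t\hat\mu_\omega - \partial^t\mu_\omega\rVert_\infty \cdot \E\Big[\sum_j|w_{1\leftarrow j}|\,\lVert X_j - X_1\rVert^\ell\Big]
\end{align*}
plus an order-$k$ remainder, and Assumption~\ref{asp:se2}\ref{asp:se2,d} is stated precisely so that each summand is $o(n^{-1/2})$ given the rates $\gamma_\ell$ in Assumption~\ref{asp:se2}\ref{asp:se2,o2}.

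For the variance estimator $\hat\sigma^2$, the summand $\hat\mu_1(X_i) - \hat\mu_0(X_i) + (2D_i-1)(1+\sum_{j:D_j\ne D_i} w_{j\leftarrow i})\hat R_i - \hat\tau_w$ can be shown to be $L^2$-close to $\psi_i$ via the same oracle-replacement arguments used above. A triangular-array law of large numbers together with the $(2+\kappa)$-moment bound in Assumption~\ref{asp:se1}(ii) then yields $\hat\sigma^2 \stackrel{\sf p}{\longrightarrow} \sigma^2$. The main obstacle, I anticipate, is Part (c) above: the careful matching of Taylor orders $\ell$ to the weighted-moment discrepancies in Assumption~\ref{asp:se2}\ref{asp:se2,d} and the rates $\gamma_\ell$ in Assumption~\ref{asp:se2}\ref{asp:se2,o2} is where the semiparametric-efficiency gain over Theorem~\ref{thm:dr} is earned, and it forms the technical heart of the proof.
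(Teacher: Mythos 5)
Your proposal is correct in substance but takes a genuinely different (and in one respect cleaner) route than the paper's. The paper does not linearize all the way to the efficient influence function. It decomposes $\hat\tau_w$ into $\bar\tau + E_n + (B_n-\hat B_n) + \tilde B_n$, where $\bar\tau := n^{-1}\sum_i[\mu_1(X_i)-\mu_0(X_i)]$ and $E_n := n^{-1}\sum_i(2D_i-1)\big(1+\sum_{j:D_j=1-D_i}w_{j\leftarrow i}\big)\epsilon_i$ with $\epsilon_i := Y_i-\mu_{D_i}(X_i)$; it keeps the data-dependent weights inside $E_n$, proves a conditional Lindeberg--Feller CLT for $\bar\tau+E_n$ with the random variance $V^E=n^{-1}\sum_i\big(1+\sum_j w_{j\leftarrow i}\big)^2\sigma_{D_i}^2(X_i)$ (Lemma~\ref{lemma:mbc,clt}, which is where the $(2+\kappa)$-moment and variance lower bounds of Assumption~\ref{asp:se1} and the Abadie--Imbens asymptotic-independence argument enter), and only afterwards shows $V^E\stackrel{\sf p}{\longrightarrow}\E[\sigma_1^2(X)/e(X)+\sigma_0^2(X)/(1-e(X))]$ using Assumption~\ref{asp:dr1}\ref{asp:dr1,w} (Lemma~\ref{lemma:mbc,ve}). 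You instead replace $1+\sum_j w_{j\leftarrow i}$ by $e(X_i)^{-1}$ (resp.\ $(1-e(X_i))^{-1}$) up front and reduce to the i.i.d.\ sum of the $\psi_i$; your variance bound for the cross term (b) is valid, since conditional on the covariates and treatment indicators the residuals are independent and mean-zero by Assumptions~\ref{asp:dr}\ref{asp:dr-1} and \ref{asp:dr2}\ref{asp:dr2,w}, so the cross term has second moment of order $n^{-1}\E[\Delta_1^2]=o(n^{-1})$ with $\Delta_1:=\sum_j w_{j\leftarrow 1}-\big(D_1\tfrac{1-e(X_1)}{e(X_1)}+(1-D_1)\tfrac{e(X_1)}{1-e(X_1)}\big)$, by exchangeability (Assumption~\ref{asp:weight}\ref{asp:weight-1}) and Assumption~\ref{asp:dr1}\ref{asp:dr1,w}. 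This buys you a classical i.i.d.\ CLT in place of the paper's triangular-array argument. The bias terms are then handled exactly as in the paper (Cauchy--Schwarz with Assumption~\ref{asp:se2}\ref{asp:se2,w1} for the normalization term; order-matched Taylor expansion, Lemma~\ref{lemma:mbc,bias}), and the paper likewise defers the variance-estimator consistency to the same oracle-replacement computations.

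One point must be tightened. Your term (c) as displayed, $n^{-1}\sum_i\sum_j w_{i\leftarrow j}\big(\hat\mu_{1-D_i}(X_i)-\hat\mu_{1-D_i}(X_j)\big)$, is \emph{not} $o_\P(n^{-1/2})$ on its own: its first-order Taylor contribution is controlled only by $\max_{t\in\Lambda_1}\lVert\partial^t\hat\mu_\omega\rVert_\infty=O_\P(1)$ times $o(n^{-1/2+\gamma_1})$. What the decomposition actually produces --- after writing $\hat R_i=\epsilon_i-(\hat\mu_{D_i}-\mu_{D_i})(X_i)$ and re-indexing the resulting double sum --- is the same expression with $\hat\mu_{1-D_i}-\mu_{1-D_i}$ in place of $\hat\mu_{1-D_i}$, i.e.\ the paper's $\hat B_n-B_n$; it is precisely this cancellation that lets one pair $\max_{t\in\Lambda_\ell}\lVert\partial^t\hat\mu_\omega-\partial^t\mu_\omega\rVert_\infty=O_\P(n^{-\gamma_\ell})$ with the $o(n^{-1/2+\gamma_\ell})$ bounds of Assumption~\ref{asp:se2}\ref{asp:se2,d}. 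Your displayed bound already uses the difference of derivatives, so the intent is right, but the statement of (c) should be corrected to the difference form. Similarly, the $L^2$-control of $\hat\mu_\omega$ used to kill the trailing term comes from Assumption~\ref{asp:dr}(iv) together with \ref{asp:dr1}\ref{asp:dr1,o} or \ref{asp:dr2}\ref{asp:dr2,o}, not from Assumption~\ref{asp:se2}\ref{asp:se2,o1}-\ref{asp:se2,o2}; this is cosmetic since all assumptions are in force.
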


\subsection{Double machine learning}

Assumptions \ref{asp:se2}\ref{asp:se2,o1}-\ref{asp:se2,o2} are arguably strong regularity conditions for the outcome model $\mu_\omega(\cdot)$. Partly in order to alleviate such requirements, \cite{chernozhukov2018double} introduced the idea of double machine learning via sample splitting and cross fitting. Similar ideas have also been studied in nonparametric statistics; cf. \citet{bickel1982adaptive}, \cite{efromovich1996nonparametric}, and \cite{zheng2010asymptotic}.  In the following, let's introduce $\tilde{\tau}_{w,N}$ as a counterpart of $\hat{\tau}_w$ based on \citet[Definition 3.1]{chernozhukov2018double}. 

In detail, let $N \ge 2$ represent a fixed number of partitions. For presentation simplicity and also without much loss of generality, assume $n$ to be divisible by $N$. Let $[I_k]_{k=1}^N$ be an $N$-fold random partition of $\zahl{n}$, with each of size equal to $n' = n/N$. For each $k \in \zahl{N}$ and $\omega \in \{0,1\}$, construct $\hat{\mu}_{\omega,k}(\cdot)$ using data $[(X_i,D_i,Y_i)]_{i=1,i \notin I_k}^n$. Similarly, for regression imputation, we impute each unit's value by regressing it against all units in the opposite group outside the $k$-th fold. More specifically, we calculate the smoothing matrix entries as follows: for any $i,j \in \zahl{n}$ with $D_i+D_j=1,i \in I_k,j \notin I_k$, let $w_{j\leftarrow i,k}$ be the weights constructed using data $(X_i,D_i,Y_i) \cup [(X_j,D_j,Y_j)]_{j=1,j \notin I_k}^n$.

We are then ready to define the double machine learning version of $\hat\tau_w$ as follows:
\begin{align*}
   \widecheck{\tau}_{w,k} :=& \frac{1}{n'} \sum_{i=1,i \in I_k}^n \Big[\hat{\mu}_{1,k}(X_i) - \hat{\mu}_{0,k}(X_i)\Big] \\
  &+ \frac{1}{n'} \sum_{i=1,i \in I_k}^n (2D_i-1)\Big(1 + \sum_{j:D_j=1-D_i,j \notin I_k} w_{j\leftarrow i,k}\Big) \Big(Y_i - \hat{\mu}_{D_i,k}(X_i)\Big)
\end{align*}
and
\begin{align*}
    \tilde{\tau}_{w,N} := \frac{1}{N} \sum_{k=1}^N \widecheck{\tau}_{w,k}.
\end{align*}

For establishing the efficiency theory of $\tilde{\tau}_{w,N} $, the following two sets of assumptions are needed.

\begin{assumption}  \phantomsection \label{asp:dml1} 
    \begin{enumerate}[itemsep=-.5ex,label=(\roman*)]
      \item $\E [U^2_\omega]$ is bounded away from zero for $\omega \in \{0,1\}$.
      \item There exists some constant $\kappa>0$ such that $\E [\lvert Y \rvert^{2+\kappa}]$ is bounded.
    \end{enumerate}
\end{assumption}

\begin{assumption}\label{asp:dml2}
    There exist two positive integers $1 \le p_1,p_2 \le \infty$ with $p_1^{-1} + p_2^{-1} = 1$, two positive real-valued sequences $[r_1] = [r_1]_n, [r_2] = [r_2]_n$ with $r_1r_2 = o(n^{-1/2})$ such that
    \begin{enumerate}[itemsep=-.5ex,label=(\roman*)]
        \item\label{asp:dml2,o} for $\omega \in \{0,1\}$, the estimator $\hat{\mu}_\omega(x)$ satisfies
        \[
            \lVert \tilde{\mu}_\omega - \mu_\omega \rVert_{p_1} = O_\P(r_1);
        \]
        \item\label{asp:dml2,w} the weights $[w_{i\leftarrow j}]_{D_i+D_j=1}$ satisfy
        \begin{align*}
            & \Big\{\E \Big[ \Big\lvert \sum_{j:D_j=1-D_1} w_{j\leftarrow 1} - \Big(D_1 \frac{1-e(X_1)}{e(X_1)} + (1-D_1) \frac{e(X_1)}{1-e(X_1)} \Big) \Big\rvert^{p_2} \Big] \Big\}^{1/p_2} = O(r_2),\\
            {\rm and}\quad & \E \Big[ \sum_{j:D_j=1-D_1} w_{j\leftarrow 1} \Big]^\kappa = O(1), ~~ {\rm for~any~} \kappa>0.
        \end{align*}
    \end{enumerate}
\end{assumption}

We are now ready to introduce the general theory on the double machine learning-based regression-adjusted imputation estimators.

\begin{theorem}  \phantomsection \label{thm:dml} 
    \begin{enumerate}[itemsep=-.5ex,label=(\roman*)]
     \item\label{thm:dml1} (Double robustness of $\tilde{\tau}_{w,N}$) Under the same conditions as those in Theorem~\ref{thm:dr}, we have
      \begin{align*}
        \tilde{\tau}_{w,N} - \tau \stackrel{\sf p}{\longrightarrow} 0.
      \end{align*}
      \item\label{thm:dml2} (Semiparametric efficiency of $\tilde{\tau}_{w,N}$) Under Assumptions~\ref{asp:dr}-\ref{asp:dr2} and \ref{asp:dml1}-\ref{asp:dml2}, we have
      \begin{align*}
       \sqrt{n} (\tilde{\tau}_{w,N} - \tau) \stackrel{\sf d}{\longrightarrow} N(0,\sigma^2).
      \end{align*}
      In addition, the variance estimator 
  \begin{align*}
  \hat{\sigma}^2_N:= \frac{1}{n} \sum_{i=1}^n \Big[\hat{\mu}_1(X_i) - \hat{\mu}_0(X_i) + (2D_i-1)\Big(1 + \sum_{j:D_j=1-D_i} w_{j\leftarrow i}\Big) \hat{R}_i - \tilde{\tau}_{w,N} \Big]^2
\end{align*}
is a consistent estimator for  $\sigma^2$ in \eqref{eq:hahn}. 
    \end{enumerate}
\end{theorem}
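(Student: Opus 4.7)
The overall plan is to exploit the cross-fitting structure: conditional on the out-of-fold data $\mathcal{D}_{-k} := \{(X_j,D_j,Y_j)\}_{j\notin I_k}$, the nuisance objects $\hat{\mu}_{\omega,k}$ and the sum $\sum_{j:D_j=1-D_i,\, j\notin I_k} w_{j\leftarrow i,k}$ are deterministic functions of $(X_i,D_i)$, while the in-fold sample $\{(X_i,D_i,Y_i)\}_{i\in I_k}$ is i.i.d.\ with the joint law of $(X,D,Y)$. A useful observation is that $\widecheck{\tau}_{w,k}$ is already written in AIPW form, so no analogue of Lemma~\ref{lemma:mbc} is needed, and there is no leftover bias term from unnormalized weights within a fold.

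For part \ref{thm:dml1} (double robustness), I would mimic the proof of Theorem~\ref{thm:dr}, but now fold by fold and with the conditioning trick. Under Assumption~\ref{asp:dr2}, $\hat{R}_i = Y_i - \hat{\mu}_{D_i,k}(X_i) \approx U_{D_i,i}$, whose conditional mean given $X_i$ is zero by unconfoundedness; the residual terms thus vanish and the regression term consistently estimates $\tau$, with $L^2$-fluctuations controlled using Assumption~\ref{asp:dr2}\ref{asp:dr2,w2}. Under Assumption~\ref{asp:dr1}, a tower-property calculation, using that $1 + \sum_{j\notin I_k,\, D_j=1-D_i} w_{j\leftarrow i,k}$ approaches $1/e(X_i)$ on treated units and $1/(1-e(X_i))$ on control units by Assumption~\ref{asp:dr1}\ref{asp:dr1,w}, shows that
\[
\E\Big[(2D_i-1)\Big(1 + \sum_{j\notin I_k,\, D_j=1-D_i} w_{j\leftarrow i,k}\Big)\big(Y_i - \bar{\mu}_{D_i}(X_i)\big) \,\Big|\, \mathcal{D}_{-k}\Big]
\]
converges to $\E[Y(1)-\bar{\mu}_1(X)] - \E[Y(0)-\bar{\mu}_0(X)]$, which combined with the regression term $\E[\bar{\mu}_1(X)-\bar{\mu}_0(X)]$ gives $\tau$. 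Averaging over the $N$ folds yields $\tilde{\tau}_{w,N}\stackrel{\sf p}{\longrightarrow}\tau$.

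For part \ref{thm:dml2} (semiparametric efficiency), I would perform the Neyman-orthogonality-style decomposition
\[
\widecheck{\tau}_{w,k} - \tau = \frac{1}{n'}\sum_{i\in I_k}\psi(X_i,D_i,Y_i) \;+\; \mathrm{Bias}_k \;+\; \mathrm{Rem}_k,
\]
where $\psi$ is the efficient influence function corresponding to \eqref{eq:hahn} (variance $\sigma^2$), $\mathrm{Bias}_k$ collects conditional-on-$\mathcal{D}_{-k}$ leading-order bias terms, and $\mathrm{Rem}_k$ collects mean-zero stochastic fluctuations. The key cross-term in $\mathrm{Bias}_k$ factorizes as a product of the nuisance error $\hat{\mu}_{\omega,k}-\mu_\omega$ against the weight error $\sum_{j\notin I_k} w_{j\leftarrow i,k} - [D_i(1-e(X_i))/e(X_i) + (1-D_i)e(X_i)/(1-e(X_i))]$; by H\"older's inequality with conjugate exponents $(p_1,p_2)$ from Assumption~\ref{asp:dml2} and the product-rate condition $r_1 r_2 = o(n^{-1/2})$, this cross-term is $o_\P(n^{-1/2})$. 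The $\mathrm{Rem}_k$-term, being a conditional i.i.d.\ average with variance of order $n^{-1}$ (using the $(2+\kappa)$-moment bound in Assumption~\ref{asp:dml1} and the high-moment control on weights in Assumption~\ref{asp:dml2}\ref{asp:dml2,w}), is also $o_\P(n^{-1/2})$. A conditional Lindeberg CLT applied to the leading i.i.d.\ sum, then averaged over folds (using $\frac{1}{N}\sum_k \frac{1}{n'}\sum_{i\in I_k}\psi(X_i,D_i,Y_i) = \frac{1}{n}\sum_{i=1}^n\psi(X_i,D_i,Y_i)$), gives $\sqrt{n}(\tilde{\tau}_{w,N}-\tau)\stackrel{\sf d}{\longrightarrow}N(0,\sigma^2)$. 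For the variance estimator, I would write $\hat{\sigma}^2_N$ as an empirical average of squared residual-like terms and combine a ULLN with the already-established convergence of $\tilde{\tau}_{w,N}$, $\hat{\mu}_{\omega,k}$, and the smoother weights to conclude $\hat{\sigma}^2_N\stackrel{\sf p}{\longrightarrow}\sigma^2$.

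The main obstacle is the H\"older-based control of the cross-term in $\mathrm{Bias}_k$: one must carefully identify the centered form of the weight error that makes the product structure visible (so the bound is $r_1 r_2$ rather than $r_1+r_2$), and verify that the $L^{p_2}$-moment of the weight error from Assumption~\ref{asp:dml2}\ref{asp:dml2,w} indeed matches the $L^{p_1}$-rate on $\hat{\mu}_\omega$. Relative to Theorem~\ref{thm:mbc}, sample splitting eliminates the need for the strong uniform-smoothness requirements Assumption~\ref{asp:se2}\ref{asp:se2,o1}-\ref{asp:se2,o2} and the higher-order discrepancy bound \ref{asp:se2}\ref{asp:se2,d}, at the price of imposing the $r_1 r_2 = o(n^{-1/2})$ product rate.
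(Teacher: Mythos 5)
Your plan is correct in substance and matches the mathematical content of the paper's proof, but it is packaged differently. For part (i) the paper simply notes that the argument of Theorem~\ref{thm:dr} carries over; your fold-by-fold conditioning elaboration is exactly what that entails, and your observation that $\widecheck{\tau}_{w,k}$ is already in AIPW form (so no analogue of Lemma~\ref{lemma:mbc} and no unnormalized-weight bias term is needed) is accurate. For part (ii) the paper does not re-derive the expansion from scratch: it verifies Assumptions~3.1 and~3.2 of \citet{chernozhukov2018double} for the score $\psi(X,D,Y;\tilde\tau,\tilde\zeta)$ with nuisance $\tilde\zeta=(\tilde\mu_0,\tilde\mu_1,\tilde\rho_0,\tilde\rho_1)$, $\tilde\rho_1=1/\tilde e$, $\tilde\rho_0=1/(1-\tilde e)$, and the decisive computation is the second Gateaux derivative
$\partial_t^2\E\,\psi(X,D,Y;\tau,\zeta+t(\tilde\zeta-\zeta))$, which factorizes into products $(\tilde\mu_\omega-\mu_\omega)(\tilde\rho_\omega-\rho_\omega)$ and is bounded by H\"older with exponents $(p_1,p_2)$ as $O(r_1r_2)=o(n^{-1/2})$. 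That is precisely your cross-term in $\mathrm{Bias}_k$, so the two routes buy the same thing; invoking the general DML theorem offloads the conditional CLT and the remainder control to cited lemmas, whereas your self-contained decomposition must handle them explicitly.

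On that last point there is one imprecision you should fix: you assert that $\mathrm{Rem}_k$ has conditional variance ``of order $n^{-1}$'' and conclude it is $o_\P(n^{-1/2})$, but variance $O(n^{-1})$ only yields $O_\P(n^{-1/2})$, which would contaminate the limiting variance. What is actually needed, and what the paper establishes in its ``step 5'', is that the conditional second moment of the score difference satisfies $\lVert\psi(\cdot;\tau,\tilde\zeta)-\psi(\cdot;\tau,\zeta)\rVert_2=o(1)$ uniformly over the nuisance realization set, so that the conditional variance of $\mathrm{Rem}_k$ is $o_\P(n^{-1})$. This consistency comes from $\lVert\hat\mu_{\omega,k}-\mu_\omega\rVert_\infty=o_\P(1)$ and the $L^2$ convergence of the weight sums to the density ratios (Assumptions~\ref{asp:dr1}--\ref{asp:dr2}, which are in your hypotheses but which you do not invoke at this step). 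With that correction the argument closes; the treatment of the variance estimator via a law of large numbers plus the established consistency of $\tilde\tau_{w,N}$ and the nuisances is consistent with what the paper does (it defers to the analogous argument in \citet{lin2021estimation}).
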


\section{Examples}\label{sec:example}


This section aims to provide examples so to put the general theory introduced in Section \ref{sec:general} on a solid ground. In the sequel, write $\ind(\cdot)$ to represent the indicator function and $a_n \asymp b_n$ if both $a_n = O(b_n)$ and $b_n = O(a_n)$ holds. For any matrix $A$, we use $\lvert A \rvert$ and $\lVert A \rVert_2$ to denote its determinant and spectral norm. For any set $\cS$, let ${\rm diam}(\cS):=\sup_{x,y\in \cS}\lVert x-y \rVert$ be its diameter. 

\subsection{Kernel matching}

We first consider the kernel matching that has been advocated in various settings \citep{heckman1997matching, heckman1998characterizing, heckman1998matching, frolich2004finite, frolich2005matching, huber2013performance}. It leverages the local constant regression (Nadaraya–Watson estimator) to impute the missing values  \citep{nadaraya1964estimating, watson1964smooth}.

More specifically, let $H = H_n \in \bR^{d \times d}$ be the {\it bandwidth matrix}  and  $K(\cdot): \bR^d \to \bR$ be the {\it multivariate kernel function} on $\bR^d$. For any $x \in \bR^d$, define 
\[
K_H(x) := \lvert H \rvert^{-1/2} K(H^{-1/2}x). 
\]
For any $i,j \in \zahl{n}$ such that $D_i+D_j=1$, one can then verify that the weight $w_{i\leftarrow j}$ corresponding to kernel matching is 
\begin{align*}
    w_{i\leftarrow j}:= \frac{K_H(X_i-X_j)}{\sum_{k:D_k=1-D_i} K_H(X_i-X_k)}.
\end{align*}

Denote the corresponding kernel matching estimator using the above smoothing matrix  as well as the double machine learning version of it by
\[
\hat{\tau}_{\rm K}~~{\rm and}~~ \tilde{\tau}_{{\rm K},N}. 
\]
Assumptions in Section \ref{sec:general} can then be shown to hold under the following sufficient conditions.


\begin{assumption}\phantomsection  \label{asp:kernel} Assume that (i)
$H$ is symmetric and positive definite, and (ii) $K$ constitutes a multivariate symmetric density function.
\end{assumption}

\begin{assumption} \phantomsection \label{asp:kernel,dr}
    \begin{enumerate}[itemsep=-.5ex,label=(\roman*)]
      \item The density of $X$ is bounded and bounded away from zero. The densities of $X \given D=1$ and $X \given D=0$ are continuous almost everywhere.
      \item $K$ is bounded with a compact support such that $\lVert H^{1/2} \rVert_2 \to 0$ and $n \lvert H^{1/2} \rvert \to \infty$.
    \end{enumerate}
\end{assumption}

\begin{assumption} \phantomsection \label{asp:kernel,se} There exists a positive integer $k$ such that  
\begin{itemize}
\item[(i)] Assumptions~\ref{asp:se2}\ref{asp:se2,o1},\ref{asp:se2,o2} hold; 
\item[(ii)] we further have $\lVert H^{1/2} \rVert_2^k = o(n^{-1/2})$ and $\lVert H^{1/2} \rVert_2^\ell = o(n^{-1/2+ \gamma_\ell})$ for all $\ell \in \zahl{k-1}$. 
\end{itemize}
\end{assumption}

\begin{assumption}[double machine learning] \phantomsection \label{asp:kernel,dml}
    \begin{enumerate}[itemsep=-.5ex,label=(\roman*)]
        \item The densities of $X \given D=1$ and $X \given D=0$ are Lipchitz on $\cS$. The diameter and the surface area (Hausdorff measure, \citet[Section 3.3]{evans2018measure}) of $\cS$ are bounded. 
        \item There exist two positive real-valued sequences $[r_1] = [r_1]_n, [r_2] = [r_2]_n$ with $r_1r_2 = o(n^{-1/2})$ such that 
        \[
        \lVert \tilde{\mu}_\omega - \mu_\omega \rVert_\infty = O_\P(r_1)~~ {\rm for} ~~\omega \in \{0,1\}, 
        \]
        and $(n \lvert H^{1/2} \rvert)^{-1/2} + \lVert H^{1/2} \rVert_2 = O(r_2)$.
    \end{enumerate}
\end{assumption}

Assumption~\ref{asp:kernel,dr} is standard for establishing consistency of the Nadaraya-Watson estimator. Assumption~\ref{asp:kernel,se} ensures that the discrepancy level in Assumption~\ref{asp:se2} is small. The regularity condition on the support and the smoothness condition on the density function are standard in nonparametric statistics \cite[Section 2]{MR2724359}. 

\begin{remark}
A specific common choice of $H$ is $h_n^2 I_d$, where $I_d$ is the $d$-dimensional identity matrix. The bandwidth selection condition in Assumption~\ref{asp:kernel,dr} then reduces to
\[
h_n \to 0\quad {\rm and}\quad  nh_n^d \to \infty, 
\]
and Assumption~\ref{asp:kernel,se} reduces to 
\[
h_n/n^{-1/(2k)} \to 0\quad {\rm and}\quad h_n/n^{(-1/2+\gamma_\ell)/\ell} \to 0 ~~{\rm for}~~ \ell \in \zahl{k-1}, 
\]
suggesting that the bandwidth cannot be too large; this echos the NN matching case where the number of NNs incorporated also has to be controlled (Theorem 5.2 in \cite{lin2021estimation}). The convergence rate in Assumption~\ref{asp:kernel,dml} reduces to $(nh^d)^{-1/2}+h$, and is the minimax rate of the density estimation over Lipchitz class $n^{-1/(2+d)}$ \cite[Section 2]{MR2724359} by taking $h_n \asymp n^{-1/(2+d)}$.
\end{remark}

The following theorem then verifies the general conditions presented in Section \ref{sec:general} when kernel matching is used for imputing the missing potential outcomes.

\begin{theorem}\label{thm:kernel}
Assume Assumptions \ref{asp:dr} and \ref{asp:kernel}  hold. We then have the following four are true.
 \begin{enumerate}[itemsep=-.5ex,label=(\roman*)]
\item\label{thm:kernel1} Assumptions  \ref{asp:weight}, \ref{asp:dr2}\ref{asp:dr2,w}\ref{asp:dr2,w2}, and \ref{asp:se2}\ref{asp:se2,w1} hold;
\item\label{thm:kernel2} Under Assumption~\ref{asp:kernel,dr}, Assumption~\ref{asp:dr1}\ref{asp:dr1,w} holds; 
\item\label{thm:kernel3} Under Assumptions~\ref{asp:kernel,dr} and \ref{asp:kernel,se}, Assumption~\ref{asp:se2} holds; 
\item\label{thm:kernel4} Under Assumptions~\ref{asp:kernel,dr} and \ref{asp:kernel,dml}, Assumption~\ref{asp:dml2} holds with $p_1,p_2$ chosen to be $\infty$ and $1$.
\end{enumerate}
\end{theorem}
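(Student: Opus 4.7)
The plan is to verify the four claims in sequence, all built on the explicit kernel-matching weights $w_{i\leftarrow j} = K_H(X_i-X_j)/\sum_{k:D_k=1-D_i}K_H(X_i-X_k)$. Claim (i) is essentially structural: permutation invariance (Assumption~\ref{asp:weight}\ref{asp:weight-1}) is immediate because the defining formula is symmetric in the opposite-group covariates and uses no outcomes at all, which also gives Assumption~\ref{asp:dr2}\ref{asp:dr2,w}; the self-normalization $\sum_{j:D_j=1-D_i}w_{i\leftarrow j}=1$ holds exactly whenever the opposite group is nonempty. Under the overlap $\eta<e(\cdot)<1-\eta$ from Assumption~\ref{asp:dr}\ref{asp:dr-1}, emptiness happens with exponentially small probability, so Assumption~\ref{asp:weight}\ref{asp:weight-2} and Assumption~\ref{asp:se2}\ref{asp:se2,w1} both reduce to bounding $\Pr(n_0=0)+\Pr(n_1=0)$. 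Assumption~\ref{asp:dr2}\ref{asp:dr2,w2} then follows a posteriori from claim (ii), since its $L^2$-limit is the bounded density ratio.

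The heart of the matter is claim (ii), where I would recognize the weight sum as an implicit ratio of kernel density estimators. Taking $D_1=1$ for concreteness (the case $D_1=0$ is symmetric), one rewrites
\begin{align*}
\sum_{j:D_j=0}w_{j\leftarrow 1} \;=\; \frac{n_0}{n_1}\cdot\frac{1}{n_0}\sum_{j:D_j=0}\frac{K_H(X_j-X_1)}{\frac{1}{n_1}\sum_{k:D_k=1}K_H(X_j-X_k)}.
\end{align*}
The denominator is a kernel density estimator $\hat f_1(X_j)$ of $f_{X|D=1}$, and the outer average is an empirical integral against $f_{X|D=0}$. Under the positivity and continuity in Assumption~\ref{asp:kernel,dr} and the bandwidth scaling $\lVert H^{1/2}\rVert_2\to 0$, $n\lvert H^{1/2}\rvert\to\infty$, standard kernel density theory gives $\hat f_1\to f_{X|D=1}$ and $n_0/n_1\to (1-p)/p$ with $p=\Pr(D=1)$. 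The outer average then converges to $f_{X|D=0}(X_1)/f_{X|D=1}(X_1)$, and Bayes' rule rewrites the full limit as $(1-e(X_1))/e(X_1)$, matching Assumption~\ref{asp:dr1}\ref{asp:dr1,w}. The upgrade from pointwise to $L^2$ convergence uses uniform integrability supplied by the overlap bound.

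For claim (iii), I would exploit compactness of the support of $K$: $w_{1\leftarrow j}\neq 0$ forces $\lVert X_j-X_1\rVert \le C\lVert H^{1/2}\rVert_2$ for some $C>0$, so combining with $\sum_j w_{1\leftarrow j}=1$ from (i) yields $\sum_j\lvert w_{1\leftarrow j}\rvert \cdot \lVert X_j-X_1\rVert^\ell \le C^\ell\lVert H^{1/2}\rVert_2^\ell$, from which Assumption~\ref{asp:se2}\ref{asp:se2,d} follows by Assumption~\ref{asp:kernel,se}; the remaining pieces of Assumption~\ref{asp:se2} are recycled from (i). Claim (iv) is the rate-quantified analogue of (ii): the classical kernel density bound $\lVert \hat f_1 - f_{X|D=1}\rVert_\infty = O_\P((n\lvert H^{1/2}\rvert)^{-1/2} + \lVert H^{1/2}\rVert_2)$ under Lipschitz densities and compact support (exactly what Assumption~\ref{asp:kernel,dml} supplies) propagates through the ratio expansion to give the $L^\infty$ rate $r_2$ required in Assumption~\ref{asp:dml2}\ref{asp:dml2,w}, after which the moment bound is immediate from the boundedness of the density ratio plus this approximation error.

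The main obstacle I anticipate is the denominator control throughout (ii) and (iv): since $\hat f_1$ appears in a denominator, mere pointwise convergence must be strengthened to a uniform-in-probability lower bound on the set where $\hat f_1$ is bounded away from zero, with the exceptional set shown to be negligible. The positivity of $f_{X|D=1}$ from Assumption~\ref{asp:kernel,dr} handles the interior, while the Lipschitz and bounded-surface-area conditions in Assumption~\ref{asp:kernel,dml} are precisely what is needed to obtain a uniform bias-plus-variance bound near the boundary of $\cS$ at the correct rate. Apart from this, everything else is bookkeeping on kernel density estimation and on counting random sample sizes $n_0,n_1$ via Bernstein-type concentration.
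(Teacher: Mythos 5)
Your overall skeleton matches the paper's: part (i) is structural, part (ii) recognizes $\sum_{j:D_j=0}w_{j\leftarrow 1}$ as $\tfrac{n_0}{n_1}$ times an empirical average of $K_H(X_j-X_1)/\hat{f}_1(X_j)$ and passes to the density ratio $f_0/f_1=(n$-free limit$)$ via Bayes, and parts (iii)--(iv) are rate bookkeeping. Your argument for part (iii) is in fact a clean alternative to the paper's: using the compact support of $K$ to force $\lVert X_j-X_1\rVert\le C\lVert H^{1/2}\rVert_2$ whenever $w_{1\leftarrow j}\neq 0$, together with row-normalization, gives the deterministic bound $\sum_j\lvert w_{1\leftarrow j}\rvert\,\lVert X_j-X_1\rVert^{\ell}\le C^{\ell}\lVert H^{1/2}\rVert_2^{\ell}$, whereas the paper computes $\E[f_0^{-1}(X_1)K_H(X_1-X_2)\lVert X_2-X_1\rVert^{\ell}]\lesssim\lVert H^{1/2}\rVert_2^{\ell}\int K(z)\lVert z\rVert^{\ell}\,\d z$; both are valid and yours is arguably simpler.

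There are, however, two genuine gaps. First, in part (i) you derive Assumption~\ref{asp:dr2}\ref{asp:dr2,w2} ``a posteriori from claim (ii)'', but claim (ii) requires Assumption~\ref{asp:kernel,dr}, which part (i) does not assume; the correct route is direct: the weights are nonnegative and row-normalized, so by exchangeability $\E[\lvert\sum_{j}w_{j\leftarrow 1}\rvert]=\E[\sum_{j}w_{j\leftarrow 1}]=\E[\sum_{j}w_{1\leftarrow j}]=1$, with no density conditions needed. Second, and more seriously, your plan for part (iv) routes through a sup-norm kernel density bound $\lVert\hat{f}_1-f_1\rVert_\infty=O_\P((n\lvert H^{1/2}\rvert)^{-1/2}+\lVert H^{1/2}\rVert_2)$ and asserts that Assumption~\ref{asp:dml2} demands an $L^\infty$ rate. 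With $p_2=1$ only an $L^1$ bound on the weight discrepancy is required, and the sup-norm route fails on two counts: the stochastic part of any uniform KDE bound carries an extra $\sqrt{\log}$ factor, and, more fatally, on a compact support $\cS$ the KDE bias is of order one on a boundary shell of width $\asymp\lVert H^{1/2}\rVert_2$, so no vanishing sup-norm rate exists at all. The bounded diameter and surface area in Assumption~\ref{asp:kernel,dml} control only the Lebesgue measure of that shell, which is precisely what rescues the $L^1$ rate; this is how Lemma~\ref{lemma:kernel2} proceeds, integrating pointwise bias--variance bounds over $\cS$. A milder version of the same issue affects your proposed denominator fix in part (ii): under Assumption~\ref{asp:kernel,dr} the conditional densities are only continuous almost everywhere, so a uniform-in-probability lower bound on $\hat{f}_1$ over all of $\cS$ is not available; the paper instead linearizes $\hat{f}_1^{-1}-f_1^{-1}$ and controls each resulting term in $L^2$ via Fatou and dominated convergence.
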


Theorem \ref{thm:kernel} directly yields the following corollary, which establishes the double robustness and semiparametric efficiency properties of $\hat\tau_{\rm K}$ and $\tilde{\tau}_{{\rm K},N} $.


\begin{corollary}  \phantomsection \label{crl:kernel} 
    \begin{enumerate}[itemsep=-.5ex,label=(\roman*)]
     \item(Double robustness of $ \hat{\tau}_{{\rm K}}$) Suppose Assumptions~\ref{asp:dr} and \ref{asp:kernel} hold and either Assumptions~\ref{asp:dr1}\ref{asp:dr1,o}, \ref{asp:kernel,dr} or Assumption~\ref{asp:dr2}\ref{asp:dr2,o} is true. We then have
     \begin{align*}
        \hat{\tau}_{{\rm K}} - \tau \stackrel{\sf p}{\longrightarrow} 0.
      \end{align*}
      \item(Semiparametric efficiency of $\hat{\tau}_{{\rm K}}$) Under Assumptions~\ref{asp:dr}, \ref{asp:dr1}\ref{asp:dr1,o}, \ref{asp:dr2}\ref{asp:dr2,o}, \ref{asp:se1}, \ref{asp:kernel}-\ref{asp:kernel,se}, we have
      \begin{align*}
       \sqrt{n} (\hat{\tau}_{{\rm K}} - \tau) \stackrel{\sf d}{\longrightarrow} N(0,\sigma^2).
      \end{align*}
     \item(Double robustness of $\tilde{\tau}_{{\rm K},N}$) Suppose Assumptions~\ref{asp:dr} and \ref{asp:kernel} hold and either Assumptions~\ref{asp:dr1}\ref{asp:dr1,o}, \ref{asp:kernel,dr} or Assumption~\ref{asp:dr2}\ref{asp:dr2,o} is true. We then have
      \begin{align*}
        \tilde{\tau}_{{\rm K},N} - \tau \stackrel{\sf p}{\longrightarrow} 0.
      \end{align*}
      \item(Semiparametric efficiency of $\tilde{\tau}_{{\rm K},N}$) Under Assumptions~\ref{asp:dr}, \ref{asp:dr1}\ref{asp:dr1,o}, \ref{asp:dr2}\ref{asp:dr2,o}, \ref{asp:dml1}, \ref{asp:kernel}, \ref{asp:kernel,dr}, \ref{asp:kernel,dml}, it holds true that
      \begin{align*}
        \sqrt{n} (\tilde{\tau}_{{\rm K},N} - \tau) \stackrel{\sf d}{\longrightarrow} N(0,\sigma^2).
      \end{align*}
    \end{enumerate}
\end{corollary}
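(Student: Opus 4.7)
The plan is straightforward: each of the four sub-claims follows by combining the corresponding general theorem from Section~\ref{sec:general} with the kernel-specific verifications provided by Theorem~\ref{thm:kernel}. There is no substantive new mathematics needed; the proof is essentially a bookkeeping exercise matching the assumptions of the corollary with the inputs required by Theorems~\ref{thm:dr}, \ref{thm:mbc}, and \ref{thm:dml}.

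For part (i), I would apply Theorem~\ref{thm:dr}. Assumption~\ref{asp:dr} is given and Theorem~\ref{thm:kernel}\ref{thm:kernel1} supplies Assumption~\ref{asp:weight}. Under the first branch of the disjunction (Assumptions~\ref{asp:dr1}\ref{asp:dr1,o} and \ref{asp:kernel,dr}), Theorem~\ref{thm:kernel}\ref{thm:kernel2} provides Assumption~\ref{asp:dr1}\ref{asp:dr1,w}, completing Assumption~\ref{asp:dr1}. Under the second branch (Assumption~\ref{asp:dr2}\ref{asp:dr2,o}), Theorem~\ref{thm:kernel}\ref{thm:kernel1} furnishes Assumption~\ref{asp:dr2}\ref{asp:dr2,w}\ref{asp:dr2,w2}, completing Assumption~\ref{asp:dr2}. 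Either way Theorem~\ref{thm:dr} delivers consistency of $\hat{\tau}_{\rm K}$.

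For part (ii), I would apply Theorem~\ref{thm:mbc}. The listed assumptions together with Theorem~\ref{thm:kernel}\ref{thm:kernel1}--\ref{thm:kernel3} furnish Assumptions~\ref{asp:dr}--\ref{asp:se2}: explicitly, Theorem~\ref{thm:kernel}\ref{thm:kernel1} provides Assumption~\ref{asp:weight}, the weight parts of Assumption~\ref{asp:dr2}, and Assumption~\ref{asp:se2}\ref{asp:se2,w1}; Theorem~\ref{thm:kernel}\ref{thm:kernel2} provides Assumption~\ref{asp:dr1}\ref{asp:dr1,w}; and Theorem~\ref{thm:kernel}\ref{thm:kernel3} delivers the remaining parts of Assumption~\ref{asp:se2}. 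The asymptotic normality then follows directly, along with consistency of the stated variance estimator.

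Parts (iii) and (iv) follow the identical template, now invoking Theorem~\ref{thm:dml}\ref{thm:dml1} and Theorem~\ref{thm:dml}\ref{thm:dml2} in place of Theorems~\ref{thm:dr} and \ref{thm:mbc}. For (iv), the only new input beyond what is already assembled is Assumption~\ref{asp:dml2}, which Theorem~\ref{thm:kernel}\ref{thm:kernel4} supplies under Assumption~\ref{asp:kernel,dml} with the choice $p_1=\infty$ and $p_2=1$. The nearest thing to an obstacle is a bookkeeping subtlety: in $\tilde{\tau}_{{\rm K},N}$ the kernel weights are constructed on cross-fitting subsamples of size $n(N-1)/N$, not on the full sample. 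However, since $N$ is fixed and the verifications in Theorem~\ref{thm:kernel} depend only on the data-generating distribution and the bandwidth sequence (rather than on sample-specific quantities), they transfer verbatim to each fold, so no genuine new work is required.
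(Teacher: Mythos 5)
Your proposal is correct and matches the paper's own route exactly: the paper gives no separate proof of Corollary~\ref{crl:kernel}, stating only that Theorem~\ref{thm:kernel} ``directly yields'' it, and your assumption-matching between Theorems~\ref{thm:dr}, \ref{thm:mbc}, \ref{thm:dml} and the verifications in Theorem~\ref{thm:kernel}\ref{thm:kernel1}--\ref{thm:kernel4} is the intended (and complete) argument. Your remark on the cross-fitting subsample sizes is a sensible extra check that the paper leaves implicit.
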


\subsection{Weighted NNs}

NN matching \citep{rubin1973matching,abadie2006large,stuart2010matching} is a popular imputation method that imputes the missing potential outcomes by a NN regression. In the nonparametric statistics literature, it is well known that NN regression, which assigns equal weights to all NNs, can be less efficient. This motivates the development of weighted NNs as useful alternatives to NN regression for boosting statistical efficiency \citep{royall1966class,samworth2012optimal}. The theoretical properties of WNNs for conducting nonparametric regression have been studied in, among many others, \cite{stone1977consistent}, \cite{samworth2012optimal}, and \citet[Chapter 5]{biau2015lectures}.

Consider the $M$-NN that restricts attention to the first $M$ NNs. The weighted nearest neighbor  (WNN) regression imputes the missing potential outcomes using a set of preassigned weights $[\gamma_{M,m}]_{m=1}^M$ satisfying  
\[
\gamma_{M,m}\geq 0~~~{\rm and}~~~\sum_{m=1}^M \gamma_{M,m} = 1. 
\]
The corresponding imputed outcome values are then 
\begin{align*}
    \hat{Y}_i^{\rm WNN}(0) := \begin{cases}
        Y_i, & \mbox{ if } D_i=0,\\
        \sum_{m=1}^M \gamma_{M,m} Y_{j_m(i)}, & \mbox{ if } D_i=1,     
    \end{cases}
~~{\rm and}~~    \hat{Y}_i^{\rm WNN}(1) := \begin{cases}
        \sum_{m=1}^M \gamma_{M,m} Y_{j_m(i)}, & \mbox{ if } D_i=0,\\   
        Y_i, & \mbox{ if } D_i=1.
    \end{cases}    
\end{align*}
Here $j_m(i)$ represents the index of $m$-th nearest neighbor (NN) of $X_i$ in $\{X_j:D_j=1-D_i\}_{j=1}^n$, i.e., the index $j \in \zahl{n}$ such that $D_j=1-D_i$ and 
\[
    \sum_{\ell=1, D_\ell=1-D_i}^n \ind\Big(\lVert X_\ell -X_i \rVert \le \lVert X_j - X_i \rVert\Big) = m.
\]

For any $i,j \in \zahl{n}$ with $D_i+D_j=1$, the corresponding weight $w_{i\leftarrow j}$ is then defined to be
\begin{align*}
    w_{i\leftarrow j}:= \sum_{m=1}^M \gamma_{M,m} \ind(j_m(i)=j)
\end{align*}
and the WNN-based ATE estimator and its double machine learning version are then denoted by
\[
\hat{\tau}_{\rm WNN}~~~{\rm and}~~~\tilde{\tau}_{{\rm WNN},N}.
\]
Notably speaking, when $\gamma_{M,m}=1/M$ for all $m \in \zahl{M}$, $\hat\tau_{\rm WNN}$ reduces to the standard bias-corrected NN matching that was studied in \cite{abadie2006large,abadie2011bias} and \cite{lin2021estimation}.

\begin{assumption} \phantomsection \label{asp:wnn,dr}
    \begin{enumerate}[itemsep=-.5ex,label=(\roman*)]
      \item\label{asp:wnn,dr,1} The density of $X$ is bounded and bounded away from zero. The densities of $X \given D=1$ and $X \given D=0$ are continuous almost everywhere. The diameters and the surface area (Hausdorff measure, \citet[Section 3.3]{evans2018measure}) of $\cS$ are bounded. There exists a constant $a \in (0,1)$ such that for any $\delta \in (0,{\rm diam}(\cS)]$ and $z \in \cS$,
      \[
        \lambda(B_{z,\delta} \cap S) \ge a \lambda(B_{z,\delta}),
      \]
      where $B_{z,\delta}$ represents the closed ball in $\bR^d$ with center at $z$ and radius $\delta$.
      \item\label{asp:wnn,dr,2} Assume $M\log n/n \to 0$, $\sum_{m=1}^M \gamma_{M,m}^2 \to 0$, and
      \begin{align*}
        \limsup_{n \to \infty} n \int_0^\infty \Big[\sum_{m=1}^M \gamma_{M,m}^2 \P\Big( U_{(m-1)} \le t \le U_{(m)} \Big) \Big]^{1/2} \d t \le 1,
      \end{align*}
      where $(U_{(1)},\ldots,U_{(M)})$ are the first $M$ order statistics of $n$ i.i.d random variables from the uniform distribution on $[0,1]$.
    \end{enumerate}
\end{assumption}

\begin{assumption} \phantomsection \label{asp:wnn,se}
    Assume that there exists a positive integer $k$ such that 
    \begin{itemize}
\item[(i)]    Assumptions~\ref{asp:se2}\ref{asp:se2,o1},\ref{asp:se2,o2} hold; 
\item[(ii)] we further have 
    \[
    \sum_{m=1}^M \gamma_{M,m} (m/n)^{k/d} = o(n^{-1/2})~~~{\rm and}~~~\sum_{m=1}^M \gamma_{M,m} (m/n)^{\ell/d} = o(n^{-1/2+ \gamma_\ell})
    \]
     for all $\ell \in \zahl{k-1}$. 
     \end{itemize}
\end{assumption}

\begin{assumption}[double machine learning] \phantomsection \label{asp:wnn,dml}
    \begin{enumerate}[itemsep=-.5ex,label=(\roman*)]
        \item The densities of $X \given D=1$ and $X \given D=0$ are Lipchitz on $\cS$.
        \item Assume $M/\log n \to \infty$ and the weights satisfy
        \begin{align*}
            M\max_{m \in \zahl{M}}\gamma_{M,m} = O(1)
        \end{align*}
        and there exists a positive sequence $[r_3]=[r_3]_n$ such that
        \begin{align*}
            n \int_0^\infty \Big[\sum_{m=1}^M \Big(\gamma_{M,m} - \frac{1}{M}\Big)^2 \P\Big( U_{(m-1)} \le t \le U_{(m)} \Big) \Big]^{1/2} \d t = O(r_3).
          \end{align*}
        Further assume that there exist two positive sequences $[r_1] = [r_1]_n, [r_2] = [r_2]_n$ with $r_1r_2 = o(n^{-1/2})$ such that $\lVert \tilde{\mu}_\omega - \mu_\omega \rVert_\infty = O_\P(r_1)$ for $\omega \in \{0,1\}$, and 
        \[
        (M/n)^{1/d} + M^{-1/2} + \Big(\sum_{m=1}^M \gamma_{M,m}^2\Big)^{1/2} + r_3 = O(r_2).
        \]
    \end{enumerate}
\end{assumption}

\begin{remark}
    Assumption \ref{asp:wnn,dr}\ref{asp:wnn,dr,1} is Assumption 4.1 in \cite{lin2021estimation}. When $\gamma_{M,m}=1/M$ for all $m \in \zahl{M}$, Assumption~\ref{asp:wnn,dr}\ref{asp:wnn,dr,2} is satisfied as long as $M \to \infty$, and the inequality in Assumption~\ref{asp:wnn,dr}\ref{asp:wnn,dr,2} can be automatically satisfied by using Chernoff's inequality, which recovers Theorems 5.1 and 5.2 in \cite{lin2021estimation}. 
    Similar discussions also apply to Assumption \ref{asp:wnn,dml}.
\end{remark}

\begin{theorem}\label{thm:wnn}
Assume Assumption \ref{asp:dr} holds. We then have the following four are true.
 \begin{enumerate}[itemsep=-.5ex,label=(\roman*)]
 \item\label{thm:wnn1} Assumptions \ref{asp:weight}, \ref{asp:dr2}\ref{asp:dr2,w}\ref{asp:dr2,w2}, and \ref{asp:se2}\ref{asp:se2,w1} hold;
 \item\label{thm:wnn2} Under Assumption~\ref{asp:wnn,dr}, Assumption~\ref{asp:dr1}\ref{asp:dr1,w} holds; 
 \item\label{thm:wnn3} Under Assumptions~\ref{asp:wnn,dr} and \ref{asp:wnn,se},  Assumption~\ref{asp:se2} holds; 
 \item\label{thm:wnn4} Under Assumptions~\ref{asp:wnn,dr} and \ref{asp:wnn,dml}, Assumption~\ref{asp:dml2} holds with $p_1,p_2$ chosen to be $\infty$ and $1$.
 \end{enumerate}
\end{theorem}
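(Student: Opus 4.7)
The plan is to verify each of the four items by exploiting the simple combinatorial structure of the WNN weights $w_{i\leftarrow j} = \sum_{m=1}^M \gamma_{M,m} \ind(j_m(i)=j)$ together with sharp asymptotics of $m$-th NN distances. Many of the requirements collapse to book-keeping: permutation invariance of the $m$-th NN ranking gives Assumption~\ref{asp:weight}\ref{asp:weight-1}; the identity $\sum_{j:D_j=1-D_i} w_{i\leftarrow j} = \sum_{m=1}^M \gamma_{M,m} = 1$ holds almost surely, hence Assumptions~\ref{asp:weight}\ref{asp:weight-2} and \ref{asp:se2}\ref{asp:se2,w1} are trivial (the random variable inside the square is identically zero). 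The weights use only $[(X_i,D_i)]$ so Assumption~\ref{asp:dr2}\ref{asp:dr2,w} is immediate. For Assumption~\ref{asp:dr2}\ref{asp:dr2,w2}, I would use the geometric lemma (a version of Stone's lemma) that any fixed point is among the $m$ nearest neighbors of at most $C_d$ other points, uniformly over $m$, so $\sum_{j:D_j=1-D_1} w_{j\leftarrow 1} \le C_d \sum_{m=1}^M \gamma_{M,m} = C_d$ almost surely. This yields part (i).

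For part (ii), I would rewrite $\sum_{j:D_j=0} w_{j\leftarrow 1}$ on $\{D_1=1\}$ as $\sum_{m=1}^M \gamma_{M,m} N_m(1)$, where $N_m(1) := \#\{j:D_j=0,\, j_m(j)=1\}$ counts how many control units pick $X_1$ as their $m$-th treated NN. Conditional on $X_1=x$, a standard calculation in NN theory (using the density of $X\mid D=0$ and $X\mid D=1$ bounded away from zero and continuous a.e., cf.\ Abadie--Imbens 2006) gives that the $m$-th treated NN radius at a control point $X_j=y$ near $x$ is of order $(m/(n_1 f(y)e(y)))^{1/d}$, so by a standard Poisson approximation $\E[N_m(1)\mid X_1=x, D_1=1] \to (1-e(x))/e(x)$ for each $m$. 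Combining this with $\sum_m \gamma_{M,m}=1$ gives the mean convergence, while $\sum_m \gamma_{M,m}^2\to 0$ in Assumption~\ref{asp:wnn,dr}\ref{asp:wnn,dr,2} controls the variance. The analogous argument on $\{D_1=0\}$ yields the $L^2$ convergence required by Assumption~\ref{asp:dr1}\ref{asp:dr1,w}; the technical integral condition in Assumption~\ref{asp:wnn,dr}\ref{asp:wnn,dr,2} is precisely what is needed to push the Poissonization/variance step through for general $[\gamma_{M,m}]$.

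For part (iii), Assumptions~\ref{asp:se2}\ref{asp:se2,o1}\ref{asp:se2,o2} are inherited. The discrepancy bound in Assumption~\ref{asp:se2}\ref{asp:se2,d} follows from the classical rate $\E\lVert X_{j_m(1)} - X_1 \rVert^\ell = O((m/n)^{\ell/d})$ under bounded, bounded-away-from-zero density, giving $\E[\sum_j |w_{1\leftarrow j}|\lVert X_j-X_1\rVert^\ell] \lesssim \sum_{m=1}^M \gamma_{M,m} (m/n)^{\ell/d}$, which is $o(n^{-1/2+\gamma_\ell})$ by hypothesis. For part (iv), I would bound $\E^{1/1}|\sum_j w_{j\leftarrow 1} - (D_1(1-e)/e+\cdots)|$ by a three-way triangle decomposition: (a) replace $[\gamma_{M,m}]$ by uniform $[1/M]$, incurring the $r_3$-term exactly as formulated in Assumption~\ref{asp:wnn,dml}; (b) replace the resulting uniform-weight count by its conditional mean, paying the $M^{-1/2}$ term; (c) replace the local density ratio by $(1-e)/e$ using Lipschitz continuity on the compact support, paying $(M/n)^{1/d}$. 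Each piece is already absorbed into $r_2$.

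The main obstacle I anticipate is a clean, unified proof of step (b)/(c) in part (iv)—i.e., showing that the weighted count $\sum_m \gamma_{M,m} N_m(1)$ approximates $(1-e(X_1))/e(X_1)$ in $L^1$ at a rate comparable to $(M/n)^{1/d}+M^{-1/2}+(\sum_m\gamma_{M,m}^2)^{1/2}$. The Lin--Han (2021) argument covers the uniform-weight case, but here one must control tail contributions from extreme $\gamma_{M,m}$ and couple the $M$-NN distance to the Lipschitz modulus of the density ratio uniformly over $m\le M$; this is where Assumption~\ref{asp:wnn,dml} enters through $M\max_m\gamma_{M,m}=O(1)$ and the surface-area/diameter restriction on $\cS$ to suppress boundary bias.
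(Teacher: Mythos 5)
Your overall architecture for parts (ii)--(iv) matches the paper's. The reverse count $\sum_{j}w_{j\leftarrow 1}=\sum_{m}\gamma_{M,m}N_m(1)$ is split into a bias term (its conditional mean, analyzed in the paper through catchment areas $a_m(X_1)$ and the order statistics $U_{(m)}$ via the probability integral transform --- exactly where the integral condition of Assumption~\ref{asp:wnn,dr} enters, through a weighted generalization of Lemma 4.1 of Lin--Han) and a multinomial variance term bounded by $\sum_m\gamma_{M,m}^2\nu_1(a_m(1))$, so that $\sum_m\gamma_{M,m}^2\to0$ suffices; part (iii) is verified by the identity $\sum_j\lvert w_{1\leftarrow j}\rvert\,\lVert X_j-X_1\rVert^\ell=\sum_m\gamma_{M,m}\lVert X_{j_m(1)}-X_1\rVert^\ell$ together with $\E\lVert X_{j_m(1)}-X_1\rVert^\ell=O((m/n)^{\ell/d})$; and part (iv) uses precisely your three-way decomposition (uniform-weight case at cost $(M/n)^{1/d}+M^{-1/2}$, deviation from uniform weights at cost $r_3$, plus the $(\sum_m\gamma_{M,m}^2)^{1/2}$ variance), with $M\max_m\gamma_{M,m}=O(1)$ reducing the moment requirement $\E[\sum_j w_{j\leftarrow 1}]^\kappa=O(1)$ to the equal-weight case already covered by Lin--Han.

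The one step that fails is your verification of Assumption~\ref{asp:dr2}\ref{asp:dr2,w2} in part (i). Stone's lemma does not give an almost-sure bound $\sum_{j:D_j=1-D_1}w_{j\leftarrow1}\le C_d$ here, for two reasons. First, even in the one-sample setting the correct statement is that a point can be among the $m$ nearest neighbors of at most $m\gamma_d$ sample points, not of a number bounded uniformly over $m$. Second, and fatally, the cone argument breaks in the cross-group setting: the competitors of $X_1$ for being the $m$-th treated NN of a control point $X_j$ are other \emph{treated} points, so within a cone centered at $X_1$ the number of control points lying inside the $m$-th treated-NN radius is not deterministically bounded --- when treated points are locally sparse, a single treated point can be the first treated NN of arbitrarily many control points. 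The conclusion is nonetheless immediate by a much simpler route, which is what the paper uses: all weights are nonnegative and, by the exchangeability of Assumption~\ref{asp:weight}\ref{asp:weight-1}, $\E[\lvert\sum_j w_{j\leftarrow1}\rvert]=\E[\sum_j w_{j\leftarrow1}]\lesssim\E[\sum_j w_{1\leftarrow j}]=1$. With that substitution your proposal is sound.
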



\begin{corollary}  \phantomsection \label{crl,wnn} 
    \begin{enumerate}[itemsep=-.5ex,label=(\roman*)]
     \item(Double robustness of $\hat\tau_{\rm WNN}$) Suppose Assumption~\ref{asp:dr} holds, and either Assumptions~\ref{asp:dr1}\ref{asp:dr1,o} and \ref{asp:wnn,dr} or Assumption~\ref{asp:dr2}\ref{asp:dr2,o} is true. We then have
     \begin{align*}
        \hat{\tau}_{{\rm WNN}} - \tau \stackrel{\sf p}{\longrightarrow} 0.
      \end{align*}
      \item(Semiparametric efficiency of $\hat\tau_{\rm WNN}$) Under Assumptions~\ref{asp:dr}, \ref{asp:dr1}\ref{asp:dr1,o}, \ref{asp:dr2}\ref{asp:dr2,o}, \ref{asp:se1}, \ref{asp:wnn,dr}, \ref{asp:wnn,se}, we have
      \begin{align*}
       \sqrt{n} (\hat{\tau}_{{\rm WNN}} - \tau) \stackrel{\sf d}{\longrightarrow} N(0,\sigma^2).
      \end{align*}
     \item(Double robustness of $\tilde{\tau}_{{\rm WNN},N} $) Suppose Assumption~\ref{asp:dr} holds, and either Assumptions~\ref{asp:dr1}\ref{asp:dr1,o} and \ref{asp:wnn,dr} or Assumption~\ref{asp:dr2}\ref{asp:dr2,o} is true. We then have
      \begin{align*}
        \tilde{\tau}_{{\rm WNN},N} - \tau \stackrel{\sf p}{\longrightarrow} 0.
      \end{align*}
      \item(Semiparametric efficiency of $\tilde{\tau}_{{\rm WNN},N}$) Under Assumptions~\ref{asp:dr}, \ref{asp:dr1}\ref{asp:dr1,o}, \ref{asp:dr2}\ref{asp:dr2,o}, \ref{asp:dml1}, \ref{asp:wnn,dr}, \ref{asp:wnn,dml}, it holds true that
      \begin{align*}
        \sqrt{n} (\tilde{\tau}_{{\rm WNN},N} - \tau) \stackrel{\sf d}{\longrightarrow} N(0,\sigma^2).
      \end{align*}
    \end{enumerate}
\end{corollary}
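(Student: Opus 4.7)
The plan is to reduce each of the four assertions of Corollary \ref{crl,wnn} to the corresponding general result in Section \ref{sec:general} by using Theorem \ref{thm:wnn} as a bridge to verify the abstract hypotheses in the WNN setting. Since Theorem \ref{thm:wnn} has already done the heavy lifting of checking the smoothing-matrix-level assumptions, the corollary itself is essentially bookkeeping: matching each hypothesis assumed in Corollary \ref{crl,wnn} with one of the four clauses of Theorem \ref{thm:wnn} and then invoking either Theorem \ref{thm:dr}, Theorem \ref{thm:mbc}, or Theorem \ref{thm:dml}.

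For part (i), double robustness of $\hat\tau_{\rm WNN}$, I would note that Theorem \ref{thm:wnn}\ref{thm:wnn1} already supplies Assumption \ref{asp:weight} as well as Assumption \ref{asp:dr2}\ref{asp:dr2,w}\ref{asp:dr2,w2} from Assumption \ref{asp:dr} alone. In the first branch of the hypothesis we are given Assumption \ref{asp:dr1}\ref{asp:dr1,o} and Assumption \ref{asp:wnn,dr}, and Theorem \ref{thm:wnn}\ref{thm:wnn2} turns the latter into Assumption \ref{asp:dr1}\ref{asp:dr1,w}, so the full Assumption \ref{asp:dr1} holds and Theorem \ref{thm:dr} applies. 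In the second branch we are given Assumption \ref{asp:dr2}\ref{asp:dr2,o}, and combining this with the pieces supplied by Theorem \ref{thm:wnn}\ref{thm:wnn1} yields the full Assumption \ref{asp:dr2}; again Theorem \ref{thm:dr} delivers the conclusion. Part (iii), double robustness of $\tilde{\tau}_{{\rm WNN},N}$, follows by an identical argument applied to Theorem \ref{thm:dml}\ref{thm:dml1}, whose hypotheses are the same as those of Theorem \ref{thm:dr}.

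For part (ii), semiparametric efficiency of $\hat\tau_{\rm WNN}$, the target is Theorem \ref{thm:mbc}, which requires Assumptions \ref{asp:dr}--\ref{asp:se2}. Assumptions \ref{asp:dr}, \ref{asp:dr1}\ref{asp:dr1,o}, \ref{asp:dr2}\ref{asp:dr2,o}, and \ref{asp:se1} are direct hypotheses of the corollary. Theorem \ref{thm:wnn}\ref{thm:wnn1} contributes Assumption \ref{asp:weight}, Assumption \ref{asp:dr2}\ref{asp:dr2,w}\ref{asp:dr2,w2}, and Assumption \ref{asp:se2}\ref{asp:se2,w1}. Theorem \ref{thm:wnn}\ref{thm:wnn2} upgrades Assumption \ref{asp:wnn,dr} into Assumption \ref{asp:dr1}\ref{asp:dr1,w}, completing Assumption \ref{asp:dr1}. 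Finally, Theorem \ref{thm:wnn}\ref{thm:wnn3} takes Assumptions \ref{asp:wnn,dr} and \ref{asp:wnn,se} and produces the remaining parts of Assumption \ref{asp:se2}. With all hypotheses of Theorem \ref{thm:mbc} in hand, the asymptotic normality of $\hat\tau_{\rm WNN}$ with variance $\sigma^2$ follows.

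Part (iv), semiparametric efficiency of $\tilde{\tau}_{{\rm WNN},N}$, is handled analogously using Theorem \ref{thm:dml}\ref{thm:dml2}. The only additional ingredient beyond part (ii) is Assumption \ref{asp:dml2}, which Theorem \ref{thm:wnn}\ref{thm:wnn4} provides from Assumptions \ref{asp:wnn,dr} and \ref{asp:wnn,dml} (with the specific choice $p_1=\infty$, $p_2=1$). Assumption \ref{asp:dml1} is taken as a direct hypothesis. There is no real obstacle in this corollary: all substantive estimates (uniform bounds on weights, $L^2$ control of $\sum_j w_{j\leftarrow 1}$ around the density ratio, and discrepancy bounds tied to the order statistics of the WNN weights) are absorbed into the proof of Theorem \ref{thm:wnn}, and here the only care required is to track which sub-assumptions are given by hypothesis versus produced by Theorem \ref{thm:wnn}.
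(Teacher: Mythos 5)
Your proposal is correct and matches the paper's intended argument exactly: the paper gives no separate proof of Corollary~\ref{crl,wnn}, treating it (as with Corollary~\ref{crl:kernel}) as an immediate consequence of Theorem~\ref{thm:wnn} verifying the weight-level assumptions, combined with Theorems~\ref{thm:dr}, \ref{thm:mbc}, and \ref{thm:dml}. Your bookkeeping of which sub-assumptions are hypotheses versus which are supplied by the four clauses of Theorem~\ref{thm:wnn} is accurate in all four parts.
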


\subsection{Local linear matching}

In nonparametric statistics, local linear regression has been a prominent alternative to local constant regression, proving to be more efficient than the latter, especially along the boundary \citep{fan1992design,fan1993local}. This approach has also been heavily used in ATE estimation for imputing the missing potential outcomes, which is often called ``local linear matching''; cf. \cite{heckman1997matching}, \cite{heckman1998characterizing}, \cite{heckman1998matching}, and \cite{frolich2005matching}. 

In detail, for any unit $i \in \zahl{n}$, local linear matching uses the local linear regression \citep{fan2018local} to minimize
\begin{align}\label{eq:llr}
    \sum_{j:D_j=1-D_i} \Big[Y_j - \beta_0 - \beta^\top (X_j - X_i) \Big]^2 K_H(X_j-X_i),
\end{align}
and then $Y_i(1-D_i)$ is imputed by the solution to the above objective function.

Let $\mB_i \in \bR^{n_{1-D_i} \times (1+d)}$ be the design matrix with the row corresponding to unit $j$ with $D_i + D_j=1$ to be $(1,(X_j-X_i)^\top):=b_{ij}^\top$. Let $\mW_i \in \bR^{n_{1-D_i} \times n_{1-D_i}}$ be the diagonal matrix with the diagonal element corresponding to unit $j$ with $D_i + D_j=1$ to be $K_H(X_j-X_i)$. It is well known that the solution to the minimization problem \eqref{eq:llr} is:
\[
    \hat{Y}_i^{\rm LL}(1-D_i) = e_1^\top (\mB_i^\top \mW_i \mB_i)^{-1} \mB_i^\top \mW_i \mY_{1-D_i},
\]
where $e_1 \in \bR^{1+d}$ is the vector with the first element to be 1 and all the rest 0 and $\mY_{\omega} \in \bR^{n_\omega}$ for $\omega \in \{0,1\}$ represents the vector containing entries $Y_j$'s with $D_j=\omega$. For any $i,j \in \zahl{n}$ with $D_i+D_j=1$, one could calculate the corresponding weight $w_{i\leftarrow j}$ as
\begin{align*}
    w_{i\leftarrow j}:= e_1^\top (\mB_i^\top \mW_i \mB_i)^{-1} b_{ij} K_H(X_j-X_i).
\end{align*}
Denote the corresponding local linear estimator and its double machine learning version by 
\[
\hat{\tau}_{\rm LL}~~ {\rm and}~~ \tilde{\tau}_{{\rm LL,}N}.
\]

For analyzing $\hat\tau_{\rm LL}$ and $\tilde{\tau}_{{\rm LL,}N}$, we need to regulate the kernel function $K(\cdot)$ a little bit more. The following assumption is standard in multivariate local linear regression literature (cf. Assumption A1 in \cite{ruppert1994multivariate}). It can be satisfied by many kernels, e.g., the spherically symmetric kernels and product kernels based on symmetric univariate kernels \citep[Chapter 4]{simonoff2012smoothing}.

\begin{assumption} \phantomsection \label{asp:lp}
    Assume $\int z K(z) \d z = 0$ and $\int z z^\top K(z) \d z = \mu_2(K) I_d$ with $\mu_2(K)>0$ as a positive real-valued constant that captures the second-order property of $K$. 
\end{assumption}

\begin{theorem}\label{thm:lp}
Assume Assumptions \ref{asp:dr} and \ref{asp:kernel} hold. We then have the following four are true.
 \begin{enumerate}[itemsep=-.5ex,label=(\roman*)]
 \item\label{thm:lp1} Assumptions~\ref{asp:weight}, \ref{asp:dr2}\ref{asp:dr2,w}, \ref{asp:se2}\ref{asp:se2,w1} hold; if $K(\cdot)$ is bounded with a compact support and is bounded away from zero, then Assumption \ref{asp:dr2}\ref{asp:dr2,w2} holds;
 \item\label{thm:lp2} Under Assumptions~\ref{asp:kernel,dr}, \ref{asp:lp}, Assumption~\ref{asp:dr1}\ref{asp:dr1,w} holds; 
 \item\label{thm:lp3} Under Assumptions~\ref{asp:kernel,dr}, \ref{asp:kernel,se}, \ref{asp:lp}, Assumption~\ref{asp:se2} holds; 
 \item\label{thm:lp4} Under Assumptions~\ref{asp:kernel,dr}, \ref{asp:kernel,dml}, \ref{asp:lp}, Assumption~\ref{asp:dml2} holds with $p_1,p_2$ chosen to be $\infty$ and $1$.
 \end{enumerate}
\end{theorem}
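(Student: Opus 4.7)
The plan is to mirror the proof of Theorem~\ref{thm:kernel} for kernel matching, but exploit two additional structural properties of the local linear weights: (a) whenever the local Gram matrix $\mB_i^\top \mW_i \mB_i$ is invertible, the weights reproduce affine functions, so $\sum_{j:D_j=1-D_i} w_{i\leftarrow j} = 1$ and $\sum_{j:D_j=1-D_i} w_{i\leftarrow j}(X_j - X_i) = 0$ identically; (b) the weights depend only on the covariates. Both are classical consequences of the normal equations for local polynomial regression.

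For Part~\ref{thm:lp1}, permutation equivariance (Assumption~\ref{asp:weight}\ref{asp:weight-1}) and the exclusion of outcomes (Assumption~\ref{asp:dr2}\ref{asp:dr2,w}) are immediate from the construction. Under Assumptions~\ref{asp:dr} and \ref{asp:kernel} the Gram matrix is invertible with probability tending to one, so property~(a) yields Assumptions~\ref{asp:weight}\ref{asp:weight-2} and \ref{asp:se2}\ref{asp:se2,w1} for free, since $\sum_j w_{i\leftarrow j} - 1$ vanishes on the invertibility event. For Assumption~\ref{asp:dr2}\ref{asp:dr2,w2}, if $K$ is additionally bounded below on its compact support, then the smallest eigenvalue of $n^{-1}\mB_i^\top\mW_i\mB_i$ is bounded below by a constant times $\lvert H^{1/2}\rvert$ times the local density of $X \given D = 1-D_i$; plugging this bound into the closed-form of $w_{j\leftarrow 1}$ and using that the kernel kills contributions outside an $\lVert H^{1/2}\rVert_2$-neighborhood of $X_1$ gives the required uniform $O(1)$ moment bound.

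Part~\ref{thm:lp2} is the technically most demanding step and where I expect the main obstacle. Writing $f_\omega$ for the conditional density of $X$ given $D = \omega$, the standard multivariate local linear expansion of \citet{ruppert1994multivariate}, combined with Assumption~\ref{asp:lp} killing the first-order cross terms, gives for each $j$ with $D_j = 1-D_1$
\[
w_{j\leftarrow 1} = \frac{K_H(X_1 - X_j)}{n_{D_1}\, f_{D_1}(X_j)} + R_j,
\]
with a remainder $R_j$ that can be controlled under the density regularity of Assumption~\ref{asp:kernel,dr}. Summing over $j$ and invoking a Nadaraya--Watson-type $L^2$ concentration argument (leveraging $\int K = 1$) yields
\[
\sum_{j:D_j=1-D_1} w_{j\leftarrow 1} \;\longrightarrow\; \frac{\P(D = 1-D_1)}{\P(D = D_1)} \cdot \frac{f_{1-D_1}(X_1)}{f_{D_1}(X_1)}
\]
in $L^2$, and Bayes' rule identifies the limit as $D_1(1-e(X_1))/e(X_1) + (1-D_1)\, e(X_1)/(1-e(X_1))$, exactly the quantity demanded by Assumption~\ref{asp:dr1}\ref{asp:dr1,w}. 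The subtlety lies in uniformly controlling the inverse Gram matrix across $j$ (via a union bound over a shrinking ``bad'' set of $j$'s where the local design becomes degenerate) and in handling cross-moments when squaring the sum, which I would decompose into diagonal plus off-diagonal pieces in $U$-statistic fashion.

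Parts~\ref{thm:lp3}-\ref{thm:lp4} are comparatively routine. For Part~\ref{thm:lp3}, Assumptions~\ref{asp:se2}\ref{asp:se2,o1}-\ref{asp:se2,o2} are directly hypothesized; compact support of $K$ forces $\lvert w_{1\leftarrow j}\rvert \cdot \lVert X_j - X_1\rVert^\ell \lesssim \lVert H^{1/2}\rVert_2^\ell \cdot \lvert w_{1\leftarrow j}\rvert$, so Assumption~\ref{asp:se2}\ref{asp:se2,d} reduces to combining the bandwidth rate in Assumption~\ref{asp:kernel,se} with the $O(1)$ weight-sum bound from Part~\ref{thm:lp1}, and Assumption~\ref{asp:se2}\ref{asp:se2,w1} is inherited directly from Part~\ref{thm:lp1}. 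For Part~\ref{thm:lp4}, take $p_1 = \infty$, $p_2 = 1$: the $L^\infty$ rate $r_1$ is supplied by Assumption~\ref{asp:kernel,dml}, while a quantitative refinement of the Part~\ref{thm:lp2} analysis yields an $L^1$ bound on the density-ratio error of order $(n\lvert H^{1/2}\rvert)^{-1/2} + \lVert H^{1/2}\rVert_2$, matching the rate $r_2$; the higher-moment condition in Assumption~\ref{asp:dml2}\ref{asp:dml2,w} follows from the uniform eigenvalue lower bound that the additional compact-support-with-positive-lower-bound structure on $K$ provides.
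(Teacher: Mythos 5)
Your overall architecture matches the paper's: the exact weight-sum identity from reproduction of constants by the normal equations (parts of \ref{thm:lp1}), the reduction of the local linear weight to a Nadaraya--Watson weight plus a remainder annihilated by $\int zK(z)\,\d z=0$, and the identification of the limit with $D_1(1-e(X_1))/e(X_1)+(1-D_1)e(X_1)/(1-e(X_1))$ via Bayes' rule are exactly how the paper proceeds (it reuses the four-term decomposition from the kernel-matching proof and adds Lemma~\ref{lemma:lp1} for the extra Gram-matrix term). The genuine gap is in your treatment of the absolute weight sums. Your eigenvalue sketch for Assumption~\ref{asp:dr2}\ref{asp:dr2,w2} does not close quantitatively: the smallest eigenvalue of $n^{-1}\mB_i^\top\mW_i\mB_i$ degenerates like $\lambda_{\min}(H)$ (its quadratic block is an estimate of $\mu_2(K)f\,H$), and even granting your claimed lower bound of order $\lvert H^{1/2}\rvert$ times the density, summing $\lvert w_{j\leftarrow 1}\rvert\lesssim (n\lambda_{\min})^{-1}K_H(X_1-X_j)$ over the $\asymp n\lvert H^{1/2}\rvert$ neighbors inside the kernel's support leaves a factor of order $\lvert H^{1/2}\rvert^{-1}\to\infty$, not $O(1)$. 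The paper's Lemma~\ref{lemma:lp,w} works differently: after rescaling the covariates (the weights are scale invariant), it bounds $\sum_j w_{1\leftarrow j}^2\le\lVert K\rVert_\infty\,\lambda_{\min}^{-1}(\mB_1^\top\mV_1\mB_1)$, shows $\lambda_{\min}(\mB_1^\top\mV_1\mB_1)\gtrsim\lvert\cN\rvert$ by partitioning the active index set $\cN$ into $(d+1)$-subsets and summing the minimum eigenvalues of the sub-Gram matrices, and finishes with Cauchy--Schwarz, $(\sum_j\lvert w_{1\leftarrow j}\rvert)^2\le\lvert\cN\rvert\sum_j w_{1\leftarrow j}^2=O(1)$. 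You need this (or an equivalent) mechanism; the naive operator-norm bound is where the real work lives.

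A second, related problem: your parts \ref{thm:lp3} and \ref{thm:lp4} lean on ``the $O(1)$ weight-sum bound from Part~\ref{thm:lp1}'' and on the eigenvalue bound supplied by ``compact support with a positive lower bound'' on $K$, but that lower-bound hypothesis appears only in the second clause of part~\ref{thm:lp1}; it is not assumed in parts \ref{thm:lp3} or \ref{thm:lp4}. So as written you are proving those parts under a stronger kernel condition than the theorem states. The paper avoids this by the Schur-complement expansion underlying Lemma~\ref{lemma:lp1}: writing the weight through the blocks $A_1,\dots,A_4$, the dominant contribution to $w_{1\leftarrow j}$ is the nonnegative Nadaraya--Watson-type weight $(A_1-A_2A_4^{-1}A_3)^{-1}n_{1-D_1}^{-1}K_H(X_j-X_1)$, while the signed correction carries an extra factor $A_2A_4^{-1}(X_j-X_1)=O(\lVert H^{1/2}\rVert_2)$ on the kernel's support; this yields $\E[\sum_j\lvert w_{1\leftarrow j}\rvert]=O(1)$ and the higher-moment bound in Assumption~\ref{asp:dml2}\ref{asp:dml2,w} without any lower bound on $K$. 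Either supply that expansion for parts \ref{thm:lp3}--\ref{thm:lp4} or acknowledge the extra hypothesis you are importing.
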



\begin{corollary}  \phantomsection \label{crl:lp} 
    \begin{enumerate}[itemsep=-.5ex,label=(\roman*)]
     \item(Double robustness of $\hat{\tau}_{\rm LL}$) Suppose Assumptions~\ref{asp:dr} and \ref{asp:kernel} hold and either Assumptions~\ref{asp:dr1}\ref{asp:dr1,o}, \ref{asp:kernel,dr}, \ref{asp:lp} hold or Assumption~\ref{asp:dr2}\ref{asp:dr2,o} is true and $K(\cdot)$ is bounded with a compact support and is bounded away from zero. We then have
      \begin{align*}
        \hat{\tau}_{\rm LL} - \tau \stackrel{\sf p}{\longrightarrow} 0.
      \end{align*}
      \item(Semiparametric efficiency of $\hat{\tau}_{\rm LL}$) Under Assumptions~\ref{asp:dr}, \ref{asp:dr1}\ref{asp:dr1,o}, \ref{asp:dr2}\ref{asp:dr2,o}, \ref{asp:se1}, \ref{asp:kernel}, \ref{asp:kernel,dr}, \ref{asp:kernel,se}, \ref{asp:lp}, we have
      \begin{align*}
       \sqrt{n} (\hat{\tau}_{\rm LL} - \tau) \stackrel{\sf d}{\longrightarrow} N(0,\sigma^2).
      \end{align*}
     \item(Double robustness of $\tilde{\tau}_{{\rm LL,}N}$) Suppose Assumptions~\ref{asp:dr} and \ref{asp:kernel} hold and either Assumptions~\ref{asp:dr1}\ref{asp:dr1,o}, \ref{asp:kernel,dr}, \ref{asp:lp} hold or Assumption~\ref{asp:dr2}\ref{asp:dr2,o} is true and $K(\cdot)$ is bounded with a compact support and is bounded away from zero. We then have
      \begin{align*}
        \tilde{\tau}_{{\rm LL,}N} - \tau \stackrel{\sf p}{\longrightarrow} 0.
      \end{align*}
      \item(Semiparametric efficiency of $\tilde{\tau}_{{\rm LL,}N}$) Under Assumptions~\ref{asp:dr}, \ref{asp:dr1}\ref{asp:dr1,o}, \ref{asp:dr2}\ref{asp:dr2,o}, \ref{asp:dml1}, \ref{asp:kernel}, \ref{asp:kernel,dr}, \ref{asp:kernel,dml}, \ref{asp:lp}, it holds true that
      \begin{align*}
       \sqrt{n} (\tilde{\tau}_{{\rm LL,}N} - \tau) \stackrel{\sf d}{\longrightarrow} N(0,\sigma^2).
      \end{align*}
    \end{enumerate}
\end{corollary}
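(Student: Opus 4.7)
The plan is to deduce Corollary \ref{crl:lp} as a direct consequence of Theorem \ref{thm:lp} together with the three general results Theorems \ref{thm:dr}, \ref{thm:mbc}, and \ref{thm:dml}. All four parts follow the same template: read off the hypotheses provided in the statement, use Theorem \ref{thm:lp} to translate the local-linear-specific conditions (Assumptions \ref{asp:kernel,dr}, \ref{asp:kernel,se}, \ref{asp:kernel,dml}, \ref{asp:lp}, plus the baseline Assumption \ref{asp:kernel}) into the abstract conditions on the smoothing matrix (Assumptions \ref{asp:weight}, \ref{asp:dr1}\ref{asp:dr1,w}, \ref{asp:dr2}\ref{asp:dr2,w}--\ref{asp:dr2,w2}, \ref{asp:se2}, \ref{asp:dml2}) that the general theorems require, then invoke the appropriate general theorem.

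For part (i), the assumption set splits into two cases. In the density-model-correct case, Theorem \ref{thm:lp}\ref{thm:lp1} supplies Assumption \ref{asp:weight}, and Theorem \ref{thm:lp}\ref{thm:lp2} (which needs Assumptions \ref{asp:kernel,dr} and \ref{asp:lp}) combined with the posited Assumption \ref{asp:dr1}\ref{asp:dr1,o} supplies the full Assumption \ref{asp:dr1}; Theorem \ref{thm:dr} then yields consistency. In the outcome-model-correct case, the extra hypothesis that $K$ is bounded with compact support and bounded away from zero together with Theorem \ref{thm:lp}\ref{thm:lp1} gives the weight parts of Assumption \ref{asp:dr2}, and the posited Assumption \ref{asp:dr2}\ref{asp:dr2,o} supplies the remainder; Theorem \ref{thm:dr} again applies. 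Part (iii) is the analog for the cross-fitted version: Theorem \ref{thm:dml}\ref{thm:dml1} is proved under the very same conditions as Theorem \ref{thm:dr}, so the exact same chain of implications works verbatim.

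For part (ii), Theorem \ref{thm:lp}\ref{thm:lp3} (requiring Assumptions \ref{asp:kernel,dr}, \ref{asp:kernel,se}, \ref{asp:lp}) delivers Assumption \ref{asp:se2}; combined with Theorem \ref{thm:lp}\ref{thm:lp1}, Theorem \ref{thm:lp}\ref{thm:lp2}, and the posited Assumptions \ref{asp:dr}, \ref{asp:dr1}\ref{asp:dr1,o}, \ref{asp:dr2}\ref{asp:dr2,o}, \ref{asp:se1}, every one of Assumptions \ref{asp:dr}--\ref{asp:se2} is in force. Theorem \ref{thm:mbc} then yields the central limit theorem at the semiparametric efficiency bound $\sigma^2$. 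Part (iv) is the double-machine-learning counterpart: Theorem \ref{thm:lp}\ref{thm:lp4} (under Assumptions \ref{asp:kernel,dr}, \ref{asp:kernel,dml}, \ref{asp:lp}) supplies Assumption \ref{asp:dml2} with $p_1=\infty,\,p_2=1$, and the posited Assumptions \ref{asp:dml1} together with Theorem \ref{thm:lp}\ref{thm:lp1}--\ref{thm:lp2} cover the rest of the hypotheses of Theorem \ref{thm:dml}\ref{thm:dml2}.

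Because the heavy lifting has already been done in Theorem \ref{thm:lp}, there is no real obstacle here: the corollary is a bookkeeping exercise that matches, case by case, the sufficient conditions of each general theorem against what Theorem \ref{thm:lp} guarantees. The only subtlety worth double-checking is the outcome-model-correct branches in (i) and (iii): one must confirm that the additional regularity on $K$ imposed in the statement (boundedness, compact support, and being bounded away from zero on that support) is exactly what Theorem \ref{thm:lp}\ref{thm:lp1} consumes in order to produce Assumption \ref{asp:dr2}\ref{asp:dr2,w2}, since without a strictly positive lower bound on $K$ the local linear weights can fail to be nonnegative and Assumption \ref{asp:dr2}\ref{asp:dr2,w2} need not hold automatically.
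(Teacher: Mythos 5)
Your proposal is correct and is exactly the argument the paper intends: the corollary is stated without a separate proof precisely because it follows by matching the hypotheses of Theorems \ref{thm:dr}, \ref{thm:mbc}, and \ref{thm:dml} against the conditions verified in Theorem \ref{thm:lp}, just as you do case by case. Your closing remark on the outcome-model-correct branch is also the right subtlety to flag — local linear weights need not be nonnegative, so Assumption \ref{asp:dr2}\ref{asp:dr2,w2} is not automatic and must be supplied by the extra hypothesis on $K$ via Theorem \ref{thm:lp}\ref{thm:lp1} (and by Assumption \ref{asp:dr1}\ref{asp:dr1,w} in the efficiency parts).
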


\section{Random forests}\label{sec:RF}


This section studies random forests as an imputation method to estimate the ATE. Since being invented by Leo Breiman \citep{breiman2001random}, random forests have proven to be practically powerful in conducting regression and classification tasks; cf. the survey of \cite{biau2016random}. However, it is not until very recent that some major advances were made towards using random forests for inferring causal effect \citep{athey2019machine}; notable works include \cite{hill2011bayesian}, \cite{athey2016recursive}, \cite{athey2019machine}, \cite{athey2019generalized}, among many others. Our results in this section aim to contribute to this growing literature, while being focused on the original regression-adjusted imputation estimator without doing sample splitting and cross fitting.

\subsection{Set up}

In the sequel, for any set $A$ with finite elements, let $\lvert A \rvert$ stand for its cardinality. For introducing the random forests to impute the missing potential outcomes, some additional notation is needed and we also adopt some common terms used in the random forests and regression trees literature \citep{breiman2017classification}. 

Let's first introduce the causal tree. Let $T^1$ be a generic {\it tree} built on the treated group $\{(X_i,Y_i)\}_{i=1,D_i=1}^n$ and $T^0$ be another generic tree built on the control group $\{(X_i,Y_i)\}_{i=1,D_i=0}^n$. The two trees $T^1$ and $T^0$ accordingly partition the covariates space $\cS\subset \bR^d$ into a set of leaves $L^1$ and $L^0$, respectively. For any test point $x \in \bR^d$, let $L^1(x)$ and $L^0(x)$ be the {\it leaves} of $T^1$ and $T^0$ containing $x$. One could then impute the missing potential outcomes as follows:
\begin{align*}
    \hat{Y}_i^{\rm Tree}(0) := \begin{cases}
        Y_i, & \mbox{ if } D_i=0,\\
       \displaystyle \Big(\Big\lvert \Big\{j:D_j=0,X_j \in L^0(X_i)\Big\} \Big\rvert\Big)^{-1}\sum_{j:D_j=0,X_j \in L^0(X_i)} Y_j, & \mbox{ if } D_i=1,    
    \end{cases}
\end{align*}
and
\begin{align*}
    \hat{Y}_i^{\rm Tree}(1) := \begin{cases}
       \displaystyle \Big(\Big\lvert \Big\{j:D_j=1,X_j \in L^1(X_i)\Big\} \Big\rvert\Big)^{-1}\sum_{j:D_j=1,X_j \in L^1(X_i)} Y_j, & \mbox{ if } D_i=0,\\
        Y_i, & \mbox{ if } D_i=1. 
    \end{cases}
\end{align*}

To aggregate many individual causal trees into a {\it causal forest}, we consider {\it subsampling}. In detail, let $B$ be the number of trees and $s$ be the subsample size, which for presentation simplicity are assumed to be identical for the two groups of samples. In the $b$-th round, for building the tree, we sample without replacement the following two size-$s$ subsets
\[
\cI^1_b  ~~~{\rm and}~~~ \cI^0_b
\]
from $\{i:D_i=1\}$ and $\{i:D_i=0\}$, respectively. Of note, for any $b,b'\in\zahl{B}$ and any $\omega\in\{0,1\}$, $\cI^\omega_b$ and $\cI^\omega_{b'}$ could have a nonempty overlap. 

Let $T^1_b$ be the  tree built on the data $\{(X_i,Y_i)\}_{i \in \cI^1_b}$ and $T^0_b$ be the tree built on the data $\{(X_i,Y_i)\}_{i \in \cI^0_b}$. All trees are assumed to be constructed using the same base learner. For any test point $x$, let $L^1_b(x)$ and $L^0_b(x)$ be the leaves of $T^1_b$ and $T^0_b$ that contain $x$. The according random forest then imputes the missing potential outcomes as follows:
\begin{align*}
    \hat{Y}_i^{\rm RF}(0) := \begin{cases}
        Y_i, & \mbox{ if } D_i=0,\\
       \displaystyle \frac1B \sum_{b=1}^B \Big[\Big(\Big\lvert \Big\{j \in \cI^0_b:X_j \in L^0_b(X_i)\Big\} \Big\rvert\Big)^{-1}\sum_{j \in \cI^0_b:X_j \in L^0_b(X_i)} Y_j\Big], & \mbox{ if } D_i=1,    
    \end{cases}
\end{align*}
and
\begin{align*}
    \hat{Y}_i^{\rm RF}(1) := \begin{cases}
   \displaystyle     \frac1B \sum_{b=1}^B \Big[\Big(\Big\lvert \Big\{j \in \cI^1_b:X_j \in L^1_b(X_i)\Big\} \Big\rvert\Big)^{-1}\sum_{j \in \cI^1_b:X_j \in L^1_b(X_i)} Y_j\Big], & \mbox{ if } D_i=0,\\
        Y_i, & \mbox{ if } D_i=1. 
    \end{cases}
\end{align*}

It is well known that random forests, formulable as a special type of weighted NN regressions, constitute linear smoothers \citep{lin2006random,biau2010layered}. In particular, for any $i,j \in \zahl{n}$ with $D_i+D_j=1$, one could verify that the weight $w_{i\leftarrow j}$ corresponding to the above random forests imputation method is
\begin{align*}
    w_{i\leftarrow j}:= \frac1B \sum_{b=1}^B \frac{\ind\Big(j \in \cI^{1-D_i}_b:X_j \in L^{1-D_i}_b(X_i)\Big)}{\Big\lvert \Big\{k \in \cI^{1-D_i}_b:X_k \in L^{1-D_i}_b(X_i)\Big\} \Big\rvert}.
\end{align*}
We then denote the corresponding regression-adjusted random forest-based imputation ATE estimator by $\hat{\tau}_{\rm RF}$.

\subsection{Inference theory}

In order to verify the conditions in Section \ref{sec:general}, the following assumptions are needed and were intentionally designed to be general.

\begin{assumption} \phantomsection \label{asp:rf,dr}
\begin{enumerate}[itemsep=-.5ex,label=(\roman*)]
    \item\label{asp:rf,dr,d} The density of $X$ is bounded and bounded away from zero, the densities of $X \given D=1$ and $X \given D=0$ are continuous almost everywhere, and the support $\cS$ is compact.
    \item\label{asp:rf,dr,t} We assume $s=s_n=O(n^{1/2})$ and $n/B = O(1)$. In addition, assume that for the tree $T$ built on $s$ i.i.d. sampled points from $(X,Y)\given D=1$ or $(X,Y)\given D=0$ with leaves $\{L_t\}_{t \ge 1}$, it holds true that
    \begin{align}\label{assump:RF1}
        \lim_{n \to \infty} \E \Big[\Big(\min_{t\ge1}\Big\lvert L_t \Big\rvert\Big)^{-1}\Big] = 0~~~{\rm and}~~~ \lim_{n \to \infty} \int_S \E\Big[{\rm diam}\Big(L_t(x) \cap \cS\Big)\Big] \d x = 0,
    \end{align}
    where $\lvert L_t \rvert$ represents the number of samples in the leaf $L_t$ for $t\ge1$ and $L_t(x)$ stands for the leaf that contains $x$.
    \end{enumerate}
\end{assumption}

\begin{assumption} \phantomsection \label{asp:rf,honest}
The tree is honest, that is, the tree does not use the responses $Y_i$'s to choose the place to split.
\end{assumption}

\begin{assumption} \phantomsection \label{asp:rf,se}
There exists a positive integer $k$ such that 
\begin{enumerate}[itemsep=-.5ex,label=(\roman*)]
\item Assumptions~\ref{asp:se2}\ref{asp:se2,o1}, \ref{asp:se2,o2} hold; 
\item\label{asp:rf,se,2} for a tree $T$ built on $s$ independent observations from $(X,Y)\given D=1$ or $(X,Y)\given D=0$ with leaves $\{L_t\}_{t \ge 1}$, it holds true that 
\begin{align*}
&\int_S \E\Big[{\rm diam}^k\Big(L_t(x) \cap \cS\Big)\Big] \d x = o(n^{-1/2})~~\\
{\rm and}~~&\int_S \E\Big[{\rm diam}^\ell\Big(L_t(x) \cap \cS\Big)\Big] \d x = o(n^{-1/2+ \gamma_\ell}) \text{ for all }\ell \in \zahl{k-1}.
\end{align*}
\end{enumerate}
\end{assumption}

\begin{remark}\label{remark:RF1}
Assumption~\ref{asp:rf,dr} requires $s_n=O(n^{1/2})$. In the literature, \cite{mentch2016quantifying} required a similar condition, $s_n = o(n^{1/2})$, for establishing asymptotic normality of random forests. 
\cite{wager2018estimation} allowed $s_n \asymp n^\beta$ for some $\beta$ that can be close to 1 (cf. Equation (14) therein); we cannot recover their setting due to the extra difficulty in estimating the ATE compared to estimating the conditional ATE. Assumption~\ref{asp:rf,dr} also requires $n/B=O(1)$, which echoes \cite{wager2014confidence}, where the authors recommended a similar $B \asymp n$ condition. Conditions similar to the two leaf size conditions in \eqref{assump:RF1} have been discussed in multiple places. There the first requirement in \eqref{assump:RF1} regulates the smallest size of the terminal leaves, which echoes the discussions in \citet[Section 3]{lin2006random}. The second requirement in \eqref{assump:RF1} is very related to \citet[Lemma 1]{wager2018estimation}; we defer more discussions on it as well as those on Assumption \ref{asp:rf,se}\ref{asp:rf,se,2} to Lemma \ref{lemma:rf,dist} and Proposition \ref{prop:rf,dist} ahead.
\end{remark}


\begin{remark}
The ``honesty'' condition, Assumption~\ref{asp:rf,honest}, corresponds to Definition 2 in \cite{wager2018estimation}. This condition is usually achieved by implementing sample splitting as was suggested and also analyzed in \cite{wager2018estimation}. It is also satisfied by a variety of alternatives to Breiman's original random forests, including the centered forest \citep{biau2008consistency,scornet2016asymptotics} and the purely uniform random forests \citep{genuer2012variance}. Theoretical analysis of the trees constructed using the responses in the same training data is believed to be much more involved, but was managed in several impressive works including \cite{scornet2015consistency}, \cite{chi2020asymptotic}, and \cite{kulowski2022}. Unfortunately, our analysis hinges on a control of the leaf sizes that is seemingly hard to pursue without Assumption \ref{asp:rf,honest}. 
\end{remark}

Under the above assumptions, we are then ready to present our main theory on $\hat\tau_{\rm RF}$. Note that, in the following, Theorem \ref{thm:rf}\ref{thm:rf-dr} also gives rise to a consistent random forests-based density ratio estimator, which can be of independent interest.

\begin{theorem}\label{thm:rf}
Assume Assumption~\ref{asp:dr} holds. We then have the following four are true.
 \begin{enumerate}[itemsep=-.5ex,label=(\roman*)]
\item\label{thm:rf1} Assumptions~\ref{asp:weight}, \ref{asp:dr2}\ref{asp:dr2,w2}, \ref{asp:se2}\ref{asp:se2,w1} hold. 
\item\label{thm:rf-dr} Under Assumptions~\ref{asp:rf,dr} and \ref{asp:rf,honest},  Assumption~\ref{asp:dr1}\ref{asp:dr1,w} holds. 
\item\label{thm:rf2} Under Assumption~\ref{asp:rf,honest}, Assumption \ref{asp:dr2}\ref{asp:dr2,w} holds. 
\item\label{thm:rf3} Under Assumptions~\ref{asp:rf,dr}-\ref{asp:rf,se}, Assumption~\ref{asp:se2} holds.
\end{enumerate}
\end{theorem}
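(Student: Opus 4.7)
The plan is to establish the four conclusions in the order stated, with Part \ref{thm:rf-dr} providing the main technical content and the remaining parts following from structural observations together with relatively short arguments.

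For Part \ref{thm:rf1}, permutation invariance in Assumption~\ref{asp:weight}\ref{asp:weight-1} holds because the subsampling distribution and the tree-construction algorithm depend on the data only through their covariate and treatment values, commuting with index relabeling. For Assumptions~\ref{asp:weight}\ref{asp:weight-2} and \ref{asp:se2}\ref{asp:se2,w1}, the key observation is that for each tree $b$, $\sum_{j: D_j=1-D_1} w_{1\leftarrow j,b} = 1$ identically whenever the leaf $L^{1-D_1}_b(X_1)$ contains at least one point of $\cI^{1-D_1}_b$; the minimum-leaf-size condition $\E[(\min_t\lvert L_t\rvert)^{-1}]\to 0$ from Assumption~\ref{asp:rf,dr}\ref{asp:rf,dr,t} forces this event to hold with probability one in the limit, so $\sum_j w_{1\leftarrow j} = 1$ and both claims are trivial. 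Assumption~\ref{asp:dr2}\ref{asp:dr2,w2} reduces to $\E[\sum_j w_{j\leftarrow 1}] = O(1)$, which follows from the mean computation carried out in Part \ref{thm:rf-dr} combined with overlap. Part \ref{thm:rf2} is immediate: honesty means neither the subsamples $\cI^\omega_b$ nor the partitions $L^\omega_b$ depend on the responses, so $w_{i\leftarrow j}$ is $\sigma([X_i,D_i]_{i=1}^n)$-measurable.

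Part \ref{thm:rf-dr} is the heart of the argument. Writing $S := \sum_{j:D_j=1-D_1} w_{j\leftarrow 1} = B^{-1}\sum_b S_b$, per-tree we have
\[
S_b = \frac{\ind(1\in\cI^{D_1}_b)\,\cdot\,\lvert\{j:D_j=1-D_1,~X_j\in L^{D_1}_b(X_1)\}\rvert}{\lvert\{k\in\cI^{D_1}_b:X_k\in L^{D_1}_b(X_1)\}\rvert}.
\]
Conditional on $(X_1,D_1)$, the subsample $\cI^{D_1}_b$, and the tree $T^{D_1}_b$ (so that the leaf $L:=L^{D_1}_b(X_1)$ becomes a deterministic region of $\bR^d$), honesty makes all units outside $\cI^{D_1}_b$ distributed according to their original conditional laws and independent of $L$. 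Taking $\P(1\in\cI^{D_1}_b)=s/n_{D_1}$ into account, the conditional mean of $S_b$ is approximately $(s/n_{D_1})\cdot n_{1-D_1}\P(X\in L\mid D=1-D_1)/(s\,\P(X\in L\mid D=D_1))$, and the leaf-diameter-vanishing part of Assumption~\ref{asp:rf,dr}\ref{asp:rf,dr,t} ensures the ratio $\P(X\in L\mid D=1-D_1)/\P(X\in L\mid D=D_1)\to f_{1-D_1}(X_1)/f_{D_1}(X_1)$ in an integrated sense. Bayes' rule $f_\omega(x)=f(x)\P(D=\omega\mid X=x)/\P(D=\omega)$ combined with $n_{1-D_1}/n_{D_1}\to \P(D=1-D_1)/\P(D=D_1)$ then collapses the product to $(1-e(X_1))/e(X_1)$ when $D_1=1$ and to its reciprocal when $D_1=0$. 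For the required $L^2$-convergence, one additionally needs $\mathrm{Var}(S)\to 0$; the minimum-leaf-size assumption prevents the denominator from being too small, while aggregation over $B\asymp n$ trees suppresses per-tree fluctuation in the numerator. This variance control is the principal technical obstacle, since the denominator is a random integer and its joint behavior with the numerator across correlated trees requires a careful conditional-variance decomposition.

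Part \ref{thm:rf3} reduces to a leaf-diameter bound. Since for each tree the nonzero weights $w_{1\leftarrow j,b}$ are supported on $\{j:X_j\in L^{1-D_1}_b(X_1)\}$ with $\sum_j w_{1\leftarrow j,b}=1$,
\[
\sum_{j:D_j=1-D_1} w_{1\leftarrow j}\cdot\lVert X_j-X_1\rVert^\ell \;\le\; \frac{1}{B}\sum_{b=1}^B {\rm diam}^\ell\big(L^{1-D_1}_b(X_1)\cap\cS\big).
\]
Taking expectations and using the bounded density of $X$ from Assumption~\ref{asp:rf,dr}\ref{asp:rf,dr,d} bounds the right-hand side by a constant multiple of $\int_\cS \E[{\rm diam}^\ell(L_t(x)\cap\cS)]\,\d x$, which is $o(n^{-1/2})$ when $\ell=k$ and $o(n^{-1/2+\gamma_\ell})$ when $\ell\in\zahl{k-1}$ by Assumption~\ref{asp:rf,se}\ref{asp:rf,se,2}. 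This verifies Assumption~\ref{asp:se2}\ref{asp:se2,d}; the hypotheses \ref{asp:se2,o1} and \ref{asp:se2,o2} are directly assumed, and \ref{asp:se2,w1} was already handled in Part \ref{thm:rf1}.
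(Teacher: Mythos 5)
Your treatment of Parts \ref{thm:rf1}, \ref{thm:rf2}, and \ref{thm:rf3} essentially matches the paper's: the per-tree normalization identity, honesty giving $\sigma(\mX,\mD)$-measurability of the weights, and the bound $\sum_{j}\lvert w_{1\leftarrow j}\rvert\,\lVert X_j-X_1\rVert^{\ell}\le B^{-1}\sum_b {\rm diam}^{\ell}(L^{1-D_1}_b(X_1)\cap\cS)$ followed by integration against the bounded densities are exactly the arguments used. One small correction on Part \ref{thm:rf1}: you should not appeal to the minimum-leaf-size condition to make $\sum_j w_{1\leftarrow j,b}=1$ hold ``with probability one in the limit.'' Every terminal leaf of a tree grown on $\cI^{1-D_1}_b$ contains at least one subsample point by construction, so the cancellation is exact and $\sum_{j:D_j=1-D_1}w_{1\leftarrow j}=1$ identically; this matters because Assumption~\ref{asp:se2}\ref{asp:se2,w1} demands $\E[\sum_j w_{1\leftarrow j}-1]^2=o(n^{-1})$, a rate that a merely asymptotic normalization statement would not deliver. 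Also, the paper gets Assumption~\ref{asp:dr2}\ref{asp:dr2,w2} for free from nonnegativity of the weights plus exchangeability (so $\E[\sum_j w_{j\leftarrow 1}]=1$), which is cleaner than routing it through the Part \ref{thm:rf-dr} mean computation.

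The genuine gap is in Part \ref{thm:rf-dr}, which is the heart of the theorem. Your heuristic for the conditional mean of $S_b$ is directionally right, but the statement to be proved is an $L^2$ statement, and you explicitly defer the variance control as ``the principal technical obstacle'' without resolving it --- yet that is precisely where all the quantitative hypotheses of Assumption~\ref{asp:rf,dr}\ref{asp:rf,dr,t} are consumed. The paper's proof decomposes $\sum_{j:D_j=0}w_{j\leftarrow 1}-\tfrac{1-e(X_1)}{e(X_1)}$ into four terms $S_1+S_2+S_3+S_4$ and proves a dedicated lemma (Lemma~\ref{lemma:rf1}) for the first three: (a) the sampling fluctuation of the opposite-group counts around $\nu_0(L^1_{bt})$ is killed by a conditional variance computation whose bound is $\E[(\min_t\lvert L_t\rvert)^{-1}]$, so the first condition in \eqref{assump:RF1} is doing real work there, not merely ``preventing the denominator from being too small''; (b) the bias term $\nu_0(L)-\tfrac{f_0(X_1)}{f_1(X_1)}\nu_1(L)$ is handled by uniform continuity of $f_0/f_1$ on the compact support together with a split on the event $\{{\rm diam}(L\cap\cS)>\delta\}$, which is where the integrated-diameter condition enters; and (c) the deviation of $n_1B^{-1}\sum_b\sum_t(\lvert\{k\in\cI^1_b:X_k\in L^1_{bt}\}\rvert)^{-1}\ind(1\in\cI^1_b:X_1\in L^1_{bt})\nu_1(L^1_{bt})$ from $1$ --- i.e., the fact that $\E[1/Z]\neq 1/\E[Z]$ for the random leaf occupancy, which your heuristic silently ignores --- is controlled by an Efron--Stein argument across resampled covariates, producing a bound of order $(s^2/n_1+s^4/n_1^2)\,\E[(\min_t\lvert L_t\rvert)^{-1}]+B^{-1}n_1\,\E[(\min_t\lvert L_t\rvert)^{-1}]$; this is the only place the conditions $s=O(n^{1/2})$ and $n/B=O(1)$ are actually used, and your proposal never explains how they enter. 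Without carrying out (a)--(c), Part \ref{thm:rf-dr} is not established.
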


\begin{corollary}  \phantomsection \label{crl,rf} 
    \begin{enumerate}[itemsep=-.5ex,label=(\roman*)]
     \item(Double robustness of $\hat\tau_{\rm RF}$) Suppose Assumption~\ref{asp:dr} holds and either Assumptions \ref{asp:dr1}\ref{asp:dr1,o}, \ref{asp:rf,dr}, \ref{asp:rf,honest} or Assumptions~\ref{asp:dr2}\ref{asp:dr2,o}, \ref{asp:rf,honest} hold. We then have
     \begin{align*}
        \hat{\tau}_{{\rm RF}} - \tau \stackrel{\sf p}{\longrightarrow} 0.
      \end{align*}
      \item(Semiparametric efficiency of $\hat\tau_{\rm RF}$) Under Assumptions~\ref{asp:dr}, \ref{asp:dr1}\ref{asp:dr1,o}, \ref{asp:dr2}\ref{asp:dr2,o}, \ref{asp:se1}, \ref{asp:rf,dr}-\ref{asp:rf,se}, we have
      \begin{align*}
       \sqrt{n} (\hat{\tau}_{{\rm RF}} - \tau) \stackrel{\sf d}{\longrightarrow} N(0,\sigma^2).
      \end{align*}
    \end{enumerate}
\end{corollary}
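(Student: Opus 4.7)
The proof plan is purely a bookkeeping exercise: Corollary \ref{crl,rf} falls out as an immediate consequence of Theorem \ref{thm:rf} combined with the general-purpose results Theorem \ref{thm:dr} (double robustness) and Theorem \ref{thm:mbc} (semiparametric efficiency). No new analytic work is required beyond checking that the hypotheses assumed in each branch of the corollary suffice to trigger the right conclusions of Theorem \ref{thm:rf}, which in turn supply every assumption needed by the general theorems.

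For part (i), the double robustness claim, I would split into the two alternative hypothesis sets. In the first branch, Assumptions \ref{asp:dr}, \ref{asp:dr1}\ref{asp:dr1,o}, \ref{asp:rf,dr}, and \ref{asp:rf,honest} are postulated. From Theorem \ref{thm:rf}\ref{thm:rf1} (which only needs Assumption \ref{asp:dr}) I would collect Assumption \ref{asp:weight}; and from Theorem \ref{thm:rf}\ref{thm:rf-dr} (which uses Assumptions \ref{asp:rf,dr} and \ref{asp:rf,honest}) I would collect Assumption \ref{asp:dr1}\ref{asp:dr1,w}. Together with the assumed Assumption \ref{asp:dr1}\ref{asp:dr1,o}, this provides all the hypotheses of Theorem \ref{thm:dr} under its first alternative, yielding $\hat\tau_{\rm RF}-\tau \stackrel{\sf p}{\longrightarrow} 0$. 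In the second branch, with Assumptions \ref{asp:dr}, \ref{asp:dr2}\ref{asp:dr2,o}, and \ref{asp:rf,honest} postulated, Theorem \ref{thm:rf}\ref{thm:rf1} again supplies Assumption \ref{asp:weight} as well as Assumption \ref{asp:dr2}\ref{asp:dr2,w2}, while Theorem \ref{thm:rf}\ref{thm:rf2} supplies Assumption \ref{asp:dr2}\ref{asp:dr2,w}; these together with the assumed Assumption \ref{asp:dr2}\ref{asp:dr2,o} again trigger Theorem \ref{thm:dr}, but now via its second alternative.

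For part (ii), the semiparametric efficiency claim, the input is Assumptions \ref{asp:dr}, \ref{asp:dr1}\ref{asp:dr1,o}, \ref{asp:dr2}\ref{asp:dr2,o}, \ref{asp:se1}, and \ref{asp:rf,dr}-\ref{asp:rf,se}. The target is Theorem \ref{thm:mbc}, whose prerequisites are Assumptions \ref{asp:dr}, \ref{asp:weight}, \ref{asp:dr1}, \ref{asp:dr2}, \ref{asp:se1}, and \ref{asp:se2}. Assumption \ref{asp:dr} and \ref{asp:se1} are assumed outright. Theorem \ref{thm:rf}\ref{thm:rf1} yields Assumptions \ref{asp:weight}, \ref{asp:dr2}\ref{asp:dr2,w2}, and \ref{asp:se2}\ref{asp:se2,w1}; Theorem \ref{thm:rf}\ref{thm:rf-dr} (under \ref{asp:rf,dr} and \ref{asp:rf,honest}) yields Assumption \ref{asp:dr1}\ref{asp:dr1,w}, which combined with the assumed \ref{asp:dr1}\ref{asp:dr1,o} gives the full Assumption \ref{asp:dr1}; Theorem \ref{thm:rf}\ref{thm:rf2} (under \ref{asp:rf,honest}) yields Assumption \ref{asp:dr2}\ref{asp:dr2,w}, which combined with \ref{asp:dr2}\ref{asp:dr2,o} and the already-verified \ref{asp:dr2}\ref{asp:dr2,w2} gives the full Assumption \ref{asp:dr2}; and Theorem \ref{thm:rf}\ref{thm:rf3} (under \ref{asp:rf,dr}-\ref{asp:rf,se}) yields the full Assumption \ref{asp:se2}. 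All prerequisites of Theorem \ref{thm:mbc} being in place, the conclusion $\sqrt{n}(\hat\tau_{\rm RF}-\tau)\stackrel{\sf d}{\longrightarrow} N(0,\sigma^2)$ follows.

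Since the corollary is, by design, a mechanical consolidation, the only place to stumble is in matching up which part of Theorem \ref{thm:rf} needs which subset of the random-forest assumptions; the hard analytic content (the leaf-size and diameter controls that underlie Theorem \ref{thm:rf}\ref{thm:rf-dr} and \ref{thm:rf3}) has already been absorbed into Theorem \ref{thm:rf} itself. Consequently the proof I would write is essentially two short paragraphs: one for each part, each simply citing Theorem \ref{thm:rf} to verify the hypotheses of Theorem \ref{thm:dr} or Theorem \ref{thm:mbc} and then invoking the latter.
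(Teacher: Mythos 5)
Your proposal is correct and is exactly the intended argument: the paper states Corollary \ref{crl,rf} as an immediate consequence of Theorem \ref{thm:rf} feeding into Theorems \ref{thm:dr} and \ref{thm:mbc}, with no separate proof given, and your assumption bookkeeping (which parts of Theorem \ref{thm:rf} supply which hypotheses of the general theorems under each branch) checks out in every case.
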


\subsection{Balanced and regular random forests}

The goal here is to decipher the second part of \eqref{assump:RF1} and  Assumption \ref{asp:rf,se}\ref{asp:rf,se,2}; cf. the discussions in Remark \ref{remark:RF1}. To this end, we leverage the technical proofs of \citet[Lemma 1]{wager2018estimation} and \citet[Lemma 2]{meinshausen2006quantile}, and provide the convergence rates of the diameters of leaves for some particular trees. 

To this end, we introduce the following regularity conditions on the tree growing patter.

\begin{assumption}\label{def:tree}
    We consider the following type of trees:
    \begin{enumerate}[itemsep=-.5ex,label=(\roman*)]
        \item The tree is $\phi$-balanced, i.e., for each terminal leaf, the proportion of splits along the $j$-th axis for each $j\in\zahl{d}$ is lower bounded by $\phi/d$ for some $\phi \in (0,1)$ and the splitting directions (i.e., picking which feature to split) are independent of the data;
        \item The tree is $(\alpha,\theta)$-regular for some $\alpha \in (0,0.5]$ and some positive integer $\theta$, i.e., at each step of growing the tree, the split leaves at least $\alpha$ of the samples on each side of the split, and the terminal leaves are all of size in $[\theta,\lfloor \theta/\alpha \rfloor]$, where $\lfloor 
        \cdot \rfloor$ is the floor function.
    \end{enumerate}
\end{assumption}

Notably speaking, Assumption 3 in \cite{meinshausen2006quantile} and Definitions 3 and 4 in \cite{wager2018estimation} considered regular and random-split conditions that are similar to Assumption \ref{def:tree}.  In practice, Assumption \ref{def:tree} can always be satisfied by controlling how tree grows in the implementation.

For those trees that satisfy Assumption~\ref{def:tree}, we have the following lemma, which controls arbitrary finite moment of the diameter of the terminal leaves.

\begin{lemma}\label{lemma:rf,dist}
    Let $\epsilon \in (0,1)$, $p \in \zahl{d}$, $\cS = [0,1]^d$, and $T$ be a tree constructed based on $s$ i.i.d. observations from the uniform distribution on $\cS$. As long as $T$ is $(\alpha,\theta)$-regular and $\phi$-balanced, we have for any $x \in \cS$ and any positive integer $k$,
    \begin{align*}
        \E\Big[{\rm diam}_p^k\Big(L_t(x) \cap \cS\Big)\Big] \le \Big(\frac{s}{\alpha^{-1}\theta}\Big)^{k \frac{\log(1-(1-\epsilon)\alpha)}{\log(\alpha^{-1})} \frac{\phi}{d} }+ \frac{\log(s/\theta)}{\log(\alpha^{-1})} \exp\Big[-\theta\alpha\Big(\log\Big(\frac{1}{1-\epsilon}\Big)-\epsilon\Big)\Big],
    \end{align*}
    where ${\rm diam}_p(\cdot)$ stands for the diameter along the $p$-th axis.
\end{lemma}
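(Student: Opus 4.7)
I follow the blueprint of Wager and Athey (2018, Lemma~1), promoting their probability bound to a $k$-th moment bound and tailoring it to the uniform-data setup. Fix $x\in\cS$ and $p\in\zahl{d}$; let $D$ be the depth of $L_t(x)$ and $S_p$ the number of splits along axis~$p$ on the root-to-$L_t(x)$ path. The argument decomposes into two concentration arguments: one for $S_p$ driven by $\phi$-balance, and one for the per-split length contraction driven by the uniform distribution together with $(\alpha,\theta)$-regularity.

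\textbf{Path length and axis counts.} The $(\alpha,\theta)$-regularity forces the sample count on any root-to-leaf path to be multiplied by a factor in $[\alpha,1-\alpha]$ at every split and to terminate inside $[\theta,\theta/\alpha]$; tracking the extremal cases yields the deterministic bound $D\geq \log(s\alpha/\theta)/\log(\alpha^{-1})$, with a matching upper bound of the same order that will supply the prefactor $\log(s/\theta)/\log(\alpha^{-1})$ in the lemma via a union bound over possible depth levels. Because $\phi$-balance makes split directions data-independent with axis~$p$ receiving probability at least $\phi/d$ per split, $S_p$ conditional on the path structure stochastically dominates a $\text{Bin}(D,\phi/d)$ random variable.

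\textbf{Per-split length contraction.} For uniform data on $[0,1]^d$, each axis-$p$ split locates its threshold at an order statistic of the in-cell $p$-th coordinates, and the $(\alpha,\theta)$-regularity confines it to lie between the $\lceil \alpha m\rceil$-th and $\lfloor(1-\alpha)m\rfloor$-th among the $m\ge\theta$ samples inside the cell. Viewing the number of in-cell samples falling on either side of a candidate length-$(1-(1-\epsilon)\alpha)$ cut as a binomial variable, a multiplicative Chernoff bound shows that with probability at least $1-\exp\!\big[-\theta\alpha(\log(1/(1-\epsilon))-\epsilon)\big]$, the child cell containing $x$ has $p$-th side length at most $1-(1-\epsilon)\alpha$ times the parent's. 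Iterating the contraction across the $S_p$ axis-$p$ splits and combining with the lower bound on $S_p$ from the previous paragraph produces the first term $(s/(\alpha^{-1}\theta))^{k\phi\log(1-(1-\epsilon)\alpha)/(d\log(\alpha^{-1}))}$, while the residual bad-event contribution, amplified by the depth union bound, becomes the second term.

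\textbf{Main obstacle.} The sharpest technical point is the conversion from sample-fraction control to length-fraction control in the previous paragraph: the samples inside a deep cell are not independent of the cell's boundaries, since those boundaries were themselves chosen from the same sample. For uniform data on $[0,1]^d$ the standard workaround, also the one used in Wager--Athey (2018, Lemma~1) and Meinshausen (2006, Lemma~2), is R\'enyi's representation of uniform order statistics, which recovers conditional independence of the in-cell samples given their count and reduces the length control to a Binomial tail bound. A secondary bookkeeping challenge is to keep the $(1-\epsilon)$ loss inside the contraction factor, so that it appears as $\log(1-(1-\epsilon)\alpha)$ in the displayed exponent rather than propagating into a separate concentration slack on $S_p$; this dictates applying the Chernoff bound per cell rather than globally to $\text{Bin}(D,\phi/d)$.
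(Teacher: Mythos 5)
Your depth bookkeeping and your per-split contraction step essentially coincide with the paper's argument: $(\alpha,\theta)$-regularity pins the number of splits $c(x)$ on the path to $L_t(x)$ between $\log(s\alpha/\theta)/\log(\alpha^{-1})$ and $\log(s/\theta)/\log(\alpha^{-1})$; the bad event that an axis-$p$ split removes less than a $(1-\epsilon)\alpha$ fraction of the parent's length is bounded via $\P(\mathrm{Bin}(S_{p,t},(1-\epsilon)\alpha)>S_{p,t}\alpha)\le\exp[-\theta\alpha(\log((1-\epsilon)^{-1})-\epsilon)]$, and the union bound over at most $\log(s/\theta)/\log(\alpha^{-1})$ splits yields the second term. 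The conditional-independence issue you flag as the main obstacle is resolved in the paper simply by conditioning on the split count and on the number of samples in the parent cell, under which the in-cell points are i.i.d.\ uniform on the cell; your R\'enyi-representation remark is the same resolution in different clothing.

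The genuine gap is in how you count the axis-$p$ splits. You read $\phi$-balance as a random-split condition and claim $S_p$ stochastically dominates $\mathrm{Bin}(D,\phi/d)$. That is the Wager--Athey route, and it cannot produce the stated first term. The paper's balance condition is a \emph{deterministic} guarantee that the proportion of splits along each axis on the path to every terminal leaf is at least $\phi/d$, so $c_p(x)\ge \frac{\phi}{d}\,c(x)\ge\frac{\phi}{d}\cdot\frac{\log(s/(\alpha^{-1}\theta))}{\log(\alpha^{-1})}$ pointwise, and $[1-(1-\epsilon)\alpha]^{k c_p(x)}$ is then exactly $(s/(\alpha^{-1}\theta))^{k\frac{\log(1-(1-\epsilon)\alpha)}{\log(\alpha^{-1})}\frac{\phi}{d}}$ with no slack. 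With stochastic domination you can only bound $\E\big[(1-(1-\epsilon)\alpha)^{kS_p}\big]$ by the Binomial generating function $(1-\tfrac{\phi}{d}+\tfrac{\phi}{d}(1-(1-\epsilon)\alpha)^k)^{D}$, which by Jensen is \emph{larger} than $(1-(1-\epsilon)\alpha)^{kD\phi/d}$ and saturates at $(1-\phi/d)^{D}$ as $k\to\infty$ --- a fixed polynomial rate in $s/\theta$ that does not improve with $k$, which is precisely the limitation of Wager--Athey's Lemma~1 that this lemma is designed to overcome (and which would make the $o(n^{-1/2})$ conclusion of Proposition~\ref{prop:rf,dist} unreachable). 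The alternative, a lower-tail concentration bound on $\mathrm{Bin}(D,\phi/d)$, introduces a multiplicative loss in the exponent and an extra failure-probability term, neither of which appears in the statement. You must use the deterministic proportion bound from the balance assumption, not stochastic domination.
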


Lemma~\ref{lemma:rf,dist} then yields sufficient conditions guaranteeing the validity of the second part of \eqref{assump:RF1} and  Assumption \ref{asp:rf,se}\ref{asp:rf,se,2}.

\begin{proposition}[Sufficient conditions on the leaf sizes]\label{prop:rf,dist}
Assume $\cS$ to be a compact subset of $\bR^d$ and $T$ to be a tree constructed based on $s$ i.i.d. observations following a distribution with density bounded and bounded away from zero on $\cS$. Assume further that $T$ is both regular and balanced. We then have, if $s/\theta \to \infty$ and $\log \log (s/\theta)/\theta \to 0$, 
    \begin{align}\label{eq:RF-db}
        \lim_{n \to \infty} \int_S \E\Big[{\rm diam}\Big(L_t(x) \cap \cS\Big)\Big] \d x = 0.
    \end{align}
If it further holds that $(s/\theta)/n^\epsilon \to \infty$ for some $\epsilon>0$ and $\theta/\log n \to \infty$, we then have, for any sufficiently large $k$,
    \begin{align}\label{eq:RF-se}
        \int_S \E\Big[{\rm diam}^k\Big(L_t(x) \cap \cS\Big)\Big] \d x = o(n^{-1/2}).
    \end{align}
\end{proposition}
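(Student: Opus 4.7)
The plan is to reduce both claims to pointwise-in-$x$ applications of Lemma \ref{lemma:rf,dist}, then integrate over the compact set $\cS$. Since every leaf $L_t(x)$ is an axis-aligned box, the elementary inequality
\[
    {\rm diam}^k(L_t(x) \cap \cS) \le d^{k/2}\sum_{p=1}^d {\rm diam}_p^k(L_t(x) \cap \cS)
\]
reduces the target integral to $d^{k/2}\sum_{p=1}^d \int_\cS \E[{\rm diam}_p^k(L_t(x)\cap\cS)]\d x$, so it will suffice to bound the integrand via Lemma \ref{lemma:rf,dist} and multiply by the finite constant $\lambda(\cS)$.

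The first task will be to bridge the fact that Lemma \ref{lemma:rf,dist} is stated for the uniform distribution on $[0,1]^d$ whereas the proposition allows any density $f$ with $c_1 \le f \le c_2$ on a generic compact $\cS$. Revisiting the argument of \cite{wager2018estimation} and \cite{meinshausen2006quantile} that underlies Lemma \ref{lemma:rf,dist}, the uniform hypothesis enters only through Chernoff-type bounds relating empirical sample counts to volumes. Under bounded density, empirical counts still concentrate around probabilities, and probabilities agree with volumes up to the ratio $c_2/c_1$. As a result, each split along axis $p$ still shrinks ${\rm diam}_p$ by a factor at most $1-(1-\eta)\alpha'$ with failure probability $\le \exp[-c\theta(\log(1/(1-\eta))-\eta)]$, where $\alpha'$ absorbs $c_1/c_2$. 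Combined with the unchanged balance condition (producing at least a $\phi/d$ share of splits along axis $p$) and the depth lower bound $\log(s/\theta)/\log(\alpha^{-1})$ from $(\alpha,\theta)$-regularity, the statement of Lemma \ref{lemma:rf,dist} transfers to our setting verbatim, up to renaming of constants.

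Once the Lemma is in hand, both claims reduce to asymptotic bookkeeping. Fix any $\eta \in (0,1)$ and set $C_1 := -\phi\log(1-(1-\eta)\alpha)/(d\log(\alpha^{-1})) > 0$ and $C_2 := \alpha(\log(1/(1-\eta))-\eta) > 0$. For \eqref{eq:RF-db}, taking $k=1$ makes the first term of Lemma \ref{lemma:rf,dist} equal to $(s\alpha/\theta)^{-C_1} = o(1)$ under $s/\theta\to\infty$, while the second term $O(\log(s/\theta) e^{-C_2\theta})$ vanishes if and only if $\log\log(s/\theta) = o(\theta)$, which is precisely the stated hypothesis (and forces $\theta \to \infty$). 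For \eqref{eq:RF-se}, the condition $s/\theta \ge n^\epsilon$ bounds the first term by $\alpha^{-k C_1} n^{-\epsilon k C_1}$, which is $o(n^{-1/2})$ as soon as $k > 1/(2\epsilon C_1)$; meanwhile $\theta/\log n\to\infty$ makes the second term $O(\log n \cdot e^{-C_2\theta}) = o(n^{-A})$ for every $A>0$, certainly $o(n^{-1/2})$. Integrating over $\cS$ only introduces the finite factor $\lambda(\cS)$.

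The main obstacle will be the bridging step: carefully transferring the Chernoff-style arguments in the proof of Lemma \ref{lemma:rf,dist} from uniform sampling on $[0,1]^d$ to bounded density on generic compact $\cS$. While the intuition --- Lebesgue and probability measures are comparable up to constants --- is transparent, the bookkeeping through the proofs of \cite{wager2018estimation} and \cite{meinshausen2006quantile} is where care is required. The subsequent asymptotic matching is then routine.
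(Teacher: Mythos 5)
Your proposal is correct and matches the intended argument: the proposition is a direct consequence of Lemma~\ref{lemma:rf,dist} applied pointwise in $x$ (via ${\rm diam}^k \le d^{k/2}\sum_p {\rm diam}_p^k$), integrated over the compact $\cS$, with exactly the asymptotic bookkeeping you describe. The only point to state more carefully in the bridging step is that when the density ratio $c_2/c_1$ enters, the free parameter $\epsilon$ in Lemma~\ref{lemma:rf,dist} must be chosen with $1-\epsilon < c_1/c_2$ so that a side with Lebesgue fraction below $(1-\epsilon)\alpha$ has conditional probability strictly below $\alpha$ and the Chernoff bound still applies; since $\epsilon\in(0,1)$ is arbitrary and the resulting exponents $C_1,C_2$ stay positive, this does not affect the conclusion.
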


Of note, in Proposition~\ref{prop:rf,dist} the requirements about $s$ and $\theta$ are much weaker for double robustness (corresponding to \eqref{eq:RF-db}) than for semiparametric efficiency (corresponding to \eqref{eq:RF-se}).

\begin{remark}
Lemma~\ref{lemma:rf,dist} is key to our analysis and is a stronger version of Lemma 1 in \cite{wager2018estimation}. In detail, Lemma 1 in \cite{wager2018estimation} or the proof of Theorem 3 therein can imply that the $k$-th moment of the diameter will always be dominated by 
\[
(s/\theta)^{-0.5[\log((1-\alpha)^{-1})/\log(\alpha^{-1})](\phi/d)}, 
\]
which, however, can not be faster than $n^{-1/2}$ for any positive integer $k$. In contrast, Lemma~\ref{lemma:rf,dist} establishes that we can reach the order $o(n^{-1/2})$ by taking $k$ large enough. This is viable by replacing the random-split condition in \cite{wager2018estimation} with Assumption \ref{def:tree}.
\end{remark}

\begin{remark}
It is worth noting that Lemma~\ref{lemma:rf,dist} and Proposition~\ref{prop:rf,dist} do not require the tree to be honest. This is in line with Lemma 2 in \cite{meinshausen2006quantile} for quantile regression tree using the responses and Lemma 1 in \cite{wager2018estimation} without assuming honesty. It indicates that the results in Lemma~\ref{lemma:rf,dist} and Proposition~\ref{prop:rf,dist} can be applied to more general random forests, e.g., the tree based on CART criteria \citep{breiman2017classification} with consistency analyzed in \cite{scornet2015consistency}. However, the ``regular'' and ``random-split'' conditions enforced in Assumption \ref{def:tree} seem inevitable to our analysis. Later, we require honesty for the double robustness and semiparametric efficiency of $\hat\tau_{\rm RF}$. 
\end{remark}

\section*{Acknowledgement}

We thank helpful discussions with Peng Ding, Kevin Guo, and Elizabeth Stuart on the matching procedure, and Yingying Fan on the random forest.

{
\bibliographystyle{apalike}
\bibliography{AMS}
}

\newpage{}

\appendix






\section{Proofs of the main results}\label{sec:main-proof}

{\bf Additional notation.} For any integer $n$, let $n!$ be the factorial of $n$. We use $\mD,\mX,\mX_0,\mX_1$ to denote $[D_i]_{i=1}^n, [X_i]_{i=1}^n, [X_i]_{i=1,D_i=0}^n, [X_i]_{i=1,D_i=1}^n$, respectively. For any $a,b \in \bR$, write $a \vee b = \max\{a,b\}$ and $a \wedge b = \min\{a,b\}$. For any two real sequences $\{a_n\}$and $\{b_n\}$, write $a_n \lesssim b_n$ (or equivalently, $b_n \gtrsim a_n$) if $a_n = O(b_n)$.

\subsection{Proof of Lemma~\ref{lemma:mbc}}

\begin{proof}[Proof of Lemma~\ref{lemma:mbc}]
From the definitions of $\hat{Y}_i(0)$, $\hat{Y}_i(1)$ for $i \in \zahl{n}$ and $\hat\tau_w$,
\begin{align*}
    &\hat\tau_w = \frac{1}{n} \sum_{i=1}^n \Big[\hat{Y}_i(1) -\hat{Y}_i(0)\Big] \\
    =& \frac{1}{n} \sum_{i=1,D_i = 1}^n \Big[Y_i - \sum_{j:D_j=0} w_{i\leftarrow j} (Y_j + \hat{\mu}_0(X_i) - \hat{\mu}_0(X_j)) \Big] \\
    & + \frac{1}{n} \sum_{i=1,D_i = 0}^n \Big[\sum_{j:D_j=1} w_{i\leftarrow j} (Y_j + \hat{\mu}_1(X_i) - \hat{\mu}_1(X_j)) - Y_i \Big]\\
    =& \frac{1}{n} \sum_{i=1,D_i = 1}^n \Big[\hat{R}_i + \hat{\mu}_1(X_i) - \hat{\mu}_0(X_i) - \sum_{j:D_j=0} w_{i\leftarrow j} \hat{R}_j + \Big(1 - \sum_{j:D_j=0} w_{i\leftarrow j} \Big) \hat{\mu}_0(X_i) \Big] \\
    & + \frac{1}{n} \sum_{i=1,D_i = 0}^n \Big[- \hat{R}_i + \hat{\mu}_1(X_i) - \hat{\mu}_0(X_i) + \sum_{j:D_j=1} w_{i\leftarrow j} \hat{R}_j - \Big(1 - \sum_{j:D_j=1} w_{i\leftarrow j} \Big) \hat{\mu}_1(X_i) \Big]\\
    =& \frac{1}{n} \sum_{i=1}^n \Big[\hat{\mu}_1(X_i) - \hat{\mu}_0(X_i)\Big] + \frac{1}{n} \Big[ \sum_{i=1,D_i = 1}^n \Big(1 + \sum_{j:D_j=0} w_{j\leftarrow i} \Big) \hat{R}_i - \sum_{i=1,D_i = 0}^n \Big(1 + \sum_{j:D_j=1} w_{j\leftarrow i} \Big) \hat{R}_i \Big]\\
    & + \frac{1}{n} \Big[ \sum_{i=1,D_i = 1}^n \Big(1 - \sum_{j:D_j=0} w_{i\leftarrow j} \Big) \hat{\mu}_0(X_i) - \sum_{i=1,D_i = 0}^n \Big(1 - \sum_{j:D_j=1} w_{i\leftarrow j} \Big) \hat{\mu}_1(X_i) \Big].
\end{align*}
This completes the proof.
\end{proof}

\subsection{Proof of Theorem~\ref{thm:dr}}

\begin{proof}[Proof of Theorem~\ref{thm:dr}]

{\bf Part I.} Suppose the propensity score model is correct, i.e., Assumption~\ref{asp:dr1} holds. For any $i \in \zahl{n}$, let $\bar{R}_i := Y_i - \bar{\mu}_{D_i}(X_i)$. By Lemma~\ref{lemma:mbc},
\begin{align*}
    & \hat\tau_w  = \hat{\tau}^{\rm reg} + \frac{1}{n} \sum_{i=1}^n (2D_i-1)\Big(1 + \sum_{j:D_j=1-D_i} w_{j\leftarrow i}\Big) \hat{R}_i + \frac{1}{n} \sum_{i=1}^n (2D_i-1) \Big(1 - \sum_{j:D_j=1-D_i} w_{i\leftarrow j} \Big) \hat{\mu}_{1-D_i}(X_i)\\
    = & \frac{1}{n} \sum_{i=1}^n \Big[\hat{\mu}_1(X_i) - \bar{\mu}_1(X_i)\Big] - \frac{1}{n} \sum_{i=1}^n \Big[\hat{\mu}_0(X_i) - \bar{\mu}_0(X_i)\Big]\\
    &+  \frac{1}{n} \Big[ \sum_{i=1}^n D_i \Big(1 + \sum_{D_j=1-D_i} w_{j\leftarrow i} \Big) \Big(\bar{\mu}_1(X_i) - \hat{\mu}_1(X_i)\Big) - \sum_{i=1}^n (1-D_i)\Big(1 + \sum_{D_j=1-D_i} w_{j\leftarrow i} \Big) \Big(\bar{\mu}_0(X_i) - \hat{\mu}_0(X_i)\Big) \Big]\\
    &+  \frac{1}{n} \Big[ \sum_{i=1}^n D_i \Big(1 + \sum_{D_j=1-D_i} w_{j\leftarrow i} - \frac{1}{e(X_i)}\Big) \bar{R}_i - \sum_{i=1}^n (1-D_i)\Big(1 + \sum_{D_j=1-D_i} w_{j\leftarrow i} - \frac{1}{1-e(X_i)}\Big) \bar{R}_i \Big]\\
    & + \frac{1}{n} \sum_{i=1}^n D_i \Big(1 - \sum_{D_j=1-D_i} w_{i\leftarrow j} \Big) \hat{\mu}_0(X_i) - \frac{1}{n} \sum_{i=1}^n (1-D_i) \Big(1 - \sum_{D_j=1-D_i} w_{i\leftarrow j} \Big) \hat{\mu}_1(X_i)\\
    & + \frac{1}{n} \Big[ \sum_{i=1}^n \Big(1 - \frac{D_i}{e(X_i)}\Big) \bar{\mu}_1(X_i) - \sum_{i=1}^n \Big(1 - \frac{1-D_i}{1-e(X_i)}\Big) \bar{\mu}_0(X_i) \Big]\\
    & + \frac{1}{n} \Big[ \sum_{i=1}^n \frac{D_i}{e(X_i)} Y_i - \sum_{i=1}^n \frac{1-D_i}{1-e(X_i)} Y_i \Big].
    \yestag\label{eq:dml1}
\end{align*}

For each pair of terms, we only establish the first half part under treatment, and the second half under control can be established in the same way.

For the first term in \eqref{eq:dml1},
\[
    \Big\lvert \frac{1}{n} \sum_{i=1}^n \Big[\hat{\mu}_1(X_i) - \bar{\mu}_1(X_i)\Big] \Big\rvert \le \lVert \hat{\mu}_1 - \bar{\mu}_1 \rVert_\infty = o_\P(1).
\]
Then
\begin{align}\label{eq:dml11}
    \frac{1}{n} \sum_{i=1}^n \Big[\hat{\mu}_1(X_i) - \bar{\mu}_1(X_i)\Big] - \frac{1}{n} \sum_{i=1}^n \Big[\hat{\mu}_0(X_i) - \bar{\mu}_0(X_i)\Big] = o_\P(1).
\end{align}

For the second term in \eqref{eq:dml1},
\begin{align*}
    & \Big\lvert \frac{1}{n} \sum_{i=1}^n D_i \Big(1 + \sum_{D_j=1-D_i} w_{j\leftarrow i} \Big) \Big(\bar{\mu}_1(X_i) - \hat{\mu}_1(X_i)\Big) \Big\rvert \\
    \le & \lVert \hat{\mu}_1 - \bar{\mu}_1 \rVert_\infty \frac{1}{n} \sum_{i=1}^n D_i \Big\lvert1 + \sum_{D_j=1-D_i} w_{j\leftarrow i} \Big\rvert = \lVert \hat{\mu}_1 - \bar{\mu}_1 \rVert_\infty \Big(\frac{n_1}{n} + \frac{1}{n} \sum_{i:D_i=1} \Big\lvert \sum_{j:D_j = 0} w_{j\leftarrow i}\Big\rvert \Big) = o_\P(1),  
\end{align*}
where the last step is due to Assumptions~\ref{asp:dr}, \ref{asp:weight}, \ref{asp:dr1}. We then have
\begin{align}\label{eq:dml12}
    \frac{1}{n} \Big[ \sum_{i=1}^n D_i \Big(1 + \sum_{D_j=1-D_i} &w_{j\leftarrow i} \Big) \Big(\bar{\mu}_1(X_i) - \hat{\mu}_1(X_i)\Big) -\notag\\
     &\sum_{i=1}^n (1-D_i)\Big(1 + \sum_{D_j=1-D_i} w_{j\leftarrow i} \Big) \Big(\bar{\mu}_0(X_i) - \hat{\mu}_0(X_i)\Big) \Big] = o_\P(1).
\end{align}

For the third term in \eqref{eq:dml1}, by the Cauchy-Schwarz inequality,
\begin{align*}
    & \E \Big[\Big\lvert\frac{1}{n} \sum_{i=1}^n D_i \Big(1 + \sum_{D_j=1-D_i} w_{j\leftarrow i} - \frac{1}{e(X_i)}\Big) \bar{R}_i \Big\rvert\Big] \le \E \Big[ \Big\lvert D_1 \Big(1 + \sum_{D_j=1-D_1} w_{j\leftarrow 1} - \frac{1}{e(X_1)}\Big) \bar{R}_1 \Big\rvert \Big]\\
    \le & \Big\{\E \Big[D_1 \Big( 1 + \sum_{D_j=1-D_1} w_{j\leftarrow 1} - \frac{1}{e(X_1)} \Big)^2\Big] \Big\}^{1/2} \Big\{\E \Big[D_1 \bar{R}_1\Big]^2\Big\}^{1/2} \\
    = & \Big\{\E \Big[D_1 \Big(\sum_{D_j=1-D_1} w_{j\leftarrow 1} - \frac{1-e(X_1)}{e(X_1)}\Big)^2\Big]\Big\}^{1/2} \Big\{\E\Big[D_1 (Y_1(1)-\bar{\mu}_1(X_1))^2\Big]\Big\}^{1/2}\\
    = & \Big\{\E \Big[D_1 \Big(\sum_{D_j=1-D_1} w_{j\leftarrow 1} - \frac{1-e(X_1)}{e(X_1)}\Big)^2\Big]\Big\}^{1/2} \Big\{\E\Big[D_1\Big(\sigma_1^2(X_1)+[\mu_1(X_1)-\bar{\mu}_1(X_1)]^2\Big)\Big]\Big\}^{1/2} = o(1),
\end{align*}
where $\sigma_1^2(x) = \E [U^2_1 \given X=x]$ for $x \in \cS$. We then obtain by the Markov inequality that
\begin{align}\label{eq:dml13}
    \frac{1}{n} \Big[ \sum_{i=1}^n D_i \Big(1 + &\sum_{D_j=1-D_i} w_{j\leftarrow i} - \frac{1}{e(X_i)}\Big) \bar{R}_i - \notag\\
    &\sum_{i=1}^n (1-D_i)\Big(1 + \sum_{D_j=1-D_i} w_{j\leftarrow i} - \frac{1}{1-e(X_i)}\Big) \bar{R}_i \Big] = o_\P(1).
\end{align}

For the fourth term in \eqref{eq:dml1},
\begin{align*}
    & \Big\lvert \frac{1}{n} \sum_{i=1}^n D_i \Big(1 - \sum_{D_j=1-D_i} w_{i\leftarrow j} \Big) \hat{\mu}_0(X_i) \Big\rvert \\
    \le &  \lVert \hat{\mu}_0 - \bar{\mu}_0 \rVert_\infty \Big(\frac{1}{n} \sum_{i=1}^n D_i \Big\lvert 1 - \sum_{D_j=1-D_i} w_{i\leftarrow j} \Big\rvert\Big) + \frac{1}{n} \sum_{i=1}^n \Big\lvert D_i\Big(1 - \sum_{D_j=1-D_i} w_{i\leftarrow j} \Big) \bar{\mu}_0(X_i) \Big\rvert.
\end{align*}
From Assumptions~\ref{asp:weight}-\ref{asp:dr1} and the Cauchy-Schwarz inequality, the above two terms are both $o_\P(1)$, and thus
\begin{align}\label{eq:dml14}
    \frac{1}{n} \sum_{i=1}^n D_i \Big(1 - \sum_{D_j=1-D_i} w_{i\leftarrow j} \Big) \hat{\mu}_0(X_i) - \frac{1}{n} \sum_{i=1}^n (1-D_i) \Big(1 - \sum_{D_j=1-D_i} w_{i\leftarrow j} \Big) \hat{\mu}_1(X_i) = o_\P(1).
\end{align}

For the fifth term in \eqref{eq:dml1}, notice that $\bar{\mu}_1$ does not depend on the samples, then
\[
    \E \Big[\frac{1}{n} \sum_{i=1}^n \Big(1 - \frac{D_i}{e(X_i)}\Big) \bar{\mu}_1(X_i) \Biggiven \mX \Big] = \frac{1}{n} \sum_{i=1}^n \E \Big[ 1 - \frac{D_i}{e(X_i)}\Biggiven X_i \Big] \bar{\mu}_1(X_i) = 0,
\]
and
\begin{align*}
    & \Var\Big[\frac{1}{n} \sum_{i=1}^n \Big(1 - \frac{D_i}{e(X_i)}\Big) \bar{\mu}_1(X_i)\Big] = \E\Big[\Var \Big[\frac{1}{n} \sum_{i=1}^n \Big(1 - \frac{D_i}{e(X_i)}\Big) \bar{\mu}_1(X_i) \Biggiven \mX \Big]\Big]\\
    = & \frac{1}{n} \E\Big[\Var \Big[\Big(1 - \frac{D_1}{e(X_1)}\Big) \bar{\mu}_1(X_1) \Biggiven X_1 \Big]\Big] = \frac{1}{n} \E \Big[\bar{\mu}^2_1(X_1) \Big(\frac{1}{e(X_1)} -1\Big)\Big] = O(n^{-1}).
\end{align*}
Then
\begin{align}\label{eq:dml15}
    \frac{1}{n} \Big[ \sum_{i=1}^n \Big(1 - \frac{D_i}{e(X_i)}\Big) \bar{\mu}_1(X_i) - \sum_{i=1}^n \Big(1 - \frac{1-D_i}{1-e(X_i)}\Big) \bar{\mu}_0(X_i) \Big] = o_\P(1).
\end{align}

For the sixth term in \eqref{eq:dml1}, notice that $\E [Y^2]$ are bounded from Assumption~\ref{asp:dr} and $[(X_i,D_i,Y_i)]_{i=1}^n$ are i.i.d.. Using the weak law of large numbers \cite[Theorem 2.2.3]{MR3930614} yields
\begin{align}\label{eq:dml16}
    \frac{1}{n} \Big[ \sum_{i=1}^n \frac{D_i}{e(X_i)} Y_i - \sum_{i=1}^n \frac{1-D_i}{1-e(X_i)} Y_i \Big] \stackrel{\sf p}{\longrightarrow} \E\Big[Y_i(1)-Y_i(0)\Big] = \tau.
\end{align}

Plugging \eqref{eq:dml11}, \eqref{eq:dml12}, \eqref{eq:dml13}, \eqref{eq:dml14}, \eqref{eq:dml15} into \eqref{eq:dml1} completes the proof.

{\bf Part II.} Suppose the outcome model is correct, i.e., Assumption~\ref{asp:dr2} holds. By Lemma~\ref{lemma:mbc},
\begin{align*}
    & \hat\tau_w  = \hat{\tau}^{\rm reg} + \frac{1}{n} \sum_{i=1}^n (2D_i-1)\Big(1 + \sum_{j:D_j=1-D_i} w_{j\leftarrow i}\Big) \hat{R}_i + \frac{1}{n} \sum_{i=1}^n (2D_i-1) \Big(1 - \sum_{j:D_j=1-D_i} w_{i\leftarrow j} \Big) \hat{\mu}_{1-D_i}(X_i)\\
    = & \frac{1}{n} \sum_{i=1}^n \Big[\hat{\mu}_1(X_i) - \mu_1(X_i)\Big] - \frac{1}{n} \sum_{i=1}^n \Big[\hat{\mu}_0(X_i) - \mu_0(X_i)\Big]\\
    &+  \frac{1}{n} \Big[ \sum_{i=1}^n D_i \Big(1 + \sum_{D_j=1-D_i} w_{j\leftarrow i}\Big) \Big(\mu_1(X_i) - \hat{\mu}_1(X_i)\Big) - \sum_{i=1}^n (1-D_i)\Big(1 + \sum_{D_j=1-D_i} w_{j\leftarrow i}\Big) \Big(\mu_0(X_i) - \hat{\mu}_0(X_i)\Big) \Big]\\
    &+  \frac{1}{n} \Big[ \sum_{i=1}^n D_i \Big(1 + \sum_{D_j=1-D_i} w_{j\leftarrow i}\Big) \Big(Y_i - \mu_1(X_i) \Big) - \sum_{i=1}^n (1-D_i) \Big(1 + \sum_{D_j=1-D_i} w_{j\leftarrow i}\Big) \Big(Y_i - \mu_0(X_i) \Big) \Big]\\
    &+  \frac{1}{n} \sum_{i=1}^n D_i \Big(1 - \sum_{D_j=1-D_i} w_{i\leftarrow j} \Big) \hat{\mu}_0(X_i) - \frac{1}{n} \sum_{i=1}^n (1-D_i) \Big(1 - \sum_{D_j=1-D_i} w_{i\leftarrow j} \Big) \hat{\mu}_1(X_i)\\
    &+  \frac{1}{n} \sum_{i=1}^n \Big[\mu_1(X_i) - \mu_0(X_i)\Big].
    \yestag\label{eq:dml2}
\end{align*}

For the first term in \eqref{eq:dml2}, in the same way as \eqref{eq:dml11},
\begin{align}\label{eq:dml21}
    \frac{1}{n} \sum_{i=1}^n \Big[\hat{\mu}_1(X_i) - \mu_1(X_i)\Big] - \frac{1}{n} \sum_{i=1}^n \Big[\hat{\mu}_0(X_i) - \mu_0(X_i)\Big] = o_\P(1).
\end{align}

For the second term in \eqref{eq:dml2}, in the same way as \eqref{eq:dml12}, by Assumptions~\ref{asp:dr}, \ref{asp:weight}, and \ref{asp:dr2}.
\begin{align}\label{eq:dml22}
    \frac{1}{n} \Big[ \sum_{i=1}^n D_i \Big(1 + \sum_{D_j=1-D_i} &w_{j\leftarrow i}\Big) \Big(\mu_1(X_i) - \hat{\mu}_1(X_i)\Big) -\notag\\ 
    &\sum_{i=1}^n (1-D_i)\Big(1 + \sum_{D_j=1-D_i} w_{j\leftarrow i}\Big) \Big(\mu_0(X_i) - \hat{\mu}_0(X_i)\Big) \Big] = o_\P(1).
\end{align}

For the third term in \eqref{eq:dml2}, noticing that $[w_{i\leftarrow j}]_{D_i+D_j=1}$ is a function of $\mX$ and $\mD$, we can obtain
\begin{align*}
  \E \Big[ \frac{1}{n} \sum_{i=1}^n D_i \Big(1 + \sum_{D_j=1-D_i} w_{j\leftarrow i}\Big) \Big(Y_i - \mu_1(X_i) \Big) \Biggiven \mX, \mD \Big] = 0,
\end{align*}
and
\begin{align*}
    &\E \Big[ \Big\lvert \frac{1}{n} \sum_{i=1}^n D_i \Big(1 + \sum_{D_j=1-D_i} w_{j\leftarrow i}\Big) \Big(Y_i - \mu_1(X_i) \Big) \Big\rvert\Big] \le \E \Big[ \Big\lvert \frac{1}{n} \sum_{i=1}^n D_i \Big(1 + \sum_{D_j=1-D_i} w_{j\leftarrow i}\Big) \Big\rvert\Big] \lVert \sigma_1 \rVert_\infty\\
    \lesssim& \lVert \sigma_1 \rVert_\infty = O(1),
\end{align*}
where $\sigma_1^2(x) = \E [U^2_1 \given X=x]$ for $x \in \cS$. Accordingly, by the weak law of large number, we obtain
\begin{align}\label{eq:dml23}
    \frac{1}{n} \Big[ \sum_{i=1}^n D_i \Big(1 + \sum_{D_j=1-D_i} &w_{j\leftarrow i}\Big) \Big(Y_i - \mu_1(X_i) \Big) - \notag\\
    &\sum_{i=1}^n (1-D_i) \Big(1 + \sum_{D_j=1-D_i} w_{j\leftarrow i}\Big) \Big(Y_i - \mu_0(X_i) \Big) \Big] = o_\P(1).
\end{align}

For the fourth term in \eqref{eq:dml2}, in the same way as \eqref{eq:dml14},
\begin{align*}
    \frac{1}{n} \sum_{i=1}^n D_i \Big(1 - \sum_{D_j=1-D_i} w_{i\leftarrow j} \Big) \hat{\mu}_0(X_i) - \frac{1}{n} \sum_{i=1}^n (1-D_i) \Big(1 - \sum_{D_j=1-D_i} w_{i\leftarrow j} \Big) \hat{\mu}_1(X_i) = o_\P(1).
\end{align*}

For the fifth term in \eqref{eq:dml2}, notice that $\E[\mu_\omega^2(X)]$ is bounded for $\omega \in \{0,1\}$. Using the weak law of large number, we obtain
\begin{align}\label{eq:dml24}
\frac{1}{n} \sum_{i=1}^n \Big[\mu_1(X_i) - \mu_0(X_i)\Big] \stackrel{\sf p}{\longrightarrow} \E \Big[\mu_1(X_1) - \mu_0(X_1)\Big] = \tau.
\end{align}

Plugging \eqref{eq:dml21}, \eqref{eq:dml22}, \eqref{eq:dml23}, \eqref{eq:dml24} into \eqref{eq:dml2} completes the proof.
\end{proof}

\subsection{Proof of Theorem~\ref{thm:mbc}}

\begin{proof}[Proof of Theorem~\ref{thm:mbc}]

Let $\epsilon_i = Y_i - \mu_{D_i}(X_i)$ for any $i \in \zahl{n}$. We decompose $\hat\tau_w $ as
\begin{align*}
    \hat\tau_w  =& \hat{\tau}^{\rm reg} + \frac{1}{n} \sum_{i=1}^n (2D_i-1)\Big(1 + \sum_{j:D_j=1-D_i} w_{j\leftarrow i}\Big) \hat{R}_i + \frac{1}{n} \sum_{i=1}^n (2D_i-1) \Big(1 - \sum_{j:D_j=1-D_i} w_{i\leftarrow j} \Big) \hat{\mu}_{1-D_i}(X_i)\\
    = & \frac{1}{n} \sum_{i=1}^n \Big[\mu_1(X_i) - \mu_0(X_i)\Big] + \frac{1}{n} \sum_{i=1}^n (2D_i-1)\Big(1 + \sum_{j:D_j=1-D_i} w_{j\leftarrow i}\Big) \epsilon_i \\
    & + \frac{1}{n}\sum_{i=1}^n (2D_i-1) \Big[\sum_{j:D_j=1-D_i} w_{i\leftarrow j} \mu_{1-D_i}(X_i)-\sum_{j:D_j=1-D_i} w_{i\leftarrow j} \mu_{1-D_i}(X_j)\Big]\\
    & - \frac{1}{n}\sum_{i=1}^n (2D_i-1) \Big[\sum_{j:D_j=1-D_i} w_{i\leftarrow j} \hat{\mu}_{1-D_i}(X_i)-\sum_{j:D_j=1-D_i} w_{i\leftarrow j} \hat{\mu}_{1-D_i}(X_j)\Big]\\
    & + \frac{1}{n} \sum_{i=1}^n (2D_i-1) \Big(1 - \sum_{j:D_j=1-D_i} w_{i\leftarrow j} \Big) \mu_{1-D_i}(X_i)\\
    =:& \bar{\tau}(\mX) + E_n + B_n - \hat{B}_n + \tilde{B}_n.
\end{align*}

For any $x \in \cS$, define $\sigma_\omega^2(x) := \E [U_\omega^2 \given X=x] = \E [[Y(\omega) - \mu_\omega(X)]^2 \given X=x ]$ for $\omega \in \{0,1\}$. Let
\[
  V^\tau := \E \Big[ \mu_1(X) - \mu_0(X) - \tau \Big]^2~~~{\rm and}~~~ V^E := \frac{1}{n} \sum_{i=1}^n \Big(1 + \sum_{j:D_j=1-D_i} w_{j\leftarrow i}\Big)^2 \sigma_{D_i}^2(X_i).
\]

We have the following central limit theorem on $\bar{\tau}(\mX) + E_n$.

\begin{lemma}\label{lemma:mbc,clt}
    Under Assumptions~\ref{asp:dr}-\ref{asp:se1},
    \begin{align}
        \sqrt{n} \Big(V^\tau + V^E\Big)^{-1/2} \Big(\bar{\tau}(\mX) + E_n - \tau \Big)\stackrel{\sf d}{\longrightarrow} N\Big(0,1\Big).
    \end{align}
\end{lemma}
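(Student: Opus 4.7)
The strategy is to split $\sqrt{n}(\bar{\tau}(\mX) + E_n - \tau) = A_n + B_n$ into two uncorrelated pieces and then apply a marginal CLT to one and a conditional CLT to the other. Specifically, let
\[
A_n := \frac{1}{\sqrt{n}}\sum_{i=1}^n \bigl(\mu_1(X_i) - \mu_0(X_i) - \tau\bigr) \quad {\rm and} \quad B_n := \frac{1}{\sqrt{n}}\sum_{i=1}^n (2D_i-1)\Bigl(1 + \sum_{j:D_j=1-D_i} w_{j\leftarrow i}\Bigr)\epsilon_i.
\]
The piece $A_n$ is a classical i.i.d. sum of mean-zero summands with variance $V^\tau$. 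For $B_n$, the critical observation is that the smoothing weights are measurable with respect to $(\mX, \mD)$, so that conditionally on $(\mX, \mD)$ the summands in $B_n$ are independent with mean zero (by $\E[\epsilon_i \mid X_i, D_i] = 0$ from Assumption~\ref{asp:dr}\ref{asp:dr-1}) and conditional variance summing exactly to $V^E$. In particular $\E[B_n \mid \mX, \mD] = 0$, so $A_n$ and $B_n$ are uncorrelated at every finite sample size.

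First I would invoke the classical Lindeberg--L\'evy CLT for $A_n$, yielding $A_n \stackrel{\sf d}{\longrightarrow} N(0, V^\tau)$ by Assumption~\ref{asp:dr}. Next I would apply the conditional Lindeberg--Feller (or Lyapunov) CLT to $B_n$ given $(\mX, \mD)$: the uniform $(2+\kappa)$-th conditional moment control on $\epsilon_i$ from Assumption~\ref{asp:se1}(ii) together with $\sigma^2_\omega(\cdot)$ being bounded below by Assumption~\ref{asp:se1}(i) yields a vanishing Lyapunov ratio, which delivers $B_n / \sqrt{V^E} \stackrel{\sf d}{\longrightarrow} N(0,1)$ conditionally on $(\mX, \mD)$, in probability. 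Independently, the weak law of large numbers applied to the average defining $V^E$ together with the $L^2$ identification in Assumption~\ref{asp:dr1}\ref{asp:dr1,w} gives $V^E \stackrel{\sf p}{\longrightarrow} \bar{V}^E := \E[\sigma^2_1(X)/e(X) + \sigma^2_0(X)/(1-e(X))]$.

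To combine the pieces I would invoke the conditional characteristic function factorization
\[
\E\Bigl[\exp\bigl(it(A_n + B_n)/\sqrt{V^\tau + V^E}\bigr)\Bigr] = \E\Bigl[\exp\bigl(it A_n/\sqrt{V^\tau + V^E}\bigr) \cdot \E\bigl[\exp(it B_n/\sqrt{V^\tau + V^E}) \bigm| \mX, \mD\bigr]\Bigr].
\]
The inner conditional expectation converges in probability to $\exp(-t^2 V^E / (2(V^\tau + V^E)))$ by the conditional CLT; combining with $V^E \stackrel{\sf p}{\longrightarrow} \bar{V}^E$, the marginal CLT for $A_n$, Slutsky's theorem, and the dominated convergence theorem, the full expectation converges to $\exp(-t^2/2)$, and L\'evy's continuity theorem then closes the argument. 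The main obstacle is verifying the conditional Lyapunov condition, which requires controlling the $(2+\kappa)$-th moments of the row sums $\sum_{j:D_j=1-D_i} w_{j\leftarrow i}$ uniformly over $i$; this relies on Assumption~\ref{asp:dr1}\ref{asp:dr1,w} together with a truncation argument that upgrades the given $L^2$ density-ratio approximation to the tail control sufficient for Lindeberg--Feller.
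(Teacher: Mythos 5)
Your overall architecture is the same as the paper's: split off the i.i.d.\ regression part and apply the classical CLT to it, apply a conditional CLT to the weighted-residual part given $(\mX,\mD)$ (using Assumption~\ref{asp:dr2}\ref{asp:dr2,w} so the summands are conditionally independent with conditional variance summing to $nV^E$), and glue the two pieces via the conditional characteristic-function factorization — the paper delegates that last step to the argument in the proof of Theorem 4 of Abadie and Imbens (2006), which is precisely the factorization you write down.

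The one place your plan diverges and would get stuck as written is the verification of the conditional CLT's regularity condition. You propose a Lyapunov condition, which forces you to control $(2+\kappa)$-th moments of the row sums $\sum_{j:D_j=1-D_i}w_{j\leftarrow i}$; the hypotheses only provide $L^2$ control of these sums (Assumption~\ref{asp:dr1}\ref{asp:dr1,w}), an $L^2$ bound does not upgrade to a $(2+\kappa)$-moment bound, and you defer the fix to an unspecified ``truncation argument.'' The paper sidesteps this entirely by checking the Lindeberg condition directly: with $p_1=1+\kappa/2$ and $p_1^{-1}+p_2^{-1}=1$,
\[
\E\big[\epsilon_i^2\,\ind(\lvert E_{n,i}\rvert>\delta\sqrt{nV^E})\,\big|\,\mX,\mD\big]\le\big(\E[\lvert\epsilon_i\rvert^{2p_1}\mid\mX,\mD]\big)^{1/p_1}\big(\P(\lvert E_{n,i}\rvert>\delta\sqrt{nV^E}\mid\mX,\mD)\big)^{1/p_2},
\]
so the higher moment falls only on $\epsilon_i$ (covered by Assumption~\ref{asp:se1}), and the tail probability is then bounded by Markov's inequality using only the \emph{second} conditional moment of $E_{n,i}$, which brings in $(1+\sum_j w_{j\leftarrow i})^2$ and nothing stronger. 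Splitting $(1+\sum_j w_{j\leftarrow i})^2$ into the bounded propensity-ratio part (controlled by the overlap constant $\eta$) plus the $L^2$-small discrepancy of Assumption~\ref{asp:dr1}\ref{asp:dr1,w}, and using that $V^E$ is bounded below by $\inf_{x,\omega}\E[U_\omega^2\mid X=x]>0$, the expected Lindeberg sum is $O(n^{-1/p_2})+o(1)$. If you insist on the Lyapunov route you must actually supply the truncation and show the truncated-away weights contribute $o_\P(1)$ to the Lindeberg sum — at which point you are effectively re-proving Lindeberg anyway. The remaining elements of your outline (uncorrelatedness of the two pieces, $V^E\stackrel{\sf p}{\longrightarrow}\E[\sigma_1^2(X)/e(X)+\sigma_0^2(X)/(1-e(X))]$, and the Slutsky/dominated-convergence closure) are consistent with the paper, with the convergence of $V^E$ handled there as a separate lemma.
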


While $V^E$ depends on the data, $V^E$ converges to a constant in probability.

\begin{lemma}\label{lemma:mbc,ve}
    Under Assumptions~\ref{asp:dr}-\ref{asp:dr1},
    \begin{align*}
        V^E \stackrel{\sf p}{\longrightarrow} \E \Big[\frac{\sigma_1^2(X)}{e(X)} + \frac{\sigma_0^2(X)}{1-e(X)}\Big].
    \end{align*}
\end{lemma}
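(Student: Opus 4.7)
The plan is to couple $V^E$ to an oracle counterpart in which the data-dependent row sum $W_i := \sum_{j:D_j=1-D_i} w_{j\leftarrow i}$ is replaced by its population target
\[
W_i^* := D_i\frac{1-e(X_i)}{e(X_i)} + (1-D_i)\frac{e(X_i)}{1-e(X_i)},
\]
and then handle the oracle version with the weak law of large numbers. The identities $1+W_1^* = 1/e(X_1)$ on $\{D_1=1\}$ and $1+W_1^* = 1/(1-e(X_1))$ on $\{D_1=0\}$ suggest defining
\[
\tilde V^E := \frac{1}{n}\sum_{i=1}^n (1+W_i^*)^2 \sigma_{D_i}^2(X_i) = \frac{1}{n}\sum_{i=1}^n \Big[\frac{D_i\sigma_1^2(X_i)}{e^2(X_i)} + \frac{(1-D_i)\sigma_0^2(X_i)}{(1-e(X_i))^2}\Big].
\]

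First I would verify $\tilde V^E \stackrel{\sf p}{\longrightarrow} \E[\sigma_1^2(X)/e(X) + \sigma_0^2(X)/(1-e(X))]$. The summands are i.i.d.\ (the $X_i,D_i$ are i.i.d.\ and $\sigma_\omega^2$ is a fixed function), and under Assumption~\ref{asp:dr}\ref{asp:dr-1} together with Assumption~\ref{asp:dr}(iii) they are uniformly bounded: overlap gives $e(X), 1-e(X) > \eta$, and $\sigma_\omega^2 \le C$ almost surely. Hence Khintchine's WLLN applies, and conditioning on $X$ inside the expectation yields $\E[D\sigma_1^2(X)/e^2(X)] = \E[\sigma_1^2(X)/e(X)]$ and similarly for the control term.

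Next I would show $V^E - \tilde V^E = o_\P(1)$. Using the algebraic identity $(1+W_i)^2 - (1+W_i^*)^2 = (W_i - W_i^*)(2+W_i+W_i^*)$ and then Cauchy--Schwarz,
\[
|V^E - \tilde V^E| \le \Big[\frac{1}{n}\sum_{i=1}^n (W_i - W_i^*)^2\Big]^{1/2}\Big[\frac{1}{n}\sum_{i=1}^n (2+W_i+W_i^*)^2 \sigma_{D_i}^4(X_i)\Big]^{1/2}.
\]
Exchangeability of the weight construction (Assumption~\ref{asp:weight}\ref{asp:weight-1}) combined with the i.i.d.\ sampling implies $(W_i,W_i^*,X_i,D_i) \stackrel{\mathrm d}{=} (W_1,W_1^*,X_1,D_1)$, so the first bracket has mean $\E[(W_1-W_1^*)^2]$, which tends to $0$ by Assumption~\ref{asp:dr1}\ref{asp:dr1,w}; Markov's inequality then gives $o_\P(1)$. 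For the second bracket, $\sigma_{D_i}^4(X_i)$ is uniformly bounded, $W_i^*$ is bounded by overlap, and $\E[W_i^2] \le 2\E[(W_i^*)^2] + 2\E[(W_i-W_i^*)^2] = O(1)$; so its expectation is $O(1)$ and it is $O_\P(1)$ by Markov.

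The only delicate point is checking that the exchangeability in Assumption~\ref{asp:weight}\ref{asp:weight-1} truly transfers the single-index hypothesis in Assumption~\ref{asp:dr1}\ref{asp:dr1,w} to every index $i$; picking a transposition $\pi$ with $\pi(1)=i$ and relabeling the summation index shows $\sum_{j:D_j=1-D_1} w_{j\leftarrow 1}$ and $\sum_{j:D_j=1-D_i} w_{j\leftarrow i}$ have the same joint law with $(X_\cdot, D_\cdot)$, which is what the Cauchy--Schwarz step needs. Combining the two displays above with $V^E = \tilde V^E + o_\P(1)$ and Slutsky completes the argument.
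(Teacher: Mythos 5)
Your proposal is correct and follows essentially the same route as the paper: decompose $V^E$ into an oracle term with $1+W_i^*$ in place of $1+W_i$ (handled by the weak law of large numbers after conditioning on $X$) plus a remainder, and control the remainder via the difference-of-squares factorization, the Cauchy--Schwarz inequality, the boundedness of $\sigma_\omega^2$ and of $W_i^*$ under overlap, and Assumption~\ref{asp:dr1}\ref{asp:dr1,w}. The only cosmetic difference is that you apply Cauchy--Schwarz to the empirical averages before taking expectations, while the paper applies it directly at the level of expectations after invoking exchangeability; both are valid and rely on the same ingredients.
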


For the bias term $B_M-\hat B_M$, in light of the smoothness conditions on $\mu_\omega$ and approximation conditions on $\hat{\mu}_\omega$ for $\omega \in \{0,1\}$, one can establish the following lemma.

\begin{lemma}\label{lemma:mbc,bias}
    Under Assumptions~\ref{asp:dr}, \ref{asp:weight}, \ref{asp:se2},
    \begin{align*}
        \sqrt{n} \Big(B_n - \hat{B}_n \Big)\stackrel{\sf p}{\longrightarrow} 0.
    \end{align*}
\end{lemma}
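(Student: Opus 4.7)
The plan is to write $B_n - \hat B_n$ as a weighted sum of increments of $\Delta_\omega := \mu_\omega - \hat\mu_\omega$, Taylor-expand those increments to order $k-1$ with a Lagrange-type remainder, and then combine the derivative bounds from Assumption~\ref{asp:se2}\ref{asp:se2,o1}--\ref{asp:se2,o2} with the weighted moment bounds of Assumption~\ref{asp:se2}\ref{asp:se2,d}. Concretely, I would first observe that
\begin{align*}
B_n - \hat B_n = \frac{1}{n}\sum_{i=1}^n (2D_i-1) \sum_{j:D_j=1-D_i} w_{i\leftarrow j} \big[\Delta_{1-D_i}(X_i)-\Delta_{1-D_i}(X_j)\big],
\end{align*}
so that $|B_n-\hat B_n|$ is bounded by the analogous sum with $|w_{i\leftarrow j}|$ and $|\Delta_{1-D_i}(X_i)-\Delta_{1-D_i}(X_j)|$ replacing their signed counterparts.

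Next I would apply multi-index Taylor's theorem to $\Delta_{1-D_i}$ around $X_i$. Using $|X_j-X_i|^t \le \lVert X_j-X_i\rVert^\ell$ for $t\in\Lambda_\ell$, this yields
\begin{align*}
|\Delta_{1-D_i}(X_j)-\Delta_{1-D_i}(X_i)| \le \sum_{\ell=1}^{k-1} C_\ell \max_{t\in\Lambda_\ell}\lVert \partial^t \Delta_{1-D_i}\rVert_\infty \lVert X_j-X_i\rVert^\ell + C_k \max_{t\in\Lambda_k}\lVert \partial^t \Delta_{1-D_i}\rVert_\infty \lVert X_j-X_i\rVert^k
\end{align*}
for dimensional constants $C_\ell, C_k$. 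Assumption~\ref{asp:se2}\ref{asp:se2,o2} gives $\max_{t\in\Lambda_\ell}\lVert \partial^t \Delta_\omega\rVert_\infty = O_\P(n^{-\gamma_\ell})$ for $\ell\in\zahl{k-1}$, and combining Assumption~\ref{asp:se2}\ref{asp:se2,o1} with the $O_\P(1)$ bound on $\max_{t\in\Lambda_k}\lVert \partial^t \hat\mu_\omega\rVert_\infty$ via the triangle inequality yields $\max_{t\in\Lambda_k}\lVert \partial^t \Delta_\omega\rVert_\infty = O_\P(1)$.

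Finally, I would set $S_\ell := \frac{1}{n}\sum_{i=1}^n \sum_{j:D_j=1-D_i} |w_{i\leftarrow j}| \lVert X_j-X_i\rVert^\ell$. The i.i.d.\ assumption together with the permutation invariance of the weight construction (Assumption~\ref{asp:weight}\ref{asp:weight-1}) implies each $i$-indexed summand in $S_\ell$ has the same expectation as the $i=1$ case, so Assumption~\ref{asp:se2}\ref{asp:se2,d} gives $\E[S_\ell] = o(n^{-1/2+\gamma_\ell})$ for $\ell\in\zahl{k-1}$ and $\E[S_k] = o(n^{-1/2})$. Since $S_\ell \ge 0$, Markov's inequality lifts these to $S_\ell = o_\P(n^{-1/2+\gamma_\ell})$ and $S_k = o_\P(n^{-1/2})$. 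Combining everything,
\begin{align*}
\sqrt{n}\,|B_n-\hat B_n| \le \sum_{\ell=1}^{k-1} O_\P(n^{-\gamma_\ell}) \cdot \sqrt{n}\, S_\ell + O_\P(1)\cdot \sqrt{n}\, S_k = \sum_{\ell=1}^{k-1} o_\P(1) + o_\P(1) = o_\P(1).
\end{align*}

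The main subtlety, and the step that needs the most care, is the reduction in the last paragraph: because the weights $w_{i\leftarrow j}$ are data-dependent, one must invoke the permutation invariance of Assumption~\ref{asp:weight}\ref{asp:weight-1} together with the i.i.d.\ sampling to equate the expectation of each $i$-indexed summand in $S_\ell$ with the single-index expectation that Assumption~\ref{asp:se2}\ref{asp:se2,d} directly controls. Once this reduction is secured, the remainder is orderly bookkeeping of the Taylor and moment rates; the tightness of Assumption~\ref{asp:se2}\ref{asp:se2,d} (with exponents matched to $\gamma_\ell$) is precisely what makes the sum telescope to $o_\P(1)$ after multiplying by $\sqrt{n}$.
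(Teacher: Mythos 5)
Your proposal is correct and follows essentially the same route as the paper's proof: the same decomposition of $B_n-\hat B_n$ into weighted increments, a $k$-th order Taylor expansion controlled by Assumption~\ref{asp:se2}\ref{asp:se2,o1}--\ref{asp:se2,o2}, and the weighted moment bounds of Assumption~\ref{asp:se2}\ref{asp:se2,d} combined via Markov's inequality. The only cosmetic difference is that you expand $\Delta_\omega=\mu_\omega-\hat\mu_\omega$ as a single function while the paper expands $\mu_\omega$ and $\hat\mu_\omega$ separately and adds the three resulting bounds; your explicit treatment of the exchangeability step reducing $S_\ell$ to the $i=1$ expectation is a point the paper leaves implicit, and it is handled correctly.
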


Under Assumption~\ref{asp:se2},
\begin{align*}
    \E[\lvert \tilde{B}_n \rvert] =& \E\Big[ \Big\lvert \frac{1}{n} \sum_{i=1}^n (2D_i-1) \Big(1 - \sum_{j:D_j=1-D_i} w_{i\leftarrow j} \Big) \mu_{1-D_i}(X_i) \Big\rvert \Big]\\
    \le& \E\Big[ \Big\lvert \Big(1 - \sum_{j:D_j=1-D_1} w_{1\leftarrow j} \Big) \mu_{1-D_1}(X_1) \Big\rvert \Big] \\
    \le& \Big\{\E\Big[ 1 - \sum_{j:D_j=1-D_1} w_{1\leftarrow j} \Big]^2 \Big\}^{1/2} \{\E[\mu_{1-D_1}^2(X_1)]\}^{1/2} = o(n^{-1/2}),
\end{align*}
and then
\begin{align}\label{eq:mbc,bias}
    \sqrt{n} \tilde{B}_n \stackrel{\sf p}{\longrightarrow} 0.
\end{align}

Combining Lemma~\ref{lemma:mbc,clt}, Lemma~\ref{lemma:mbc,ve}, Lemma~\ref{lemma:mbc,bias}, and Equation~\ref{eq:mbc,bias} completes the proof.

The consistency of the variance estimator can be established in a similar way as the proof of Theorem 5.1 in \cite{lin2021estimation}.
\end{proof}

\subsection{Proof of Theorem~\ref{thm:dml}}

\begin{proof}[Proof of Theorem~\ref{thm:dml}]
The proof for double robustness is the same as Theorem~\ref{thm:dr}.

We follow the proof of Theorem 5.1(ii) in \cite{lin2021estimation} for the semiparametric efficiency, by checking Assumptions 3.1 and 3.2 in \cite{chernozhukov2018double}. In the following the notation in \cite{chernozhukov2018double} is adopted.

The score (or the efficient influence function as used in \citet[Section 3.4]{tsiatis2006semiparametric}) is
\[
  \psi(X,D,Y;\tilde{\tau},\tilde{\zeta}) := \tilde{\mu}_1(X) - \tilde{\mu}_0(X) + \frac{D(Y-\tilde{\mu}_1(X))}{\tilde{e}(X)} - \frac{(1-D)(Y-\tilde{\mu}_0(X))}{1-\tilde{e}(X)} - \tilde{\tau},
\]
where $\tilde{\zeta}(x) = (\tilde{\mu}_0(x),\tilde{\mu}_1(x),\tilde{\rho}_0(x),\tilde{\rho}_1(x))$ are the nuisance parameters by letting $\tilde{\rho}_0(x) = 1/(1-\tilde{e}(x))$ and $\tilde{\rho}_1(x) = 1/\tilde{e}(x)$. Let $\rho_0(x) = 1/(1-e(x))$ and $\rho_1(x) = 1/e(x)$. Then the true value is $\zeta(x) = (\mu_0(x),\mu_1(x),\rho_0(x),\rho_1(x))$.

We can then write the score as
\[
  \psi(X,D,Y;\tilde{\tau},\tilde{\zeta}) = \tilde{\mu}_1(X) - \tilde{\mu}_0(X) + D(Y-\tilde{\mu}_1(X))\tilde{\rho}_1(X) - (1-D)(Y-\tilde{\mu}_0(X))\tilde{\rho}_0(X) - \tilde{\tau}.
\]

For the $\kappa$ in Assumption~\ref{asp:dml1}, let $q = 2+ \kappa/2$, $q_1 = 2 + \kappa$ and $q_2$ such that $q^{-1} = q_1^{-1} + q_2^{-1}$. Let $\cT_n$ be the set consisting of all $\tilde{\zeta}$ such that for $\omega \in \{0,1\}$,
\begin{align*}
    & \lVert \tilde{\mu}_\omega - \mu_\omega \rVert_{p_1} = O(r_1), ~~ \lVert \tilde{\rho}_\omega - \rho_\omega \rVert_{p_2} = O(r_2),\\
    & \lVert \tilde{\mu}_\omega - \mu_\omega \rVert_\infty = o(1), ~~ \lVert \tilde{\rho}_\omega - \rho_\omega \rVert
    _2 = o(1), ~~ \lVert \tilde{\rho}_\omega\rVert_{q_2} = O(1),
\end{align*}
where $p_1,p_2$ are the ones in Assumption~\ref{asp:dml2}. Then the selection of $\cT_n$ satisfies Assumption 3.2(a) in \cite{chernozhukov2018double} from Assumptions~\ref{asp:dr1}, \ref{asp:dr2}, \ref{asp:dml2}.

Steps 1-3 in the proof of Theorem 5.1(ii) in \cite{lin2021estimation} can be directly applied.

For step 4 therein, we can establish in the same way that for $\omega \in \{0,1\}$, $\lVert \mu_\omega \rVert_{2+\kappa} = O(1)$ from $\lVert Y \rVert_{2+\kappa} = O(1)$, and $\tau = O(1)$. Then from H\"older's inequality and $\lVert \rho_\omega \rVert_\infty$ is bounded for $\omega \in \{0,1\}$, for any $\tilde{\zeta} \in \cT_n$,
\begin{align*}
  & \lVert \psi(X,D,Y;\tau,\tilde{\zeta}) \lVert_q = \lVert \tilde{\mu}_1(X) - \tilde{\mu}_0(X) + D(Y-\tilde{\mu}_1(X))\tilde{\rho}_1(X) - (1-D)(Y-\tilde{\mu}_0(X))\tilde{\rho}_0(X) - \tau \rVert_q\\
  \le & \lVert \tilde{\mu}_1(X) \rVert_q + \lVert \tilde{\mu}_0(X) \rVert_q + \lVert (Y-\tilde{\mu}_1(X))\tilde{\rho}_1(X) \rVert_q + \lVert (Y-\tilde{\mu}_0(X))\tilde{\rho}_0(X) \rVert_q + \tau\\
  \le & \lVert \mu_1 \rVert_q + \lVert \tilde{\mu}_1 - \mu_1 \rVert_q + \lVert \mu_0 \rVert_q + \lVert \tilde{\mu}_0 - \mu_0 \rVert_q + (\lVert Y \rVert_{q_1} + \lVert \mu_1 \rVert_{q_1} + \lVert \tilde{\mu}_1 - \mu_1 \rVert_{q_1}) \lVert \tilde{\rho}_1 \rVert_{q_2}\\
  & + (\lVert Y \rVert_{q_1} + \lVert \mu_0 \rVert_{q_1} + \lVert \tilde{\mu}_0 - \mu_0 \rVert_{q_1}) \lVert \tilde{\rho}_0 \rVert_{q_2} + \tau \\
  \le & \lVert \mu_1 \rVert_{2+\kappa} + \lVert \tilde{\mu}_1 - \mu_1 \rVert_\infty + \lVert \mu_0 \rVert_{2+\kappa} + \lVert \tilde{\mu}_0 - \mu_0 \rVert_\infty + (\lVert Y \rVert_{2+\kappa} + \lVert \mu_1 \rVert_{2+\kappa} + \lVert \tilde{\mu}_1 - \mu_1 \rVert_\infty) \lVert \tilde{\rho}_1 \rVert_{q_2}\\
  & + (\lVert Y \rVert_{2+\kappa} + \lVert \mu_0 \rVert_{2+\kappa} + \lVert \tilde{\mu}_0 - \mu_0 \rVert_\infty) \lVert \tilde{\rho}_0 \rVert_{q_2} + \tau = O(1).
\end{align*}
The last step is from the definition of $\cT_n$ and the selection of $q,q_1$. Then we complete this step.

For step 5 therein, by H\"older's inequality, for any $\tilde{\zeta} \in \cT_n$,
\begin{align*}
  & \lVert \psi(X,D,Y;\tau,\tilde{\zeta}) - \psi(X,D,Y;\tau,\zeta) \rVert_2\\
  \le & \lVert \tilde{\mu}_1 - \mu_1 \rVert_2 + \lVert \tilde{\mu}_0- \mu_0 \rVert_2 + \lVert D(Y-\tilde{\mu}_1(X))\tilde{\rho}_1(X) - D(Y-\mu_1(X))\rho_1(X) \rVert_2 \\
  & + \lVert (1-D)(Y-\tilde{\mu}_0(X))\tilde{\rho}_0(X) - (1-D)(Y-\mu_0(X))\rho_0(X) \rVert_2\\
  \le & \lVert \tilde{\mu}_1 - \mu_1 \rVert_2 + \lVert \tilde{\mu}_0- \mu_0 \rVert_2 + \lVert (Y-\mu_1(X)) (\tilde{\rho}_1 - \rho_1) \rVert
  _2 + \lVert (\tilde{\mu}_1 - \mu_1) \tilde{\rho}_1 \rVert_2 \\
  & + \lVert (Y-\mu_0(X)) (\tilde{\rho}_0 - \rho_0) \rVert
  _2 + \lVert (\tilde{\mu}_0 - \mu_0) \tilde{\rho}_0 \rVert_2\\
  \le & \lVert \tilde{\mu}_1 - \mu_1 \rVert_2 + \lVert \tilde{\mu}_0- \mu_0 \rVert_2 + O(\lVert \tilde{\rho}_1 - \rho_1 \rVert
  _2) + \lVert \tilde{\mu}_1 - \mu_1 \rVert_\infty \lVert \tilde{\rho}_1 \rVert_2 \\
  & + O(\lVert \tilde{\rho}_0 - \rho_0 \rVert
  _2) + \lVert \tilde{\mu}_0 - \mu_0 \rVert_\infty \lVert \tilde{\rho}_0 \rVert_2 \\
  =& o(1).
\end{align*}
The last two steps are due to the definition of $\cT_n$ and that the construction of weights does not depend on the responses.

Notice that for any $t \in (0,1)$,
\begin{align*}
  &\partial_t^2 \E \psi(X,D,Y;\tau,\zeta + t (\tilde{\zeta} - \zeta)) \\
  = & -2\Big(\E [D(\tilde{\mu}_1(X) - \mu_1(X))(\tilde{\rho}_1(X)-\rho_1(X))] - \E [(1-D)(\tilde{\mu}_0(X) - \mu_0(X))(\tilde{\rho}_0(X)-\rho_0(X))] \Big).
\end{align*}
Then by the definition of $\cT_n$, for any $\tilde{\zeta} \in \cT_n$,
\[
  \lvert \partial_t^2 \E \psi(X,D,Y;\tau,\zeta + t (\tilde{\zeta} - \zeta)) \rvert \le 2 [\lVert \tilde{\mu}_1 - \mu_1 \rVert_{p_1} \lVert \tilde{\rho}_1 - \rho_1 \rVert_{p_2} + \lVert \tilde{\mu}_0 - \mu_0 \rVert_{p_1} \lVert \tilde{\rho}_0 - \rho_0 \rVert_{p_2}] = O(r_1r_2) = o(n^{-1/2}).
\]
This completes the proof this step and thus finishes the whole proof.
\end{proof}

\subsection{Proof of Theorem~\ref{thm:kernel}}

\begin{proof}[Proof of Theorem~\ref{thm:kernel}]

{\bf Proof of Theorem~\ref{thm:kernel}\ref{thm:kernel1}.} Assumptions~\ref{asp:weight}, \ref{asp:se2}\ref{asp:se2,w1} hold since we always have $\sum_{j:D_j=1-D_1} w_{1\leftarrow j} = 1$. Assumption~\ref{asp:dr2}\ref{asp:dr2,w} holds since the construction of weights only based on $\mX$ and $\mD$ from the definition. Assumption~\ref{asp:dr2}\ref{asp:dr2,w2} holds since all the weights are nonnegative and then $\E[ \lvert \sum_{j:D_j=1-D_1} w_{j\leftarrow 1} \rvert] = \E[ \sum_{j:D_j=1-D_1} w_{j\leftarrow 1} ] \lesssim \E[ \sum_{j:D_j=1-D_1} w_{1\leftarrow j}] = 1$.

{\bf Proof of Theorem~\ref{thm:kernel}\ref{thm:kernel2}.} To verify Assumption~\ref{asp:dr1}\ref{asp:dr1,w}, notice that
\begin{align*}
    & \E \Big[ \sum_{j:D_j=1-D_1} w_{j\leftarrow 1} - \Big(D_1 \frac{1-e(X_1)}{e(X_1)} + (1-D_1) \frac{e(X_1)}{1-e(X_1)} \Big) \Big]^2\\
    =& \E \Big[\E \Big[ \Big(\sum_{j:D_j=1-D_1} w_{j\leftarrow 1} - D_1 \frac{1-e(X_1)}{e(X_1)} - (1-D_1) \frac{e(X_1)}{1-e(X_1)} \Big)^2 \Biggiven \mD \Big] \Big]\\
    =& \E \Big[\E \Big[ \Big(\sum_{j:D_j=0} w_{j\leftarrow 1} - \frac{1-e(X_1)}{e(X_1)}\Big)^2 \Biggiven \mD,D_1=1 \Big] \ind\Big(D_1 = 1\Big)\Big] \\
    &+ \E \Big[\E \Big[ \Big(\sum_{j:D_j=1} w_{j\leftarrow 1} - \frac{e(X_1)}{1-e(X_1)} \Big)^2 \Biggiven \mD,D_1=0 \Big] \ind\Big(D_1 = 0\Big)\Big].
\end{align*}

Then it suffices to consider the first term above under $D_1 = 1$, and the second term under $D_1=0$ can be established in the same way.

For any $\omega \in \{0,1\}$, let $f_\omega(\cdot)$ be the density function of $X|D=\omega$. Under $D_1 = 1$, we have
\begin{align*}
    &\sum_{j:D_j=0} w_{j\leftarrow 1} - \frac{1-e(X_1)}{e(X_1)} \\
    =& \sum_{j:D_j=0} \frac{K_H(X_j-X_1)}{\sum_{k:D_k=1} K_H(X_j-X_k)} - \frac{1-e(X_1)}{e(X_1)}\\
    =& \Big[\frac{n_0}{n_1} \frac{f_0(X_1)}{f_1(X_1)} - \frac{1-e(X_1)}{e(X_1)} \Big] + \frac{n_0}{n_1} f_1^{-1}(X_1) \Big(\frac{1}{n_0} \sum_{j:D_j=0} K_H(X_j-X_1) - f_0(X_1)\Big)\\
    &+ \frac{n_0}{n_1} \frac{1}{n_0} \sum_{j:D_j=0} \Big(f_1^{-1}(X_j) - f_1^{-1}(X_1)\Big) K_H(X_j-X_1)\\
    &+ \frac{n_0}{n_1} \frac{1}{n_0} \sum_{j:D_j=0} \Big[\Big(\frac{1}{n_1}\sum_{k:D_k=1} K_H(X_j-X_k)\Big)^{-1} - f_1^{-1}(X_j)\Big] K_H(X_j-X_1)\\
    =:& R_1 + \frac{n_0}{n_1}R_2 + \frac{n_0}{n_1}R_3 + \frac{n_0}{n_1}R_4.
    \yestag\label{eq:kernel1}
\end{align*}

For $R_1$, from the law of large number, we have
\begin{align*}
    \lim_{n \to \infty } \E \Big[\E \Big[ R_1^2 \Biggiven \mD,D_1=1 \Big] \ind\Big(D_1 = 1\Big)\Big] = 0.
\end{align*}

For $R_2$ to $R_4$, we have the following lemma.

\begin{lemma}\label{lemma:kernel1}
    Under Assumption~\ref{asp:kernel,dr}, we have
    \begin{align*}
        \lim_{n \to \infty } \E \Big[\frac{n_0^2}{n_1^2} \E \Big[ R_i^2 \Biggiven \mD,D_1=1 \Big] \ind\Big(D_1 = 1\Big)\Big] = 0, ~~ i=2,3,4.
    \end{align*}
\end{lemma}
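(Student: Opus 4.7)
The plan is to treat $R_2$, $R_3$, $R_4$ as errors in the bias--variance decomposition of classical kernel density estimators, exploiting that Assumptions \ref{asp:dr}\ref{asp:dr-1} and \ref{asp:kernel,dr} imply $f_0$ and $f_1$ are bounded above and bounded away from zero on $\cS$, so in particular $\lVert f_1^{-1}\rVert_\infty < \infty$. Before touching the $R_i$'s I would dispose of the random prefactor $n_0^2/n_1^2$: since $n_0/n \to 1-\P(D=1)$ and $n_1/n \to \P(D=1) \in (\eta, 1-\eta)$, Hoeffding's inequality shows $n_0/n_1$ is bounded by a deterministic constant on an event whose complement has exponentially small probability, and on the complement the crude bound $\hat f_0(X_1) \le \lVert K\rVert_\infty |H|^{-1/2}$ together with the exponential decay suffices to kill the contribution.

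For $R_2$: conditioning on $(\mD, X_1, D_1=1)$, $\hat f_0(X_1):=n_0^{-1}\sum_{j:D_j=0} K_H(X_j-X_1)$ is the standard Rosenblatt--Parzen estimator of $f_0$ at $X_1$ from $n_0$ i.i.d.\ draws from $f_0$. After a change of variables the conditional bias equals $\int K(v)[f_0(X_1+H^{1/2}v)-f_0(X_1)]\d v$, which vanishes at almost every $X_1$ by the a.e.\ continuity and boundedness of $f_0$, compact support of $K$, and $\lVert H^{1/2}\rVert_2\to 0$; the conditional variance is at most $(n_0|H|^{1/2})^{-1}\lVert K\rVert_\infty\lVert f_0\rVert_\infty$ and vanishes since $n|H|^{1/2}\to\infty$. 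Dominated convergence together with $\lVert f_1^{-1}\rVert_\infty<\infty$ then delivers the claim for $R_2$. The handling of $R_3$ is entirely parallel: its conditional mean is $\int [f_1^{-1}(X_1+H^{1/2}v)-f_1^{-1}(X_1)]K(v)f_0(X_1+H^{1/2}v)\d v$, which tends to $0$ a.e.\ by a.e.\ continuity of $f_1^{-1}$, and the conditional variance satisfies the same $(n_0|H|^{1/2})^{-1}$ bound with an additional $\lVert f_1^{-1}\rVert_\infty^2$ factor.

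The term $R_4$ is the genuine obstacle because of the random denominator. The key is the algebraic identity $\hat f_1^{-1}(u)-f_1^{-1}(u)=[f_1(u)-\hat f_1(u)]/[f_1(u)\hat f_1(u)]$ with $\hat f_1(u):=n_1^{-1}\sum_{k:D_k=1}K_H(u-X_k)$. To avoid worries about $\hat f_1$ being small I would truncate $\hat f_1$ from below at $\tfrac12 \inf_{\cS} f_1>0$; uniform consistency of $\hat f_1$ on $\cS$ under Assumption \ref{asp:kernel,dr} makes the truncation inactive with probability tending to $1$. Conditioning additionally on the treated covariates $\mX_1=[X_k]_{D_k=1}$, the function $\Delta(u):=\hat f_1^{-1}(u)-f_1^{-1}(u)$ becomes deterministic, $X_1$ is independent of $\{X_j:D_j=0\}$, and the same bias--variance decomposition as for $R_2$ yields
\[
\E\!\left[R_4^2\,\big|\,\mD,\mX_1\right] \le \E_{X_1}\!\left[\Big(\!\int\! \Delta(u) K_H(u-X_1) f_0(u)\d u\Big)^2\right] + \frac{1}{n_0}\E_{X_1}\!\left[\int\! \Delta(u)^2 K_H^2(u-X_1) f_0(u)\d u\right].
\]
Taking the outer expectation over $\mX_1$ and invoking the pointwise $L^2$ consistency $\E[\Delta(u)^2]\to 0$ at almost every $u$ (which is exactly the conclusion just obtained for $R_2$ applied to $\hat f_1$), together with a uniform $L^2$ bound supplied by the truncation and boundedness of $f_1^{-1}$, Fubini and dominated convergence deliver the $o(1)$ conclusion for $R_4$. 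The main technical care is in showing that the pointwise $L^2$ control of $\Delta$ passes through the two integrals uniformly enough to apply DCT; this is where the truncation and the Lipschitz-free a.e.\ continuity conditions in Assumption \ref{asp:kernel,dr} are used most forcefully.
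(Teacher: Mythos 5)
Your treatment of $R_2$ and $R_3$ coincides with the paper's: condition on $(X_1,\mD)$, split into squared bias plus variance of a Rosenblatt--Parzen estimator, kill the bias via a.e.\ continuity of $f_0$ (resp.\ $f_1^{-1}$), boundedness, $\lVert H^{1/2}\rVert_2\to0$ and dominated convergence/Fatou, and bound the variance by $(n_0\lvert H\rvert^{1/2})^{-1}\lVert K\rVert_\infty\lVert f_0\rVert_\infty$. That part is fine. The handling of the prefactor $n_0^2/n_1^2$ by concentration is also unobjectionable.

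For $R_4$ your route genuinely diverges from the paper's, and the divergence is where the gap sits. You keep the random denominator $\hat f_1(X_j)$ via the exact identity $\hat f_1^{-1}-f_1^{-1}=(f_1-\hat f_1)/(f_1\hat f_1)$ and propose to truncate $\hat f_1$ from below, justifying the truncation by ``uniform consistency of $\hat f_1$ on $\cS$ under Assumption~\ref{asp:kernel,dr}.'' Assumption~\ref{asp:kernel,dr} does not deliver uniform consistency: the densities are only assumed continuous \emph{almost everywhere} (not uniformly continuous, not even continuous everywhere on $\cS$), and the bandwidth condition is $n\lvert H^{1/2}\rvert\to\infty$ rather than $n\lvert H^{1/2}\rvert/\log n\to\infty$, which is what a sup-norm consistency argument (or even a union bound over the $n_0$ evaluation points $X_j$ via Bernstein's inequality) would require. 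Moreover, since the target is an $L^2$ statement, ``truncation inactive with probability tending to one'' is not enough: on the exceptional event the untruncated $\hat f_1^{-1}(X_j)$ is unbounded (it can be of order $n_1\lvert H\rvert^{1/2}$ or worse when few treated points fall near $X_j$), so you would need a quantitative bound on both the probability of that event and the magnitude of $R_4^2$ there, neither of which your assumptions supply. The paper sidesteps all of this by linearizing with the \emph{deterministic} coefficient $f_1^{-2}(X_j)$, i.e.\ writing $\hat f_1^{-1}-f_1^{-1}=f_1^{-2}(f_1-\hat f_1)+f_1^{-1}(f_1-\hat f_1)(\hat f_1^{-1}-f_1^{-1})$, so that the leading term contains no random denominator; it then conditions on the treated covariates, expands the square of the leading term over pairs of treated indices (a U-statistic computation in which the off-diagonal terms factorize into products of kernel biases, handled exactly as in the $R_2$ argument, and the diagonal terms are $O(n_1^{-1})$), and treats the second term as a higher-order remainder. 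If you want to salvage your version you would either need to strengthen the assumptions to get sup-norm control of $\hat f_1$, or switch to the paper's deterministic linearization.
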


We then complete the proof by \eqref{eq:kernel1}.

{\bf Proof of Theorem~\ref{thm:kernel}\ref{thm:kernel3}.} To verify Assumption~\ref{asp:se2}, notice that
\begin{align*}
    \E \Big[ \sum_{j:D_j=1-D_1} \lvert w_{1\leftarrow j} \rvert \cdot \lVert X_j - X_1 \rVert^k \Big] =& \E \Big[\E \Big[ \sum_{j:D_j=0} w_{1\leftarrow j} \lVert X_j - X_1 \rVert^k \Biggiven \mD,D_1=1 \Big] \ind\Big(D_1 = 1\Big)\Big] \\
    &+ \E \Big[\E \Big[ \sum_{j:D_j=1} w_{1\leftarrow j} \lVert X_j - X_1 \rVert^k \Biggiven \mD,D_1=0 \Big] \ind\Big(D_1 = 0\Big)\Big].
\end{align*}

It suffices to consider the first term under $D_1=1$.

Under $D_1=1$, notice that
\begin{align*}
    & \sum_{j:D_j=0} w_{1\leftarrow j} \lVert X_j - X_1 \rVert^k = \Big(\sum_{k:D_k=0} K_H(X_1-X_k)\Big)^{-1} \sum_{j:D_j=0} K_H(X_1-X_j) \lVert X_j - X_1 \rVert^k\\
    = & \Big(\frac{1}{n_0} \sum_{k:D_k=0} K_H(X_1-X_k)\Big)^{-1} \frac{1}{n_0} \sum_{j:D_j=0} K_H(X_1-X_j) \lVert X_j - X_1 \rVert^k.
\end{align*}
From the properties of kernel density estimation, it suffices to consider
\begin{align*}
    & \E \Big[\E \Big[ f_0^{-1}(X_1) \frac{1}{n_0} \sum_{j:D_j=0} K_H(X_1-X_j) \lVert X_j - X_1 \rVert^k \Biggiven \mD,D_1=1 \Big] \ind\Big(D_1 = 1\Big)\Big]\\
    =& \E \Big[\E \Big[ f_0^{-1}(X_1) K_H(X_1-X_2) \lVert X_2 - X_1 \rVert^k \Biggiven D_1=1,D_2=0 \Big] \ind\Big(D_1 = 1\Big)\Big]\\
    \lesssim& \int K_H(y-x) \lVert y - x \rVert^k \d x \d y = \lvert H \rvert^{-1/2} \int K(H^{-1/2}(y-x)) \lVert y - x \rVert^k \d x \d y\\
    =& \int K(z)  \lVert H^{1/2} z \rVert^k \d x \d z \le \lVert H^{1/2} \rVert_2^k \int K(z)  \lVert z \rVert^k \d z.
\end{align*}
Then it suffices to assume
\[
    \int K(z)  \lVert z \rVert^k \d z = O(1),~~\lVert H^{1/2} \rVert_2^k = o(n^{-1/2}),~~ \lVert H^{1/2} \rVert_2^\ell = o(n^{-1/2+ \gamma_\ell}) ~~\mbox{\rm for all}~~ \ell \in \zahl{k-1},
\]
and under Assumptions~\ref{asp:kernel,dr}, \ref{asp:kernel,se}, the above conditions hold.

{\bf Proof of Theorem~\ref{thm:kernel}\ref{thm:kernel4}.} To verify Assumption~\ref{asp:dml2}, from \eqref{eq:kernel1}, we establish the convergence rate of each term in \eqref{eq:kernel1} seperately.

For $R_1$, it is easy to check
\begin{align*}
    \E \Big[\E \Big[ \Big\lvert R_1 \Big\rvert \Biggiven \mD,D_1=1 \Big] \ind\Big(D_1 = 1\Big)\Big] = O(n^{-1/2}).
\end{align*}

For $R_2$ to $R_4$, we have the following lemma.

\begin{lemma}\label{lemma:kernel2}
    Under Assumptions~\ref{asp:kernel,dr}, \ref{asp:kernel,dml}, we have
    \begin{align*}
        \E \Big[\frac{n_0}{n_1} \E \Big[ \Big\lvert R_i \Big\rvert \Biggiven \mD,D_1=1 \Big] \ind\Big(D_1 = 1\Big)\Big] \lesssim \lVert H^{1/2} \rVert_2 + (n \lvert H^{1/2} \rvert)^{-1/2}, ~~ i=2,3,4.
    \end{align*}
\end{lemma}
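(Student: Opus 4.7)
The three terms $R_2,R_3,R_4$ are all standard kernel-density-type remainders, and my plan is to bound each in $L^1$ by $\lVert H^{1/2}\rVert_2+(n\lvert H^{1/2}\rvert)^{-1/2}$. The prefactor $n_0/n_1$ and the indicator $\ind(D_1=1)$ are harmless: by the overlap part of Assumption~\ref{asp:dr}, $n_0,n_1\asymp n$ with overwhelming probability and $\E[n_0/n_1\cdot\ind(D_1=1)]$ together with moments of $n/n_1$ are $O(1)$, so it suffices to prove conditional versions of the bound and then apply Cauchy--Schwarz to peel off this ratio. Moreover, conditional on $\mD$ and $D_1=1$, the variable $X_1$ is distributed as $X\given D=1\sim f_1$, and, independently, the control $X_j$'s ($D_j=0$) are i.i.d.\ from $f_0$; I will exploit this conditional independence throughout.

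For $R_2$, the quantity inside is the Parzen--Rosenblatt kernel density estimator of $f_0$ evaluated at $X_1$, minus $f_0(X_1)$, multiplied by $f_1^{-1}(X_1)$. I will perform the textbook bias/variance decomposition: the bias is $\int K(z)[f_0(X_1+H^{1/2}z)-f_0(X_1)]\d z$, which is $O(\lVert H^{1/2}\rVert_2)$ for $X_1$ at distance at least $\lVert H^{1/2}\rVert_2\cdot\mathrm{diam}(\mathrm{supp}\,K)$ from $\partial\cS$ by the Lipschitz property of $f_0$ in Assumption~\ref{asp:kernel,dml}, and the boundary strip contributes only $O(\lVert H^{1/2}\rVert_2)$ after averaging against $f_1$, thanks to the bounded surface area of $\cS$. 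The variance is $\le n_0^{-1}\E[K_H^2(X_2-X_1)]\lesssim (n\lvert H^{1/2}\rvert)^{-1}$. Combining via $\E\lvert Z\rvert\le\lvert\E Z\rvert+\sqrt{\Var Z}$ and using the boundedness of $f_1^{-1}$ yields the claimed rate.

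For $R_3$, since $f_1$ is Lipschitz and bounded away from zero on $\cS$, the same holds for $f_1^{-1}$, so $\lvert f_1^{-1}(X_j)-f_1^{-1}(X_1)\rvert\lesssim\lVert X_j-X_1\rVert$. I then use
\[
\E\Big[\tfrac{1}{n_0}\sum_{j:D_j=0}K_H(X_j-X_1)\lVert X_j-X_1\rVert\Biggiven\mD,D_1=1\Big]=\E\big[K_H(X_2-X_1)\lVert X_2-X_1\rVert\bigm| D_1=1,D_2=0\big],
\]
and the change of variables $z=H^{-1/2}(x_2-x_1)$ reduces this to $\int K(z)\lVert H^{1/2}z\rVert\d z\lesssim\lVert H^{1/2}\rVert_2\int K(z)\lVert z\rVert\d z$; compact support of $K$ in Assumption~\ref{asp:kernel,dr} makes the last integral finite, giving the $\lVert H^{1/2}\rVert_2$ bound.

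For $R_4$, write
\[
[\hat f_1(X_j)]^{-1}-[f_1(X_j)]^{-1}=\frac{f_1(X_j)-\hat f_1(X_j)}{\hat f_1(X_j)\,f_1(X_j)},
\]
where $\hat f_1(x):=n_1^{-1}\sum_{k:D_k=1}K_H(x-X_k)$. I will split on the event $A_j:=\{\hat f_1(X_j)\ge f_1(X_j)/2\}$; on $A_j$, the denominator is bounded below by $\tfrac12 f_1(X_j)^2\gtrsim 1$, while on $A_j^c$ a Chebyshev bound using $\E[\lvert\hat f_1(x)-f_1(x)\rvert^2]\lesssim\lVert H^{1/2}\rVert_2^2+(n\lvert H^{1/2}\rvert)^{-1}$ shows the contribution is of strictly smaller order. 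Conditioning on the treated covariates $\mathbf X_{\mathrm{trt}}$ and using that the control $X_j$'s are i.i.d.\ $f_0$ independent of $\mathbf X_{\mathrm{trt}}$,
\[
\E\big[K_H(X_j-X_1)\lvert\Delta(X_j)\rvert\bigm|\mathbf X_{\mathrm{trt}},\mD,D_1=1\big]=\int K_H(x-X_1)\lvert\Delta(x)\rvert f_0(x)\d x,
\]
so, averaging also over $X_1\sim f_1$ and bounding $\int K_H(x-x_1)f_1(x_1)\d x_1=O(1)$, the expected value of $\lvert R_4\rvert$ is dominated by $\E\int\lvert\Delta(x)\rvert f_0(x)\d x$. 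Cauchy--Schwarz plus the pointwise $L^2$-rate above then gives the rate. The main obstacle is precisely this $R_4$ step: the inverse of a kernel estimator is nonlinear and ill-conditioned on a small-probability set, so the event-splitting on $A_j$ must be managed carefully to avoid introducing an unwanted $\sqrt{\log n}$ factor that a uniform-in-$x$ bound on $\lvert\hat f_1-f_1\rvert$ would produce; the bounded-surface-area condition in Assumption~\ref{asp:kernel,dml} is also indispensable for converting pointwise bias bounds into integrated ones in both $R_2$ and $R_4$.
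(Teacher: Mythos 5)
Your handling of $R_2$ is essentially the paper's own argument: the same $\E\lvert Z\rvert\le\lvert\E Z\rvert+\sqrt{\Var Z}$ split, the same decomposition of the bias into a Lipschitz term $\lVert H^{1/2}\rVert_2\int K(z)\lVert z\rVert\,\d z$ plus a boundary term $\lVert f_0\rVert_\infty\int K(z)\ind(X_1+H^{1/2}z\notin\cS)\,\d z$ controlled after integration by the bounded surface area of $\cS$, and the same $(n\lvert H^{1/2}\rvert)^{-1}$ variance. Your $R_3$ argument (Lipschitz continuity of $f_1^{-1}$ plus the change of variables $z=H^{-1/2}(x-X_1)$) is also the intended one; the paper only states that $R_3$ and $R_4$ are ``handled in a similar way.''

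The genuine gap is in your $R_4$ step. On the bad event $A_j^c=\{\hat f_1(X_j)<f_1(X_j)/2\}$ you must still integrate $\lvert \hat f_1(X_j)^{-1}-f_1(X_j)^{-1}\rvert K_H(X_j-X_1)$, and the only pointwise bounds available there are $\hat f_1(X_j)^{-1}K_H(X_j-X_1)\le n_1$ (since $K_H(X_j-X_1)$ is one of the $n_1$ nonnegative summands of $n_1\hat f_1(X_j)$) and $f_1(X_j)^{-1}K_H(X_j-X_1)\lesssim \lvert H\rvert^{-1/2}$. Chebyshev only yields $\P(A_j^c)\lesssim\lVert H^{1/2}\rVert_2^2+(n\lvert H^{1/2}\rvert)^{-1}$, so the bad-event contribution is bounded by a quantity of order $\big(n_1+\lvert H\rvert^{-1/2}\big)\P(A_j^c)$, which is not $O\big(\lVert H^{1/2}\rVert_2+(n\lvert H^{1/2}\rvert)^{-1/2}\big)$ and in general does not even tend to zero. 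Even a Bernstein bound, $\P(A_j^c)\le e^{-cn\lvert H^{1/2}\rvert}$, beats the factor $n$ only when $n\lvert H^{1/2}\rvert\gtrsim\log n$, whereas Assumption~\ref{asp:kernel,dr} guarantees only $n\lvert H^{1/2}\rvert\to\infty$. The paper avoids ever inverting $\hat f_1$ on a small-probability set: it uses the exact identity
\[
\Big(\tfrac{1}{n_1}\textstyle\sum_{k:D_k=1}K_H(X_j-X_k)\Big)^{-1}-f_1^{-1}(X_j)
= f_1^{-2}(X_j)\big(f_1(X_j)-\hat f_1(X_j)\big)+f_1^{-1}(X_j)\big(f_1(X_j)-\hat f_1(X_j)\big)\Big(\hat f_1(X_j)^{-1}-f_1^{-1}(X_j)\Big),
\]
so the leading term carries only the bounded deterministic factor $f_1^{-2}$ and is then bounded by expanding moments directly, after separating the $k=1$ summand of $\hat f_1$ --- a step that also resolves the dependence between $\hat f_1$ and $X_1$ which your ``averaging over $X_1\sim f_1$'' quietly ignores (it contributes only $O((n\lvert H^{1/2}\rvert)^{-1})$, but it must be peeled off before you can factor $\int K_H(x-x_1)f_1(x_1)\,\d x_1$ out of the expectation). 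To close your version you would need either to adopt this linearization or to impose an additional bandwidth condition ensuring exponential concentration of $\hat f_1$.
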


From the properties of kernel density estimation and $K$ is bounded, we obtain for any $\kappa>0$,
\[
    \E \Big[ \sum_{j:D_j=1-D_1} w_{j\leftarrow 1} \Big]^\kappa = O(1).
\]
Then the proof of verifying Assumption~\ref{asp:dml2} is complete.
\end{proof}

\subsection{Proof of Theorem~\ref{thm:wnn}}

\begin{proof}[Proof of Theorem~\ref{thm:wnn}]

{\bf Proof of Theorem~\ref{thm:wnn}\ref{thm:wnn1}.} Assumptions~\ref{asp:weight}, \ref{asp:dr2}\ref{asp:dr2,w}, \ref{asp:dr2}\ref{asp:dr2,w2} \ref{asp:se2}\ref{asp:se2,w1} hold since we take $\sum_{m=1}^M \gamma_{M,m} = 1$, the construction of weights is only based on $\mX$ and $\mD$, and all the weights are nonnegative.

{\bf Proof of Theorem~\ref{thm:wnn}\ref{thm:wnn2}.} To verify Assumption~\ref{asp:dr1}\ref{asp:dr1,w}, it suffices to consider
\[
    \E \Big[\E \Big[ \Big(\sum_{j:D_j=1} w_{j\leftarrow 1} - \frac{e(X_1)}{1-e(X_1)}\Big)^2 \Biggiven \mD,D_1=0 \Big] \ind\Big(D_1 = 0\Big)\Big].
\]

We first define the modified catchment area similar to Definition 2.1 in \cite{lin2021estimation}. For any $m \in \zahl{M}$, let $a_m(\cdot):\bR^d \to \cB(\bR^d)$ be the mapping from $\bR^d$ to the class of all Borel sets in $\bR^d$ so that
\begin{align*}
    a_m(x) = a_m\Big(x,\{X_i\}_{i:D_i=0}\Big):= \Big\{z \in \bR^d: \lVert \cX^0_{(m-1)}(z) - z \rVert < \lVert x-z \rVert \le \lVert \cX^0_{(m)}(z) - z \rVert\Big\},
\end{align*}
where $\cX^0_{(m)}(\cdot)$ is the mapping that returns the value of input's $m$-th NN in $\{X_i\}_{i:D_i=0}$, with $\cX^0_{0}(z) = z$ for $z \in \bR^d$. Let $a_m(i)$ be the shorthand of $a_m(X_i)$ for $i \in \zahl{n}$. From the definition of the modified catchment area, $j_m(j)=i$ if and only if $X_j \in a_m(i)$ for $i,j\in\zahl{n}$ with $D_i=0,D_j=1$ and $m \in \zahl{M}$.

We rewrite
\begin{align*}
    &\sum_{j:D_j=1} w_{j\leftarrow 1} - \frac{e(X_1)}{1-e(X_1)} \\
    =& \sum_{j:D_j=1} \sum_{m=1}^M \gamma_{M,m} \ind(j_m(j)=1) - \frac{e(X_1)}{1-e(X_1)}\\
    =& \Big[\frac{n_1}{n_0} \frac{f_1(X_1)}{f_0(X_1)} - \frac{e(X_1)}{1-e(X_1)} \Big] +\Big[\sum_{m=1}^M \gamma_{M,m} n_1 \nu_1\big(a_m(1)\big) - \frac{n_1}{n_0} \frac{f_1(X_1)}{f_0(X_1)}\Big]\\
    &+ \sum_{j:D_j=1} \Big[ \sum_{m=1}^M \gamma_{M,m} \Big(  \ind(j_m(j)=1) - \nu_1\big(a_m(1)\big) \Big) \Big].
    \yestag\label{eq:wnn1}
\end{align*}

We first establish a lemma to generalize Lemma 4.1 in \cite{lin2021estimation} for the weighted nearest neighbors case, with notation adopted from there.

\begin{lemma} \label{lemma:moment,catch}
Let $a_m(x),m\in\zahl{M}$ be the modified catchment area of $x$ based on $n_0$ samples from probability measure $\nu_0$ with density $f_0$. Let $\nu_1$ be another probability measure with density $f_1$. Assuming $M\log n_0/n_0 \to 0$ as $n_0 \to \infty$, we have
\[
    \lim_{n_0\to\infty} n_0 \E\Big[ \sum_{m=1}^M \gamma_{M,m} \nu_1\big(a_m(x)\big)\Big] = \frac{f_1(x)}{f_0(x)}
\]
holds for $\nu_0$-almost all $x$. For any positive integer $p$, if we further assume
\[
    \limsup_{n_0 \to \infty} n_0 \int_0^\infty \Big[\sum_{m=1}^M \gamma_{M,m}^p \P\Big( U_{(m-1)} \le t \le U_{(m)} \Big) \Big]^{1/p} \d t \le 1,
\]
then
    \[
    \lim_{n_0\to\infty} n_0^p \E\Big[ \Big(\sum_{m=1}^M \gamma_{M,m} \nu_1\big(a_m(x)\big) \Big)^p\Big] = \Big[\frac{f_1(x)}{f_0(x)}\Big]^p
    \]
    holds for $\nu_0$-almost all $x$.
\end{lemma}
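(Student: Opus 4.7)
The plan is to reduce $\P(z\in a_m(x))$ to an explicit Binomial factor and then change variables around $x$ so that the problem becomes the uniform-order-statistic integral appearing in the hypothesis. The starting observation is that $z\in a_m(x)$ iff exactly $m-1$ of the $n_0$ control samples fall in $B(z,\lVert x-z\rVert)$, so with $p(z):=\nu_0(B(z,\lVert x-z\rVert))$ one has $\P(z\in a_m(x)) = \binom{n_0}{m-1} p(z)^{m-1}(1-p(z))^{n_0-m+1}$. At $\nu_0$-a.e.\ $x$ (those Lebesgue points of $f_0$ and $f_1$ with $f_0(x)>0$), for $z$ near $x$ one has $p(z) = f_0(x)v_d\lVert x-z\rVert^d(1+o(1))$, where $v_d$ is the volume of the unit ball in $\bR^d$. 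Switching to spherical coordinates $z=x+r\omega$ and setting $t=n_0 p(z)$, together with $\lvert S^{d-1}\rvert = dv_d$, will yield the master identity
\[
\int F(p(z))\, f_1(z)\,dz \;=\; \frac{f_1(x)}{f_0(x)}\cdot\frac{1}{n_0}\int_0^\infty F(t/n_0)\,dt \cdot (1+o(1)),
\]
valid up to negligible contributions from $z$ outside a shrinking neighborhood of $x$.

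For the first claim, I will plug $F(u)=\binom{n_0}{m-1}u^{m-1}(1-u)^{n_0-m+1}$ into the master identity and use $\int_0^1 F(t)\,dt = 1/(n_0+1)$ to obtain $\E[\nu_1(a_m(x))] = (f_1(x)/f_0(x))(n_0+1)^{-1}(1+o(1))$; summing against the convex weights $\gamma_{M,m}$ gives the stated limit. For the $p$-th moment, I will rewrite
\[
S := \sum_m \gamma_{M,m}\nu_1(a_m(x)) \;=\; \int f_1(z)\, \gamma_{M,N(z)}\,\ind(N(z)\le M)\,dz,
\]
with $N(z) := 1 + \lvert\{i : \lVert X_i-z\rVert<\lVert x-z\rVert\}\rvert$, and apply Minkowski's integral inequality:
\[
\lVert S\rVert_p \;\le\; \int f_1(z)\Big[\sum_{m=1}^M \gamma_{M,m}^p\, \P(z\in a_m(x))\Big]^{1/p}\!\!dz.
\]
Plugging this new integrand into the master identity, substituting $t=u/n_0$, and recognizing $\binom{n_0}{m-1}t^{m-1}(1-t)^{n_0-m+1} = \P(U_{(m-1)}\le t\le U_{(m)})$ will give
\[
n_0\lVert S\rVert_p \;\le\; \frac{f_1(x)}{f_0(x)}\cdot n_0\int_0^\infty \Big[\sum_{m=1}^M \gamma_{M,m}^p\, \P(U_{(m-1)}\le t\le U_{(m)})\Big]^{1/p}\!\!dt \cdot (1+o(1)),
\]
which is at most $f_1(x)/f_0(x)$ up to $1+o(1)$ by the hypothesis. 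Jensen's inequality then provides the matching lower bound $\liminf n_0^p \E[S^p] \ge (\liminf n_0 \E[S])^p = (f_1(x)/f_0(x))^p$ through the first part, and the two inequalities combine to yield the claim.

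The main obstacle will be making the localization rigorous and uniform in $M=M_{n_0}$ and in the weight family, since the relative errors in $p(z)\approx f_0(x)v_d\lVert x-z\rVert^d$ and $f_1(z)\approx f_1(x)$ are integrated against an integrand involving all of $\{\gamma_{M,m}\}_{m\le M}$ simultaneously. The assumption $M\log n_0/n_0\to 0$, combined with the exponential decay of $\binom{n_0}{m-1}p(z)^{m-1}(1-p(z))^{n_0-m+1}$ for $m\le M$ once $n_0 p(z)\gg M$, ensures the dominant contribution comes from $\lVert x-z\rVert\lesssim (M/n_0)^{1/d}$, which shrinks to $0$; a Lebesgue-differentiation-type argument then absorbs the relative errors into $1+o(1)$, after which the change of variables is routine.
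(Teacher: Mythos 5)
Your proposal is correct and follows essentially the same route as the paper's proof: both reduce the catchment-area probabilities to uniform order statistics via the ball-measure (probability-integral) transform $t=\nu_0(B(z,\lVert x-z\rVert))$, so that $\P(z\in a_m(x))=\P(U_{(m-1)}\le t\le U_{(m)})$, localize to $t\lesssim M\log(n_0/M)/n_0$ using $M\log n_0/n_0\to 0$ together with the fact that the density of $\nu_0(B(Z,\lVert x-Z\rVert))$ for $Z\sim\nu_1$ tends to $f_1(x)/f_0(x)$ near $0$ at $\nu_0$-a.e.\ $x$, bound the $p$-th moment by $\int f_1(z)\bigl[\sum_{m}\gamma_{M,m}^p\P(z\in a_m(x))\bigr]^{1/p}\d z$, and close with the Jensen/H\"older lower bound $n_0^p\E[S^p]\ge (n_0\E S)^p$. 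The only difference is cosmetic: you reach the $p$-th-moment bound via Minkowski's integral inequality applied to $S=\int f_1(z)\gamma_{M,N(z)}\ind(N(z)\le M)\d z$, whereas the paper introduces $p$ independent copies $\tZ_1,\dots,\tZ_p\sim\nu_1$ and applies H\"older conditionally on $(W_1,\dots,W_p)$ --- the two yield the identical intermediate inequality.
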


By leveraging the same technique to establish the global $L_p$ risk consistency as Theorem 4.2 in \cite{lin2021estimation}, as long as Lemma~\ref{lemma:moment,catch} holds for $p=2$, we obtain
\begin{align}\label{eq:wnn2}
    \lim_{n \to \infty} \E \Big[\E \Big[ \Big(\sum_{m=1}^M \gamma_{M,m} n_1 \nu_1\big(a_m(1)\big) - \frac{n_1}{n_0} \frac{f_1(X_1)}{f_0(X_1)} \Big)^2 \Biggiven \mD,D_1=0 \Big] \ind\Big(D_1 = 0\Big)\Big] = 0.
\end{align}

For the third term in \eqref{eq:wnn1}, from the i.i.d.-ness of $[X_j]_{j:D_j=1}$ conditional on $\mD$,
\begin{align*}
    &\E\Big\{ \Big[\sum_{j:D_j=1} \Big[ \sum_{m=1}^M \gamma_{M,m} \Big(  \ind(j_m(j)=1) - \nu_1\big(a_m(1)\big) \Big) \Big] \Big]^2\Biggiven \mX_0,\mD,D_1=0 \Big\}\\
    =& \Var\Big\{ \sum_{j:D_j=1} \Big( \sum_{m=1}^M \gamma_{M,m} \ind(j_m(j)=1) \Big) \Biggiven \mX_0,\mD,D_1=0 \Big\}\\
    =& n_1 \Var\Big\{ \sum_{m=1}^M \gamma_{M,m} \ind(j_m(2)=1)\Biggiven \mX_0,\mD,D_1=0,D_2=1 \Big\}.
\end{align*}
Notice that for any $m,m' \in \zahl{M}$ and $m \neq m'$,
\begin{align*}
    &\Var\Big\{ \ind(j_m(2)=1) \Biggiven \mX_0,\mD,D_1=0,D_2=1 \Big\} 
    \le \P\Big(j_m(2)=1 \Biggiven \mX_0,\mD,D_1=0,D_2=1\Big) 
    = \nu_1\big(a_m(1)\big),
\end{align*}
and
\begin{align*}
    &\Cov\Big\{ \ind(j_m(2)=1),\ind(j_{m'}(2)=1) \Biggiven \mX_0,\mD,D_1=0,D_2=1 \Big\} \\
    =& -\P\Big(j_m(2)=1 \Biggiven \mX_0,\mD,D_1=0,D_2=1\Big)\P\Big(j_{m'}(2)=1 \Biggiven \mX_0,\mD,D_1=0,D_2=1\Big) \\
    =& -\nu_1\big(a_m(1)\big)\nu_1\big(a_{m'}(1)\big).
\end{align*}
Then
\begin{align*}
    \Var\Big\{ \sum_{m=1}^M \gamma_{M,m} \ind(j_m(2)=1)\Biggiven \mX_0,\mD,D_1=0,D_2=1 \Big\} \le \sum_{m=1}^M \gamma_{M,m}^2 \nu_1\big(a_m(1)\big).
\end{align*}
From Lemma~\ref{lemma:moment,catch} and $\sum_{m=1}^M \gamma_{M,m}^2 \to 0$, we obtain
\begin{align*}
    \E\Big[n_1\sum_{m=1}^M \gamma_{M,m}^2 \nu_1\big(a_m(1)\big) \Biggiven \mD, D_1=0\Big] = \frac{n_1}{n_0} \sum_{m=1}^M \gamma_{M,m}^2 = o(n_1/n_0),
\end{align*}
and then
\begin{align}\label{eq:wnn3}
    \lim_{n \to \infty} \E \Big[\E \Big[ \Big(\sum_{j:D_j=1} \Big[ \sum_{m=1}^M \gamma_{M,m} \Big(  \ind(j_m(j)=1) - \nu_1\big(a_m(1)\big) \Big) \Big] \Big)^2 \Biggiven \mD,D_1=0 \Big] \ind\Big(D_1 = 0\Big)\Big] = 0.
\end{align}

Combining \eqref{eq:wnn2} with \eqref{eq:wnn3} by \eqref{eq:wnn1} completes the proof of verifying Assumption~\ref{asp:dr1}\ref{asp:dr1,w}.

{\bf Proof of Theorem~\ref{thm:wnn}\ref{thm:wnn3}.} To verify Assumption~\ref{asp:se2}, notice that for any positive integer $p$,
\begin{align*}
    \sum_{j:D_j=1-D_1} \lvert w_{1\leftarrow j} \rvert \cdot \lVert X_j - X_1 \rVert^p =& \sum_{j:D_j=1-D_1} \sum_{m=1}^M \gamma_{M,m} \ind(j_m(1)=j) \lVert X_j - X_1 \rVert^p\\
    =& \sum_{m=1}^M \gamma_{M,m} \lVert X_{j_m(1)} - X_1 \rVert^p.
\end{align*}
From Lemma A.2 in \cite{lin2021estimation}, we obtain
\begin{align*}
    & \E \Big[ \sum_{j:D_j=1-D_1} w_{1\leftarrow j} \lVert X_j - X_1 \rVert^p \Big] = \sum_{m=1}^M \gamma_{M,m} \E \Big[\lVert X_{j_m(1)} - X_1 \rVert^p \Big] = O\Big(\sum_{m=1}^M \gamma_{M,m} \Big(\frac{m}{n}\Big)^{p/d} \Big).
\end{align*}

The proof is then complete by Assumption~\ref{asp:wnn,se}.

{\bf Proof of Theorem~\ref{thm:wnn}\ref{thm:wnn4}.} To verify Assumption~\ref{asp:dml2}, notice that for any $\kappa>0$,
\begin{align*}
    &\E \Big[ \sum_{j:D_j=1-D_1} w_{j\leftarrow 1} \Big]^\kappa = \E \Big[ \sum_{j:D_j=1} \sum_{m=1}^M \gamma_{M,m} \ind(j_m(j)=1) \Big]^\kappa \\
    \le& \Big(M\max_{m \in \zahl{M}}\gamma_{M,m}\Big)^\kappa \E \Big[ \sum_{j:D_j=1} \frac{1}{M} \sum_{m=1}^M  \ind(j_m(j)=1) \Big]^\kappa.
\end{align*}
From Theorem 4.2 in \cite{lin2021estimation} and $M\max_{m \in \zahl{M}}\gamma_{M,m} = O(1)$, we obtain for any $\kappa>0$,
\[
    \E \Big[ \sum_{j:D_j=1-D_1} w_{j\leftarrow 1} \Big]^\kappa = O(1).
\]

For the rate of convergence, we first consider the pointwise bias and variance, and then the global rates of convergence under the $L_1$ risk can be established by leveraging the proof technique of Theorem 4.4 in \cite{lin2021estimation}.

Notice that
\begin{align*}
    &\frac{1}{2} \E\Big[\Big(\sum_{m=1}^M \gamma_{M,m} n_1 \nu_1\big(a_m(x)\big) - \frac{n_1}{n_0} \frac{f_1(x)}{f_0(x)}\Big)^2\Big] \\
    \le& \E\Big[\Big(\sum_{m=1}^M \frac{1}{M} n_1 \nu_1\big(a_m(x)\big) - \frac{n_1}{n_0} \frac{f_1(x)}{f_0(x)}\Big)^2\Big] + \E\Big[\Big(\sum_{m=1}^M \Big(\gamma_{M,m} - \frac{1}{M}\Big) n_1 \nu_1\big(a_m(x)\big)\Big)^2\Big].
\end{align*}

From Theorem 4.3 in \cite{lin2021estimation}, the first term is $O((M/n)^{2/d} + M^{-1})$, and it remains to consider the second term. Notice that in the same way as Lemma~\ref{lemma:moment,catch} and by Assumption~\ref{asp:wnn,dml},
\begin{align*}
    \E\Big[ \Big(\sum_{m=1}^M \Big(\gamma_{M,m} - \frac{1}{M}\Big) n_1\nu_1\big(a_m(x)\big) \Big)^2\Big] 
    \lesssim& n^2 \Big[\int_0^\infty \Big[\sum_{m=1}^M  \Big(\gamma_{M,m} - \frac{1}{M}\Big)^2 \P\Big( U_{(m-1)} \le t \le U_{(m)} \Big) \Big]^{1/2} \d t\Big]^2 \\
    \lesssim& r_3^2.
\end{align*}
We then obtain
\begin{align*}
    \E\Big[\Big(\sum_{m=1}^M \gamma_{M,m} n_1 \nu_1\big(a_m(x)\big) - \frac{n_1}{n_0} \frac{f_1(x)}{f_0(x)}\Big)^2\Big] \lesssim \Big(\frac{M}{n}\Big)^{2/d} + \frac{1}{M} + r_3^2.
\end{align*}
Notice that
\begin{align*}
    &\E\Big\{\sum_{j:D_j=1} \Big[ \sum_{m=1}^M \gamma_{M,m} \Big(  \ind(X_j \in a_m(x)) - \nu_1\big(a_m(x)\big) \Big) \Big]\Big\}^2\\
    =& \Var\Big\{\sum_{j:D_j=1} \Big[ \sum_{m=1}^M \gamma_{M,m} \Big(  \ind(X_j \in a_m(x)) - \nu_1\big(a_m(x)\big) \Big) \Big]\Big\}\\
    =& \E\Big\{\Var\Big\{\sum_{j:D_j=1} \Big[ \sum_{m=1}^M \gamma_{M,m} \Big(  \ind(X_j \in a_m(x)) - \nu_1\big(a_m(x)\big) \Big) \Big] \Biggiven \mX_0,\mD\Big\}\Big\}\\
    =& \E\Big\{n_1 \Var\Big[\sum_{m=1}^M \gamma_{M,m} \ind(X_1 \in a_m(x)) \Biggiven \mX_0,\mD,D_1=1\Big]\Big\}\\
    \le& \E\Big[n_1 \sum_{m=1}^M \gamma_{M,m}^2 \nu_1\big(a_m(x)\big) \Big] \\
    =& \E\Big[\frac{n_1}{n_0} \sum_{m=1}^M \gamma_{M,m}^2\Big] \\
    \lesssim& \sum_{m=1}^M \gamma_{M,m}^2.
\end{align*}
From \eqref{eq:wnn1} and the proof of Theorem 4.4 in \cite{lin2021estimation}, the proof is complete.
\end{proof}

\subsection{Proof of Theorem~\ref{thm:lp}}

\begin{proof}[Proof of Theorem~\ref{thm:lp}]

{\bf Proof of Theorem~\ref{thm:lp}\ref{thm:lp1}.} Assumptions~\ref{asp:weight}, \ref{asp:dr2}\ref{asp:dr2,w}, \ref{asp:se2}\ref{asp:se2,w1} hold by noticing that the construction of weights only based on $\mX$ and $\mD$, and the first column of $\mB_1$ are all one and then
\begin{align*}
    \sum_{j:D_j=1-D_1} w_{1\leftarrow j} = \sum_{j:D_j=1-D_1} e_1^\top (\mB_1^\top \mW_1 \mB_1)^{-1} b_{1j} K_H(X_j-X_1) = e_1^\top (\mB_1^\top \mW_1 \mB_1)^{-1} \mB_1^\top \mW_1 \mB_1 e_1 = 1.
\end{align*}

To verify \ref{asp:dr2}\ref{asp:dr2,w2}, notice that
\begin{align*}
    \E \Big[ \Big\lvert \sum_{j:D_j=1-D_1} w_{j\leftarrow 1} \Big\rvert \Big] \le \E \Big[ \sum_{j:D_j=1-D_1}  \Big\lvert w_{j\leftarrow 1} \Big\rvert \Big] = \E \Big[ \sum_{j:D_j=1-D_1}  \Big\lvert w_{j\leftarrow 1} \Big\rvert \Big] \lesssim \E \Big[ \sum_{j:D_j=1-D_1}  \Big\lvert w_{1\leftarrow j} \Big\rvert \Big].
\end{align*}

Then it suffices to establish the following lemma.
\begin{lemma}\label{lemma:lp,w}
    As long as $K(\cdot)$ is bounded with a compact support and is bounded away from zero, we have
    \begin{align*}
        \E \Big[ \sum_{j:D_j=1-D_1}  \Big\lvert w_{1\leftarrow j} \Big\rvert \Big] = O(1).
    \end{align*}
\end{lemma}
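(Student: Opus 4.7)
The plan is to exploit the two-sided kernel sandwich $\kappa_0\,\mathbf{1}_{\mathrm{supp}(K)}(z) \le K(z) \le \lVert K\rVert_\infty\, \mathbf{1}_{\mathrm{supp}(K)}(z)$ to reduce $\sum_{j:D_j = 1-D_1}\lvert w_{1 \leftarrow j}\rvert$ to a deterministic functional of the lifted local design, and then control its expectation by a random matrix argument.

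The first step is to rescale. Setting $Z_j := H^{-1/2}(X_j - X_1)$ and $\tilde{b}_{1j} := (1, Z_j^\top)^\top$, the bandwidth factors hidden in $\mB_1$ and $K_H$ cancel between $b_{1j}$, $(\mB_1^\top\mW_1\mB_1)^{-1}$ and $K_H(X_j-X_1)$, yielding the bandwidth-free representation
\[
w_{1\leftarrow j} = \frac{K(Z_j)}{n_{1-D_1}}\, e_1^\top \tilde{S}_n^{-1}\tilde{b}_{1j}, \qquad \tilde{S}_n := \frac{1}{n_{1-D_1}}\sum_{j:D_j=1-D_1} K(Z_j)\, \tilde{b}_{1j}\tilde{b}_{1j}^\top.
\]
Cauchy--Schwarz together with the compactness $\mathrm{supp}(K) \subseteq\{z:\lVert z\rVert\le c\}$ and the boundedness of $K$ gives $\lvert w_{1\leftarrow j}\rvert \le \sqrt{1+c^2}\,\lVert K\rVert_\infty\, \lVert \tilde{S}_n^{-1}\rVert_2\, \mathbf{1}_{\{Z_j\in \mathrm{supp}(K)\}}/n_{1-D_1}$, and summing yields an upper bound proportional to $\lVert \tilde{S}_n^{-1}\rVert_2 \cdot (N/n_{1-D_1})$, where $N := \lvert\{j:D_j=1-D_1,\, Z_j\in \mathrm{supp}(K)\}\rvert$.

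The main observation is to apply the lower bound $K\ge\kappa_0\mathbf{1}_{\mathrm{supp}(K)}$ to $\tilde S_n$ from below: this gives $\tilde{S}_n \succeq \kappa_0 (N/n_{1-D_1})\,T_N$ with $T_N := N^{-1}\sum_{j\in\mathcal N}\tilde{b}_{1j}\tilde{b}_{1j}^\top$ the empirical second moment of the lifted in-support design, where $\mathcal N$ denotes the in-support indices. Inverting, $\lVert\tilde S_n^{-1}\rVert_2 \le n_{1-D_1}/(\kappa_0 N\,\lambda_{\min}(T_N))$, and the factor $N/n_{1-D_1}$ cancels \emph{exactly} with the prefactor in the previous step, producing the key scale-free deterministic bound
\[
\sum_{j:D_j=1-D_1}\lvert w_{1\leftarrow j}\rvert \;\le\; \frac{\sqrt{1+c^2}\,\lVert K\rVert_\infty}{\kappa_0\,\lambda_{\min}(T_N)}
\]
on $\{N\ge d+1\}$, and zero on $\{N<d+1\}$ by the usual convention that the local linear estimator is set to $0$ when the design matrix is rank deficient.

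Taking expectation, the lemma reduces to showing $\E[\lambda_{\min}(T_N)^{-1}\mathbf{1}_{\{N\ge d+1\}}] = O(1)$. Conditionally on $X_1$ and on the in-support indices, the $Z_j$'s in $\mathcal N$ are i.i.d.\ from a density proportional to $\lvert H\rvert^{1/2}f_{1-D_1}(X_1+H^{1/2}\cdot)$ restricted to $\mathrm{supp}(K)$, which is essentially uniform on $\mathrm{supp}(K)$. The population analog of $T_N$ is thus a fixed positive-definite matrix whose smallest eigenvalue depends only on $\mathrm{supp}(K)$ and $d$, and a matrix Bernstein inequality for bounded i.i.d.\ summands combined with a Chernoff tail bound on $N$ delivers the required uniform bound. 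The main obstacle is precisely this last step: without the self-cancellation of $N/n_{1-D_1}$ obtained from the two-sided kernel bound, $\lVert\tilde S_n^{-1}\rVert_2$ alone would diverge like $\lvert H\rvert^{-1/2}$, so the entire argument hinges on the clean factorization that this two-sided control produces.
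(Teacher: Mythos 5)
Your reduction to the smallest eigenvalue of the normalized in-support Gram matrix is essentially the paper's own reduction in different clothing: the paper also rescales by $H^{-1/2}$, bounds the kernel above by $\lVert K\rVert_\infty$ and below by $\underline{K}\,\ind(\cdot\in{\rm supp}(K))$, and arrives at $\bigl(\sum_j\lvert w_{1\leftarrow j}\rvert\bigr)^2\le\lvert\cN\rvert\sum_j w_{1\leftarrow j}^2\le\lvert\cN\rvert\,\lVert K\rVert_\infty\,\lambda_{\rm min}^{-1}(\mB_1^\top\mV_1\mB_1)$, which is the same scale-free quantity as your $\lambda_{\rm min}(T_N)^{-1}$ up to constants (the paper passes through $\sum_j w_{1\leftarrow j}^2$ and one global Cauchy--Schwarz rather than a per-term Cauchy--Schwarz; this difference is cosmetic). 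The genuine divergence is in how that eigenvalue is controlled. The paper's bound is deterministic: it partitions the in-support indices $\cN$ into disjoint blocks $\cN_k$ of size $d+1$ and uses $\lambda_{\rm min}(\mB_1^\top\mV_1\mB_1)\ge\underline{K}\sum_k\lambda_{\rm min}(\mB_{\cN_k}^\top\mB_{\cN_k})\gtrsim\lvert\cN\rvert$, so no inverse moment of a random eigenvalue is ever taken.

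Your final step, $\E[\lambda_{\rm min}(T_N)^{-1}\ind(N\ge d+1)]=O(1)$ via matrix Bernstein plus Chernoff, does not go through as stated, and this is a genuine gap. Matrix Bernstein yields $\P(\lambda_{\rm min}(T_N)\le\lambda/2)\le(d+1)e^{-cN}$ with $\lambda$ the population smallest eigenvalue, but on that rare event $\lambda_{\rm min}(T_N)^{-1}$ is unbounded, so an exponentially small probability does not control the expectation: writing $\E[\lambda_{\rm min}^{-1}]=\int_0^\infty\P(\lambda_{\rm min}\le\epsilon)\,\epsilon^{-2}\,\d\epsilon$, what you need is a small-ball estimate for $\epsilon\downarrow 0$, which Bernstein-type concentration does not provide. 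The problem is acute when $N$ is of order $d$ (an event of non-negligible probability unless $n\lvert H^{1/2}\rvert$ is large): for continuous designs with $N=d+1$ the lower tail behaves like $\P(\lambda_{\rm min}\le\epsilon)\asymp\epsilon^{1/2}$ up to constants, and the inverse first moment diverges. Two smaller issues: the lemma carries no density assumption, so your claims that the conditional law of the in-support $Z_j$'s is ``essentially uniform on ${\rm supp}(K)$'' and that the population Gram matrix has an eigenvalue floor depending only on ${\rm supp}(K)$ and $d$ are not available (near the boundary of $\cS$ the restricted design lives on ${\rm supp}(K)\cap H^{-1/2}(\cS-X_1)$, so any floor depends on $X_1$); and the convention that the weights vanish on rank-deficient designs should be checked rather than invoked. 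To repair the argument you would either adopt the paper's deterministic block-partition bound, or supplement matrix concentration with a Rudelson--Vershynin-type lower tail for $\lambda_{\rm min}$ together with a separate treatment of the event that $N$ is small.
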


{\bf Proof of Theorem~\ref{thm:lp}\ref{thm:lp2}.} To verify Assumption~\ref{asp:dr1}\ref{asp:dr1,w}, in the same way as Theorem~\ref{thm:kernel}\ref{thm:kernel2}, it suffices to consider
\[
    \E \Big[\E \Big[ \Big(\sum_{j:D_j=0} w_{j\leftarrow 1} - \frac{1-e(X_1)}{e(X_1)}\Big)^2 \Biggiven \mD,D_1=1 \Big] \ind\Big(D_1 = 1\Big)\Big].
\]
We have
\begin{align*}
    &\sum_{j:D_j=0} w_{j\leftarrow 1} - \frac{1-e(X_1)}{e(X_1)} = \sum_{j:D_j=0} e_1^\top (\mB_j^\top \mW_j \mB_j)^{-1} b_{j1} K_H(X_1-X_j) - \frac{1-e(X_1)}{e(X_1)}\\
    =& \Big[\frac{n_0}{n_1} \frac{f_0(X_1)}{f_1(X_1)} - \frac{1-e(X_1)}{e(X_1)} \Big] + \frac{n_0}{n_1} f_1^{-1}(X_1) \Big(\frac{1}{n_0} \sum_{j:D_j=0} K_H(X_j-X_1) - f_0(X_1)\Big)\\
    &+ \frac{n_0}{n_1} \frac{1}{n_0} \sum_{j:D_j=0} \Big(f_1^{-1}(X_j) - f_1^{-1}(X_1)\Big) K_H(X_j-X_1)\\
    &+ \frac{n_0}{n_1} \frac{1}{n_0} \sum_{j:D_j=0} \Big[\Big(n_1 e_1^\top (\mB_j^\top \mW_j \mB_j)^{-1} b_{j1} \Big) - f_1^{-1}(X_j)\Big] K_H(X_j-X_1).
\end{align*}

Notice that the first three terms are the same as \eqref{eq:kernel1}, and then can be handled in the same way as Theorem~\ref{thm:kernel}. Now it suffices to consider the last term, and we have the following lemma.

\begin{lemma}\label{lemma:lp1}
    Under Assumptions~\ref{asp:kernel,dr} and \ref{asp:lp}, we have
    \begin{align*}
        \lim_{n \to \infty } \E \Big[\frac{n_0^2}{n_1^2} \E \Big[ \Big[\frac{1}{n_0} \sum_{j:D_j=0} \Big[\Big(n_1 e_1^\top (\mB_j^\top \mW_j \mB_j)^{-1} b_{j1} \Big) - f_1^{-1}(X_j)\Big] K_H(X_j-X_1) \Big]^2 \Biggiven \mD,D_1=1 \Big] \ind\Big(D_1 = 1\Big)\Big] = 0.
    \end{align*}
\end{lemma}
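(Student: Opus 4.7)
Set $S_j := n_1^{-1} \mB_j^\top \mW_j \mB_j \in \R^{(1+d)\times(1+d)}$ and partition $S_j$ into its scalar $(1,1)$ entry $a_j$, its $1 \times d$ top-right block $B_j$, and its $d \times d$ bottom-right block $C_j$. Since $b_{j1} = (1,(X_1-X_j)^\top)^\top$, Schur's complement gives
\begin{align*}
    n_1 e_1^\top (\mB_j^\top \mW_j \mB_j)^{-1} b_{j1} \;=\; \alpha_j\bigl(1 - B_j C_j^{-1}(X_1 - X_j)\bigr),
    \qquad \alpha_j := (a_j - B_j C_j^{-1} B_j^\top)^{-1}.
\end{align*}
This yields the decomposition
\begin{align*}
    n_1 e_1^\top (\mB_j^\top \mW_j \mB_j)^{-1} b_{j1} - f_1^{-1}(X_j) \;=\; \Delta_j^{(1)} + \Delta_j^{(2)}(X_1 - X_j),
\end{align*}
where $\Delta_j^{(1)} := \alpha_j - f_1^{-1}(X_j)$ depends on $X_j$ alone and $\Delta_j^{(2)} := -\alpha_j B_j C_j^{-1}$ is a $1\times d$ vector. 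Substituting this into the sum inside the lemma reduces the estimate to controlling two pieces in $L_2$ conditional on $\mD$ and $D_1 = 1$.

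Under Assumption~\ref{asp:kernel,dr}, exactly the uniform kernel-density/regression arguments used to control $R_2$--$R_4$ in the proof of Lemma~\ref{lemma:kernel1} give $a_j = f_1(X_j) + o_\P(1)$ uniformly over $X_j$ in the shrinking support of $K_H(\cdot - X_1)$. By Assumption~\ref{asp:lp}, the change of variables $z = H^{-1/2}(y-X_j)$ together with a Taylor expansion of $f_1$ around $X_j$ shows that $\E[B_j \mid X_j] = \mu_2(K)\nabla f_1(X_j)^\top H + O(\lVert H\rVert^{3/2})$ and $\E[C_j \mid X_j] = \mu_2(K) H f_1(X_j) + O(\lVert H\rVert^{3/2})$, so $\Delta_j^{(1)} = o_\P(1)$ uniformly and $\Delta_j^{(2)}$ converges, uniformly in the relevant $X_j$, to $-\nabla \log f_1(X_j)^\top$, which is $O(1)$. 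The contribution of $\Delta_j^{(1)}$ to the sum is then handled by this uniform rate times $\frac{1}{n_0}\sum_{j:D_j=0} K_H(X_j - X_1) = O_\P(1)$. For the cross term $\frac{1}{n_0}\sum_{j:D_j=0}\Delta_j^{(2)}(X_1 - X_j) K_H(X_j - X_1)$, I analyze its conditional first and second moments: the mean integrates $\Delta_j^{(2)}(X_1-y)K_H(y-X_1) f_0(y)$, whose leading-order term is proportional to $\int z K(z)\,\d z = 0$ by Assumption~\ref{asp:lp}, leaving an $O(\lVert H\rVert)$ Taylor remainder; the variance is bounded by the standard kernel-smoother rate $(n_0 \lvert H^{1/2}\rvert)^{-1}\lVert H^{1/2}\rVert_2^2$. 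Both vanish under the bandwidth conditions $\lVert H^{1/2}\rVert_2\to 0$ and $n\lvert H^{1/2}\rvert \to \infty$.

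The main obstacle will be making the block-matrix approximations of $\alpha_j, B_j, C_j$ \emph{uniform} in $X_j$ and sharpening pointwise $L_1$ bounds into the $L_2$ bounds required to close the mean-square estimate. The kernel-density consistency from Lemma~\ref{lemma:kernel1} is applied componentwise to each entry of $S_j$, together with a dominated-convergence step to invert $S_j$ uniformly. Passing the combined bound through the outer expectation and using $\E[(n_0/n_1)^2] = O(1)$, which is guaranteed by the overlap condition in Assumption~\ref{asp:dr}, finally gives the claimed zero limit.
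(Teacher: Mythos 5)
Your decomposition is the same one the paper uses: the Schur-complement/block-inverse formula for $n_1 e_1^\top(\mB_j^\top\mW_j\mB_j)^{-1}b_{j1}$ with blocks $a_j=A_1$ (the kernel density estimator of $f_1$ at $X_j$), $B_j=A_2$, $C_j=A_4$, followed by reduction of the $(1,1)$-block error to the kernel-matching analysis of Lemma~\ref{lemma:kernel1} and the use of $\int zK(z)\,\d z=0$ from Assumption~\ref{asp:lp} to annihilate the leading-order contribution of the off-diagonal block. The linear-in-$(X_1-X_j)$ piece and the quadratic form $A_2A_4^{-1}A_2^\top$ are killed by exactly the mechanism you describe.

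The one step I would push back on is your plan to make the approximations of $a_j$, $B_j$, $C_j$ \emph{uniform} in $X_j$ and then multiply by $\frac{1}{n_0}\sum_{j:D_j=0}K_H(X_j-X_1)=O_\P(1)$. Uniform consistency of the kernel density estimator (and of the higher-order local moments) over the support is not available under Assumption~\ref{asp:kernel,dr}, which only requires $n\lvert H^{1/2}\rvert\to\infty$; the standard uniform rates need $n\lvert H^{1/2}\rvert/\log n\to\infty$ or comparable entropy/bandwidth conditions that are not assumed here. The paper avoids taking any supremum: it conditions on $X_1$ and the treated covariates $\mX_1$, writes the conditional mean of the sum as an integral against $f_0$, and then bounds the integrated conditional second moment directly, expanding the square and exploiting independence across the treated units $X_2,X_3$ appearing in $A_2$ (this is the same device used for the term $R_4$ in Lemma~\ref{lemma:kernel1}). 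So the route is right, but the uniformity claim should be replaced by this integrated mean-square argument; as written, that step would not go through under the stated bandwidth condition. A small computational slip: with $\alpha_j\approx f_1^{-1}(X_j)$, $B_j\approx \mu_2(K)\nabla f_1(X_j)^\top H$ and $C_j\approx \mu_2(K)f_1(X_j)H$, the limit of $\Delta_j^{(2)}$ is $-f_1^{-1}(X_j)\nabla\log f_1(X_j)^\top$ rather than $-\nabla\log f_1(X_j)^\top$; this does not affect the argument since only boundedness is used.
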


The proof of Theorem~\ref{thm:lp}\ref{thm:lp3} and Theorem~\ref{thm:lp}\ref{thm:lp4} can be established in the same way as those of Theorems~\ref{thm:kernel}\ref{thm:kernel3} and \ref{thm:kernel}\ref{thm:kernel4} by performing a similar analysis as Lemma~\ref{lemma:lp1}.
\end{proof}

\subsection{Proof of Theorem~\ref{thm:rf}}

\begin{proof}[Proof of Theorem~\ref{thm:rf}]

{\bf Proof of Theorem~\ref{thm:rf}\ref{thm:rf1}.} Notice that
\begin{align*}
    &\sum_{j:D_j=1-D_1} w_{1\leftarrow j}\\
    =& \sum_{j:D_j=1-D_1} B^{-1} \sum_{b=1}^B (\lvert \{k \in \cI^{1-D_1}_b:X_k \in L^{1-D_1}_b(X_1)\} \rvert)^{-1}\ind(j \in \cI^{1-D_1}_b:X_j \in L^{1-D_1}_b(X_1))\\
    =&  B^{-1} \sum_{b=1}^B (\lvert \{k \in \cI^{1-D_1}_b:X_k \in L^{1-D_1}_b(X_1)\} \rvert)^{-1} \sum_{j:D_j=1-D_1} \ind(j \in \cI^{1-D_1}_b:X_j \in L^{1-D_1}_b(X_1))\\
    =&  B^{-1} \sum_{b=1}^B (\lvert \{k \in \cI^{1-D_1}_b:X_k \in L^{1-D_1}_b(X_1)\} \rvert)^{-1} \lvert \{j \in \cI^{1-D_1}_b:X_j \in L^{1-D_1}_b(X_1)\} \rvert \\
    =& 1.
\end{align*}

Then Assumptions~\ref{asp:weight}, \ref{asp:se2}\ref{asp:se2,w1} hold. Assumption \ref{asp:dr2}\ref{asp:dr2,w2} holds by noticing that all weights are nonnegative.

{\bf Proof of Theorem~\ref{thm:rf}\ref{thm:rf-dr}.} To verify Assumption~\ref{asp:dr1}\ref{asp:dr1,w}, it suffices to consider
\[
    \E \Big[\E \Big[ \Big(\sum_{j:D_j=0} w_{j\leftarrow 1} - \frac{1-e(X_1)}{e(X_1)}\Big)^2 \Biggiven \mD,D_1=1 \Big] \ind\Big(D_1 = 1\Big)\Big].
\]

Let $\{L^1_{bt}\}_{t \ge 1}$ be the set of terminal leaves in $L^1_b$ for $b \in \zahl{B}$. We rewrite
\begin{align*}
    &\sum_{j:D_j=0} w_{j\leftarrow 1} - \frac{1-e(X_1)}{e(X_1)} \\
    =& \sum_{j:D_j=0} B^{-1} \sum_{b=1}^B (\lvert \{k \in \cI^1_b:X_k \in L^1_b(X_j)\} \rvert)^{-1}\ind(1 \in \cI^1_b:X_1 \in L^1_b(X_j)) - \frac{1-e(X_1)}{e(X_1)}\\
    =& B^{-1} \sum_{b=1}^B \sum_{t\ge1} \sum_{j:D_j=0}  (\lvert \{k \in \cI^1_b:X_k \in L^1_{bt}\} \rvert)^{-1}\ind(1 \in \cI^1_b:X_1 \in L^1_{bt}) \ind(X_j \in L^1_{bt}) - \frac{1-e(X_1)}{e(X_1)}\\
    =& \Big[B^{-1} \sum_{b=1}^B \sum_{t\ge1} \sum_{j:D_j=0}  (\lvert \{k \in \cI^1_b:X_k \in L^1_{bt}\} \rvert)^{-1}\ind(1 \in \cI^1_b:X_1 \in L^1_{bt}) [\ind(X_j \in L^1_{bt}) - \nu_0(L^1_{bt})]\Big]\\
    &+ \Big[n_0 B^{-1} \sum_{b=1}^B \sum_{t\ge1} (\lvert \{k \in \cI^1_b:X_k \in L^1_{bt}\} \rvert)^{-1}\ind(1 \in \cI^1_b:X_1 \in L^1_{bt}) \Big(\nu_0(L^1_{bt}) - \frac{f_0(X_1)}{f_1(X_1)} \nu_1(L^1_{bt})\Big)\Big]\\
    &+ \Big[\frac{n_0}{n_1} \frac{f_0(X_1)}{f_1(X_1)} \Big(n_1 B^{-1} \sum_{b=1}^B \sum_{t\ge1} (\lvert \{k \in \cI^1_b:X_k \in L^1_{bt}\} \rvert)^{-1}\ind(1 \in \cI^1_b:X_1 \in L^1_{bt}) \nu_1(L^1_{bt}) - 1\Big)\Big]\\
    &+ \Big[\frac{n_0}{n_1} \frac{f_0(X_1)}{f_1(X_1)} - \frac{1-e(X_1)}{e(X_1)} \Big]\\
    =:& S_1 + S_2 + S_3 + S_4.
\end{align*}

The term $S_4$ can be handled by the law of large number. Then it suffices to have the following lemma.

\begin{lemma}\label{lemma:rf1}
    Under Assumptions~\ref{asp:rf,dr}, \ref{asp:rf,honest}, we have
    \begin{align*}
        \lim_{n \to \infty } \E \Big[\E \Big[ S_i^2 \Biggiven \mD,D_1=1 \Big] \ind\Big(D_1 = 1\Big)\Big] = 0, ~~ i=1,2,3.
    \end{align*}
\end{lemma}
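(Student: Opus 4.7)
\noindent\textbf{Proof plan for Lemma~\ref{lemma:rf1}.}
The strategy is to handle $S_1,S_2,S_3$ separately, each exploiting a different structural aspect of the honest forest. Throughout, I would carry out the analysis conditional on $\mD$ and $\mX_1$ (with $D_1=1$), exploiting that by Assumption~\ref{asp:rf,honest} each tree $T^1_b$ is built from the covariates $\{X_k\}_{k\in\cI^1_b}$ of the treated subsample only, so the control observations $\{X_j\}_{D_j=0}$ are i.i.d. from $\nu_0$ given $\mD,\mX_1$ and the forest.

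For $S_1$, which is the empirical-process term with centered summands $\ind(X_j\in L^1_{bt})-\nu_0(L^1_{bt})$, the plan is to use the conditional i.i.d. structure of the controls to write $\E[S_1^2\mid \mD,\mX_1,D_1=1] = n_0\,\Var(w_{j\leftarrow 1}\mid \mD,\mX_1)$ for a generic control index $j$, then bound the variance by the conditional second moment. Applying Jensen across the $B$ trees reduces the second moment to per-tree quantities of the form $\nu_0(L^1_b(X_1))\,\ind(1\in\cI^1_b)/|\{k\in\cI^1_b:X_k\in L^1_b(X_1)\}|^2$. Pulling out the subsample-inclusion probability $s/n_1$ and bounding the remaining denominator by $\min_t|L^1_{bt}|$ reduces matters to the first part of Assumption~\ref{asp:rf,dr}\ref{asp:rf,dr,t}, namely $\E[(\min_t|L_t|)^{-1}]\to 0$, which drives the variance to zero.

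For $S_2$, the key observation is that $\nu_\omega(L)=\int_L f_\omega$, so the inner factor $\nu_0(L^1_{bt})-(f_0(X_1)/f_1(X_1))\,\nu_1(L^1_{bt})$ is controlled by the oscillation of $f_0$ and $f_1$ over $L^1_{bt}$. Since $X_1\in L^1_{bt}$, continuity of $f_0,f_1$ (Assumption~\ref{asp:rf,dr}\ref{asp:rf,dr,d}) and the lower bound on $f_1$ give a pointwise bound of the form $o(1)\cdot\nu_1(L^1_b(X_1))$ whenever $\mathrm{diam}(L^1_b(X_1))\to 0$. Multiplying by the remaining weights and using the second part of \eqref{assump:RF1} together with dominated convergence, as well as the fact (already established in Theorem~\ref{thm:rf}\ref{thm:rf1} and the proof strategy for $S_3$ below) that the total weight $n_0 B^{-1}\sum_b \ind(1\in\cI^1_b)\nu_1(L^1_b(X_1))/|\{k\in\cI^1_b:X_k\in L^1_b(X_1)\}|$ has a bounded second moment, yields $\E[S_2^2\mid \mD,D_1=1]\to 0$.

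For $S_3$, the prefactor $(n_0/n_1)\,f_0(X_1)/f_1(X_1)$ is $O(1)$ by Assumption~\ref{asp:dr}\ref{asp:dr-1} and the density bounds. Setting $N_b:=|\{k\in\cI^1_b:X_k\in L^1_b(X_1)\}|$, the bracketed quantity becomes $n_1 B^{-1}\sum_b \ind(1\in\cI^1_b)\,\nu_1(L^1_b(X_1))/N_b - 1$. Since $\E[\ind(1\in\cI^1_b)]=s/n_1$ and $N_b$ concentrates around $s\,\nu_1(L^1_b(X_1))$ by a binomial concentration argument tied to the subsample of size $s$, the leading balance $n_1/s\cdot s/n_1=1$ is recovered. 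The $s=O(n^{1/2})$ and $n/B=O(1)$ conditions of Assumption~\ref{asp:rf,dr}\ref{asp:rf,dr,t} prevent the subsampling fluctuations from blowing up the second moment, and $\E[(\min_t|L_t|)^{-1}]\to 0$ rules out pathological tiny leaves. The main obstacle will be $S_3$: unlike $S_1$ (where honesty cleanly decouples the tree from the control covariates) and $S_2$ (which is essentially deterministic in the tree once leaf diameters shrink), here $\nu_1(L^1_b(X_1))$ and $N_b$ are intrinsically coupled to the subsample $\cI^1_b$ and to $X_1$ itself (when $1\in\cI^1_b$), and the stated honesty condition is weaker than sample-splitting, so a careful double-averaging over both the subsampling and the tree construction will be required.
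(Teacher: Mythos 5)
Your treatments of $S_1$ and $S_2$ track the paper's proof closely: for $S_1$ the paper likewise uses honesty to get conditional mean zero, the i.i.d.-ness of the controls to reduce the second moment to $n_0$ times a per-control variance, Jensen across the $B$ trees, and the bound $\sum_{t}(\lvert\{k\in\cI^1_b:X_k\in L^1_{bt}\}\rvert)^{-1}\nu_0(L^1_{bt})\le(\min_t\lvert\{k\in\cI^1_b:X_k\in L^1_{bt}\}\rvert)^{-1}$; for $S_2$ the paper also controls the oscillation of $f_0/f_1$ over the leaf (via uniform continuity on the compact support and an $\epsilon$--$\delta$ split with Markov's inequality applied to $\ind(\mathrm{diam}>\delta)\le\delta^{-1}\mathrm{diam}$, rather than dominated convergence) and defers the variance part to the $S_3$ machinery. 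One small refinement worth adopting: where you propose to ``pull out the subsample-inclusion probability $s/n_1$,'' the paper instead uses exchangeability of the treated units, replacing the expectation conditional on $D_1=1$ by an average over all treated $i$, so that $\sum_{i:D_i=1}\ind(i\in\cI^1_b:X_i\in L^1_{bt})$ exactly cancels one power of the leaf count.

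The genuine gap is in $S_3$. First, your proposed ``binomial concentration of $N_b$ around $s\,\nu_1(L^1_b(X_1))$'' is not a valid step as stated: the leaf $L^1_b(X_1)$ is constructed from the very subsample $\cI^1_b$ whose points you are counting, so $N_b$ is not a binomial count of i.i.d. draws landing in a fixed set, and no such concentration is available (nor needed). The paper instead exploits the exact unbiasedness identity obtained from exchangeability, $\E\big[\sum_{i:D_i=1}\sum_t N_{bt}^{-1}\ind(i\in\cI^1_b:X_i\in L^1_{bt})\,\nu(L^1_{bt})\,\big|\,\mD\big]=\sum_t\nu(L^1_{bt})=1$, which makes the bracketed quantity exactly mean one and reduces the problem to a pure variance bound. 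Second, your plan names the right conditions ($s=O(n^{1/2})$, $n/B=O(1)$) but not the mechanism that makes them bite: the variance must be split by the law of total variance given $\mX_1$, with the between-subsample component---an incomplete $U$-statistic over overlapping size-$s$ subsets---controlled by the Efron--Stein inequality (perturbing one treated covariate affects only the $O(\binom{n_1-2}{s-2})$ subsamples containing it, yielding the $s^2/n_1+s^4/n_1^2$ rate), and the within-tree component picking up the $B^{-1}n_1$ factor from independence of the tree randomizations. Without Efron--Stein or an equivalent Hoeffding-type decomposition, ``careful double-averaging'' does not by itself control the covariance between trees built on overlapping subsamples, and this is precisely the step your proposal leaves open.
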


{\bf Proof of Theorem~\ref{thm:rf}\ref{thm:rf2}.} It is true by directly checking the definition of honesty.

{\bf Proof of Theorem~\ref{thm:rf}\ref{thm:rf3}.} To verify Assumption~\ref{asp:se2}, notice that if $D_1=1$,
\begin{align*}
    &\sum_{j:D_j=1-D_1} \lvert w_{1\leftarrow j} \rvert \cdot \lVert X_j - X_1 \rVert^k \\
    =& \sum_{j:D_j=0} B^{-1} \sum_{b=1}^B (\lvert \{k \in \cI^{0}_b:X_k \in L^{0}_b(X_1)\} \rvert)^{-1}\ind(j \in \cI^{0}_b:X_j \in L^{0}_b(X_1)) \lVert X_j - X_1 \rVert^k\\
    \le& \sum_{j:D_j=0} B^{-1} \sum_{b=1}^B (\lvert \{k \in \cI^{0}_b:X_k \in L^{0}_b(X_1)\} \rvert)^{-1}\ind(j \in \cI^{0}_b:X_j \in L^{0}_b(X_1)) {\rm diam}^k(L^{0}_b(X_1) \cap \cS)\\
    =& B^{-1} \sum_{b=1}^B {\rm diam}^k(L^{0}_b(X_1) \cap \cS)
\end{align*}
Then
\begin{align*}
    & \E \Big[ \sum_{j:D_j=1-D_1} \lvert w_{1\leftarrow j} \rvert \cdot \lVert X_j - X_1 \rVert^k \Big]\\
    \le& \int \E[ {\rm diam}^k(L^{0}_b(x) \cap \cS)] f_1(x) \d x + \int \E[{\rm diam}^k(L^{1}_b(x) \cap \cS)] f_0(x) \d x.
\end{align*}

The proof is thus complete by Assumption~\ref{asp:rf,se}.
\end{proof}

\section{Proofs of the rest results}\label{sec:proof}

\subsection{Proof of Lemma~\ref{lemma:rf,dist}}

\begin{proof}[Proof of Lemma~\ref{lemma:rf,dist}]
For any $x \in \cS$ and $p \in \zahl{d}$, let $c(x)$ and $c_p(x)$ be the number of splits yielding $L_t(x)$ and that along the $p$-th axis, respectively. From $T$ is $(\alpha,\theta)$-regular, we have $\theta \le s\alpha^{c(x)} \le \lfloor \theta/\alpha \rfloor$, and then $\log(s/\theta)/\log(\alpha^{-1}) \ge c(x) \ge \log(s/(\alpha^{-1}\theta))/\log(\alpha^{-1})$. From $T$ is $\phi$-balanced, we have
\begin{align*}
    c_p(x) \ge \frac{\log(s/(\alpha^{-1}\theta))}{\log(\alpha^{-1})}\frac{\phi}{d}.
\end{align*}

Conditional on $c_p(x)$, let $\cE_t$ be the event that the $t$-th split along $p$-th axis leaves less than $(1-\epsilon)\alpha$ of the parent leaf's Lebesgue measure in one direction for $t \in \zahl{c_p(x)}$. Notice that under $\cE_1^c\cap\cdots\cap\cE_{c_p(x)}^c$, each split along $p$-th axis leaves at most $1-(1-\epsilon)\alpha$ of the Lebesgue measure on each side, and then ${\rm diam}_p^k(L_t(x) \cap \cS) \le [1-(1-\epsilon)\alpha]^{kc_p(x)}$ since $\cS=[0,1]^d$. Then
\begin{align*}
    &\E[{\rm diam}_p^k(L_t(x) \cap \cS) \given c_p(x)] \\
    =& \E[{\rm diam}_p^k(L_t(x) \cap \cS) \ind(\cE_1^c\cap\cdots\cap\cE_{c_p(x)}^c) \given c_p(x)] + 
    \E[{\rm diam}_p^k(L_t(x) \cap \cS) \ind(\cE_1\cup\cdots\cup\cE_{c_p(x)}) \given c_p(x)]\\
    \le& [1-(1-\epsilon)\alpha]^{kc_p(x)} + \P(\cE_1\cup\cdots\cup\cE_{c_p(x)} \given c_p(x)) \le [1-(1-\epsilon)\alpha]^{kc_p(x)} + \sum_{t=1}^{c_p(x)} \P(\cE_t \given c_p(x)).
\end{align*}

Let $S_{p,t}$ be the number of samples in the parent leaf of $t$-th split along $p$-th axis. Conditional on $S_{p,t}$ and $c_p(x)$, the conditional distribution of samples in the parent leaf are i.i.d. from uniform distribution in the parent leaf. Then from the regularity of the tree and the Chernoff inquality,
\begin{align*}
    \P(\cE_t \given S_{p,t},c_p(x)) &= \P({\rm Bin}(S_{p,t},(1-\epsilon)\alpha)>S_{p,t}\alpha) \le \exp(-S_{p,t}\alpha(\log((1-\epsilon)^{-1})-\epsilon)) \\
    &\le \exp(-\theta\alpha(\log((1-\epsilon)^{-1})-\epsilon)).
\end{align*}

Then from the lower bound of $c_p(x)$, $c_p(x) \le c(x)$ and the upper bound of $c(x)$,
\begin{align*}
    &\E[{\rm diam}_p^k(L_t(x) \cap \cS)] \le \E\Big([1-(1-\epsilon)\alpha]^{kc_p(x)} + c_p(x)\exp(-\theta\alpha(\log((1-\epsilon)^{-1})-\epsilon))\Big)\\
    \le& \Big(1-(1-\epsilon)\alpha\Big)^{k \frac{\log(s/(\alpha^{-1}\theta))}{\log(\alpha^{-1})} \frac{\phi}{d} } + \frac{\log(s/\theta)}{\log(\alpha^{-1})} \exp\Big[-\theta\alpha\Big(\log\Big(\frac{1}{1-\epsilon}\Big)-\epsilon\Big)\Big].
\end{align*}
This then completes the proof.
\end{proof}

\subsection{Proof of Lemma~\ref{lemma:mbc,clt}}

\begin{proof}[Proof of Lemma~\ref{lemma:mbc,clt}]
From the standard central limit theorem \cite[Theorem 27.1]{MR1324786}, we have
\begin{align}\label{eq:mbc,clt1}
  \sqrt{n} \Big(\bar{\tau}(\mX) - \tau\Big) \stackrel{\sf d}{\longrightarrow} N\Big(0,V^\tau\Big).
\end{align}

Let $E_{n,i} = (2D_i-1)\Big(1 + \sum_{j:D_j=1-D_i} w_{j\leftarrow i}\Big) \epsilon_i$ for any $i \in \zahl{n}$. Conditional on $\mX,\mD$, $[E_{n,i}]_{i=1}^n$ are independent from Assumption~\ref{asp:dr2}. Notice that $\E [E_{n,i} \given \mX, \mD] = 0$ and $\sum_{i=1}^n \Var[E_{n,i} \given \mX, \mD] = n V^E$. To apply the Lindeberg-Feller central limit theorem \cite[Theorem 27.2]{MR1324786}, it suffices to verify that: for a given $(\mX, \mD)$,
\[
    \frac{1}{nV^E} \sum_{i=1}^n \E\Big[\Big(E_{n,i}\Big)^2 \ind\Big(\lvert E_{n,i} \rvert > \delta \sqrt{nV^E}\Big) \Biggiven \mX, \mD \Big] \to 0,
\]
for all $\delta>0$.

Let $p_1=1+\kappa/2>1$ and take $p_2>1$ such that $p_1^{-1} + p_2^{-1} = 1$ for $\kappa$ in Assumption~\ref{asp:se1}. Let $C_\sigma := \sup_{x \in \cS, \omega \in \{0,1\}} \{\E [\lvert U_\omega \rvert^{2p_1} \given X=x] \vee \E [U^2_\omega \given X=x]\} < \infty$ from Assumption~\ref{asp:se1} and recall $\eta$ from Assumption~\ref{asp:dr}. Then
\begin{align*}
  & \frac{1}{nV^E} \sum_{i=1}^n \E\Big[\Big(E_{n,i}\Big)^2 \ind\Big(\lvert E_{n,i} \rvert > \delta \sqrt{nV^E}\Big) \Biggiven \mX, \mD \Big]\\
  =& \frac{1}{nV^E} \sum_{i=1}^n \E\Big[\Big(1 + \sum_{j:D_j=1-D_i} w_{j\leftarrow i}\Big)^2 \epsilon_i^2 \ind\Big(\lvert E_{n,i} \rvert > \delta \sqrt{nV^E}\Big) \Biggiven \mX, \mD \Big]\\
  \le& \frac{2}{nV^E} \sum_{i=1}^n \Big\{\E\Big[\Big(\frac{D_i}{e(X_i)} + \frac{1-D_i}{1-e(X_i)}\Big)^2 \epsilon_i^2 \ind\Big(\lvert E_{n,i} \rvert > \delta \sqrt{nV^E}\Big) \Biggiven \mX, \mD \Big] \\
  &+ \E\Big[\Big(\sum_{j:D_j=1-D_i} w_{j\leftarrow i} - \Big(D_i \frac{1-e(X_i)}{e(X_i)} + (1-D_i) \frac{e(X_i)}{1-e(X_i)} \Big)\Big)^2 \epsilon_i^2 \ind\Big(\lvert E_{n,i} \rvert > \delta \sqrt{nV^E}\Big) \Biggiven \mX, \mD \Big]\Big\}\\
  \le& \frac{2}{nV^E} \sum_{i=1}^n \Big\{\eta^{-2} \E\Big[\epsilon_i^2 \ind\Big(\lvert E_{n,i} \rvert > \delta \sqrt{nV^E}\Big) \Biggiven \mX, \mD \Big] \\
  &+ C_\sigma \Big[\sum_{j:D_j=1-D_i} w_{j\leftarrow i} - \Big(D_i \frac{1-e(X_i)}{e(X_i)} + (1-D_i) \frac{e(X_i)}{1-e(X_i)} \Big)\Big]^2\Big\}.
\end{align*}

From H\"older's inequality and Markov's inequality, for any $i \in \zahl{n}$,
\begin{align*}
    &\E\Big[\epsilon_i^2 \ind\Big(\lvert E_{n,i} \rvert > \delta \sqrt{nV^E}\Big) \Biggiven \mX, \mD \Big] \le \Big\{\E\Big[\epsilon_i^{2p_1}\Biggiven \mX, \mD\Big]\Big\}^{1/p_1} \Big\{\P\Big(\lvert E_{n,i} \rvert > \delta \sqrt{nV^E} \Biggiven \mX, \mD\Big)\Big\}^{1/p_2}\\
    \le& C_\sigma^{1/p_1} \Big\{\frac{1}{\delta^2 n V^E} \E\Big[E_{n,i}^2\Biggiven \mX, \mD\Big]\Big\}^{1/p_2} \le C_\sigma \Big\{\frac{1}{\delta^2 n V^E} \Big(1 + \sum_{j:D_j=1-D_i} w_{j\leftarrow i}\Big)^2\Big\}^{1/p_2}.
\end{align*}

Let $c_\sigma = \inf_{x \in \cS, \omega \in \{0,1\}} \E [U^2_\omega \given X=x] >0$. From the definition of $V^E$, we have $V^E \ge c_\sigma$ for almost all $\mX, \mD$. Then from Assumption~\ref{asp:dr1}\ref{asp:dr1,w},
\[
  \E \Big[\frac{1}{nV^E} \sum_{i=1}^n \E\Big[\Big(E_{n,i}\Big)^2 \ind\Big(\lvert E_{n,i} \rvert > \delta \sqrt{nV^E}\Big) \Biggiven \mX, \mD \Big] \Big] = O(n^{-1/p_2}) + o(1) = o(1).
\]
We thus obtain
\[
  \frac{1}{nV^E} \sum_{i=1}^n \E\Big[\Big(E_{n,i}\Big)^2 \ind\Big(\lvert E_{n,i} \rvert > \delta \sqrt{nV^E}\Big) \Biggiven \mX, \mD \Big] = o_\P(1).
\]
Applying the Lindeberg-Feller central limit theorem then yields
\begin{align}\label{eq:mbc,clt2}
  \sqrt{n} (V^E)^{-1/2} E_n = \Big(nV^E\Big)^{-1/2} \sum_{i=1}^n E_{n,i} \stackrel{\sf d}{\longrightarrow} N\Big(0,1\Big).
\end{align}

Noticing that $\sqrt{n} \Big(\bar{\tau}(\mX) - \tau\Big)$ and $\sqrt{n} (V^E)^{-1/2} E_n$ are asymptotically independent, leveraging the same argument as made in \citet[Proof of Theorem 4, Page 267]{abadie2006large} and then combining \eqref{eq:mbc,clt1} and \eqref{eq:mbc,clt2} comletes the proof.
\end{proof}

\subsection{Proof of Lemma~\ref{lemma:mbc,ve}}

\begin{proof}[Proof of Lemma~\ref{lemma:mbc,ve}]
We decompose $V^E$ as follows:
\begin{align*}
   V^E =& \frac{1}{n} \sum_{i=1}^n \Big(1 + \sum_{j:D_j=1-D_i} w_{j\leftarrow i}\Big)^2 \sigma_{D_i}^2(X_i)\\
  =& \Big[\frac{1}{n} \sum_{i:D_i = 1}^n \Big(\frac{1}{e(X_i)}\Big)^2 \sigma_1^2(X_i) + \frac{1}{n} \sum_{i:D_i = 0}^n \Big(\frac{1}{1-e(X_i)}\Big)^2 \sigma_0^2(X_i) \Big]\\
  &+ \frac{1}{n} \sum_{i:D_i = 1}^n \Big[\Big(1 + \sum_{j:D_j=1-D_i} w_{j\leftarrow i}\Big)^2 - \Big(\frac{1}{e(X_i)}\Big)^2\Big] \sigma_1^2(X_i) \\
  &+ \frac{1}{n} \sum_{i:D_i = 0}^n \Big[\Big(1 + \sum_{j:D_j=1-D_i} w_{j\leftarrow i}\Big)^2 - \Big(\frac{1}{1-e(X_i)}\Big)^2\Big] \sigma_0^2(X_i).
  \yestag\label{eq:mbc,clt4}
\end{align*}

For the first term in \eqref{eq:mbc,clt4}, notice that $[(X_i,D_i,Y_i)]_{i=1}^n$ are i.i.d. and $\E[D_i (e(X_i))^{-2} \sigma_1^2(X_i)], \E[(1-D_i) (1-e(X_i))^{-2} \sigma_0^2(X_i)] < \infty$. Using the weak law of large numbers \cite[Theorem 2.2.14]{MR3930614}, we have
\begin{align*}
  \frac{1}{n} \sum_{i=1, D_i = 1}^n \Big(\frac{1}{e(X_i)}\Big)^2 \sigma_1^2(X_i) + \frac{1}{n} \sum_{i=1, D_i = 0}^n \Big(\frac{1}{1-e(X_i)}\Big)^2 \sigma_0^2(X_i) \stackrel{\sf p}{\longrightarrow} \E \Big[\frac{\sigma_1^2(X)}{e(X)} + \frac{\sigma_0^2(X)}{1-e(X)}\Big].
\end{align*}

For the second term in \eqref{eq:mbc,clt4}, using the Cauchy--Schwarz inequality,
\begin{align*}
  & \E \Big[\Big\lvert\frac{1}{n} \sum_{D_i = 1}^n \Big[\Big(1 + \sum_{j:D_j=1-D_i} w_{j\leftarrow i}\Big)^2 - \Big(\frac{1}{e(X_i)}\Big)^2\Big] \sigma_1^2(X_i) \Big\rvert \Big]\\
  \le & \E \Big[D_i\Big\lvert\Big(1 + \sum_{j:D_j=1-D_i} w_{j\leftarrow i}\Big)^2 - \Big(\frac{1}{e(X_i)}\Big)^2\Big\rvert \Big] \lVert \sigma_1^2 \rVert_\infty\\
  \le & \Big\{\E \Big[D_i\Big(1 + \sum_{j:D_j=1-D_i} w_{j\leftarrow i} - \frac{1}{e(X_i)}\Big)^2 \Big]\Big\}^{1/2} \Big\{\E \Big[D_i\Big(1 + \sum_{j:D_j=1-D_i} w_{j\leftarrow i} + \frac{1}{e(X_i)}\Big)^2 \Big]\Big\}^{1/2} \lVert \sigma_1^2 \rVert_\infty\\
  = & \Big\{\E \Big[ D_i \Big[\sum_{j:D_j=1-D_i} w_{j\leftarrow i} - \Big(D_i \frac{1-e(X_i)}{e(X_i)} + (1-D_i) \frac{e(X_i)}{1-e(X_i)} \Big) \Big]^2 \Big]\Big\}^{1/2} \\
  & \Big\{\E \Big[D_i\Big(1 + \sum_{j:D_j=1-D_i} w_{j\leftarrow i} + \frac{1}{e(X_i)}\Big)^2 \Big]\Big\}^{1/2} \lVert \sigma_1^2 \rVert_\infty\\
  = & o(1),
\end{align*}
where the last step is due to Assumption~\ref{asp:dr1}. Then we obtain
\begin{align*}
    \frac{1}{n} \sum_{D_i = 1}^n \Big[\Big(1 + \sum_{j:D_j=1-D_i} w_{j\leftarrow i}\Big)^2 - \Big(\frac{1}{e(X_i)}\Big)^2\Big] \sigma_1^2(X_i) \stackrel{\sf p}{\longrightarrow} 0.
\end{align*}

For the third term in \eqref{eq:mbc,clt4}, we can establish in the same way that
\begin{align*}
    \frac{1}{n} \sum_{D_i = 0}^n \Big[\Big(1 + \sum_{j:D_j=1-D_i} w_{j\leftarrow i}\Big)^2 - \Big(\frac{1}{1-e(X_i)}\Big)^2\Big] \sigma_0^2(X_i) \stackrel{\sf p}{\longrightarrow} 0.
\end{align*}

Then from \eqref{eq:mbc,clt4},
\begin{align*}
  V^E \stackrel{\sf p}{\longrightarrow} \E \Big[\frac{\sigma_1^2(X)}{e(X)} + \frac{\sigma_0^2(X)}{1-e(X)}\Big].
\end{align*}
This completes the whole proof.
\end{proof}

\subsection{Proof of Lemma~\ref{lemma:mbc,bias}}

\begin{proof}[Proof of Lemma~\ref{lemma:mbc,bias}]
We decompose $B_n - \hat{B}_n$ as
\begin{align*}
  & \lvert B_n - \hat{B}_n \rvert \\
  =& \Big\lvert  \frac{1}{n}\sum_{i=1}^n (2D_i-1) \Big[\sum_{j:D_j=1-D_i} w_{i\leftarrow j} \Big(\mu_{1-D_i}(X_i) - \mu_{1-D_i}(X_j) - \hat{\mu}_{1-D_i}(X_i) + \hat{\mu}_{1-D_i}(X_j) \Big)\Big] \Big\rvert\\
  \le& \frac{1}{n}\sum_{i=1}^n \sum_{j:D_j=1-D_i} \lvert w_{i\leftarrow j} \rvert \max_{\omega \in \{0,1\}} \Big\lvert \mu_\omega(X_i) - \mu_\omega(X_j) - \hat{\mu}_\omega(X_i) + \hat{\mu}_\omega(X_j) \Big\rvert.
  \yestag\label{eq:mbc,bias1}
\end{align*}

For any $\omega \in \{0,1\}$, by Taylor expansion to $k$-th order,
\begin{align}\label{eq:mbc,bias2}
    \Big\lvert \mu_\omega (X_j) - \mu_{\omega} (X_i) - \sum_{\ell = 1}^{k-1} \frac{1}{\ell!} \sum_{t \in \Lambda_\ell} \partial^t \mu_{\omega} (X_i) (X_j-X_i)^t \Big\rvert \le \max_{t \in \Lambda_k} \lVert \partial^t \mu_{\omega} \rVert_\infty \frac{1}{k!} \sum_{t \in \Lambda_k} \lVert X_j-X_i \rVert^k.
\end{align}
In the same way,
\begin{align}\label{eq:mbc,bias3}
  \Big\lvert \hat{\mu}_\omega (X_j) - \hat{\mu}_{\omega} (X_i) - \sum_{\ell = 1}^{k-1} \frac{1}{\ell!} \sum_{t \in \Lambda_\ell} \partial^t \hat{\mu}_{\omega} (X_i) (X_j-X_i)^t \Big\rvert \le \max_{t \in \Lambda_k} \lVert \partial^t \hat{\mu}_{\omega} \rVert_\infty \frac{1}{k!} \sum_{t \in \Lambda_k} \lVert X_j-X_i \rVert^k.
\end{align}
We also have
\begin{align}\label{eq:mbc,bias4}
  \Big\lvert \sum_{\ell = 1}^{k-1} \frac{1}{\ell!} \sum_{t \in \Lambda_\ell} (\partial^t \hat{\mu}_{\omega} (X_i) - \partial^t \mu_{\omega} (X_i)) (X_j-X_i)^t \Big\rvert \le \sum_{\ell = 1}^{k-1} \max_{t \in \Lambda_\ell} \lVert \partial^t \hat{\mu}_{\omega} - \partial^t \mu_{\omega} \rVert_\infty \frac{1}{\ell!} \sum_{t \in \Lambda_\ell} \lVert X_j-X_i \rVert^\ell.
\end{align}

Plugging \eqref{eq:mbc,bias2}, \eqref{eq:mbc,bias3}, \eqref{eq:mbc,bias4} into \eqref{eq:mbc,bias1} yields
\begin{align*}
  & \lvert B_n - \hat{B}_n \rvert \\
  \lesssim & \Big( \max_{\omega \in \{0,1\}} \max_{t \in \Lambda_k} \lVert \partial^t \mu_{\omega} \rVert_\infty + \max_{\omega \in \{0,1\}} \max_{t \in \Lambda_k} \lVert \partial^t \hat{\mu}_{\omega} \rVert_\infty\Big) \Big(\frac{1}{n} \sum_{i=1}^n \sum_{j:D_j=1-D_i} \lvert w_{i\leftarrow j} \rvert \lVert X_j-X_i \rVert^k \Big) \\
  & + \sum_{\ell=1}^{k-1} \Big( \max_{\omega \in \{0,1\}} \max_{t \in \Lambda_\ell} \lVert \partial^t \hat{\mu}_{\omega} - \partial^t \mu_{\omega} \rVert_\infty \Big) \Big(\frac{1}{n} \sum_{i=1}^n \sum_{j:D_j=1-D_i} \lvert w_{i\leftarrow j} \rvert \lVert X_j-X_i \rVert^\ell \Big).
\end{align*}

This completes the proof by leveraging Assumption~\ref{asp:se2}.
\end{proof}

\subsection{Proof of Lemma~\ref{lemma:kernel1}}

\begin{proof}[Proof of Lemma~\ref{lemma:kernel1}]

{\bf Part I.} We decompose $R_2$ as
\begin{align*}
    & f_1^2(X_1) \E \Big[ R_2^2 \Biggiven X_1,\mD,D_1=1 \Big] \\
    = & f_1^2(X_1) \Big\{\E \Big[ R_2 \Biggiven X_1,\mD,D_1=1 \Big]\Big\}^2 + f_1^2(X_1) \Var \Big[ R_2 \Biggiven X_1,\mD,D_1=1 \Big]\\
    = & \Big\{\E \Big[ K_H(X_2-X_1) \Biggiven X_1,D_1=1,D_2=0 \Big] - f_0(X_1)\Big\}^2 + \\
    &\frac{1}{n_0} \Var \Big[ K_H(X_2-X_1) \Biggiven X_1,D_1=1,D_2=0 \Big].
    \yestag\label{eq:kernel2}
\end{align*}

From $K_H(x) = \lvert H \rvert^{-1/2} K(H^{-1/2}x)$ for any $x \in \bR^d$ and $\int K(z) \d z = 1$,
\begin{align*}
    & \Big\{\E \Big[ K_H(X_2-X_1) \Biggiven X_1,D_1=1,D_2=0 \Big] - f_0(X_1)\Big\}^2\\
    =& \Big(\int K_H(x-X_1) f_0(x) \d x - f_0(X_1) \Big)^2 = \Big(\lvert H \rvert^{-1/2} \int K(H^{-1/2}(x-X_1)) f_0(x) \d x - f_0(X_1) \Big)^2\\
    =& \Big(\int K(z) f_0(X_1+H^{1/2}z) \d z - f_0(X_1) \Big)^2 = \Big(\int K(z) [f_0(X_1+H^{1/2}z) - f_0(X_1)] \d z \Big)^2\\
    =& \int K(z_1) K(z_2) [f_0(X_1+H^{1/2}z_1) - f_0(X_1)] [f_0(X_1+H^{1/2}z_2) - f_0(X_1)] \d z_1 \d z_2.
\end{align*}

From Assumption~\ref{asp:kernel,dr}, we have $f_0$ and $f_1$ are both bounded and bounded away from zero, and then
\begin{align*}
    & \E \Big[\E \Big[ \frac{n_0^2}{n_1^2} f_1^{-2}(X_1) \Big\{\E \Big[ K_H(X_2-X_1) \Biggiven X_1,D_1=1,D_2=0 \Big] - f_0(X_1)\Big\}^2 \Biggiven \mD,D_1=1 \Big] \ind\Big(D_1 = 1\Big)\Big]\\
    = & \E \Big[ \frac{n_0^2}{n_1^2} \int K(z_1) K(z_2) [f_0(x+H^{1/2}z_1) - f_0(x)] [f_0(x+H^{1/2}z_2) - f_0(x)] f_1^{-1}(x) \d x \d z_1 \d z_2 \ind\Big(D_1 = 1\Big)\Big]\\
    \lesssim & \int K(z_1) K(z_2) \Big\lvert [f_0(x+H^{1/2}z_1) - f_0(x)] [f_0(x+H^{1/2}z_2) - f_0(x)] \Big\rvert \d x \d z_1 \d z_2.
\end{align*}

Notice that $f_0$ is bounded and $\int K(z) \d z = 1$. From $\lVert H^{1/2} \rVert_2 \to 0$, $f_0$ continuous almost everywhere, and Fatou's lemma, one reaches
\begin{align*}
    & \limsup_{n \to \infty}  \int K(z_1) K(z_2) \Big\lvert [f_0(x+H^{1/2}z_1) - f_0(x)] [f_0(x+H^{1/2}z_2) - f_0(x)] \Big\rvert \d x \d z_1 \d z_2\\
    \le & \int \limsup_{n \to \infty} K(z_1) K(z_2) \Big\lvert [f_0(x+H^{1/2}z_1) - f_0(x)] [f_0(x+H^{1/2}z_2) - f_0(x)] \Big\rvert \d x \d z_1 \d z_2 \\
    =& 0.
\end{align*}

Then
\begin{align}\label{eq:kernel3}
    \lim_{n \to \infty} \E \Big[\E \Big[ \frac{n_0^2}{n_1^2} f_1^{-2}(X_1) \Big\{\E \Big[ K_H(X_2-X_1) \Biggiven X_1,D_1=1,D_2=0 \Big] - f_0(X_1)\Big\}^2 \Biggiven \mD,D_1=1 \Big] \ind\Big(D_1 = 1\Big)\Big] = 0.
\end{align}

In the same way,
\begin{align*}
     \Var \Big[ K_H(X_2-X_1) \Biggiven X_1,D_1=1,D_2=0 \Big] \le& \E \Big[ K_H^2(X_2-X_1) \Biggiven X_1,D_1=1,D_2=0 \Big]\\
    =& \int K_H^2(x-X_1) f_0(x) \d x\\
    =& \lvert H \rvert^{-1} \int K^2(H^{-1/2}(x-X_1)) f_0(x) \d x \\
    =& \lvert H \rvert^{-1/2} \int K^2(z) f_0(X_1+H^{1/2}z) \d z.
\end{align*}

Then from $\int K^2(z) \d z < \infty$ and $n \lvert H^{1/2} \rvert \to \infty$,
\begin{align*}
    & \E \Big[\E \Big[ \frac{n_0^2}{n_1^2} f_1^{-2}(X_1) \frac{1}{n_0} \Var \Big[ K_H(X_2-X_1) \Biggiven X_1,D_1=1,D_2=0 \Big] \Biggiven \mD,D_1=1 \Big] \ind\Big(D_1 = 1\Big)\Big]\\
    \lesssim & \frac{1}{n} \lvert H \rvert^{-1/2} \int K^2(z) f_0(x+H^{1/2}z) \d x \d z \\
    \lesssim & \frac{1}{n} \lvert H \rvert^{-1/2} \int K^2(z) \d z \\
    =& o(1).
\end{align*}

Then
\begin{align}\label{eq:kernel4}
    \lim_{n \to \infty} \E \Big[\frac{n_0^2}{n_1^2} \E \Big[  f_1^{-2}(X_1) \Var \Big[ K_H(X_2-X_1) \Biggiven X_1,D_1=1,D_2=0 \Big] \Biggiven \mD,D_1=1 \Big] \ind\Big(D_1 = 1\Big)\Big] = 0.
\end{align}

Plugging \eqref{eq:kernel3} and \eqref{eq:kernel4} into \eqref{eq:kernel2} yields
\begin{align*}
    \lim_{n \to \infty } \E \Big[\frac{n_0^2}{n_1^2} \E \Big[ R_2^2 \Biggiven \mD,D_1=1 \Big] \ind\Big(D_1 = 1\Big)\Big] = 0.
\end{align*}

{\bf Part II.} We decompose $R_3$ as
\begin{align*}
    &\E \Big[ R_3^2 \Biggiven X_1,\mD,D_1=1 \Big] \\
    = & \Big\{\E \Big[ R_3 \Biggiven X_1,\mD,D_1=1 \Big]\Big\}^2 + \Var \Big[ R_3 \Biggiven X_1,\mD,D_1=1 \Big]\\
    = & \Big\{\E \Big[ \Big(f_1^{-1}(X_2) - f_1^{-1}(X_1)\Big) K_H(X_2-X_1) \Biggiven X_1,D_1=1,D_2=0 \Big] \Big\}^2 \\
    & + \frac{1}{n_0} \Var \Big[ \Big(f_1^{-1}(X_2) - f_1^{-1}(X_1)\Big) K_H(X_2-X_1) \Biggiven X_1,D_1=1,D_2=0 \Big].
    \yestag\label{eq:kernel5}
\end{align*}

For the first term in \eqref{eq:kernel5},
\begin{align*}
    & \Big\{\E \Big[ \Big(f_1^{-1}(X_2) - f_1^{-1}(X_1)\Big) K_H(X_2-X_1) \Biggiven X_1,D_1=1,D_2=0 \Big] \Big\}^2\\
    = & \Big[ \int \Big(f_1^{-1}(x) - f_1^{-1}(X_1)\Big) K_H(x-X_1) f_0(x) \d x \Big]^2 \\
    = & \Big[ \lvert H \rvert^{-1/2}\int \Big(f_1^{-1}(x) - f_1^{-1}(X_1)\Big) K(H^{-1/2}(x-X_1)) f_0(x) \d x \Big]^2\\
    = & \Big[ \int \Big(f_1^{-1}(X_1+H^{1/2}z) - f_1^{-1}(X_1)\Big) K(z) f_0(X_1+H^{1/2}z) \d z \Big]^2\\
    = & \int \Big(f_1^{-1}(X_1+H^{1/2}z_1) - f_1^{-1}(X_1)\Big) \Big(f_1^{-1}(X_1+H^{1/2}z_2) - f_1^{-1}(X_1)\Big) K(z_1) K(z_2) \\
    & f_0(X_1+H^{1/2}z_1) f_0(X_1+H^{1/2}z_2) \d z_1 \d z_2
\end{align*}

In the same way as \eqref{eq:kernel3}, and then handling the second term of \eqref{eq:kernel5} in the same way as \eqref{eq:kernel4}, we obtain
\begin{align*}
    \lim_{n \to \infty } \E \Big[\frac{n_0^2}{n_1^2} \E \Big[ R_3^2 \Biggiven \mD,D_1=1 \Big] \ind\Big(D_1 = 1\Big)\Big] = 0.
\end{align*}

{\bf Part III.} We decompose $R_4$ as
\begin{align*}
    R_4 = & \frac{1}{n_0} \sum_{j:D_j=0} \Big[\Big(\frac{1}{n_1}\sum_{k:D_k=1} K_H(X_j-X_k)\Big)^{-1} - f_1^{-1}(X_j)\Big] K_H(X_j-X_1)\\
    =& \frac{1}{n_0} \sum_{j:D_j=0} f_1^{-2}(X_j)\Big[f_1(X_j) - \frac{1}{n_1}\sum_{k:D_k=1} K_H(X_j-X_k)\Big] K_H(X_j-X_1)\\
    &+ \frac{1}{n_0} \sum_{j:D_j=0} f_1^{-1}(X_j) \Big[f_1(X_j) - \frac{1}{n_1}\sum_{k:D_k=1} K_H(X_j-X_k)\Big] \\
    &~~\Big[\Big(\frac{1}{n_1}\sum_{k:D_k=1} K_H(X_j-X_k)\Big)^{-1} - f_1^{-1}(X_j)\Big] K_H(X_j-X_1).
\end{align*}

The first term is the dominant term. Notice that conditional on $\mX_1$, the expectation term is
\begin{align*}
    & \E \Big[\frac{1}{n_0} \sum_{j:D_j=0} f_1^{-2}(X_j)\Big[f_1(X_j) - \frac{1}{n_1}\sum_{k:D_k=1} K_H(X_j-X_k)\Big] K_H(X_j-X_1) \Biggiven \mD,D_1=1,\mX_1 \Big]\\
    =& \E \Big[f_1^{-2}(X_2)\Big[f_1(X_2) - \frac{1}{n_1}\sum_{k:D_k=1} K_H(X_2-X_k)\Big] K_H(X_2-X_1) \Biggiven \mD,D_1=1,D_2=0,\mX_1 \Big]\\
    =& \int \Big[f_1(x) - \frac{1}{n_1}\sum_{k:D_k=1} K_H(x-X_k)\Big] K_H(x-X_1) f_1^{-2}(x) f_0(x) \d x
\end{align*}
and the variance term is
\begin{align*}
    & n_0 \Var \Big[\frac{1}{n_0} \sum_{j:D_j=0} f_1^{-2}(X_j)\Big[f_1(X_j) - \frac{1}{n_1}\sum_{k:D_k=1} K_H(X_j-X_k)\Big] K_H(X_j-X_1) \Biggiven \mD,D_1=1,\mX_1 \Big]\\
    =& \Var \Big[f_1^{-2}(X_2)\Big[f_1(X_2) - \frac{1}{n_1}\sum_{k:D_k=1} K_H(X_2-X_k)\Big] K_H(X_2-X_1) \Biggiven \mD,D_1=1,D_2=1,\mX_1 \Big]\\
    \le&  \int \Big[f_1(x) - \frac{1}{n_1}\sum_{k:D_k=1} K_H(x-X_k)\Big]^2 K_H^2(x-X_1) f_1^{-4}(x) f_0(x) \d x.
\end{align*}

Then from the Cauchy-Schwarz inquality, the expectation term can be bounded by
\begin{align*}
    &\frac{1}{2} \E \Big[\Big(\int \Big[f_1(x) - \frac{1}{n_1}\sum_{k:D_k=1} K_H(x-X_k)\Big] K_H(x-X_1) f_1^{-2}(x) f_0(x) \d x\Big)^2 \Biggiven X_1, \mD,D_1=1 \Big]\\
    \le& \frac{1}{n_1^2} \E \Big[\Big(\int \Big[\sum_{k \neq 1:D_k=1} \Big(f_1(x) - K_H(x-X_k) \Big)\Big] K_H(x-X_1) f_1^{-2}(x) f_0(x) \d x\Big)^2 \Biggiven X_1, \mD,D_1=1 \Big]\\
    &+ \frac{1}{n_1^2} \Big(\int \Big[f_1(x) - K_H(x-X_1)\Big] K_H(x-X_1) f_1^{-2}(x) f_0(x) \d x\Big)^2.
\end{align*}

The first term above is the dominant term. We expand it as:
\begin{align*}
    &\E \Big[\Big(\int \Big[\sum_{k \neq 1:D_k=1} \Big(f_1(x) - K_H(x-X_k) \Big)\Big] K_H(x-X_1) f_1^{-2}(x) f_0(x) \d x\Big)^2 \Biggiven X_1, \mD,D_1=1 \Big]\\
    =& (n_1-1)(n_1-2) \E \Big[ \int \Big(f_1(x_1) - K_H(x_1-X_2) \Big) \Big(f_1(x_2) - K_H(x_2-X_3) \Big) \\
    & K_H(x_1-X_1) K_H(x_2-X_1) f_1^{-2}(x_1) f_1^{-2}(x_2) f_0(x_1) f_0(x_2) \d x_1 \d x_2 \Biggiven X_1,D_1=D_2=D_3=1\Big]\\
    &+ (n_1-1) \E \Big[ \int \Big(f_1(x_1) - K_H(x_1-X_2) \Big) \Big(f_1(x_2) - K_H(x_2-X_2) \Big) \\
    & K_H(x_1-X_1) K_H(x_2-X_1) f_1^{-2}(x_1) f_1^{-2}(x_2) f_0(x_1) f_0(x_2) \d x_1 \d x_2 \Biggiven X_1,D_1=D_2=1\Big].
\end{align*}

It suffices to consider the first term above since the second term is $O(n_1^{-1})$. For the first term,
\begin{align*}
    & \E \Big[ \int \Big(f_1(x_1) - K_H(x_1-X_2) \Big) \Big(f_1(x_2) - K_H(x_2-X_3) \Big) \\
    & K_H(x_1-X_1) K_H(x_2-X_1) f_1^{-2}(x_1) f_1^{-2}(x_2) f_0(x_1) f_0(x_2) \d x_1 \d x_2 \Biggiven D_1=D_2=D_3=1\Big]\\
    =& \int \Big(f_1(x_1) - \E[K_H(x_1-X_2) \given D_2=1] \Big) \Big(f_1(x_2) - \E[K_H(x_1-X_3) \given D_3=1] \Big) \\
    & \E[K_H(x_1-X_1) K_H(x_2-X_1) \given D_1=1] f_1^{-2}(x_1) f_1^{-2}(x_2) f_0(x_1) f_0(x_2) \d x_1 \d x_2.
\end{align*}

Then in the same way as establishing $R_2$, and handling the variance term in the same way, we obtain
\begin{align*}
    \lim_{n \to \infty } \E \Big[\E \Big[ R_4^2 \Biggiven \mD,D_1=1 \Big] \ind\Big(D_1 = 1\Big)\Big] = 0.
\end{align*}
This completes the proof.
\end{proof}

\subsection{Proof of Lemma~\ref{lemma:kernel2}}

\begin{proof}[Proof of Lemma~\ref{lemma:kernel2}]
We only consider $R_2$, and $R_3,R_4$ can be handled in a similar way.

For $R_2$, notice that
\begin{align*}
    & f_1(X_1) \E \Big[ \Big\lvert R_2 \Big\rvert \Biggiven X_1,\mD,D_1=1 \Big] \\
    \le & f_1(X_1) \Big\lvert \E \Big[ R_2 \Biggiven X_1,\mD,D_1=1 \Big]\Big\rvert + f_1(X_1) \E \Big[ \Big\lvert R_2 -  \E \Big[ R_2 \Biggiven X_1,\mD,D_1=1 \Big]\Big\rvert \Biggiven X_1,\mD,D_1=1 \Big] \\
    \le & \Big\lvert \E \Big[ K_H(X_2-X_1) \Biggiven X_1,D_1=1,D_2=0 \Big] - f_0(X_1)\Big\rvert + \Big\{\frac{1}{n_0} \Var \Big[ K_H(X_2-X_1) \Biggiven X_1,D_1=1,D_2=0 \Big] \Big\}^{1/2}.
\end{align*}

The second term after taking expectation is $O((n \lvert H^{1/2} \rvert)^{-1})$ as shown in the proof of Lemma~\ref{lemma:kernel2}. For the first term above, from the Lipchitz condition on $\cS$,
\begin{align*}
    & \Big\lvert \E \Big[ K_H(X_2-X_1) \Biggiven X_1,D_1=1,D_2=0 \Big] - f_0(X_1)\Big\rvert = \Big\lvert \int K(z) [f_0(X_1+H^{1/2}z) - f_0(X_1)] \d z \Big\rvert\\
    \le & \int K(z) \Big\lvert f_0(X_1+H^{1/2}z) - f_0(X_1)\Big\rvert \d z \\
    \le & \int K(z) \lVert H^{1/2}z \rVert \d z + \lVert f_0 \rVert_{\infty} \int K(z) \ind(X_1+H^{1/2}z \notin \cS) \d z\\
    \le & \lVert H^{1/2} \rVert_2 \int K(z) \lVert z \rVert \d z + \lVert f_0 \rVert_{\infty} \int K(z) \ind(X_1+H^{1/2}z \notin \cS) \d z.
\end{align*}

Then from the diameter and the surface area of $\cS$ are bounded, and $\int K(z) \lVert z \rVert \d z < \infty$,
\begin{align*}
    & \E \Big[\E \Big[ \frac{n_0}{n_1} f_1^{-1}(X_1) \Big\lvert \E \Big[ K_H(X_2-X_1) \Biggiven X_1,D_1=1,D_2=0 \Big] - f_0(X_1)\Big\rvert \Biggiven \mD,D_1=1 \Big] \ind\Big(D_1 = 1\Big)\Big]\\
    \le & \E \Big[ \frac{n_0}{n_1} \Big[\lVert H^{1/2} \rVert_2 \int K(z) \lVert z \rVert \d z \lambda(\cS) + \lVert f_0 \rVert_{\infty} \int K(z) \ind(x+H^{1/2}z \notin S) \d x \d z \Big] \ind\Big(D_1 = 1\Big)\Big]\\
    \lesssim & \lVert H^{1/2} \rVert_2.
\end{align*}
This completes the proof.
\end{proof}

\subsection{Proof of Lemma~\ref{lemma:moment,catch}}

\begin{proof}[Proof of Lemma~\ref{lemma:moment,catch}]

{\bf Part I.}
Let $Z$ be a copy from $\nu_1$ independent of the data. Then for any $m \in \zahl{M}$,
\begin{align*}
  & \E\big[\nu_1\big(a_m(x)\big)\big] = \P \Big(Z \in a_m(x)\Big) = \P\Big(\lVert \cX_{(m-1)}(Z) - Z \rVert \le \lVert x - Z \rVert \le \lVert \cX_{(m)}(Z) - Z \rVert\Big) \\
  =&  \P\Big( \nu_0(B_{Z,\lVert \cX_{(m-1)}(Z) - Z \rVert}) \le \nu_0(B_{Z,\lVert x - Z \rVert}) \le \nu_0(B_{Z,\lVert \cX_{(m)}(Z) - Z \rVert}) \Big).
  \yestag\label{eq:meancatch0}
\end{align*}
For any given $z \in \bR^d$, $[\nu_0(B_{z,\lVert X_i - z \rVert})]_{i=1}^{n_0}$ are i.i.d. from the uniform distribution on $[0,1]$ since $[X_i]_{i=1}^{n_0}$ are i.i.d. from $\nu_0$ and we use the probability integral transform. Then 
\[
\Big(\nu_0(B_{Z,\lVert \cX_{(m-1)}(Z) - Z \rVert}), \nu_0(B_{Z,\lVert \cX_{(m)}(Z) - Z \rVert})\Big)
\]
has the same distribution as $(U_{(m-1)},U_{(m)})$ and is independent of $Z$.

Let $W = \nu_0(B_{Z,\lVert x-Z \rVert})$. Denote the density of $W$ by $f_W$. In the same way as Lemma 6.3 in \citet{lin2021estimation}, from the almost everywhere continuity of $f_0$ and $f_1$, $f_W(0) = f_1(x)/f_0(x)$, and for any $\epsilon \in (0,1)$, there exists some $\delta = \delta_x>0$ such that for any $w$ with $0 < w \le \delta$, we have $(1-\epsilon) f_1(x)/f_0(x) < f_W(w) < (1+\epsilon) f_1(x)/f_0(x)$.

Let $\eta_n = 4\log(n_0/M)$. Since $M \log n_0 /n_0 \to 0$, we can take $n_0$ large enough so that $\eta_n < \delta$. Then for any $m \in \zahl{M}$,
\begin{align*}
    & \E\big[\nu_1\big(a_m(x)\big)\big] = \P\Big( \nu_0(B_{Z,\lVert \cX_{(m-1)}(Z) - Z \rVert}) \le W \le \nu_0(B_{Z,\lVert \cX_{(m)}(Z) - Z \rVert}) \Big) \\
    \le& \P \Big(\nu_0(B_{Z,\lVert \cX_{(m-1)}(Z) - Z \rVert}) \le W \le \nu_0(B_{Z,\lVert \cX_{(m)}(Z) - Z \rVert}) \le \eta_N \frac{M}{n_0}\Big) + \P\Big(\nu_0(B_{Z,\lVert \cX_{(m)}(Z) - Z \rVert}) > \eta_N \frac{M}{n_0}\Big)\\
    =&\E \Big[ \ind \Big(\nu_0(B_{Z,\lVert \cX_{(m-1)}(Z) - Z \rVert}) \le W \le \nu_0(B_{Z,\lVert \cX_{(m)}(Z) - Z \rVert}) \le \eta_N \frac{M}{n_0}\Big) \Big] + \P\Big(U_{(m)} > \eta_N \frac{M}{n_0}\Big)\\
    =& \E \Big[\int_{\nu_0(B_{Z,\lVert \cX_{(m-1)}(Z) - Z \rVert})}^{\nu_0(B_{Z,\lVert \cX_{(m)}(Z) - Z \rVert})} f_W(w) \ind \Big( w \le \eta_N \frac{M}{n_0}\Big) \d w \Big]  + \P\Big(U_{(m)} > \eta_N \frac{M}{n_0}\Big)\\
    \le& (1+\epsilon) f_1(x)/f_0(x) \E \Big[\nu_0(B_{Z,\lVert \cX_{(m)}(Z) - Z \rVert}) - \nu_0(B_{Z,\lVert \cX_{(m-1)}(Z) - Z \rVert}) \Big]  + \P\Big(U_{(m)} > \eta_N \frac{M}{n_0}\Big)\\
    =& (1+\epsilon) f_1(x)/f_0(x) \E \Big[U_{(m)} - U_{(m-1)} \Big]  + \P\Big(U_{(m)} > \eta_N \frac{M}{n_0}\Big).
\end{align*}

Notice that
\begin{align*}
    \E \Big[U_{(m)} - U_{(m-1)} \Big] = \frac{m}{n_0+1} - \frac{m-1}{n_0+1} = \frac{1}{n_0+1}.
\end{align*}

We then obtain
\begin{align}
    \limsup_{n_0\to\infty} n_0 \sum_{m=1}^M \gamma_{M,m} \E\big[\nu_1\big(a_m(x)\big)\big] \le (1+\epsilon) f_1(x)/f_0(x).
\end{align}

The lower bound can be established in a similar way.

{\bf Part II.}
Let $\tZ_1,\ldots,\tZ_p$ be $p$ independent copies that are drawn from $\nu_1$ independent of the data. Then

\begin{align*}
    &\E\Big[ \Big(\sum_{m=1}^M \gamma_{M,m} \nu_1\big(a_m(x)\big) \Big)^p\Big] = \E\Big[ \prod_{k=1}^p \Big(\sum_{m=1}^M \gamma_{M,m} \ind\big( \tZ_k \in a_m(x)\big) \Big)\Big]\\
    =& \E\Big[ \prod_{k=1}^p \Big(\sum_{m=1}^M \gamma_{M,m} \ind\big( \nu_0(B_{\tZ_k,\lVert \cX_{(m-1)}(\tZ_k) - \tZ_k \rVert}) \le \nu_0(B_{\tZ_k,\lVert x - \tZ_k \rVert}) \le \nu_0(B_{\tZ_k,\lVert \cX_{(m)}(\tZ_k) - \tZ_k \rVert}) \big) \Big)\Big]
\end{align*}

Let $W_k = \nu_0(B_{\tZ_k,\lVert x - \tZ_k \rVert})$ and $V_k^{(m)} = \nu_0(B_{\tZ_k,\lVert \cX_{(m)}(\tZ_k) - \tZ_k \rVert})$ for any $k \in \zahl{p}$ and $m \in \zahl{M}$. Then $[W_k]_{k=1}^p$ are i.i.d. since $[\tZ_k]_{k=1}^p$ are i.i.d.. For any $k \in \zahl{p}$ and $\tZ_k \in \bR^d$ given, $V_k^{(m)} \given \tZ_k $ has the same distribution as $U_{(m)}$. Then for any $k \in \zahl{p}$ and $m \in \zahl{M}$, $V_k^{(m)}$ has the same distribution as $U_{(m)}$, and $V_k^{(m)}$ is independent of $\tZ_k$.

Let $W_{\max} = \max_{k \in \zahl{p}} W_k$ and $V_{\max} = \max_{k \in \zahl{p}} V_k^{(M)}$. Then
\begin{align*}
    &\E\Big[ \Big(\sum_{m=1}^M \gamma_{M,m} \nu_1\big(a_m(x)\big) \Big)^p\Big]\\
    \le & \E\Big[ \prod_{k=1}^p \Big(\sum_{m=1}^M \gamma_{M,m} \ind\big( V_k^{(m-1)} \le W_k \le V_k^{(m)} \big) \Big) \ind\Big(V_{\max} \le \eta_n \frac{M}{n_0} \Big)\Big] + \P\Big( V_{\max} > \eta_n \frac{M}{n_0} \Big).
    \yestag\label{eq:momentcatch3}
\end{align*}

It is easy to check
\begin{align}\label{eq:momentcatch2}
    \lim_{n_0 \to \infty} n_0^p \P\Big( V_{\max} > \eta_n \frac{M}{n_0}\Big) = 0.
\end{align}

Notice that for any $k \in \zahl{p}$, for any $w$ with $0 < w \le \delta$, we have $(1-\epsilon) f_1(x)/f_0(x) < f_{W_k}(w) < (1+\epsilon) f_1(x)/f_0(x)$. Then
\begin{align*}
    & \E\Big[ \prod_{k=1}^p \Big(\sum_{m=1}^M \gamma_{M,m} \ind\big( V_k^{(m-1)} \le W_k \le V_k^{(m)} \big) \Big) \ind\Big(V_{\max} \le \eta_n \frac{M}{n_0} \Big)\Big]\\
    =& \int\E\Big[ \prod_{k=1}^p \Big(\sum_{m=1}^M \gamma_{M,m} \ind\big( V_k^{(m-1)} \le W_k \le V_k^{(m)} \big) \Big) \ind\Big(V_{\max} \le \eta_n \frac{M}{n_0} \Big) \Biggiven W_k=t_k,k\in\zahl{p}\Big] \\
    & f_{(W_1,\ldots,W_p)}(t_1,\ldots,t_p) \d t_1 \cdots \d t_p\\
    \le& \int\E\Big[ \prod_{k=1}^p \Big(\sum_{m=1}^M \gamma_{M,m} \ind\big( V_k^{(m-1)} \le W_k \le V_k^{(m)} \big) \Big) \Biggiven W_k=t_k,k\in\zahl{p}\Big] \\
    & f_{(W_1,\ldots,W_p)}(t_1,\ldots,t_p) \ind\Big(t_1,\ldots,t_p \le \eta_n \frac{M}{n_0}\Big) \d t_1 \cdots \d t_p\\
    =& \int\E\Big[ \prod_{k=1}^p \Big(\sum_{m=1}^M \gamma_{M,m} \ind\big( V_k^{(m-1)} \le W_k \le V_k^{(m)} \big) \Big) \Biggiven W_k=t_k,k\in\zahl{p}\Big] \\
    & f_{W_1}(t_1) \ldots f_{W_p}(t_p) \ind\Big(t_1,\ldots,t_p \le \eta_n \frac{M}{n_0}\Big) \d t_1 \cdots \d t_p\\
    \le& (1+\epsilon)^p \big[f_1(x)/f_0(x)\big]^p \int\E\Big[ \prod_{k=1}^p \Big(\sum_{m=1}^M \gamma_{M,m} \ind\big( V_k^{(m-1)} \le W_k \le V_k^{(m)} \big) \Big) \Biggiven W_k=t_k,k\in\zahl{p}\Big]  \d t_1 \cdots \d t_p.
\end{align*}

From H\"older's inequality,
\begin{align*}
    &\E\Big[ \prod_{k=1}^p \Big(\sum_{m=1}^M \gamma_{M,m} \ind\big( V_k^{(m-1)} \le W_k \le V_k^{(m)} \big) \Big) \Biggiven W_k=t_k,k\in\zahl{p}\Big]\\
    \le&  \prod_{k=1}^p \Big\{\E\Big[ \Big(\sum_{m=1}^M \gamma_{M,m} \ind\big( V_k^{(m-1)} \le W_k \le V_k^{(m)} \big) \Big)^p \Biggiven W_k=t_k,k\in\zahl{p}\Big] \Big\}^{1/p}\\
    =& \prod_{k=1}^p \Big\{\E\Big[ \Big(\sum_{m=1}^M \gamma_{M,m} \ind\big( V_k^{(m-1)} \le W_k \le V_k^{(m)} \big) \Big)^p \Biggiven W_k=t_k\Big] \Big\}^{1/p}\\
    =& \prod_{k=1}^p \Big\{\E\Big[ \Big(\sum_{m=1}^M \gamma_{M,m} \ind\big( U_{(m-1)} \le t_k \le U_{(m)} \big) \Big)^p \Big] \Big\}^{1/p}\\
    =& \prod_{k=1}^p \Big[\sum_{m=1}^M \gamma_{M,m}^p \P\Big( U_{(m-1)} \le t_k \le U_{(m)} \Big) \Big]^{1/p}.
\end{align*}

As long as
\begin{align*}
    \limsup_{n_0 \to \infty} n_0 \int_0^\infty \Big[\sum_{m=1}^M \gamma_{M,m}^p \P\Big( U_{(m-1)} \le t \le U_{(m)} \Big) \Big]^{1/p} \d t \le 1,
\end{align*}
we have
\begin{align}\label{eq:momentcatch4}
    &\lim_{n_0 \to \infty} n_0^p \E\Big[ \prod_{k=1}^p \Big(\sum_{m=1}^M \gamma_{M,m} \ind\big( V_k^{(m-1)} \le W_k \le V_k^{(m)} \big) \Big) \ind\Big(V_{\max} \le \eta_n \frac{M}{n_0} \Big)\Big] \notag\\
    \le& (1+\epsilon)^p \big[f_1(x)/f_0(x)\big]^p.
\end{align}

Plugging \eqref{eq:momentcatch2} and \eqref{eq:momentcatch4} intro \eqref{eq:momentcatch3} yields
\begin{align*}
    \limsup_{n_0 \to \infty} n_0^p \E\Big[ \Big(\sum_{m=1}^M \gamma_{M,m} \nu_1\big(a_m(x)\big) \Big)^p\Big] \le (1+\epsilon)^p \big[f_1(x)/f_0(x)\big]^p.
\end{align*}

By H\"older's inequality, 
\[
    n_0^p \E\Big[ \Big(\sum_{m=1}^M \gamma_{M,m} \nu_1\big(a_m(x)\big) \Big)^p\Big] \ge \Big[n_0 \E\Big[\sum_{m=1}^M \gamma_{M,m} \nu_1\big(a_m(x)\big)\Big]\Big]^p.
\]

From the result of Part I,
\begin{align*}
  \liminf_{n_0 \to \infty} n_0^p \E\Big[ \Big(\sum_{m=1}^M \gamma_{M,m} \nu_1\big(a_m(x)\big) \Big)^p\Big] \ge \big[f_1(x)/f_0(x)\big]^p.
\end{align*}

Then the proof is complete.
\end{proof}

\subsection{Proof of Lemma~\ref{lemma:lp,w}}

\begin{proof}[Proof of Lemma~\ref{lemma:lp,w}]
To consider $\sum_{j:D_j=1-D_1} \lvert w_{1\leftarrow j} \rvert$, we first consider $\sum_{j:D_j=1-D_1} w_{1\leftarrow j}^2$.

We reorder $[X_i]_{i=1}^n$ such that the first $n_0$ are with $D=0$ and the next $n_1$ are with $D=1$, and denote the new sequence by $[Z_1,\ldots,Z_{n_0},Z_1',\ldots,Z_{n_1}']$. We assume without loss of generality that $D_1=1$. Then $\mB_1 = (b_1,\ldots,b_{n_0})^\top$ with $b_i = (1,(Z_i-Z_1')^\top)$, and $\mW_1 = {\rm diag}(w_1,\ldots,w_{n_0})$ with $w_i = K_H(Z_i-Z_1')$ for $i \in \zahl{n_0}$. Then
\begin{align*}
    \sum_{j:D_j=1-D_1} w_{1\leftarrow j}^2 = e_1^\top (\mB_1^\top \mW_1 \mB_1)^{-1} \mB_1^\top \mW_1^2 \mB_1 (\mB_1^\top \mW_1 \mB_1)^{-1} e_1.
\end{align*}

Notice that the weights are scale invariant with respect to the covariates, i.e., minimizing
\begin{align*}
    \sum_{j:D_j=1-D_i} \Big[Y_j - \beta_0 - \beta^\top (X_j - X_i) \Big]^2 K_H(X_j-X_i)
\end{align*}
is equivalent to minimizing
\begin{align*}
    \sum_{j:D_j=1-D_i} \Big[Y_j - \beta_0 - \beta^\top \alpha(X_j - X_i) \Big]^2 K_{\alpha^2H}(\alpha(X_j-X_i))
\end{align*}
for any $\alpha>0$ since we are only interested in the solution for $\beta_0$. Then we can assume without loss of generality that the absolute values of all elements of $Z_i - Z_1',i \in \zahl{n_0}$ are greater than 1. Notice that $\sum_{j:D_j=1-D_1} w_{1\leftarrow j}^2$ is scale invariant with respect to $\mW_1$. Let $\mV_1 = {\rm diag}(v_1,\ldots,v_{n_0})$ with $v_i = K(H^{-1/2}(Z_i-Z_1'))$ for $i \in \zahl{n_0}$. Then using the fact that $\mV_1$ is diagonal with all elements nonnegative and bounded, we have
\begin{align*}
    & \sum_{j:D_j=1-D_1} w_{1\leftarrow j}^2 = e_1^\top (\mB_1^\top \mV_1 \mB_1)^{-1} \mB_1^\top \mV_1^2 \mB_1 (\mB_1^\top \mV_1 \mB_1)^{-1} e_1\\
    \le& \lVert K \rVert_\infty e_1^\top (\mB_1^\top \mV_1 \mB_1)^{-1} \mB_1^\top \mV_1 \mB_1 (\mB_1^\top \mV_1 \mB_1)^{-1} e_1 = \lVert K \rVert_\infty e_1^\top (\mB_1^\top \mV_1 \mB_1)^{-1} e_1\\
    \le& \lVert K \rVert_\infty \lambda_{\rm min}^{-1}(\mB_1^\top \mV_1 \mB_1),
    \yestag\label{eq:lp,w1}
\end{align*}
where $\lambda_{\rm min}(\cdot)$ is the smallest eigenvalue.

From the definition of the smallest eigenvalue,
\begin{align*}
    \lambda_{\rm min}(\mB_1^\top \mV_1 \mB_1) = \min_{u \in \bR^{d+1}} u^\top \mB_1^\top \mV_1 \mB_1 u = \min_{u \in \bR^{d+1}} \sum_{i=1}^{n_0} v_i (\mB_1 u)_i^2.
\end{align*}

Let $\cN := \{i:v_i>0\}$. Using the fact that $K(\cdot)$ is bounded away from zero and denoting the lower bound by $\underline{K}$, we have
\begin{align*}
    \lambda_{\rm min}(\mB_1^\top \mV_1 \mB_1) \ge \underline{K} \min_{u \in \bR^{d+1}} \sum_{i \in \cN} (\mB_1 u)_i^2.
\end{align*}

Without loss of generality, we assume $\lvert \cN \rvert$ is divisible by $d+1$ and then randomly split $\cN$ into $\bigcup_{k=1}^{\lvert \cN \rvert/(d+1)} \cN_k$ with $[\cN_k]_{k=1}^{\lvert \cN \rvert/(d+1)}$ disjoint and all of cardinality $d+1$. Let $\mB_{\cN_k}$ be the matrix constructed by extracting $\mB$'s rows that are indexed by $\cN_k$. We then have
\begin{align*}
    &\lambda_{\rm min}(\mB_1^\top \mV_1 \mB_1) \ge \underline{K} \min_{u \in \bR^{d+1}} \sum_{k=1}^{\lvert \cN \rvert/(d+1)} \sum_{i \in \cN_k} (\mB_1 u)_i^2 \ge \underline{K} \sum_{k=1}^{\lvert \cN \rvert/(d+1)} \min_{u \in \bR^{d+1}}  \sum_{i \in \cN_k} (\mB_1 u)_i^2\\
    =& \underline{K} \sum_{k=1}^{\lvert \cN \rvert/(d+1)} \lambda_{\rm min}(\mB_{\cN_k}^\top \mB_{\cN_k}) \gtrsim \underline{K} \lvert \cN \rvert/(d+1) \gtrsim \lvert \cN \rvert.
    \yestag\label{eq:lp,w2}
\end{align*}

Notice that $w_{1\leftarrow j}=0$ if and only if $K(H^{-1/2}(X_j-X_1))=0$. From the Cauchy-Schwarz inequality and combining \eqref{eq:lp,w1} and \eqref{eq:lp,w2}, we obtain
\begin{align*}
    \Big(\sum_{j:D_j=1-D_1} \lvert w_{1\leftarrow j} \rvert\Big)^2 \le \lvert \cN \rvert \sum_{j:D_j=1-D_1} w_{1\leftarrow j}^2 = O(1),
\end{align*}
and then the proof is complete.
\end{proof}

\subsection{Proof of Lemma~\ref{lemma:lp1}}

\begin{proof}[Proof of Lemma~\ref{lemma:lp1}]

Notice that
\begin{align*}
    & \E \Big[\frac{1}{n_0}  \sum_{j:D_j=0} \Big[\Big(n_1 e_1^\top (\mB_j^\top \mW_j \mB_j)^{-1} b_{j1} \Big) - f_1^{-1}(X_j)\Big] K_H(X_j-X_1) \Biggiven \mD,D_1=1,\mX_1 \Big]\\
    =& \E \Big[ \Big[\Big(n_1 e_1^\top (\mB_2^\top \mW_2 \mB_2)^{-1} b_{21} \Big) - f_1^{-1}(X_2)\Big] K_H(X_2-X_1) \Biggiven \mD,D_1=1,D_2=0,\mX_1 \Big]\\
    =& \int \Big[\Big(n_1 e_1^\top (\mB_x^\top \mW_x \mB_x)^{-1} b_x \Big) - f_1^{-1}(x)\Big] K_H(x-X_1) f_0(x) \d x,
\end{align*}
where for any $x \in \bR^d$, let $\mB_x \in \bR^{n_1 \times (d+1)}$ be the design matrix with the row corresponding to unit $k$ with $D_k=1$ to be $(1,(X_k-x)^\top)$, $\mW_x \in \bR^{n_1 \times n_1}$ be the diagonal matrix with the diagonal element corresponding to unit $k$ with $D_k=1$ to be $K_H(X_k-x)$, and $b_x$ be $(1,(X_1-x)^\top)$.

We can check that
\begin{align*}
    \frac{1}{n_1} \mB_x^\top \mW_x \mB_x = \begin{bmatrix}
        A_1 & A_2\\
        A_3 & A_4
    \end{bmatrix},
\end{align*}
where
\begin{align*}
    & A_1 = \frac{1}{n_1}\sum_{k:D_k=1} K_H(X_k-x), \\
    & A_2^\top = A_3 = \frac{1}{n_1}\sum_{k:D_k=1} K_H(X_k-x) (X_k-x),\\
    & A_4 = \frac{1}{n_1}\sum_{k:D_k=1} K_H(X_k-x) (X_k-x) (X_k-x)^\top.
\end{align*}

One then has
\begin{align*}
    n_1(\mB_x^\top \mW_x \mB_x)^{-1} = \begin{bmatrix}
        (A_1-A_2A_4^{-1}A_3)^{-1} & -(A_1-A_2A_4^{-1}A_3)^{-1}A_2A_4^{-1}\\
        -A_4^{-1}A_3(A_1-A_2A_4^{-1}A_3)^{-1} & (A_4-A_3A_1^{-1}A_2)^{-1}
    \end{bmatrix}.
\end{align*}

We then obtain
\begin{align*}
    & n_1 e_1^\top (\mB_x^\top \mW_x \mB_x)^{-1} b_x\\
    =& (A_1-A_2A_4^{-1}A_3)^{-1} -(A_1-A_2A_4^{-1}A_3)^{-1}A_2A_4^{-1} (X_1-x)^\top.
\end{align*}

We rewrite
\begin{align*}
    &\int \Big[\Big(n_1 e_1^\top (\mB_x^\top \mW_x \mB_x)^{-1} b_x \Big) - f_1^{-1}(x)\Big] K_H(x-X_1) f_0(x) \d x\\
    =& \int f_1^{-2}(x)\Big[f_1(x)-A_1 + A_2 A_4^{-1}A_3 - f_1(x) A_2 A_4^{-1} (X_1-x)^\top\Big] K_H(x-X_1) f_0(x) \d x\\
    &+ \int f_1^{-1}(x)\Big[f_1(x)-A_1+A_2A_4^{-1}A_3\Big]\Big[\Big(n_1 e_1^\top (\mB_x^\top \mW_x \mB_x)^{-1} b_x \Big) - f_1^{-1}(x)\Big] K_H(x-X_1) f_0(x) \d x.
\end{align*}
The first term is the dominant term. We decompose
\begin{align*}
    &\int f_1^{-2}(x)\Big[f_1(x)-A_1 + A_2 A_4^{-1}A_3 - f_1(x) A_2 A_4^{-1} (X_1-x)^\top\Big] K_H(x-X_1) f_0(x) \d x\\
    =& \int f_1^{-2}(x)\Big[f_1(x)-A_1\Big] K_H(x-X_1) f_0(x) \d x + \int f_1^{-2}(x)A_2 A_4^{-1}A_3 K_H(x-X_1) f_0(x) \d x\\
    &- \int f_1^{-1}(x)A_2 A_4^{-1} (X_1-x)^\top K_H(x-X_1) f_0(x) \d x.
\end{align*}

In the above $A_1$ is the kernel density estimation, and then the first term can be handled in the same way as Lemma~\ref{lemma:kernel1}. The third term and the second term can be handled similarly, and we consider the second term. Notice that $A_4$ is an estimate of $\mu_2(K)f_1(x)H$. Then it suffices to consider
\begin{align*}
    &\mu_2(K) \int f_1^{-2}(x)A_2 (\mu_2(K)f_1(x)H)^{-1}A_3 K_H(x-X_1) f_0(x) \d x\\
    =& \int \Big[\frac{1}{n_1}\sum_{k:D_k=1} K_H(X_k-x) (X_k-x)\Big]^\top H^{-1}  \Big[\frac{1}{n_1}\sum_{k:D_k=1} K_H(X_k-x) (X_k-x)\Big] K_H(x-X_1) f_1^{-3}(x) f_0(x) \d x.
\end{align*}

We consider
\begin{align*}
    \E \Big[\Big(\int f_1^{-2}(x)A_2 (\mu_2(K)f_1(x)H)^{-1}A_3 K_H(x-X_1) f_0(x) \d x \Big)^2 \Biggiven X_1, \mD,D_1=1 \Big],
\end{align*}
and then the dominant term is
\begin{align*}
    & \E \Big[\Big(\int \Big[K_H(X_2-x) (X_2-x)\Big]^\top H^{-1}  \Big[K_H(X_3-x) (X_3-x)\Big] K_H(x-X_1) f_1^{-3}(x) f_0(x) \d x \Big)^2 \\
    & \Biggiven X_1, D_1=D_2=D_3=1 \Big].
\end{align*}

From $\int z K(z) \d z = 0$, the above term is zero and then the proof is complete.
\end{proof}

\subsection{Proof of Lemma~\ref{lemma:rf1}}

\begin{proof}[Proof of Lemma~\ref{lemma:rf1}]
{\bf Part I.} For $S_1$, notice that
\begin{align*}
    & \E \Big[B^{-1} \sum_{b=1}^B \sum_{t\ge1} \sum_{j:D_j=0}  (\lvert \{k \in \cI^1_b:X_k \in L^1_{bt}\} \rvert)^{-1}\ind(1 \in \cI^1_b:X_1 \in L^1_{bt}) [\ind(X_j \in L^1_{bt}) - \nu_0(L^1_{bt})] \\
    & \Biggiven \mD,D_1=1,\mX_1,\{\cI^1_b\}_{b=1}^B,\{L^1_{bt}\}_{t \ge 1}\Big] = 0,
\end{align*}
since $[X_j]_{j:D_j=0}$ are independent of the selection of subsamples $\{\cI^1_b\}_{b=1}^B$ and the generation of trees $\{L^1_{bt}\}_{t \ge 1}$. Then
\begin{align*}
    & \E \Big[\Big(B^{-1} \sum_{b=1}^B \sum_{t\ge1} \sum_{j:D_j=0}  (\lvert \{k \in \cI^1_b:X_k \in L^1_{bt}\} \rvert)^{-1}\ind(1 \in \cI^1_b:X_1 \in L^1_{bt}) [\ind(X_j \in L^1_{bt}) - \nu_0(L^1_{bt})] \Big)^2 \\
    & \Biggiven \mD,D_1=1,\mX_1,\{\cI^1_b\}_{b=1}^B,\{L^1_{bt}\}_{t \ge 1}\Big]\\
    =& \Var \Big[B^{-1} \sum_{b=1}^B \sum_{t\ge1} \sum_{j:D_j=0}  (\lvert \{k \in \cI^1_b:X_k \in L^1_{bt}\} \rvert)^{-1}\ind(1 \in \cI^1_b:X_1 \in L^1_{bt}) \ind(X_j \in L^1_{bt})\\
    & \Biggiven \mD,D_1=1,\mX_1,\{\cI^1_b\}_{b=1}^B,\{L^1_{bt}\}_{t \ge 1}\Big]\\
    =& n_0 \Var \Big[B^{-1} \sum_{b=1}^B \sum_{t\ge1} (\lvert \{k \in \cI^1_b:X_k \in L^1_{bt}\} \rvert)^{-1}\ind(1 \in \cI^1_b:X_1 \in L^1_{bt}) \ind(X_2 \in L^1_{bt})\\
    & \Biggiven \mD,D_1=1,D_2=0,\mX_1,\{\cI^1_b\}_{b=1}^B,\{L^1_{bt}\}_{t \ge 1}\Big]\\
    \le& n_0 \E \Big[\Big(B^{-1} \sum_{b=1}^B \sum_{t\ge1} (\lvert \{k \in \cI^1_b:X_k \in L^1_{bt}\} \rvert)^{-1}\ind(1 \in \cI^1_b:X_1 \in L^1_{bt}) \ind(X_2 \in L^1_{bt})\Big)^2\\
    & \Biggiven \mD,D_1=1,D_2=0,\mX_1,\{\cI^1_b\}_{b=1}^B,\{L^1_{bt}\}_{t \ge 1}\Big]\\
    \le& n_0 B^{-1} \sum_{b=1}^B \E \Big[\Big(\sum_{t\ge1} (\lvert \{k \in \cI^1_b:X_k \in L^1_{bt}\} \rvert)^{-1}\ind(1 \in \cI^1_b:X_1 \in L^1_{bt}) \ind(X_2 \in L^1_{bt})\Big)^2\\
    & \Biggiven \mD,D_1=1,D_2=0,\mX_1,\{\cI^1_b\}_{b=1}^B,\{L^1_{bt}\}_{t \ge 1}\Big]\\
    =& n_0 B^{-1} \sum_{b=1}^B \sum_{t\ge1} \E \Big[ (\lvert \{k \in \cI^1_b:X_k \in L^1_{bt}\} \rvert)^{-2}\ind(1 \in \cI^1_b:X_1 \in L^1_{bt}) \ind(X_2 \in L^1_{bt})\\
    & \Biggiven \mD,D_1=1,D_2=0,\mX_1,\{\cI^1_b\}_{b=1}^B,\{L^1_{bt}\}_{t \ge 1}\Big]\\
    =& n_0 B^{-1} \sum_{b=1}^B \sum_{t\ge1} (\lvert \{k \in \cI^1_b:X_k \in L^1_{bt}\} \rvert)^{-2}\ind(1 \in \cI^1_b:X_1 \in L^1_{bt}) \nu_0(L^1_{bt})
\end{align*}

Notice that for $b \in \zahl{B}$, we have
\begin{align*}
    &\sum_{t\ge1} \sum_{i:D_i=1} (\lvert \{k \in \cI^1_b:X_k \in L^1_{bt}\} \rvert)^{-2}\ind(i \in \cI^1_b:X_i \in L^1_{bt}) \nu_0(L^1_{bt}) \\
    =& \sum_{t\ge1} (\lvert \{k \in \cI^1_b:X_k \in L^1_{bt}\} \rvert)^{-1} \nu_0(L^1_{bt}) \le (\min_{t \ge 1}\lvert \{k \in \cI^1_b:X_k \in L^1_{bt}\} \rvert)^{-1} \sum_{t\ge1} \nu_0(L^1_{bt})\\
    =& (\min_{t \ge 1}\lvert \{k \in \cI^1_b:X_k \in L^1_{bt}\} \rvert)^{-1}.
\end{align*}

Then
\begin{align*}
    &\E \Big[\E \Big[ S_1^2 \Biggiven \mD,D_1=1 \Big] \ind\Big(D_1 = 1\Big)\Big]\\
    \le& \E \Big[\E \Big[ n_0 B^{-1} \sum_{b=1}^B \sum_{t\ge1} (\lvert \{k \in \cI^1_b:X_k \in L^1_{bt}\} \rvert)^{-2}\ind(1 \in \cI^1_b:X_1 \in L^1_{bt}) \nu_0(L^1_{bt}) \Biggiven \mD,D_1=1 \Big]\Big]\\
    =&\E \Big[\frac{n_0}{n_1} \E \Big[\sum_{i:D_i=1} B^{-1} \sum_{b=1}^B \sum_{t\ge1} (\lvert \{k \in \cI^1_b:X_k \in L^1_{bt}\} \rvert)^{-2}\ind(i \in \cI^1_b:X_i \in L^1_{bt}) \nu_0(L^1_{bt}) \Biggiven \mD\Big]\Big]\\
    \le&\E \Big[\frac{n_0}{n_1} \E \Big[B^{-1} \sum_{b=1}^B (\min_{t \ge 1}\lvert \{k \in \cI^1_b:X_k \in L^1_{bt}\} \rvert)^{-1} \Biggiven \mD\Big]\Big].
\end{align*}

From Assumption~\ref{asp:rf,dr}\ref{asp:rf,dr,t}, we obtain
\begin{align*}
    \lim_{n \to \infty} \E \Big[\E \Big[ S_1^2 \Biggiven \mD,D_1=1 \Big] \ind\Big(D_1 = 1\Big)\Big] = 0.
\end{align*}

{\bf Part II.} For $S_2$, we decompose it as follows:
\begin{align*}
    & \E \Big[ \Big[n_0 B^{-1} \sum_{b=1}^B \sum_{t\ge1} (\lvert \{k \in \cI^1_b:X_k \in L^1_{bt}\} \rvert)^{-1}\ind(1 \in \cI^1_b:X_1 \in L^1_{bt}) \Big(\nu_0(L^1_{bt}) - \frac{f_0(X_1)}{f_1(X_1)} \nu_1(L^1_{bt})\Big)\Big]^2 \Biggiven \mD,D_1=1 \Big]\\
    =& \Big[\E \Big[ n_0 B^{-1} \sum_{b=1}^B \sum_{t\ge1} (\lvert \{k \in \cI^1_b:X_k \in L^1_{bt}\} \rvert)^{-1}\ind(1 \in \cI^1_b:X_1 \in L^1_{bt}) \Big(\nu_0(L^1_{bt}) - \frac{f_0(X_1)}{f_1(X_1)} \nu_1(L^1_{bt})\Big) \Biggiven \mD,D_1=1 \Big]\Big]^2\\
    +& \Var\Big[ n_0 B^{-1} \sum_{b=1}^B \sum_{t\ge1} (\lvert \{k \in \cI^1_b:X_k \in L^1_{bt}\} \rvert)^{-1}\ind(1 \in \cI^1_b:X_1 \in L^1_{bt}) \Big(\nu_0(L^1_{bt}) - \frac{f_0(X_1)}{f_1(X_1)} \nu_1(L^1_{bt})\Big) \Biggiven \mD,D_1=1 \Big].
\end{align*}

The variance term can be handled in the same way as $S_3$. It then suffices to consider the bias term.
Notice that
\begin{align*}
    & \E \Big[ n_0 B^{-1} \sum_{b=1}^B \sum_{t\ge1} (\lvert \{k \in \cI^1_b:X_k \in L^1_{bt}\} \rvert)^{-1}\ind(1 \in \cI^1_b:X_1 \in L^1_{bt}) \Big(\nu_0(L^1_{bt}) - \frac{f_0(X_1)}{f_1(X_1)} \nu_1(L^1_{bt})\Big) \Biggiven \mD,D_1=1 \Big]\\
    =& \E \Big[ n_0 \sum_{t\ge1} (\lvert \{k \in \cI^1:X_k \in L^1_{t}\} \rvert)^{-1}\ind(1 \in \cI^1:X_1 \in L^1_{t}) \Big(\nu_0(L^1_{t}) - \frac{f_0(X_1)}{f_1(X_1)} \nu_1(L^1_{t})\Big) \Biggiven \mD,D_1=1 \Big],
\end{align*}
where we shorthand $\cI^1_1$ as $\cI^1$ and $L^1_{1t}$ as $L^1_{t}$.

Consider any $\epsilon>0$. From Assumption~\ref{asp:rf,dr}\ref{asp:rf,dr,d} that $f_0$ and $f_1$ are continuous, bounded and bounded away from zero, and $\cS$ is compact, we have $f_0/f_1$ is uniformly continuous on $\cS$. Then there exists $\delta>0$ such that for any $x,z \in \cS$ with $\lVert x-z \rVert \le \delta$, we have 
\[
\lvert f_0(x)/f_1(x) - f_0(z)/f_1(z) \rvert \le \epsilon. 
\]
Let $\cE_t$ be the event 
\[
\Big\{{\rm diam}(L_t^1 \cap \cS) > \delta\Big\}. 
\]
Under $\cE^c_t$, we have for any $X_1 \in L_t^1$,
\begin{align*}
    &\Big\lvert \nu_0(L^1_{t}) - \frac{f_0(X_1)}{f_1(X_1)} \nu_1(L^1_{t}) \Big\rvert = \Big\lvert \int_{L^1_{t}} f_0(z) \d z - \frac{f_0(X_1)}{f_1(X_1)} \int_{L^1_{t}} f_1(z) \d z\Big\rvert\\
    =& \Big\lvert \int_{L^1_{t}} \Big(\frac{f_0(z)}{f_1(z)} - \frac{f_0(X_1)}{f_1(X_1)}\Big)f_1(z) \d z\Big\rvert \le \epsilon \int_{L^1_{t}} f_1(z) \d z = \epsilon \nu_1(L^1_{t}).
\end{align*}

Then
\begin{align*}
    &\Big\lvert \E \Big[ n_0 \sum_{t\ge1} (\lvert \{k \in \cI^1:X_k \in L^1_{t}\} \rvert)^{-1}\ind(1 \in \cI^1:X_1 \in L^1_{t}) \Big(\nu_0(L^1_{t}) - \frac{f_0(X_1)}{f_1(X_1)} \nu_1(L^1_{t})\Big) \ind_{\cE^c_t} \Biggiven \mD,D_1=1 \Big] \Big\rvert\\
    \le& \epsilon \E \Big[ n_0 \sum_{t\ge1} (\lvert \{k \in \cI^1:X_k \in L^1_{t}\} \rvert)^{-1}\ind(1 \in \cI^1:X_1 \in L^1_{t}) \nu_1(L^1_{t}) \Biggiven \mD,D_1=1 \Big] \\
    =& \frac{n_0}{n_1}\epsilon,
\end{align*}
by noticing that
\begin{align*}
    &n_1\E \Big[\sum_{t\ge1} (\lvert \{k \in \cI^1:X_k \in L^1_{t}\} \rvert)^{-1}\ind(1 \in \cI^1:X_1 \in L^1_{t}) \nu_1(L^1_{t}) \Biggiven \mD,D_1=1 \Big]\\
    =& \sum_{i:D_i=1} \E \Big[\sum_{t\ge1} (\lvert \{k \in \cI^1:X_k \in L^1_{t}\} \rvert)^{-1}\ind(i \in \cI^1:X_i \in L^1_{t}) \nu_1(L^1_{t}) \Biggiven \mD,D_1=1 \Big]\\
    =& \sum_{t\ge1} \nu_1(L^1_{t}) \\
    =&1.
\end{align*}

On the other hand,
\begin{align*}
    & \Big\lvert \E \Big[ n_0 \sum_{t\ge1} (\lvert \{k \in \cI^1:X_k \in L^1_{t}\} \rvert)^{-1}\ind(1 \in \cI^1:X_1 \in L^1_{t}) \Big(\nu_0(L^1_{t}) - \frac{f_0(X_1)}{f_1(X_1)} \nu_1(L^1_{t})\Big) \ind_{\cE_t} \Biggiven \mD,D_1=1 \Big] \Big\rvert\\
    \lesssim & \E \Big[ n_0 \sum_{t\ge1} (\lvert \{k \in \cI^1:X_k \in L^1_{t}\} \rvert)^{-1}\ind(1 \in \cI^1:X_1 \in L^1_{t}) \lambda(L^1_{t} \cap \cS) \ind_{\cE_t} \Biggiven \mD,D_1=1 \Big] \\
    =& \frac{n_0}{n_1} \E \Big[ \sum_{t\ge1} \lambda(L^1_{t} \cap \cS) \ind_{\cE_t} \Biggiven \mD \Big]\\ 
    =& \frac{n_0}{n_1} \E \Big[ \int_\cS \ind({\rm diam}(L_t^1(x) \cap \cS) > \delta) \d x \Biggiven \mD \Big] \\
    =& \frac{n_0}{n_1} \int_\cS \E \Big[ \ind({\rm diam}(L_t^1(x) \cap \cS) > \delta) \Biggiven \mD \Big] \d x \\
    \le& \frac{n_0}{n_1} \delta^{-1} \int_\cS \E \Big[ {\rm diam}(L_t^1(x) \cap \cS) \Biggiven \mD \Big] \d x.
\end{align*}

Using Assumption~\ref{asp:rf,dr}\ref{asp:rf,dr,t} and since the selection of $\epsilon$ is arbitrary, we obtain
\begin{align*}
    \lim_{n \to \infty} \E \Big[\E \Big[ S_2^2 \Biggiven \mD,D_1=1 \Big] \ind\Big(D_1 = 1\Big)\Big] = 0.
\end{align*}

{\bf Part III.} For $S_3$, it suffices to consider
\begin{align*}
    \E \Big[\Big(n_1 B^{-1} \sum_{b=1}^B \sum_{t\ge1} (\lvert \{k \in \cI^1_b:X_k \in L^1_{bt}\} \rvert)^{-1}\ind(1 \in \cI^1_b:X_1 \in L^1_{bt}) \nu_0(L^1_{bt}) - 1\Big)^2 \Biggiven \mD,D_1=1\Big].
\end{align*}

Notice that
\begin{align*}
    &\E \Big[n_1 B^{-1} \sum_{b=1}^B \sum_{t\ge1} (\lvert \{k \in \cI^1_b:X_k \in L^1_{bt}\} \rvert)^{-1}\ind(1 \in \cI^1_b:X_1 \in L^1_{bt}) \nu_0(L^1_{bt}) \Biggiven \mD,D_1=1\Big]\\
    =& \E \Big[\sum_{i:D_i=1} B^{-1} \sum_{b=1}^B \sum_{t\ge1} (\lvert \{k \in \cI^1_b:X_k \in L^1_{bt}\} \rvert)^{-1}\ind(i \in \cI^1_b:X_i \in L^1_{bt}) \nu_0(L^1_{bt}) \Biggiven \mD\Big] \\
    =& 1.
\end{align*}
Then
\begin{align*}
    &\E \Big[\Big(n_1 B^{-1} \sum_{b=1}^B \sum_{t\ge1}  (\lvert \{k \in \cI^1_b:X_k \in L^1_{bt}\} \rvert)^{-1}\ind(1 \in \cI^1_b:X_1 \in L^1_{bt}) \nu_0(L^1_{bt}) - 1\Big)^2 \Biggiven \mD,D_1=1\Big]\\
    =& \Var \Big[n_1 B^{-1} \sum_{b=1}^B \sum_{t\ge1}  (\lvert \{k \in \cI^1_b:X_k \in L^1_{bt}\} \rvert)^{-1}\ind(1 \in \cI^1_b:X_1 \in L^1_{bt}) \nu_0(L^1_{bt}) \Biggiven \mD,D_1=1\Big]\\
    =& \Var\Big[\E\Big[n_1 B^{-1} \sum_{b=1}^B \sum_{t\ge1}  (\lvert \{k \in \cI^1_b:X_k \in L^1_{bt}\} \rvert)^{-1}\ind(1 \in \cI^1_b:X_1 \in L^1_{bt}) \nu_0(L^1_{bt}) \Biggiven \mD,D_1=1,\mX_1\Big] \Biggiven \mD,D_1=1 \Big]\\
    &+ \E\Big[\Var\Big[n_1 B^{-1} \sum_{b=1}^B \sum_{t\ge1}  (\lvert \{k \in \cI^1_b:X_k \in L^1_{bt}\} \rvert)^{-1}\ind(1 \in \cI^1_b:X_1 \in L^1_{bt}) \nu_0(L^1_{bt}) \Biggiven \mD,D_1=1,\mX_1\Big] \Biggiven \mD,D_1=1 \Big].
    \yestag\label{eq:rf1}
\end{align*}

Conditional on $\mX_1$, the randomness comes from the subsampling and the construction of trees. For any $\beta = (\beta_1,\ldots,\beta_s)^\top \in \bR^s$, let $L^\beta$ be the tree constructed based on samples $\{X_{\beta_r}\}_{r=1}^s$ with leaves $\{L^\beta_t\}_{t\ge1}$, and $L^\beta(x)$ be the leaf containing $x$ for any test point $x \in \bR^d$. For the first term in \eqref{eq:rf1},
\begin{align*}
    & \E\Big[n_1 B^{-1} \sum_{b=1}^B \sum_{t\ge1}  (\lvert \{k \in \cI^1_b:X_k \in L^1_{bt}\} \rvert)^{-1}\ind(1 \in \cI^1_b:X_1 \in L^1_{bt}) \nu_0(L^1_{bt}) \Biggiven \mD,D_1=1,\mX_1\Big]\\
    =& n_1 \binom{n_1}{s}^{-1}\sum_{\substack{1 \le \beta_1 < \cdots < \beta_s \le n\\ D_{\beta_1}=\cdots=D_{\beta_s}=1}}\E\Big[\sum_{t\ge1}  (\lvert \{k \in \beta:X_k \in L^\beta_t\} \rvert)^{-1}\ind(1 \in \beta:X_1 \in L^\beta_t) \nu_0(L^\beta_t) \Biggiven \mD,D_1=1,\mX_1\Big]\\
    =& n_1 \binom{n_1}{s}^{-1}\sum_{\substack{2 \le \beta_2 < \cdots < \beta_s \le n,\beta_1=1\\ D_{\beta_2}=\cdots=D_{\beta_s}=1}}\E\Big[\sum_{t\ge1}  (\lvert \{k \in \beta:X_k \in L^\beta_t\} \rvert)^{-1}\ind(X_1 \in L^\beta_t) \nu_0(L^\beta_t) \Biggiven \mD,D_1=1,\mX_1\Big]\\
    =& n_1 \binom{n_1}{s}^{-1}\sum_{\substack{2 \le \beta_2 < \cdots < \beta_s \le n,\beta_1=1\\ D_{\beta_2}=\cdots=D_{\beta_s}=1}}\E\Big[(\lvert \{k \in \beta:X_k \in L^\beta(X_1)\} \rvert)^{-1} \nu_0(L^\beta(X_1)) \Biggiven \mD,D_1=1,\mX_1\Big].
\end{align*}

We then apply the Efron-Stein inequality \cite[Theorem 3.1]{boucheron2013concentration}. For any $\ell \in \zahl{n}$ such that $D_\ell=1$, define $\mX_1^\ell$ to be the vector replacing $X_\ell$ by $\tX_\ell$, where $[\tX_\ell]_{\ell:D_\ell=1}$ are independent copies of $[X_\ell]_{\ell:D_\ell=1}$ from $X\given D=1$. For $\ell\ge2$, as long as $\beta_2,\ldots,\beta_s \neq \ell$, we have
\begin{align*}
    &\E\Big[(\lvert \{k \in \beta:X_k \in L^\beta(X_1)\} \rvert)^{-1} \nu_0(L^\beta(X_1)) \Biggiven \mD,D_1=1,\mX_1\Big] \\
    =& \E\Big[(\lvert \{k \in \beta:X_k \in L^\beta(X_1)\} \rvert)^{-1} \nu_0(L^\beta(X_1)) \Biggiven \mD,D_1=1,\mX_1^\ell\Big],
\end{align*}
since the subsamples are the same and then the construction of trees does not depend on $X_\ell$ and $\tX_\ell$. Then from the Efron-Stein inequality and the Cauchy-Schwarz inquality,
\begin{align*}
    &2\Var\Big[\E\Big[n_1 B^{-1} \sum_{b=1}^B \sum_{t\ge1}  (\lvert \{k \in \cI^1_b:X_k \in L^1_{bt}\} \rvert)^{-1}\ind(1 \in \cI^1_b:X_1 \in L^1_{bt}) \nu_0(L^1_{bt}) \Biggiven \mD,D_1=1,\mX_1\Big] \Biggiven \mD,D_1=1 \Big]\\
    \le& \E \Big[\Big[n_1 \binom{n_1}{s}^{-1}\sum_{\substack{2 \le \beta_2 < \cdots < \beta_s \le n,\beta_1=1\\ D_{\beta_2}=\cdots=D_{\beta_s}=1}}\Big(\E\Big[(\lvert \{k \in \beta:X_k \in L^\beta(X_1)\} \rvert)^{-1} \nu_0(L^\beta(X_1)) \Biggiven \mD,D_1=1,\mX_1\Big]\\
    &- \E\Big[(\lvert \{k \in \beta:X_k \in L^\beta(X_1)\} \rvert)^{-1} \nu_0(L^\beta(X_1)) \Biggiven \mD,D_1=1,\mX_1^1\Big]\Big) \Big]^2 \Biggiven \mD,D_1=1 \Big]\\
    &+ \sum_{\ell \neq 1:D_\ell=1}\E \Big[\Big[n_1 \binom{n_1}{s}^{-1}\sum_{\substack{2 \le \beta_3 < \cdots < \beta_s \le n\\ \beta_1=1,\beta_2=\ell,\beta_3,\cdots,\beta_s \neq \ell \\ D_{\beta_3}=\cdots=D_{\beta_s}=1}}\Big(\E\Big[(\lvert \{k \in \beta:X_k \in L^\beta(X_1)\} \rvert)^{-1} \nu_0(L^\beta(X_1)) \Biggiven \mD,D_1=1,\mX_1\Big]\\
    &- \E\Big[(\lvert \{k \in \beta:X_k \in L^\beta(X_1)\} \rvert)^{-1} \nu_0(L^\beta(X_1)) \Biggiven \mD,D_1=1,\mX_1^\ell\Big]\Big)\Big]^2 \Biggiven \mD,D_1=1 \Big]\\
    \le& 4n_1^2 \binom{n_1}{s}^{-2}\Big[\binom{n_1-1}{s-1}^{2} + (n_1-1)\binom{n_1-2}{s-2}^{2}\Big] \\
    &\E \Big[\Big(\E\Big[(\lvert \{k \in \beta:X_k \in L^\beta(X_1)\} \rvert)^{-1} \nu_0(L^\beta(X_1)) \Biggiven \mD,D_1=1,\mX_1\Big]\Big)^2\Biggiven \mD,D_1=1\Big]\\
    \lesssim& \Big(s^2 + \frac{s^4}{n_1}\Big) \E \Big[\Big(\E\Big[(\lvert \{k \in \beta:X_k \in L^\beta(X_1)\} \rvert)^{-1} \nu_0(L^\beta(X_1)) \Biggiven \mD,D_1=1,\mX_1\Big]\Big)^2\Biggiven \mD,D_1=1\Big].
\end{align*}
Notice that
\begin{align*}
    &\E \Big[\Big(\E\Big[(\lvert \{k \in \beta:X_k \in L^\beta(X_1)\} \rvert)^{-1} \nu_0(L^\beta(X_1)) \Biggiven \mD,D_1=1,\mX_1\Big]\Big)^2\Biggiven \mD,D_1=1\Big]\\
    \le& \E \Big[\Big(\E \Big[(\min_{t\ge1}\lvert L_t^\beta \rvert)^{-1}\Biggiven \mD,D_1=1,\mX_1\Big]\Big)\\
    &\Big(\E\Big[(\lvert \{k \in \beta:X_k \in L^\beta(X_1)\} \rvert)^{-1} \nu_0(L^\beta(X_1)) \Biggiven \mD,D_1=1,\mX_1\Big]\Big)\Biggiven \mD,D_1=1\Big]\\
    =& \frac{1}{n_1} \E \Big[(\min_{t\ge1}\lvert L_t^\beta \rvert)^{-1}\Biggiven \mD,D_1=1\Big].
\end{align*}
Then from Assumption~\ref{asp:rf,dr}\ref{asp:rf,dr,t},
\begin{align*}
    &\Var\Big[\E\Big[n_1 B^{-1} \sum_{b=1}^B \sum_{t\ge1}  (\lvert \{k \in \cI^1_b:X_k \in L^1_{bt}\} \rvert)^{-1}\ind(1 \in \cI^1_b:X_1 \in L^1_{bt}) \nu_0(L^1_{bt}) \Biggiven \mD,D_1=1,\mX_1\Big] \Biggiven \mD,D_1=1 \Big]\\
    =& o\Big(\frac{s^2}{n_1} + \frac{s^4}{n_1^2}\Big).
    \yestag\label{eq:rf2}
\end{align*}

For the second term in \eqref{eq:rf1}, from the independence of the trees and shorthanding $\cI^1_1$ as $\cI^1$ and $L^1_{1t}$ as $L^1_{t}$,
\begin{align*}
    &\E\Big[\Var\Big[n_1 B^{-1} \sum_{b=1}^B \sum_{t\ge1}  (\lvert \{k \in \cI^1_b:X_k \in L^1_{bt}\} \rvert)^{-1}\ind(1 \in \cI^1_b:X_1 \in L^1_{bt}) \nu_0(L^1_{bt}) \Biggiven \mD,D_1=1,\mX_1\Big] \Biggiven \mD,D_1=1 \Big]\\
    =& B^{-1} \E\Big[\Var\Big[n_1 \sum_{t\ge1}  (\lvert \{k \in \cI^1:X_k \in L^1_{t}\} \rvert)^{-1}\ind(1 \in \cI^1:X_1 \in L^1_{t}) \nu_0(L^1_{t}) \Biggiven \mD,D_1=1,\mX_1\Big] \Biggiven \mD,D_1=1 \Big]\\
    \le& B^{-1} \E\Big[\E\Big[\Big(n_1 \sum_{t\ge1}  (\lvert \{k \in \cI^1:X_k \in L^1_{t}\} \rvert)^{-1}\ind(1 \in \cI^1:X_1 \in L^1_{t}) \nu_0(L^1_{t}) \Big)^2 \Biggiven \mD,D_1=1,\mX_1\Big] \Biggiven \mD,D_1=1 \Big]\\
    =& B^{-1} \E\Big[ n_1^2 \E\Big[\sum_{t\ge1}  (\lvert \{k \in \cI^1:X_k \in L^1_{t}\} \rvert)^{-2}\ind(1 \in \cI^1:X_1 \in L^1_{t}) \nu_0^2(L^1_{t}) \Biggiven \mD,D_1=1,\mX_1\Big] \Biggiven \mD,D_1=1 \Big]\\
    =& B^{-1} \E\Big[ n_1 \E\Big[\sum_{t\ge1}  (\lvert \{k \in \cI^1:X_k \in L^1_{t}\} \rvert)^{-1} \nu_0^2(L^1_{t}) \Biggiven \mD,D_1=1,\mX_1\Big] \Biggiven \mD,D_1=1 \Big]\\
    \le& B^{-1} \E\Big[ n_1 \E\Big[( \min_{t\ge1}\lvert \{k \in \cI^1:X_k \in L^1_{t}\} \rvert)^{-1} \Biggiven \mD,D_1=1,\mX_1\Big] \Biggiven \mD,D_1=1 \Big] = o(B^{-1}n_1).
    \yestag\label{eq:rf3}
\end{align*}

Plugging \eqref{eq:rf2} and \eqref{eq:rf3} to \eqref{eq:rf1} and using Assumption~\ref{asp:rf,dr}\ref{asp:rf,dr,t}, the proof is complete.
\end{proof}

\end{document}